\documentclass[11pt,reqno]{amsart}
\usepackage[T1]{fontenc}
\usepackage{lmodern}
\usepackage[expansion=false]{microtype}
\usepackage[margin=1in]{geometry}
\usepackage{amsmath,amssymb,amsthm,mathtools,mathrsfs,booktabs,enumitem}
\usepackage[hidelinks]{hyperref}
\usepackage{zref-clever}
\zcsetup{cap,abbrev=false,nameinlink=false,rangesep={\textendash},rangetopair=false}
\zcRefTypeSetup{equation}{reffont=\upshape}
\NewDocumentCommand{\zeqref}{m}{\textup{\hbox{\normalfont\zcref[noname]{#1}}}}
\AddToHook{env/proposition/begin}{\zcsetup{countertype={theorem=proposition}}}
\AddToHook{env/lemma/begin}{\zcsetup{countertype={theorem=lemma}}}
\AddToHook{env/corollary/begin}{\zcsetup{countertype={theorem=corollary}}}
\AddToHook{env/definition/begin}{\zcsetup{countertype={theorem=definition}}}
\AddToHook{env/remark/begin}{\zcsetup{countertype={theorem=remark}}}
\usepackage{cite}
\usepackage{comment}
\numberwithin{equation}{section}
\newtheorem{theorem}{Theorem}[section]
\newtheorem{proposition}[theorem]{Proposition}
\newtheorem{lemma}[theorem]{Lemma}
\newtheorem{corollary}[theorem]{Corollary}
\theoremstyle{definition}

\theoremstyle{remark}

\newtheorem{example}[theorem]{Example}
\newcommand{\E}{\mathbb E}
\newcommand{\Prob}{\mathbb P}
\newcommand{\R}{\mathbb R}
\newcommand{\one}{\mathbf1}
\newcommand{\dd}{\,\mathrm d}
\newcommand{\cP}{\mathcal P}

\DeclareMathOperator{\Var}{Var}
\DeclareMathOperator{\supp}{supp}
\DeclareMathOperator{\sech}{sech}
\DeclareMathOperator{\arctanh}{arctanh}
\DeclareMathOperator{\Law}{Law}
\DeclareMathOperator{\Cov}{Cov}
\DeclareMathOperator{\tr}{tr}

\DeclareMathOperator{\PD}{PD}

\allowdisplaybreaks[2]
\title[Overlap structure of mixed even models]
{Overlap structure of mixed even $p$-spin models}
\author{P. M. Aronow}
\author{Patrick Lopatto}
\date{\today}
\hypersetup{pdftitle={Overlap structure of mixed even p-spin models},bookmarksdepth=2}
\begin{document}

\begin{abstract}
We identify the limiting overlap structure of mixed even $p$-spin
models, including the Sherrington--Kirkpatrick model, at every fixed
deterministic external field.  In particular, we show that the disorder-averaged distribution of
the overlap, or its absolute value at zero field, converges to the
Parisi measure. We identify the full limiting array using the overlap
array of the associated Ruelle probability cascade.  We also establish the corresponding
Ghirlanda--Guerra identities and
determine the limiting law of the quenched overlap distribution.
For the zero-field Sherrington--Kirkpatrick model, we additionally prove
temperature chaos at every pair of distinct nonnegative inverse
temperatures.
\end{abstract}
\maketitle
\setcounter{tocdepth}{1}
\tableofcontents

\section{Introduction}\label{sec:introduction}
The Sherrington--Kirkpatrick (SK) model is central to the theory of mean-field spin glasses and, more broadly, to our modern understanding of disordered systems \cite{mezard1987spin,charbonneau2023spin}. 
It describes a system of spins
taking values $\pm1$ with independent Gaussian interactions between
every pair \cite{SherringtonKirkpatrick1975}. These interactions favor
either alignment or opposition, creating competing constraints whose
collective effect is particularly pronounced at low temperature.

Parisi proposed that the equilibrium properties of the SK model are 
described by an order parameter that takes its value in the space of probability measures 
\cite{Parisi1979}. His replica-symmetry-breaking ansatz leads to a variational formula for the
limiting free energy, which has now been put on rigorous footing \cite{Talagrand,chatterjee2026michel}. The minimizer of the variational problem is unique and yields the relevant order parameter, which is known as the \emph{Parisi measure} \cite{ACunique}. Parisi's prediction about the order
parameter consists of two distinct parts; for simplicity, we state them in the case where there is no external field. The first states that below the critical
temperature, the Parisi measure should exhibit full replica symmetry
breaking, meaning that it has support equal to a nontrivial interval. The second concerns an observable called the \emph{overlap}, defined as the normalized inner product of two configurations sampled
independently from the Gibbs measure with the disorder fixed. 
It states that the 
disorder-averaged distribution of the absolute overlap should converge
to the Parisi measure as the number of spins tends to infinity. Together,
these assertions describe equilibrium configurations with a continuum
of possible overlap values.

The first assertion was recently established through the entire low-temperature phase \cite{LopattoFRSB, Lopatto2026ExternalField}. However, the second has remained open throughout this regime, despite significant effort. The strongest previous results in this direction are Panchenko's theorems on overlap identities and
ultrametricity \cite{PanGG,PanUM}, which identify the overlap structure
for generic mixed $p$-spin models, a class which does not include the SK model. The contribution of this paper is to
establish the second part of Parisi's prediction for the SK model and, more
generally, for mixed even $p$-spin models with exponentially
summable coefficients at every fixed external field. We proceed by first proving the
full Ghirlanda--Guerra identities, then use them to identify the joint limiting
overlaps of arbitrarily many replicas with the hierarchy prescribed
by the Parisi measure. We also determine the limiting law of the
random overlap distribution obtained when the disorder is held fixed.

\subsection{Model and limiting objects}
For each even integer $p\ge2$, let
$(g^{(p)}_{i_1\cdots i_p})$ be independent standard Gaussian variables,
which are further independent  across the interaction orders $p$. Throughout, we consider sequences $(b_p)$ and functions $\xi(q)$ satisfying 
\begin{equation}\label{eq:mixture}
 \xi(q)=\sum_{p\in2\mathbb N}b_p^2q^p\not\equiv0,
 \qquad b_p\ge0,\qquad \sum_{p\in2\mathbb N}2^pb_p^2<\infty.
\end{equation}
For $m \in [-1,1]^N$, define
\begin{equation}\label{eq:hamiltonian}
 H_N(m)=\sum_{p\in2\mathbb N}\frac{b_p}{N^{(p-1)/2}}
  \sum_{i_1,\ldots,i_p\le N}g^{(p)}_{i_1\cdots i_p}
  m_{i_1}\cdots m_{i_p},\quad
 R(m,n)=\frac{m\cdot n}{N},\quad q(m)=R(m,m).
\end{equation}
The process $H_N$ has a continuous version and covariance
$\E H_N(m)H_N(n)=N\xi(R(m,n))$, even for infinite mixtures, as described in Appendix~\zcref[noname]{app:source}. On
$\{-1,1\}^N$ equipped with counting measure, set
\[
 G_{N,h}(\sigma)=Z_{N,h}^{-1}
       \exp\left(H_N(\sigma)+h\sum_{i=1}^N\sigma_i\right),
 \qquad F_{N,h}=N^{-1}\log Z_{N,h},\qquad p_N=\E F_{N,h},
\]
with the constant $Z_{N,h}$ chosen to make $G_{N,h}$ a probability measure. We write $\langle\cdot\rangle$ for expectation with respect to an
i.i.d.\ sequence $\sigma^1,\sigma^2,\ldots$ sampled from the Gibbs
measure $G_{N,h}$ and call each sample $\sigma^\ell$ a
\emph{replica}. For example,
\[
 \bigl\langle f(R_{12})\bigr\rangle
 =
 \sum_{\sigma^1,\sigma^2}
 G_{N,h}(\sigma^1)G_{N,h}(\sigma^2)
 f\bigl(R(\sigma^1,\sigma^2)\bigr).
\]
We also write $R_{ij}=R(\sigma^i,\sigma^j)$ for the overlap between
replicas $i$ and $j$, and set
\begin{equation}\label{eq:unsigned-convention}
 S_{ij}=\begin{cases}R_{ij},&h\ne0,\\ |R_{ij}|,&h=0.\end{cases}
\end{equation}

Let $\mu=\mu_{\xi,h}$ be the unique minimizer of the Parisi
functional and let $Q=\max\supp\mu$. We recall the functional and uniqueness property in
Section~\zcref[noname]{sec:scalar}. We define  the off-diagonal array space by
\[
 \Omega=[-1,1]^{\mathcal I},\qquad
 \mathcal I=\{(i,j):1\le i<j<\infty\},
\]
and equip it with its compact product topology. By convergence of array laws, we mean weak convergence of the
probability measures on $\Omega$. Since $\mathcal I$ is countable, this is equivalent to
convergence of expectations for every bounded continuous function
depending on only finitely many array entries.  For each $N$, let $\mathcal L_N$ denote the probability law on
$\Omega$ defined by
\begin{equation}\label{e:annealed}
 \mathcal L_N(A)
 =
 \E\Bigl[
 G_{N,h}^{\otimes\infty}
 \bigl((R_{ij})_{i<j}\in A\bigr)
 \Bigr],
 \qquad A\subseteq\Omega\ \text{Borel},
\end{equation}
where the expectation is over the Gaussian disorder. We call any weak
subsequential limit of $(\mathcal L_N)$ a \emph{physical overlap law}.

We first record two properties of the finite-volume overlap arrays.
Complete each array to a symmetric array with $R_{ii}=1$. For every $m\ge1$,
the matrix $(R_{ij})_{i,j\le m}$ is positive semidefinite. Indeed,
for any $c_1,\ldots,c_m\in\mathbb R$,
\[
 \sum_{i,j=1}^m c_i c_j R_{ij}
 =
 \frac1N\left\|\sum_{i=1}^m c_i\sigma^i\right\|^2
 \ge0.
\]
This finite matrix is also invariant in law under any relabeling of the indices, because 
$\sigma^1,\sigma^2,\ldots$ are i.i.d.\ conditional on the disorder.
Both properties pass to a weak subsequential limit. Positive
semidefiniteness of each finite principal submatrix is a closed
condition on finitely many overlap coordinates, and invariance under
a finite permutation of the replica labels is preserved by weak
convergence.

It follows that the completed limiting array satisfies the sufficient
conditions for the Dovbysh--Sudakov representation
\cite{dovbysh1984gram,DS}, which gives a random probability measure
$\eta$ on $\ell^2\times\mathbb R_+$ such that, conditional on $\eta$,
\[
 R_{ij}
 =v^i\cdot v^j+a_i\delta_{ij},
 \qquad a_i\ge0,
\]
where the pairs $
 (v^1,a_1),(v^2,a_2),\ldots$ 
are i.i.d.\ samples from $\eta$. Since $R_{ii}=1$, we have
$a_i=1-\|v^i\|^2$, so the $\ell^2$-marginal $G$ of $\eta$ is supported
on the unit ball of $\ell^2$. We call $G$ a \emph{physical directing
measure}.

Write $\mathcal L_\mu$ for the law of the nonnegative Ruelle
probability cascade (RPC) overlap array
$\mathcal U=(\mathcal U_{ij})_{i<j}$ associated with $\mu$ \cite{Ruelle1987}.
This means that $\mathcal L_\mu$ is the joint law of all overlaps among an
infinite sequence of replicas sampled from the cascade, and its
two-replica marginal is $
 \mathcal U_{12}\sim\mu$. 
For finitely supported $\mu$, this is the usual finite-level RPC, and RPCs for 
general $\mu$ are obtained as limits of such finite-level cascades. We recall the construction of RPCs in 
Section~\zcref[noname]{sec:rpc}.
Given independent signs
$(\varepsilon_i)_{i\ge1}$ uniform on $\{-1, 1\}$ and independent of $\mathcal U$, define
\[
 \mathcal R^{\mu,h}_{ij}=
 \begin{cases}
 \mathcal U_{ij},&h\ne0,\\
 \varepsilon_i\varepsilon_j\mathcal U_{ij},&h=0,
 \end{cases}\qquad i<j.
\]

\subsection{Identities, full array, and quenched law}
We begin with the identities we use to identify the full overlap array.
For a limiting signed array, let $\mathcal R_n$ be the completion of
$\sigma(R_{ij}:1\le i<j\le n)$ with respect to the law of
the limiting array. 
\begin{theorem}\label{thm:gg}
Assume \zeqref{eq:mixture} and fix $h\in\R$. Every physical
overlap law satisfies, for every $n\ge2$,
\begin{equation}\label{eq:full-gg-conditional}
 \Law(S_{1,n+1}\mid\mathcal R_n)
 =\frac1n\mu+\frac1n\sum_{\ell=2}^n\delta_{S_{1\ell}}
 \quad\text{almost surely}.
\end{equation}
Equivalently, for every bounded $\mathcal R_n$-measurable $F$ and
bounded Borel $\psi:[0,1]\to\R$,
\begin{equation}\label{eq:full-gg}
 \E\!\left[F\,\psi(S_{1,n+1})\right]
 =
 \frac1n\,\E[F]\int \psi(q)\,\mu(dq)
 +
 \frac1n\sum_{\ell=2}^n
 \E\!\left[F\,\psi(S_{1\ell})\right].
\end{equation}
\end{theorem}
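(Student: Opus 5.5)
The plan is to prove \zeqref{eq:full-gg} first for the monomials $\psi(q)=q^p$ with $p$ even (at $h\ne0$, all $p\ge1$), and then extend. At $h=0$, evenness of $\xi$ and the global spin-flip symmetry mean that $S_{ij}=|R_{ij}|$ is the natural variable, and even powers of $R$ coincide with powers of $S$. Since these powers, together with constants, are dense in $C([0,1])$, a monotone class argument in $\psi$ then gives \zeqref{eq:full-gg} for all bounded Borel $\psi$; the conditional form \zeqref{eq:full-gg-conditional} follows because $F$ ranges over all bounded $\mathcal R_n$-measurable functions.

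For a fixed monomial I will run the standard Ghirlanda--Guerra mechanism in a perturbed direction. Let $H^{(p)}_N$ be an independent pure $p$-spin Hamiltonian and consider $H_N+\beta H^{(p)}_N$. Its free energy $\varphi_N(\beta)$ is convex in $\beta$, with
\[
\varphi_N'(\beta)=\beta\bigl(1-\E\langle R_{12}^p\rangle_\beta\bigr).
\]
By the Parisi formula, $\varphi_N\to\varphi(\beta)=\mathcal P(\xi+\beta^2q^p)$, and $\varphi$ is differentiable in $\beta$ by the differentiability of the Parisi functional in the covariance. From this I will deduce concentration of $\beta^{-1}H^{(p)}_N(\sigma)/N$ both under $\langle\cdot\rangle$ and in the disorder, using Gaussian concentration together with convexity; this is a Panchenko-type argument. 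Gaussian integration by parts then yields \zeqref{eq:full-gg} for $\psi=q^p$ at the perturbed model, with $\mu$ replaced by $\mu_{\xi+\beta^2q^p,h}$. The constant $\int q^p\,d\mu$ is identified through the derivative formula $\partial_\beta\mathcal P=\beta\bigl(1-\int q^p\,d\mu_\beta\bigr)$. This identity holds for a.e. $\beta$ by convexity, and in fact for all $\beta>0$ once the limit is known to be differentiable.

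The main obstacle is removing the perturbation, that is, passing $\beta\downarrow0$ while staying at the true model. My first attempt will exploit the fact that the identity for $p$ is sharp, with no averaging in the perturbation parameter, whenever $\xi$ already has $b_p>0$: then $H_N$ itself contains $H^{(p)}_N$ with coefficient $b_p$. Writing $b_p$ as a free parameter, the differentiability of $\mathcal P$ in $b_p$ at the actual value, together with convexity of $\varphi_N$ in $b_p$, gives concentration and hence the identity for $q^p$ at the unperturbed model. For the remaining $p$ (for SK, all $p\ne2$), I would try to pass $\beta\to0$ by continuity. This requires two inputs.
\begin{enumerate}[label=(\roman*)]
\item Continuity of $\mu_{\xi+\beta^2q^p,h}$ in $\beta$, which follows from uniqueness of the Parisi minimizer and compactness.
\item An equicontinuity statement showing that the overlap laws of $H_N+\beta H^{(p)}_N$ converge to those of $H_N$ uniformly in $N$ as $\beta\to0$. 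My proposal here is a second-moment bound: the perturbation changes the free energy by $O(\beta^2)$ uniformly in $N$, which controls the relevant derivative terms. Combined with the identity already proved for the directions that are present in $\xi$, this should be enough to force the limiting laws to coincide.
\end{enumerate}
I expect step (ii) to be where the real work lies, and where the exponential summability in \zeqref{eq:mixture} and the recent full-RSB structure of $\mu$ may need to enter.
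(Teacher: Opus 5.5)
There is a genuine gap, and it sits exactly where you expected: step (ii). Your first step is correct. When $b_p>0$, differentiability of $P$ in $b_p$, convexity of the random free energy and a large-deviation argument for $H_{N,p}/N$ give the identity for $\psi(q)=q^p$ at the unperturbed model. But for SK this is only $p=2$, and a single power is not a determining family. So everything rests on (ii), and the mechanism you propose cannot supply it.

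The bound
\[
 \varphi_N(\beta)-\varphi_N(0)=\int_0^\beta s\bigl(1-\E\langle R_{12}^p\rangle_s\bigr)\,ds=O(\beta^2)
\]
holds uniformly in $N$ no matter how the overlaps behave, because the integrand is bounded by $s$. It therefore carries no information about overlap laws. The quantity you actually need, $1-\E\langle R_{12}^p\rangle_0$, equals $\varphi_N''(0)$, while $\varphi_N'(0)=0$ identically. Convergence of convex functions, even to a smooth limit, controls first derivatives at points of differentiability, not second derivatives.

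Nor do you have uniform-in-$N$ closeness of Gibbs measures. The tilt $e^{\beta H^{(p)}_N}$ has log-variance $\beta^2N$, so entropy or total-variation comparisons only give bounds of order $\beta\sqrt N$. Interchanging $\lim_{\beta\downarrow0}$ with $\lim_{N\to\infty}$ is precisely the obstruction that confines Panchenko's perturbative identities to perturbed Gibbs measures. ``Combining with the identity for the directions present in $\xi$'' does not close it, since for SK that identity only fixes second-moment information.

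The missing idea is a new source of information about overlaps at the original coefficients, obtained without ever adding and then removing a perturbation of the measure.
\begin{itemize}
\item The paper adds spin-product sources $\theta\sum_i\prod_\ell\sigma_i^\ell$ only inside multi-replica pressures constrained to prescribed overlaps.
\item It proves a Guerra-type \emph{upper} bound $dP+\theta c+\theta^2/2+O(\eta^2)$ that is tight at $\theta=0$. Combined with concentration, this shows that under the \emph{unperturbed} Gibbs measure, constrained configurations whose site-averaged spin products deviate from the scalar Parisi moments have exponentially small probability.
\item Via the one-spin cavity representation, this identifies the conditional laws of the cavity magnetizations given the full overlap array.
\item Gaussian integration by parts for the cavity field is then applied; the normalizer introduces a new replica. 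With exponential test functions whose profiles separate overlap levels at distinct exponential rates, this yields ultrametricity, the zero-field sign relations, and
\[
 n\Prob(S_{1,n+1}>q\mid\mathcal R_n)=\mu((q,1])+\sum_{\ell=2}^n\one_{\{S_{1\ell}>q\}}.
\]
This identity is first proved under a witness hypothesis, which is then removed by a weighted exchange argument.
\item Finally, the reduction to spheres shows the directing measure lives on radius $Q$.
\end{itemize}
None of this is reachable from free-energy derivatives in the interaction coefficients alone.
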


The next theorem identifies the limiting overlap distribution.
\begin{theorem}\label{thm:array}
Assume \zeqref{eq:mixture} and fix $h\in\R$.  With $\mathcal L_N$
defined by \zeqref{e:annealed},
\[
 \mathcal L_N
 \Longrightarrow
 \Law\bigl((\mathcal R^{\mu_{\xi,h},h}_{ij})_{i<j}\bigr).
\]
In particular, $(S_{ij})_{i<j}$ converges to $\mathcal L_{\mu_{\xi,h}}$.
Every physical directing measure is almost surely supported on the
sphere of squared radius $Q=\max\supp\mu_{\xi,h}$.
\end{theorem}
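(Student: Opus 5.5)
Since $\Omega$ is compact, the sequence $(\mathcal L_N)$ is tight, so it suffices to show that every physical overlap law coincides with $\Law((\mathcal R^{\mu,h}_{ij})_{i<j})$, where $\mu=\mu_{\xi,h}$. Fix a subsequential limit and its array $(R_{ij})$. By \zcref{thm:gg}, the array $(S_{ij})$ satisfies the full Ghirlanda--Guerra identities \zeqref{eq:full-gg-conditional} with the nonrandom measure $\mu$ as the law of $S_{12}$. I would then invoke Panchenko's ultrametricity theorem \cite{PanUM}. Any weakly exchangeable, positive semidefinite array satisfying the Ghirlanda--Guerra identities is ultrametric, and the identities together with ultrametricity determine the law of the array uniquely from the law of $S_{12}$, as the RPC overlap array. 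This gives $(S_{ij})\overset{d}{=}\mathcal U$, i.e.\ the law $\mathcal L_\mu$.

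\textbf{Case $h\neq0$.} Here $S=R$, so the identification of $(S_{ij})$ is already the claim. One must check that $R_{12}\ge0$ almost surely, so that the Ghirlanda--Guerra identities are applied to a nonnegative array. This follows because $\mu$ is the law of $S_{12}$ and $\mu$ is supported in $[0,1]$, as recalled in Section~\zcref[noname]{sec:scalar}.

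\textbf{Case $h=0$.} Here $S_{ij}=|R_{ij}|$, and the sign structure must be recovered. I would use the spin-flip symmetry $\sigma\mapsto-\sigma$ of $G_{N,0}$, which holds because $\xi$ is even. Flipping replica $i$ independently leaves $\mathcal L_N$ invariant and changes $R_{ij}\mapsto\varepsilon_i\varepsilon_jR_{ij}$, so every limit law is invariant under such random sign changes. Two further inputs are needed.
\begin{enumerate}
\item The sign of $R_{ij}$ is consistent along clusters. Whenever $|R_{ij}|,|R_{jk}|,|R_{ik}|>0$, the product $R_{ij}R_{jk}R_{ik}$ should be nonnegative almost surely. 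I would derive this from positive semidefiniteness combined with the ultrametric structure of $|R|$: for an ultrametric triple with $|R|$-values $a\le b=b$, a negative product makes the Gram matrix have negative determinant unless $a=0$.
\item The atom at zero must be handled. Where $|R_{ij}|=0$ there is no sign to recover, and the formula $\varepsilon_i\varepsilon_j\mathcal U_{ij}$ is unaffected.
\end{enumerate}
With consistent signs, one can write $R_{ij}=\tau_i\tau_j|R_{ij}|$ for some signs $\tau_i$ that are determined up to a global flip on each connected component of $\{|R|>0\}$. Randomizing by the flip symmetry makes these signs i.i.d.\ uniform and independent of $|R|$, which yields exactly $\mathcal R^{\mu,0}$.

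\textbf{Support of the directing measure.} It remains to show that $\|v\|^2=Q$ almost surely. Write $R_{12}=v^1\cdot v^2$. By a standard property of Dovbysh--Sudakov representations, the maximal value of the support of the law of $R_{12}$ equals the essential supremum of $\|v\|^2$, and $G$-almost-all $v$ satisfy $\|v\|^2\ge$ that value. More directly, I would use the RPC picture: the directing measure of the RPC is supported on vectors of norm $\sqrt Q$. Since the array law determines the directing measure up to isometry (Dovbysh--Sudakov uniqueness), every physical directing measure is also supported on the sphere of squared radius $Q$. In the $h=0$ case the signs only multiply vectors by $\pm1$, which does not change norms.

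\textbf{Main obstacle.} I expect the delicate step to be the $h=0$ sign reconstruction, specifically showing rigorously that the signed array is determined by $|R|$ plus independent signs. The concern is that the sign assignment may not factor through independent per-replica signs on each cluster. If the triangle-consistency argument fails, I would first try to prove the identities directly for the signed array with a symmetrized $\mu$ and apply Panchenko's theorem to the Gram structure $v^i\cdot v^j$ with $v\mapsto -v$ symmetry.
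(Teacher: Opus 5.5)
Your outline is right for $h\ne0$, but at zero field there is a genuine gap: you need the sign relation, Theorem~\zcref[noname]{thm:gg} does not supply it, and the argument you give for it is false. Take three vectors of squared norm $Q$ with pairwise inner products $-\epsilon$, where $0<\epsilon\le Q/2$. Their Gram matrix is $(Q+\epsilon)I-\epsilon J$, with eigenvalues $Q-2\epsilon,\,Q+\epsilon,\,Q+\epsilon$, so it is positive semidefinite. Yet the absolute overlaps $(\epsilon,\epsilon,\epsilon)$ are ultrametric and the sign product is negative. Hilbert-space geometry forces sign consistency only for overlaps near $Q$: on the sphere, $v\cdot w,\,v\cdot z>3Q/4$ implies $w\cdot z>0$. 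For SK, however, $\supp\mu$ reaches down to $0$.

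In the paper the sign relation is \zeqref{eq:si-shell-signs}, proved in Lemma~\zcref[noname]{geo:sign} from the cavity integration-by-parts identities. Comparing \zeqref{eq:si-first-moment} with \zeqref{mix:main} for $f=f_y/Z_y$ as $y\to\infty$ gives $\mathbb E_{\rm ph}[|T_{13}|-sT_{13}\operatorname{sgn}T_{23}\mid T_{12}]=0$ with a nonnegative integrand. Lemma~\zcref[noname]{lem:signs-all-zero-field} then transfers this to the physical array. Your proposal contains no substitute for this input. Your fallback also fails. Flipping replica $3$ makes $\Law(R_{13}\mid R_{12})$ symmetric, which is incompatible with a signed Ghirlanda--Guerra identity placing mass $1/2$ at $R_{12}$ unless $R_{12}=0$.

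There is also a circularity in applying Panchenko's theorem directly to $S=|R|$ at zero field. That theorem requires a Gram array, and entrywise absolute values do not preserve positive semidefiniteness. For example, the unit vectors in $\R^2$ at angles $0,\pi/4,\pi/2,3\pi/4$ have a Gram matrix whose entrywise absolute value has eigenvalue $1-\sqrt2$. Positive semidefiniteness of $|R|$ would follow from the sign relation, but you planned to derive that relation from ultrametricity of $|R|$. The paper avoids this by applying Proposition~\zcref[noname]{prop:rpc-reconstruction} to $A_{ij}=R_{ij}^2$ from \zeqref{eq:squared-gram}. This array is Gram by the Schur product theorem, inherits the identities from $S$, and has marginal $(q\mapsto q^2)_\#\mu$; one then takes square roots of the levels.

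With these two repairs, the rest of your plan matches the paper: the nonzero-field case, and the flip-symmetry argument producing independent fair component signs (Lemma~\zcref[noname]{lem:signed-reconstruction}). Your RPC-plus-uniqueness route to the support statement is also fine. The ``standard property'' you cite first is not a general fact about Dovbysh--Sudakov representations; it requires the Ghirlanda--Guerra identities, as in the proof of Proposition~\zcref[noname]{prop:rpc-reconstruction}.
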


In particular, the ordinary one-overlap marginal converges to $\mu$ when
$h\ne0$. At zero field the absolute marginal converges to $\mu$, and
the signed marginal converges to the symmetrized measure 
$\tfrac12(\mathrm{id})_\#\mu+\tfrac12(-\mathrm{id})_\#\mu$.

To state the quenched result, write $J$ for the disorder and let
\begin{equation}\label{eq:quenched-array-definition}
 \mathscr Q_N^J
 =G_{N,h}^{\otimes\infty}\circ(R_{ij})_{i<j}^{-1}
 \in\cP(\Omega),\qquad
 P_N^J=G_{N,h}^{\otimes2}\circ S_{12}^{-1}.
\end{equation}
Here, $\mathscr Q_N^J$ is the conditional law of the entire overlap
array when the disorder is held fixed, $P_N^J$ is the
conditional law of the single overlap $S_{12}$, and $\cP(\Omega)$ denotes the space of Borel probability measures on $\Omega$. Both are random
probability measures because they depend on the random disorder $J$.

For a general Parisi measure $\mu$, the associated RPC is a random
probability measure $G_\mu^{\rm RPC}$ on the sphere
\[
 \mathbb S_Q=\{v\in\ell^2:\|v\|^2=Q\}, \qquad Q = \max \supp \mu,
\]
whose i.i.d.\ samples have  
overlap-array law  $\mathcal L_\mu$.  Conditional on $G_\mu^{\rm RPC}$, sample
\[
 v^1,v^2,\ldots\overset{\mathrm{i.i.d.}}{\sim}G_\mu^{\rm RPC}
\]
and form their overlap array.  We denote its conditional law by
\[
 \mathscr Q_\mu^{\rm RPC}
 =
 (G_\mu^{\rm RPC})^{\otimes\infty}
 \circ
 \bigl((v^i)_{i\ge1}\mapsto(v^i\cdot v^j)_{i<j}\bigr)^{-1}.
\]
Then $\mathscr Q_\mu^{\rm RPC}$ is a random probability measure on
overlap arrays, while
\[
 \E\bigl[\mathscr Q_\mu^{\rm RPC}\bigr]
 =\mathcal L_\mu
\]
is the deterministic annealed RPC array law, where the expectation is taken in the randomness from $G_\mu^{\rm RPC}$. 

We now specialize this construction to $
 \mu=\mu_{\xi,h}$. 
If $h\ne0$, we set $
 \mathscr Q_{\xi,h}^{\rm RPC}=\mathscr Q_\mu^{\rm RPC}$. 
If $h=0$, conditional on $G_\mu^{\rm RPC}$, we sample independent
fair signs $\varepsilon_i\in\{-1,1\}$, which are also independent of the
vectors $(v^i)_{i\ge1}$, and replace the nonnegative overlap
$v^i\cdot v^j$ by $\varepsilon_i\varepsilon_j\,v^i\cdot v^j$ in the definition of $\mathscr Q_\mu^{\rm RPC}$. 
In this case, we let $\mathscr Q_{\xi,0}^{\rm RPC}$ denote the
conditional law of the resulting signed overlap array given
$G_\mu^{\rm RPC}$.  In both cases,
$\mathscr Q_{\xi,h}^{\rm RPC}$ is random because it depends on
$G_\mu^{\rm RPC}$. The conditional law averages over the sampled
vectors and, at zero field, the independent signs. Averaging this conditional
law over the random measure $G_\mu^{\rm RPC}$ gives the annealed RPC
array law, 
\[
 \E\bigl[\mathscr Q_{\xi,h}^{\rm RPC}\bigr]
 =
 \Law\bigl((\mathcal R^{\mu_{\xi,h},h}_{ij})_{i<j}\bigr).
\]
In the following theorem, we write $\langle \cdot \rangle_J$ for expectation with respect to the Gibbs measure with fixed disorder $J$. 

\begin{theorem}\label{thm:quenched}
Under \zeqref{eq:mixture}, for every fixed $h$,
\begin{equation}\label{eq:quenched-convergence}
 \mathscr Q_N^J\ \Longrightarrow\ \mathscr Q_{\xi,h}^{\rm RPC}
 \quad\text{in distribution as random elements of }\cP(\Omega).
\end{equation}
For every bounded Borel $\psi:[-1,1]\to\R$ whose discontinuity set
has $\mu$-measure zero,
\begin{equation}\label{eq:one-third}
 \lim_{N\to\infty}\Var_J\langle\psi(S_{12})\rangle_J
 =\frac13\left[\int\psi^2\,d\mu-
                     \left(\int\psi\,d\mu\right)^2\right].
\end{equation}
If $\mu=\delta_q$, then $P_N^J$ converges weakly in probability to
$\delta_q$. If $\mu$ is not a point mass, $P_N^J$ does not converge
weakly in probability to any deterministic probability measure.
\end{theorem}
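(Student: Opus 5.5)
We deduce Theorem~\zcref[noname]{thm:quenched} from Theorem~\zcref[noname]{thm:array} by the method of moments on $\cP(\Omega)$. Since $\Omega$ is compact, $\cP(\Omega)$ is compact metrizable, so the laws of $\mathscr Q_N^J$ are tight. Moreover, a random probability measure on $\Omega$ is determined in law by the joint moments
\[
 \E\prod_{k=1}^r \int f_k\,d\mathscr Q,
\]
where the $f_k$ range over continuous functions of finitely many array entries. These products of integrals separate points and form an algebra of continuous functions on $\cP(\Omega)$, so Stone--Weierstrass applies.

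The key observation is that each such moment is itself an annealed quantity for a larger family of replicas. Take $f_1,\dots,f_r$ depending on at most $m$ replicas, and relabel the replicas of $f_k$ into the disjoint block $\{(k-1)m+1,\dots,km\}$. Call the relabeled functions $\tilde f_k$. Then
\[
 \E\prod_{k}\int f_k\,d\mathscr Q_N^J=\int\prod_k\tilde f_k\,d\mathcal L_N .
\]
The right side is a bounded continuous function of finitely many entries. By Theorem~\zcref[noname]{thm:array} it therefore converges to
\[
 \E\prod_k\tilde f_k(\mathcal R^{\mu,h}).
\]
Conditioning on $G^{\rm RPC}_\mu$ factorizes this limit as $\E\prod_k\int f_k\,d\mathscr Q^{\rm RPC}_{\xi,h}$. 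The factorization uses that, given $G^{\rm RPC}_\mu$, the disjoint blocks of vectors are independent; at $h=0$ the signs attached to different replicas are also independent. Every subsequential limit thus has the moments of $\mathscr Q^{\rm RPC}_{\xi,h}$, which proves \zeqref{eq:quenched-convergence}. The point needing care is that $\mathscr Q^{\rm RPC}_{\xi,h}$ is well defined as a random element. I would take this from the RPC construction in Section~\zcref[noname]{sec:rpc}, together with the identity $\E[\mathscr Q^{\rm RPC}_{\xi,h}]=\Law(\mathcal R^{\mu,h})$ stated before the theorem.

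For \zeqref{eq:one-third}, first write
\[
 \Var_J\langle\psi(S_{12})\rangle_J=\E\langle\psi(S_{12})\psi(S_{34})\rangle-\bigl(\E\langle\psi(S_{12})\rangle\bigr)^2 .
\]
Both terms are annealed functionals of $(S_{12},S_{34})$. Theorem~\zcref[noname]{thm:array} gives convergence of $(S_{ij})$ to $\mathcal L_\mu$, whose one-overlap marginals are $\mu$. Hence both terms converge, provided $\psi$ is continuous off a set that is null for the relevant limiting marginals. For $S_{12}$ the marginal is $\mu$, and the joint law of $(S_{12},S_{34})$ has both marginals equal to $\mu$; this suffices by the continuous mapping theorem applied to $(x,y)\mapsto\psi(x)\psi(y)$.

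It remains to compute $\E\psi(\mathcal U_{12})\psi(\mathcal U_{34})$ under the RPC. I would use the Ghirlanda--Guerra identities of Theorem~\zcref[noname]{thm:gg}, which hold for the limit. Step 1: for $n=2$ and $F=\psi(S_{12})$, they give
\[
 \E\psi(S_{12})\psi(S_{13})=\tfrac12 m_1^2+\tfrac12 m_2,
\]
where $m_1=\int\psi\,d\mu$ and $m_2=\int\psi^2\,d\mu$. Step 2: for $n=3$, applied to replica $4$ versus replica $3$ with $F=\psi(S_{12})$ (after relabeling so that replica $3$ plays the role of replica $1$), we get
\[
 \E\psi(S_{12})\psi(S_{34})=\tfrac13 m_1^2+\tfrac13\E\psi(S_{12})\psi(S_{13})+\tfrac13\E\psi(S_{12})\psi(S_{23}).
\]
By exchangeability the last two terms both equal $\tfrac12m_1^2+\tfrac12m_2$. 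Therefore
\[
 \E\psi(S_{12})\psi(S_{34})=\tfrac23m_1^2+\tfrac13m_2,
\]
and the variance limit is $\tfrac13(m_2-m_1^2)$. Theorem~\zcref[noname]{thm:gg} is stated for bounded Borel $\psi$, so applying it to the limit law is legitimate.

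Finally, consider the claims about $P_N^J$. If $\mu=\delta_q$, then \zeqref{eq:one-third} gives vanishing variance for each continuous $\psi$, and $\E\langle\psi(S_{12})\rangle\to\psi(q)$. Hence $\int\psi\,dP_N^J\to\psi(q)$ in $L^2$, which gives convergence in probability for a countable convergence-determining family of $\psi$. Conversely, suppose $P_N^J\to\nu$ in probability with $\nu$ deterministic. Then for continuous $\psi$, bounded convergence forces $\Var_J\int\psi\,dP_N^J\to0$. If $\mu$ is not a point mass, choose a continuous $\psi$ that is nonconstant on $\supp\mu$. Then $m_2-m_1^2>0$, contradicting \zeqref{eq:one-third}. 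I expect the main obstacle to be the factorization step identifying the moments of the limit with those of $\mathscr Q^{\rm RPC}_{\xi,h}$ at zero field. There one must check carefully that the sign randomization is performed per replica and independently of $G^{\rm RPC}_\mu$, so that block independence holds conditionally.
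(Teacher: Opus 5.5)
Your proposal is correct and follows essentially the same route as the paper: disjoint replica blocks turn mixed moments of $\mathscr Q_N^J$ into annealed array functionals, which are identified via Theorem~\ref{thm:array} and conditional block independence under the cascade (with per-replica signs at zero field) and then extended by Stone--Weierstrass on $\cP(\Omega)$. The one-third formula comes from the $n=2$ and $n=3$ Ghirlanda--Guerra identities exactly as you compute them, and the only cosmetic difference is in the point-mass case, where the paper uses $\E\int|s-q|\,P_N^J(ds)\to0$ together with the bounded-Lipschitz distance instead of your $L^2$ convergence along a countable convergence-determining family.
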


\subsection{Hierarchical geometry and cavity invariance}

We now state a few corollaries of the preceding theorems. 
The first concerns the ultrametric structure of the overlaps. 
\begin{corollary}\label{cor:geometry}
Assume \zeqref{eq:mixture}, and let $\mathcal L$ be any physical overlap
law. Then, under $\mathcal L$,
\begin{equation}\label{eq:geometry}
S_{12}\ge\min(S_{13},S_{23})\quad\text{almost surely}.
\end{equation}
\end{corollary}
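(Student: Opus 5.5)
Corollary~\zcref[noname]{cor:geometry} follows directly from Theorem~\zcref[noname]{thm:array}. There is one subtlety: the statement concerns an arbitrary physical overlap law $\mathcal L$, which is a priori only a subsequential limit. By Theorem~\zcref[noname]{thm:array}, however, the full sequence $\mathcal L_N$ converges to $\Law\bigl((\mathcal R^{\mu,h}_{ij})_{i<j}\bigr)$ with $\mu=\mu_{\xi,h}$. Hence every physical overlap law equals this law, and under it $(S_{ij})_{i<j}$ has law $\mathcal L_\mu$, the law of the nonnegative RPC overlap array $\mathcal U$. When $h\ne0$ this is immediate from $S_{ij}=R_{ij}=\mathcal U_{ij}$. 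When $h=0$, we have $S_{ij}=|\varepsilon_i\varepsilon_j\mathcal U_{ij}|=\mathcal U_{ij}$, using $\mathcal U_{ij}\ge0$. Since \zeqref{eq:geometry} is an almost sure statement about the law of $(S_{12},S_{13},S_{23})$, it suffices to prove
\[
\mathcal U_{12}\ge\min(\mathcal U_{13},\mathcal U_{23})\quad\text{a.s.}
\]
for the RPC array.

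For a finitely supported $\mu$, this is the standard ultrametricity of the finite-level RPC. The replicas are leaves of a tree of depth $r$, and $\mathcal U_{ij}=q_{k}$ with $k$ the depth of the last common ancestor of leaves $i$ and $j$, where $0\le q_0<\dots<q_r=Q$ are the atoms of $\mu$. Let $a\wedge b$ denote the depth of the last common ancestor. Then $1\wedge 2\ge\min(1\wedge3,2\wedge3)$ deterministically: if leaves $1$ and $3$ share a path to depth $k$, and leaves $2$ and $3$ share a path to depth $l$, then leaves $1$ and $2$ share a path to depth $\min(k,l)$. Since $k\mapsto q_k$ is increasing, this gives the inequality. (If the construction in Section~\zcref[noname]{sec:rpc} allows only nondecreasing $q_k$, the conclusion still holds.)

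For general $\mu$, I would use the approximation recalled in Section~\zcref[noname]{sec:rpc}: $\mathcal L_\mu$ is the weak limit of $\mathcal L_{\mu_k}$ for finitely supported $\mu_k\to\mu$. The set
\[
\{x\in\Omega:\ x_{12}\ge\min(x_{13},x_{23})\}
\]
is closed in $\Omega$ and depends on only three coordinates. Each $\mathcal L_{\mu_k}$ gives it full mass, so by the Portmanteau theorem the limit $\mathcal L_\mu$ does as well. This completes the argument.

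I expect no real obstacle. The only point needing care is the passage from ``subsequential limit'' to the identified RPC law, and this is exactly what the full-sequence convergence in Theorem~\zcref[noname]{thm:array} provides. Without it, one could instead appeal to Panchenko's ultrametricity theorem applied to the Ghirlanda--Guerra identities of Theorem~\zcref[noname]{thm:gg} for the array $(S_{ij})$. That route would first require checking that $(S_{ij})$ is itself a positive semidefinite, exchangeable Gram-type array, which is delicate at $h=0$ because of the absolute values. Going through Theorem~\zcref[noname]{thm:array} avoids this entirely.
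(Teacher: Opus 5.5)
Your proposal is correct and takes the same approach as the paper. You use the full-sequence convergence in Theorem~\zcref[noname]{thm:array} to identify every physical law's $S$-array with the RPC array $\mathcal L_\mu$, and then invoke ultrametricity of the RPC; your tree argument for finite cascades and the closed-set Portmanteau step for general $\mu$ simply spell out the ultrametricity that the paper takes from the cascade construction.
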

Let $\mathcal L$ be a physical overlap law and let
$G$ be a directing measure in its Dovbysh--Sudakov representation.
By Theorem~\zcref[noname]{thm:array}, $G$ is supported on
\[
 \mathbb S_Q=\{v\in\ell^2:\|v\|^2=Q\},
 \qquad Q=\max\supp\mu.
\]
At zero field, after identifying each vector $v$ with $-v$, the
preceding corollary shows that three independent samples from $G$
satisfy the ultrametric inequality for the quotient distance
\[
 d_\pm([v],[w])
 =
 \min\{\|v-w\|,\|v+w\|\}
\]
almost surely.

The next corollary describes how replicas are partitioned by an overlap threshold and identifies the distribution of the resulting block frequencies.

\begin{corollary}
\label{cor:pd}
Fix $0\le q<Q$ and set $
 a=\mu([0,q])$. 
For a limiting overlap array, define a relation on the replica labels by
\[
 i\sim_q j
 \quad\Longleftrightarrow\quad
 i=j\ \text{or}\ S_{ij}>q.
\]
By Corollary~\zcref[noname]{cor:geometry}, this is almost surely an equivalence
relation.  Let $\Pi_q$ denote the resulting random partition of
$\mathbb N$.

Suppose first that $0<a<1$.  If the first $n$ replicas occupy
$k$ distinct blocks of $\Pi_q$, with block sizes
$n_1,\ldots,n_k$, then, conditional on the overlaps among the first
$n$ replicas,
\[
 \Prob\bigl(n+1\text{ joins block }j\mid\mathcal R_n\bigr)
 =
 \frac{n_j-a}{n},
 \qquad 1\le j\le k,
\]
and
\[
 \Prob\bigl(n+1\text{ starts a new block}\mid\mathcal R_n\bigr)
 =
 \frac{ka}{n}.
\]
Consequently, almost surely, the limit
\[
 p_B=\lim_{n\to\infty}\frac{|B\cap\{1,\ldots,n\}|}{n}
\]
exists for every block $B$ of $\Pi_q$. If $(p_k)_{k\ge1}$ is the
sequence of these frequencies arranged in decreasing order, then $
 (p_k)_{k\ge1}\sim\PD(a,0)$, 
where $
 \PD(a,0)$ denotes the Poisson--Dirichlet distribution.
If $a=0$, then $\Pi_q$ consists almost surely of a single block.
\end{corollary}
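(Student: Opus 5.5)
The plan is to derive everything from the Ghirlanda--Guerra identities of Theorem~\zcref[noname]{thm:gg} applied to a suitable indicator, together with ultrametricity (Corollary~\zcref[noname]{cor:geometry}). Fix $n$ and condition on $\mathcal R_n$. By Corollary~\zcref[noname]{cor:geometry}, $\sim_q$ restricted to $\{1,\ldots,n+1\}$ is an equivalence relation. For any $i$ and $j$ in distinct blocks, ultrametricity also gives $S_{ij}\le q$ whenever $S_{ik}>q$ and $k\sim_q j$. Hence the event that $n+1$ joins the block of replica $1$ is $\{S_{1,n+1}>q\}$.

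Apply \zeqref{eq:full-gg} with $\psi=\one_{(q,1]}$. This gives
\[
\Prob(S_{1,n+1}>q\mid\mathcal R_n)=\frac{1-a}{n}+\frac1n\#\{2\le\ell\le n:S_{1\ell}>q\}=\frac{(1-a)+(n_1-1)}{n}=\frac{n_1-a}{n},
\]
where $n_1$ is the size of the block containing replica $1$. To handle a general block $j$, pick a representative $i$ in it. By exchangeability of the limiting array, the identity \zeqref{eq:full-gg-conditional} holds with replica $1$ replaced by any $i\le n$: relabel replicas by a permutation fixing $n+1$ and preserving $\mathcal R_n$ as a $\sigma$-algebra. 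This yields probability $(n_j-a)/n$ for joining block $j$. Summing over the $k$ disjoint events and taking complements gives the new-block probability $1-\sum_j(n_j-a)/n=ka/n$. The only care needed is that the events for distinct blocks are disjoint, which is again ultrametricity.

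These transition probabilities are exactly those of the two-parameter Chinese restaurant process with parameters $(a,0)$. Since the conditional law of the partition of $\{1,\ldots,n+1\}$ given $\mathcal R_n$ depends only on the partition of $\{1,\ldots,n\}$, the restricted partitions $\Pi_q|_{[n]}$ form a Markov chain with the same law as the $(a,0)$ Chinese restaurant process started from $\{\{1\}\}$. Pitman's theorem then applies: the asymptotic block frequencies $p_B$ exist almost surely, and their decreasing rearrangement is $\PD(a,0)$. Almost-sure existence follows from Kingman's paintbox representation, since $\Pi_q$ is an exchangeable partition of $\mathbb N$.

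For $a=0$, the same computation gives new-block probability $0$ at every step. So by induction all replicas lie in the block of replica $1$ almost surely, and a countable union of null events is null. The main point requiring care is the first step: checking that $\{S_{1,n+1}>q\}$ is exactly the join event and that the relabelled GG identity is legitimate at the level of conditional laws. Once these hold, the rest is the standard Chinese restaurant/Poisson--Dirichlet correspondence. The hypothesis $0<a<1$ ensures the $\PD(a,0)$ parameters are in range. Alternatively, one could read everything off the RPC representation of Theorem~\zcref[noname]{thm:array}, but the GG route is self-contained.
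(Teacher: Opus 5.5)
Your proposal is correct and follows essentially the paper's own argument: the Ghirlanda--Guerra identity with $\psi=\one_{(q,1]}$ at a block representative gives the $(a,0)$ Chinese restaurant transition rule, ultrametricity makes the join events disjoint, and the standard Ewens--Pitman theory yields $\PD(a,0)$. The one minor difference is how you get existence of the frequencies. You obtain it from Kingman's representation of the exchangeable partition, whereas the paper uses the directing measure: its threshold-$q$ clusters have positive mass, so the strong law identifies the block frequencies with these cluster masses.
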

At zero field, $S_{ij}=|R_{ij}|$, so two states belong to the same
$q$-cluster when their absolute overlap exceeds $q$. In particular,
states related by the global sign change $v\mapsto-v$ belong to the
same cluster.

In physics, the cavity method studies the effect of adding a single
spin. Summing over that spin reweights the thermodynamic states by
$\cosh(h+Y(v))$. Although this can change their relative weights by
order-one factors, the following corollary shows that the joint law
of the weights and overlap geometry is preserved. This gives a
consistency relation for the limiting Gibbs measure.

\begin{corollary}\label{cor:cavity-invariance}
Let $G$ be a random directing measure for the limiting overlap array
in Theorem~\zcref[noname]{thm:array}.  We identify two directing measures $G_1$ and $G_2$ if there exists
a surjective linear isometry
\[
 U:\overline{\operatorname{span}}(\supp G_1)
   \longrightarrow
   \overline{\operatorname{span}}(\supp G_2)
\]
such that $G_2=U_\#G_1$.  
Conditional on $G$, let $Y$ be centered Gaussian with
covariance $\E[Y(v)Y(w)\mid G]=\kappa(v\cdot w)$, where
$\kappa=\xi'$. Then
\begin{equation}\label{eq:cavity-invariance}
 \mathcal C_{\xi,h}(G)(dv)
 =\frac{\cosh(h+Y(v))}{G\cosh(h+Y)}G(dv)
\end{equation}
has the same law as $G$ on the space of isometry classes defined above.
\end{corollary}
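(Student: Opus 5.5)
Since Theorem~\zcref[noname]{thm:array} identifies every physical overlap law with the RPC law, I will prove the invariance directly for the Ruelle probability cascade directing measure. Two directing measures producing the same overlap-array law agree up to isometry on the closed span of their supports, by the uniqueness part of the Dovbysh--Sudakov representation. So it suffices to show the following: the array obtained by sampling i.i.d.\ from $\mathcal C_{\xi,h}(G)$ has the same law as the array obtained by sampling from $G$.

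At zero field I first reduce to the nonnegative cascade. The measure $G$ can be taken to be $G_\mu^{\rm RPC}$ symmetrized under $v\mapsto -v$. Since $\cosh$ is even and $Y(-v)$ has the same law as $-Y(v)$ jointly in $v$, the reweighting commutes with the symmetrization. So I work with $G=G_\mu^{\rm RPC}$ and a general $h$. In that case I use the array law $\mathcal L_\mu$, with $\mu$ supported in $[0,Q]$, and $\kappa=\xi'$ nonnegative on $[0,1]$.

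The core step is the Bolthausen--Sznitman / Panchenko invariance for RPCs: for any $\mu$ and any bounded positive function $\phi$, reweighting the RPC weights by $\phi(h+Y(v))$ leaves the law of the reordered weights and their overlap geometry unchanged. Here $Y$ is the Gaussian field with covariance $\kappa(v\cdot w)$ on the cascade, so the Gaussian cavity field at each level is indexed by the tree. For finitely supported $\mu$, I would prove this level by level, from the leaves toward the root. At each level the increments of $Y$ are independent Gaussians attached to the vertices, independent of the Poisson--Dirichlet weights. The standard lemma then applies: if $(w_\alpha)$ follows $\PD(\zeta_k,0)$ and $(U_\alpha)$ are i.i.d.\ positive random variables with $\E U^{\zeta_k}<\infty$, then $w_\alpha U_\alpha/\sum w U$ is again $\PD(\zeta_k,0)$ after reordering, and it is independent of the size-biased subtrees. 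Iterating this lemma down the tree gives the finite-level statement. For general $\mu$, I approximate by finitely supported $\mu_r\to\mu$. The overlap-array laws of $\mathcal C(G_{\mu_r})$ converge to those of $\mathcal C(G_\mu)$, since each finite-replica moment is a continuous functional of the arrays jointly with the Gaussian covariances. Because $\cosh(h+Y)$ has all moments, dominated convergence handles the normalization.

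The main obstacle is this last step. $\cosh(h+Y)$ is unbounded, so the weak convergence of cascades must be upgraded to convergence of moments of the reweighted measure. I would handle it by computing moments $\E\langle\prod f(R_{ij})\rangle_{\mathcal C}$ through the finite-level formulas, which express them via expectations with Gaussian integrals. These formulas are continuous in $\mu_r$ thanks to uniform bounds of the form $\E\cosh(h+Y)^{k}\le e^{C(k)(1+\kappa(1))}$. Finally, the laws coincide on finite arrays and $\Omega$ has the product topology, so the array laws agree. The uniqueness part of Dovbysh--Sudakov then yields equality of laws on isometry classes.
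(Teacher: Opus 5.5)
Your proposal is correct, but it follows a genuinely different route from the paper's.

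\textbf{The paper's route.} The paper never computes anything on the cascade. It takes a physical predecessor $G_0$ and successor $\widehat G_0$ from the one-site cavity representation. Both are physical limits at the same $(\xi,h)$, so the full-array theorem gives them the same annealed array law. The disjoint-block moment identity then identifies the laws of their quenched array laws, and uniqueness of directing measures gives equality in law modulo isometry. The representation realizes $\widehat G_0$ as the reweighting of $G_0$ by $a(v)\cosh(h+Y_0(v))$, and $a$ is constant on the sphere of squared radius $Q$. Hence $\widehat G_0=\mathcal C_{\xi,h}(G_0)$, and the fixed-point property is inherited from the physical system essentially for free.

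\textbf{Your route.} You use the physics only to identify the law, and then verify invariance directly for the cascade via Bolthausen--Sznitman/Poisson-marking invariance, conditioning on the common root increment. This buys a more structural statement. It shows that the RPC of \emph{every} $\mu$ is invariant under reweighting by any positive function of $h+Y$ with suitable moment bounds, so the fixed point is a property of cascades rather than of the Parisi measure. It also does not depend on the cavity lemma or its appendices.

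\textbf{What it costs.} The paper avoids the passage from finitely supported $\mu_r$ to general $\mu$ entirely, and there your sketch needs sharpening. The difficulty is not only that $\cosh$ is unbounded. The normalizer $G\cosh(h+Y)$ is not a function of finitely many replicas, so moments of $\mathcal C(G)$ are not, as you assert, continuous functionals of finite overlap arrays. The clean fix is the one in the paper's appendix on joint convergence:
\begin{itemize}
\item replace $G\cosh(h+Y)$ by an average over $m$ auxiliary replicas;
\item use $\cosh\ge1$ and $|x^{-n}-y^{-n}|\le n|x-y|$ on $[1,\infty)$ to get an $O(m^{-1/2})$ error uniform in $G$;
\item pass $r\to\infty$ at fixed $m$, and only then let $m\to\infty$.
\end{itemize}

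\textbf{Two smaller points.}
\begin{itemize}
\item The field variance $\kappa(\|v\|^2)$ is not encoded in the off-diagonal array. Either choose $\mu_r$ with $\max\supp\mu_r\to Q$, or check that a norm discrepancy only produces per-replica noise whose effect is a constant factor that cancels on the sphere.
\item ``Same overlap-array law'' must mean the law of the whole infinite array. That law determines the law of the quenched array law via disjoint blocks, which is what the uniqueness theorem for directing measures actually requires.
\end{itemize}
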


\subsection{Temperature chaos} 

The methods we develop for the previous results also allow us to establish temperature chaos for the SK model, as well as a temperature-chaos criterion for mixed even $p$-spin models. Since this result is a conceptual detour, we defer a fuller discussion to Section~\zcref[noname]{sec:temperature-chaos}.

\subsection{Proof ideas}\label{subsec:proof-ideas}

\subsubsection{Previous approaches}
Panchenko has used differentiability of the Parisi formula to identify
overlap moments corresponding to the interaction terms present in
the Hamiltonian \cite{PanDiff}. Further concentration and Gaussian integration
by parts give the corresponding Ghirlanda--Guerra identities at each order with a nonzero coefficient
\cite{GhirlandaGuerra1998,PanGG}. In suitably generic models, these powers form
a determining family, yielding the overlap marginal and the full
identities \cite{PanchenkoBook}. 

Further, Panchenko's ultrametricity theorem converts the full identities into
a geometric description \cite{PanUM}. His proof first derives invariance
under reweightings determined by sampled overlaps, then uses it to
duplicate vectors while controlling their overlaps. He concludes by showing that repeated
duplication of a configuration violating ultrametricity would
contradict Hilbert space geometry. The theorem applies to overlap
arrays independently of their origin, leaving the derivation of the
full identities for a particular Hamiltonian as a separate problem.
Given the conclusion of Panchenko's ultrametricity result, the Baffioni--Rosati reconstruction theorem then identifies the
resulting hierarchy as an RPC determined by its one-overlap marginal
\cite{BaffioniRosati}.

Related results describe the analogous hierarchy in finite systems.
Talagrand has constructed pure states under certain assumptions on the overlap
identities and marginal \cite{TalagrandPureStates}, and Jagannath showed that,
when the averaged overlap distribution converges and the approximate
Ghirlanda--Guerra identities hold, the Gibbs measures admit nested clusters
at suitable overlap levels, with weights converging in distribution to
RPC weights \cite{Jagannath2017}.

Cavity methods have also been used to describe the organization of spins within states. 
For suitably perturbed mixed $p$-spin models, Panchenko derived
cavity consistency equations under which the full joint overlap law
determines the limiting joint distribution of spins across sites
and replicas \cite{PanchenkoSpinDistributions}.
Auffinger and Jagannath represented spin distributions of generic
models through branching Parisi diffusions and derived multiscale
TAP equations \cite{AuffingerJagannathSpinDistributions}.
They also proved TAP equations within pure states of generic
models when the Parisi measure has an atom at the maximum of its support 
\cite{AuffingerJagannathTAP}.
Chen, Panchenko, and Subag developed a generalized TAP free energy
for magnetization vectors \cite{CPSGeneralizedTAP,CPSII}.
Here finite-replica comparisons yield conditional magnetization
laws for the unperturbed model before its overlap structure
has been identified. We then use these laws to establish the
full Ghirlanda--Guerra identities and determine the overlap array.

\subsubsection{Ideas of the present proof}
Panchenko's approach obtains overlap information by differentiating
the free energy with respect to the interaction coefficients. For
generic mixtures, the available interaction orders determine the
overlap distribution. For SK, this argument gives only its second
moment and the corresponding Ghirlanda--Guerra identities. We obtain
the missing information by studying spins in several replicas
jointly. We add auxiliary terms involving products of these spins
and compare the resulting free energies with scalar Parisi
expressions. These comparisons determine spin moments under the
original Gibbs measure while keeping its interaction coefficients
fixed.

We apply these moment estimates to the effect of removing and
restoring one spin. Deleting the spin and every interaction
containing it leaves a Gibbs measure on the remaining spins.
Restoring the deleted interactions and summing over the spin
reweights the remaining configurations by its partition function.
In the limiting representation, the restored spin in a state
represented by a vector $v$ has conditional mean
$M(v)=\tanh(h+Y(v))$. Conditional on the directing measure $G$
of the reduced systems, $Y$ is a centered Gaussian field with
covariance
\[
 \E[Y(v)Y(w)\mid G]=\xi'(v\cdot w).
\]
For vectors sampled from the reweighted measure, the moment
estimates identify the law of one magnetization and the joint law
of two magnetizations at vectors of equal norm as the corresponding
scalar Parisi laws, conditional on all sampled overlaps.

We next use these conditional magnetization laws to determine
the distribution of a new overlap given the earlier overlaps.
The two-magnetization law applies to vectors of equal norm,
so we first reduce the problem to identifying the overlap law
after restricting and normalizing the directing measure on
each sphere of positive mass. For replicas
$v^1,\ldots,v^{n+1}$ sampled from the resulting reweighted
measure, set $Y_j=Y(v^j)$ and $M_j=\tanh(h+Y_j)$.
For a bounded function $F$ of the signed overlaps among
the first $n$ replicas, Gaussian integration by parts gives
\[
 \begin{aligned}
 &n\E[F\,\xi'(R_{1,n+1})f(Y_1)M_{n+1}]\\
 &\qquad=\E\Bigl[F\Bigl(
   \xi'(\|v^1\|^2)f'(Y_1)-Y_1f(Y_1)
   +f(Y_1)\sum_{\ell=1}^n\xi'(R_{1\ell})M_\ell
   \Bigr)\Bigr],
 \end{aligned}
\]
where $R_{11}=\|v^1\|^2$ in this display and
$f\in C^1(\R)$, with $f,f'$ growing at most exponentially.  The conditional magnetization
laws express the field and magnetization terms as scalar
Parisi expectations. Scalar integration by parts then turns
the display into an identity involving only overlaps.
Because $F$ is arbitrary, this identity determines a
conditional expectation given the earlier overlaps.
The function of the new overlap appearing in that expectation
depends on our choice of $f$.

We choose $f$ to obtain enough information to determine
conditional probabilities at every overlap threshold.
Write $S_{ij}=R_{ij}$ at nonzero field and
$S_{ij}=|R_{ij}|$ at zero field. Taking $f(x)=e^{tx}$
at nonzero field and $f(x)=\sinh(tx)$ at zero field gives
scalar expressions whose asymptotics as $t\to\infty$
distinguish different overlap levels. Comparing identities
for three replicas first proves
$S_{23}\ge\min(S_{12},S_{13})$ and, at zero field, shows
that three nonzero signed overlaps have positive product.
We then use exponentials of linear combinations of two
fields to identify the conditional tail probabilities
\[
 \Prob(S_{1,n+1}>q\mid\mathcal R_n)
 =\frac1n\mu((q,1])
  +\frac1n\sum_{\ell=2}^n\one_{\{S_{1\ell}>q\}},
\]
where $\mathcal R_n$ is generated by the signed overlaps
among the first $n$ replicas. We first establish this
formula when $0<S_{1j}<q$ for some $2\le j\le n$.
An additional replica, weighted toward positive overlaps
below $q$, supplies this condition in the enlarged sample.
Exchangeability relates the error in the formula for the
original sample to its error for the enlarged sample.
The choice of weights makes this error vanish.
Together with the treatment of the remaining thresholds
and endpoint atoms, this identifies the overlap law on
each spherical restriction.

This identification also shows that the original directing
measure is supported on a single sphere, giving the full
Ghirlanda--Guerra identities for the original overlap array.
We then use Panchenko's ultrametricity theorem and the
Baffioni--Rosati reconstruction theorem to identify its
full law. This also determines the distribution of the
random quenched sampling measure.

\subsubsection{Outline of the proof}
Section~\zcref[noname]{sec:scalar} describes the scalar Parisi
magnetizations used in the cavity comparisons. Writing
$u(q,x)=\partial_x\Phi(q,x)$, the process $u(q,X_q)$ along
the Parisi diffusion is a martingale. Two diffusions coupled
to coincide up to time $r$ and evolve independently thereafter
describe magnetizations with overlap $r$. We establish
integration-by-parts identities for these laws, the variational
identity $\E[u(q,X_q)^2]=q$ on $\supp\mu$, and the integrability
estimates needed in the subsequent comparisons.

Section~\zcref[noname]{sec:sources} considers replicas in one
group or two equal-sized groups, with prescribed overlaps
within and between groups. We add to their joint Hamiltonian
a parameter times a sum over sites of a chosen product of
replica spins. Gaussian interpolation bounds the resulting
constrained pressure by scalar Parisi expressions. Varying
the parameter controls the site average of this spin product,
whose predicted value is obtained by replacing each selected
spin by the scalar magnetization for its group. For any fixed
error tolerance, sufficiently narrow overlap constraints make
the disorder-averaged Gibbs probability of violating this
prediction exponentially small in $N$.

Section~\zcref[noname]{sec:cavity} applies these estimates
after removing and restoring one spin. Exchangeability of
sites converts the site averages into moments at the restored
site. Auxiliary replicas selected to lie near a directing
vector then determine its cavity magnetization law. The
argument retains arbitrary additional overlaps in the
conditioning. Conditional on the full overlap array, the law
of one magnetization depends only on its vector's squared
norm, and the joint law at two vectors of equal norm depends
only on that norm and their mutual overlap.
Appendix~\zcref[noname]{app:cavity} controls the coefficient
changes caused by deleting a site, so these conclusions
apply at the original mixture and field.

Section~\zcref[noname]{sec:shells} reduces the remaining
argument to normalized restrictions of the directing measure
to spheres. If $Q=0$, a vanishing even overlap moment forces
every off-diagonal overlap to vanish. Otherwise, the scalar
and cavity identities exclude squared norms below $Q$ and
require every remaining squared norm $s$ to satisfy
$\E[u(s,X_s)^2]=s$. We use this to show that the possible squared norms form a deterministic countable set, and that the conditional cavity laws remain valid after
restricting and normalizing on each sphere of positive mass. 

Section~\zcref[noname]{sec:regression} identifies the overlap
law on each such sphere. We compare Gaussian integration
by parts for its cavity fields with the scalar Parisi
identities. Differentiating the normalizing constant
introduces an additional replica, whose overlaps can be
included in the conditional magnetization laws. Exponential
functions of the terminal scalar field produce increasing
functions of overlap whose differences from their terminal
values decay at strictly ordered rates. These rates rule
out violations of ultrametricity for absolute overlaps and,
at zero field, exclude $R_{12}R_{13}R_{23}<0$.

The next step determines conditional overlap tail probabilities
on a sphere of squared radius $S$. For $0<q<S$ carrying no
mass under either $\mu$ or the unknown marginal of $S_{12}$,
we prove 
\[
 \Prob\!\left(
   S_{1,n+1}>q\mid(R_{ij})_{i<j\le n}
 \right)
 =
 \frac{\mu((q,1])
   +\sum_{j=2}^n\one_{\{S_{1j}>q\}}}{n}.
\]
Initially, the argument requires $0<S_{1j}<q$ for some
$2\le j\le n$. We apply the cavity and scalar identities
with exponential functions of the fields at replicas $1$
and $j$. Their coefficients are chosen so that the limiting
magnetizations distinguish overlaps above and below $q$.
Subtracting the identities and using the sign and ultrametricity
relations gives the displayed conditional probability.

To remove the assumption that some $S_{1j}$ lies in $(0,q)$,
we introduce an additional replica and weight the identities
toward overlaps in that interval. Exchangeability relates
the error in the desired formula to a weighted error for
the enlarged sample. The preceding argument makes this
error zero when the added overlap lies in $(0,q)$, and the
weights suppress the remaining overlaps. This proves the
threshold identity whenever $\mu((0,q))>0$, including
the case $n=1$ needed to identify the marginal. If the
positive support of $\mu$ starts above zero, the one-replica
identities exclude overlap mass in the intervening interval.
Conditional first moments of $\xi'(S_{1,n+1})$ and total
mass then determine the unresolved atoms at zero and the
bottom of the positive support. Thus the full conditional
Ghirlanda--Guerra identities hold on each sphere.

Returning to the original overlap coordinate, two replicas
sampled from any normalized restriction to a sphere satisfy
$S_{12}\sim\mu$. If such a sphere had squared radius $S>Q$,
two sufficiently nearby vectors would have overlap greater
than $Q$ with positive probability, contradicting
$Q=\max\supp\mu$. Hence the directing measure is supported on
the sphere of squared radius $Q$, and the end of
Section~\zcref[noname]{sec:regression} proves
Theorem~\zcref[noname]{thm:gg}.

Section~\zcref[noname]{sec:rpc} identifies the overlap-array
law using Panchenko's ultrametricity theorem and the
Baffioni--Rosati reconstruction theorem. At zero field,
these results are applied to the Gram array $(R_{ij}^2)$,
which determines the law of the absolute overlaps. The sign
relations proved above and invariance under independently
reversing each replica recover the signed array. This proves
Theorem~\zcref[noname]{thm:array}.

Section~\zcref[noname]{sec:quenched} proves convergence in
distribution of the random quenched sampling measure.
Products of conditional Gibbs expectations can be written
using disjoint groups of replicas, so the annealed array
limit determines all joint moments needed to identify this
distribution. The Ghirlanda--Guerra identities with three
and four replicas give the variance identity in
Theorem~\zcref[noname]{thm:quenched}, and the RPC identification
gives the Poisson--Dirichlet cluster frequencies.

Finally, joint convergence of overlaps and the restored spin
represents the limiting measure after restoration as the
cavity reweighting of the measure before restoration. Their
common overlap-array law identifies their distributions up
to isometry, proving invariance under the one-site cavity map.

\subsection*{Acknowledgments} 
This paper was written
by the authors with the assistance of large language models, which were used to
suggest mathematical arguments, help with drafting and revision, and write code
for computational verification. P.\ L.\ was partially supported by NSF grant DMS-2450004.

\section{Scalar Parisi identities}\label{sec:scalar}
We collect the scalar diffusion identities, magnetization bounds, and
Gaussian integration-by-parts formula used in the cavity comparisons.
For the representation of spin distributions through branching
Parisi diffusions in generic models; see
\cite{AuffingerJagannathSpinDistributions}.
Throughout Sections~\zcref[noname]{sec:scalar}--\zcref[noname]{sec:regression},
$\xi$ and $h$ are fixed, and
\[
 \kappa(q)=\xi'(q),\qquad \mu=\mu_{\xi,h},\qquad
 \alpha(q)=\mu([0,q]),\qquad Q=\max\supp\mu.
\]

\subsection{The Parisi formula}
Given a probability measure $\nu$ on $[0,1]$, set
$\alpha_\nu(q)=\nu([0,q])$. The Parisi PDE is 
\begin{equation}\label{e:parisipde}
 \partial_q\Phi_\nu
 =-\frac{\kappa'(q)}2\bigl((\Phi_\nu)_{xx}
                  +\alpha_\nu(q)(\Phi_\nu)_x^2\bigr),
 \qquad \Phi_\nu(1,x)=\log(2\cosh x).
\end{equation}
For a distribution function with jumps, the PDE is understood in
integrated form and its time derivative exists almost everywhere.
Also, solutions with step distribution functions can be obtained using the Cole--Hopf transform, with approximation in $L^1([0,1])$ yielding the solution for general $\nu$ 
\cite[Proposition~2]{ACunique}.

In the next lemma, we record some results on the regularity of solutions to this PDE from \cite{ACunique} and two  immediate consequences. 
\begin{lemma}\label{lem:scalar-spatial-regularity}
Let $\Phi$ be the solution to \zeqref{e:parisipde} associated with $\nu$, and set
\[
 u(s,x)=\partial_x\Phi(s,x),
 \qquad
 V(s,x)=\partial_{xx}\Phi(s,x).
\]
The spatial derivatives
\[
 \partial_x^j\Phi,\qquad 0\le j\le4,
\]
exist and are continuous on $[0,1]\times\mathbb R$.  Further, there exists a constant $C_\xi>0$ such that
\begin{equation}\label{e:pdebounds}
 |u(s,x)|\le1,
 \qquad
 \frac{C_\xi}{\cosh^2 x}\le V(s,x)\le1,
 \qquad
 |\partial_x^3\Phi(s,x)|\le4
\end{equation}
for every $(s,x)\in[0,1]\times\mathbb R$, and
\begin{equation}\label{e:pdebounds2}
 V(s,x)>0
 \qquad\text{and}\qquad
 |u(s,x)|<1
\end{equation}
for every finite $x$.
\end{lemma}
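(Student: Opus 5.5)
The plan is to get the regularity and the bounds from \cite{ACunique} for step measures, pass to general $\nu$ by the $L^1$ approximation recalled above, and then read off the strict inequalities in \zeqref{e:pdebounds2} from the lower bound on $V$.

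\textbf{Finitely many atoms.} Suppose first that $\nu$ has finitely many atoms, so $\alpha_\nu$ is a step function. On each interval where $\alpha_\nu$ is constant, the Cole--Hopf transform writes $\Phi$ as
\[
\frac{1}{\alpha}\log\E\exp\bigl(\alpha\,\Phi(q_{k+1},x+\sqrt{\kappa(q_{k+1})-\kappa(q)}\,g)\bigr).
\]
Smoothness in $x$ then follows by induction backward from $\Phi(1,x)=\log(2\cosh x)$, which has bounded derivatives of every order.

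\textbf{Bounds on $u$ and $V$.} Differentiating under the expectation shows that $u$ is an average of $\tanh$ under a tilted Gaussian measure, so $|u|\le1$. Differentiating once more, $V$ is a tilted variance plus a tilted average of terminal second derivatives. This gives $V\le1$ by the standard argument of \cite{ACunique}, and $V>0$ since $\sech^2>0$.

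\textbf{Third derivative.} The bound $|\partial_x^3\Phi|\le4$ comes from the same tilted-cumulant expansion. Each term is a centered moment of a quantity bounded by one, so it is bounded by a small absolute constant.

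\textbf{Lower bound on $V$.} I would take the bound $C_\xi\cosh^{-2}x$ directly from \cite{ACunique}. The mechanism is Gaussian smoothing over total time $\kappa(1)-\kappa(0)$: the tilted measure of $x+$(Gaussian) keeps the terminal point within $O(1)$ of $x$ with probability bounded below uniformly in $\nu$, because the tilt is controlled by $|u|\le1$. On that event $\sech^2$ is at least $c\cosh^{-2}x$.

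\textbf{General $\nu$.} All of these bounds are uniform in the step measure. Taking $\alpha_n\to\alpha_\nu$ in $L^1$, \cite[Proposition~2]{ACunique} gives uniform convergence of $\Phi_n$. The uniform bounds on up to four derivatives (the fourth coming from the same expansion) give equicontinuity of $\partial_x^j\Phi_n$ for $j\le3$ by Arzel\`a--Ascoli. Continuity in $s$ comes from the integrated PDE, since derivatives of $\Phi$ also solve linear parabolic equations with bounded coefficients. The limits then identify $\partial_x^j\Phi$, and the inequalities pass to the limit. For the fourth derivative I would cite \cite{ACunique} directly rather than redo the argument.

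\textbf{Strict inequalities.} The consequences \zeqref{e:pdebounds2} are immediate. For finite $x$, $V(s,x)\ge C_\xi\cosh^{-2}x>0$. For $|u|<1$: suppose $u(s,x)=1$ for some finite $x$. Since $u_x=V>0$, $u(s,\cdot)$ is strictly increasing, so $u(s,y)>1$ for every $y>x$, contradicting $|u|\le1$. The case $u(s,x)=-1$ is symmetric, using $y<x$.

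\textbf{Main obstacle.} I expect the hardest step to be the uniform lower bound $V\ge C_\xi\cosh^{-2}x$ and keeping it stable under the limit. That is why I would rely on \cite{ACunique} for it rather than rederive it.
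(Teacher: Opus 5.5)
Your proposal is correct and follows the paper's proof. The paper cites \cite[Proposition~2]{ACunique} for the existence and continuity of $\partial_x^j\Phi$, $j\le4$, and for \zeqref{e:pdebounds}, and then derives \zeqref{e:pdebounds2} exactly as you do, from $V\ge C_\xi\cosh^{-2}x>0$ and the resulting strict monotonicity of $u(s,\cdot)$. Your sketches of the step-measure arguments are not needed, and the one-level cumulant heuristic for $|\partial_x^3\Phi|\le4$ would not by itself give a bound uniform in the number of levels. Since you ultimately rely on the citation, nothing is missing.
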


\begin{proof}
The existence and continuity of the $\partial_x^j\Phi$, as well as the bounds in \zeqref{e:pdebounds}, are given in
\cite[Proposition~2]{ACunique}.  For \zeqref{e:pdebounds2}, the lower bound on $V$ gives
$V(s,x)>0$ for every finite $x$.  Hence, for fixed $s$, the function
$x\mapsto u(s,x)$ is strictly increasing.  Further, $u$  cannot attain either $1$ or $-1$ at any $x\in \R$, since
if $u(s,x_0)=1$, then $V(s,x)>0$  would give $u(s,x)>1$ for
$x>x_0$, contradicting $|u| \le 1$, and similarly if $u(s,x_0)=-1$. Hence
$|u(s,x)|<1$, as desired.
\end{proof}

The Parisi functional and its minimum are
\begin{equation}\label{eq:parisi-functional}
 \mathcal P_{\xi,h}(\nu)=\Phi_\nu(0,h)
       -\frac12\int_0^1q\kappa'(q)\alpha_\nu(q)\,dq,
 \qquad P=\min_{\nu\in\cP([0,1])}\mathcal P_{\xi,h}(\nu),
\end{equation}
where $\cP([0,1])$ denotes the set of Borel probability measures on $[0,1]$. It is known that $\lim_{N \rightarrow \infty} p_N = P$ \cite{Talagrand, PanchenkoParisi2014} and that the 
minimizer $\mu$ in the second expression of \zeqref{eq:parisi-functional} is unique \cite{ACunique}. We suppress its subscript
in $\Phi=\Phi_\mu$ and $\alpha = \alpha_\mu$, and we write
\begin{equation}\label{eq:parisi-pde}
 u=\Phi_x,\qquad V=\Phi_{xx},\qquad
 \Phi_q=-\frac{\kappa'}2(V+\alpha u^2).
\end{equation}
Let $B$ be a standard real Brownian motion. Expectations involving
only the scalar diffusion below are taken over its Brownian law.
The associated diffusion and its second magnetization moment are
\begin{equation}\label{eq:parisi-sde}
 dX_q=\kappa'(q)\alpha(q)u(q,X_q)\,dq
          +\sqrt{\kappa'(q)}\,dB_q,\qquad X_0=h,
 \qquad \Gamma(q)=\E u(q,X_q)^2.
\end{equation}
For $q\in[0,1]$ and $x\in\mathbb R$, we write $\E_{q,x}$ for
expectation with respect to the same diffusion started from $X_q=x$. To explain the terminology, we remark that $u(q,x)=\partial_x\Phi(q,x)$ is the effective
single-spin magnetization when the Parisi diffusion is at $x$ at time $q$, since $u(q,x)
 =
 \E_{q,x}[\tanh(X_1)]$.

By \cite[Proposition~1]{ChenGT}, we have 
\begin{equation}\label{eq:contact}
 \Gamma(q)=q,\qquad \Gamma'(q)\le1
       \quad(q\in\supp\mu),
\end{equation}
interpreted using one-sided derivatives at $0$ and $1$. 
Since $u(1,x)=\tanh x$ and $X_1$ is finite almost surely,
$\Gamma(1)<1$, implying $Q<1$. When $h=0$, we have 
\begin{equation}\label{eq:zero-in-support}
 0\in\supp\mu
\end{equation}
by \cite[Theorem~1(i)]{ACproperties}. 

For $q\ge Q$, we have  $\alpha(q)=1$, and direct computation gives
\begin{equation}\label{eq:scalar-terminal}
 \Phi(q,x)=\log(2\cosh x)+\frac{\kappa(1)-\kappa(q)}2,
 \qquad u(q,x)=\tanh x,\qquad q\ge Q.
\end{equation}
For every $q\in[0,1]$, let $\nu_{q,h}$ be the law of $u(q,X_q)$. 
We also require the joint law of two magnetizations whose diffusions
coincide up to a prescribed time and then evolve independently afterward.
For every $r, q \in [0,1]$ such that $0\le r\le q$, let
\[
 (X_s^1,X_s^2)_{0\le s\le q}
\]
be two copies of the Parisi diffusion that follow the same path up to
time $r$ and, conditional on their common value at time $r$, use
independent Brownian increments on $[r,q]$.  We define
$\nu_r^{q,h}$ to be the law of
\begin{equation}\label{e:nurqh}
 \bigl(u(q,X_q^1),u(q,X_q^2)\bigr).
\end{equation}
Each coordinate has marginal law $\nu_{q,h}$.

When $h=0$, we also need negative values of the overlap parameter.
For $-q\le r<0$, define $\nu_r^{q,0}$ as the image of
$\nu_{|r|}^{q,0}$ under 
$(m_1,m_2)\mapsto(m_1,-m_2)$.
The definitions from the positive and negative sides agree
continuously at $r=0$, because if $h=0$, the diffusion starts from $X_0=0$,
and the solution $\Phi$ of the Parisi PDE is even in the spatial variable, so $u(q,\cdot)$
is odd.  

The laws $\nu_{q,h}$ and $\nu_r^{q,h}$ are weakly continuous in
their parameters, including the reflected extension at zero field.
See the coupling argument in the proof of
Lemma~\zcref[noname]{src:replica-coefficient}.

\subsection{Martingales and bounds on the magnetization}
For a continuous distribution function, differentiating
\zeqref{eq:parisi-pde} and applying It\^o's formula gives the following
identities. For general distribution functions they hold in integrated
form by approximation \cite[Lemma~2, Eqs.~(31)--(32)]{ACunique}.
We have 
\begin{align}
 d\,u(q,X_q)&=\sqrt{\kappa'(q)}\,V(q,X_q)\,dB_q,
       \label{eq:martingale-u}\\
 d\,V(q,X_q)&=-\kappa'(q)\alpha(q)V(q,X_q)^2\,dq
       +\sqrt{\kappa'(q)}\,\Phi_{xxx}(q,X_q)\,dB_q.
       \label{eq:martingale-v}
\end{align}
In particular,
\begin{equation}\label{eq:gamma-derivative}
 \Gamma'(q)=\kappa'(q)\E \big[V(q,X_q)^2\big].
\end{equation}

The next lemma quantifies the gap between $V$ and $1-u^2$.
\begin{lemma}\label{lem:slack}
For $q<Q$ and $x\in\R$,
\begin{equation}\label{eq:slack}
 1-u(q,x)^2-V(q,x)
 =\E_{q,x}\int_q^Q\kappa'(s)(1-\alpha(s))V(s,X_s)^2\,ds>0.
\end{equation}
For $q\ge Q$, $V(q,x)=1-u(q,x)^2$.
\end{lemma}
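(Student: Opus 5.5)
We apply Itô's formula to the process $W_s = 1-u(s,X_s)^2-V(s,X_s)$ along the Parisi diffusion started from $X_q=x$, and integrate from $q$ to $Q$. The identity \zeqref{eq:slack} then follows from the boundary value $W_Q=0$, which the second claim supplies, together with the resulting drift. We therefore prove the second claim first. For $s\ge Q$ we have $\alpha(s)=1$, so \zeqref{eq:scalar-terminal} gives $u(s,x)=\tanh x$. Differentiating in $x$ gives $V(s,x)=\sech^2x=1-\tanh^2x=1-u(s,x)^2$. The formula \zeqref{eq:scalar-terminal} itself is checked directly: $\Phi=\log(2\cosh x)+c(s)$ solves \zeqref{e:parisipde} with $\alpha=1$, because $\Phi_{xx}+\Phi_x^2=1$, so $\partial_s\Phi=-\kappa'/2$.

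For the drift, we use \zeqref{eq:martingale-u} and \zeqref{eq:martingale-v}. They give
\[
d(u^2)=2u\sqrt{\kappa'}V\,dB_s+\kappa' V^2\,ds,
\qquad
dV=-\kappa'\alpha V^2\,ds+\sqrt{\kappa'}\,\Phi_{xxx}\,dB_s .
\]
Hence
\[
dW_s=-\kappa'(s)(1-\alpha(s))V(s,X_s)^2\,ds+(\text{martingale increment}).
\]
The martingale parts have bounded integrands by \zeqref{e:pdebounds}, since $|u|\le1$, $V\le1$, $|\Phi_{xxx}|\le4$, and $\kappa'$ is bounded on $[0,1]$ by \zeqref{eq:mixture}. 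Their expectations therefore vanish. Taking $\E_{q,x}$ and using $W_Q=0$ yields
\[
W_q(x)=\E_{q,x}\int_q^Q\kappa'(s)(1-\alpha(s))V(s,X_s)^2\,ds .
\]
For a general (discontinuous) $\alpha$, we use the integrated forms of \zeqref{eq:martingale-u} and \zeqref{eq:martingale-v} cited from \cite{ACunique}, which hold by approximation. Equivalently, we approximate $\mu$ by measures with continuous distribution functions and pass to the limit using the uniform bounds and continuity in $\nu$ of $\Phi$ and its derivatives from \cite[Proposition~2]{ACunique}.

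The main point needing care is strict positivity. Since $Q=\max\supp\mu$ and $q<Q$, we have $\mu([s,Q])>0$ for every $s<Q$, so $1-\alpha(s)=\mu((s,1])>0$ on $[q,Q)$. We also need $\kappa'(s)>0$ on a set of positive measure in $(q,Q)$. This holds because $\xi\not\equiv0$ has nonnegative coefficients, which gives $\kappa'(s)=\xi''(s)>0$ for $s>0$. At $s=0$ only a null set is lost. Finally, $V(s,X_s)>0$ almost surely by \zeqref{e:pdebounds2}, since $X_s$ is finite. The integrand is thus strictly positive on $[q,Q)$ up to a null set, and the expectation of its integral is strictly positive. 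This gives the strict inequality in \zeqref{eq:slack}.
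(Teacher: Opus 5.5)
Your proof is correct and follows the paper's argument. It uses It\^o's formula with \zeqref{eq:martingale-u}--\zeqref{eq:martingale-v} to get drift $-\kappa'(1-\alpha)V^2$, the terminal form \zeqref{eq:scalar-terminal} to get $1-u^2-V=0$ at $Q$, and then positivity from $\alpha<1$ below $Q$, $\kappa'>0$ on $(0,1]$, and $V>0$. One small fix: $1-\alpha(s)=\mu((s,1])>0$ for $s<Q$ holds because $(s,1]$ is a relatively open neighborhood of $Q\in\supp\mu$; it does not follow from $\mu([s,Q])>0$ as you wrote.
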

\begin{proof}
By \zeqref{eq:martingale-u}--\zeqref{eq:martingale-v} and It\^o's formula, the drift term of
$1-u^2-V$  is
$-\kappa'(s)(1-\alpha(s))V^2$. The value of $1-u^2-V$ at $Q$ is zero
by \zeqref{eq:scalar-terminal}. Taking expectations proves the claimed identity.
For $s<Q$, $\alpha(s)<1$ by the definition of $Q$. Also $V>0$ and
$\kappa'(s)>0$ for $s>0$ under \zeqref{eq:mixture}. Hence the integral
is strictly positive whenever $q<Q$. Then \zeqref{eq:scalar-terminal} implies
the remaining assertion.
\end{proof}
We also require a more refined bound on $u(q,x)$. 
\begin{lemma}\label{scalar:tail}
For $q\in[0,1]$ and $x\in\R$,
\begin{equation}\label{eq:sigmoid-shift}
 x-2[\kappa(1)-\kappa(q)]
 \le\arctanh u(q,x)
 \le x+2[\kappa(1)-\kappa(q)].
\end{equation}
Consequently, for constants $c,C>0$ depending only on $\xi$,
\begin{equation}\label{source:eq:sigmoid-bounds}
 1-u(q,x)\le Ce^{-2x}\quad(x\in\R),\qquad
 1-u(q,x)\ge ce^{-2x}\quad(x\ge0).
\end{equation}
Moreover, for every $\lambda>0$,
\[
 \E\exp\left[
   \lambda\sup_{0\le q\le1}|X_q|
 \right]<\infty,
 \qquad
 \E\exp\left[
   \lambda\sup_{0\le q\le1}
   |\arctanh u(q,X_q)|
 \right]<\infty.
\]
\end{lemma}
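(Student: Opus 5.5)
The bound \zeqref{eq:sigmoid-shift} comes from comparing $\Phi(q,\cdot)$ with the terminal profile $\log(2\cosh\cdot)$. The two tail bounds then follow by elementary inversion. The exponential moments come from a drift bound on the diffusion together with the Gaussian tail of $\sup_q|B_q|$.

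\emph{Step 1: the shift bound.} Write $\Phi(q,x)=\log(2\cosh x)+\psi(q,x)$. The PDE \zeqref{eq:parisi-pde} integrated from $q$ to $1$, together with $|u|\le1$ and $0<V\le1$ from Lemma~\zcref[noname]{lem:scalar-spatial-regularity}, gives
\[
 0\le\Phi(q,x)-\log(2\cosh x)\le\kappa(1)-\kappa(q).
\]
This is the two-sided bound on $\psi$. Derivative bounds do not follow directly from a bound on $\psi$, so I would instead use the representation $u(q,x)=\E_{q,x}\tanh(X_1)$. The drift of $X$ is bounded by $\kappa'\alpha|u|\le\kappa'$. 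A cleaner route is a comparison or Girsanov argument. One can show $\arctanh u(q,\cdot)-x$ stays bounded by an integrated drift, for instance by the maximum principle applied to $w=\arctanh u$. The function $w$ satisfies a parabolic equation obtained from differentiating \zeqref{eq:parisi-pde}, whose source terms are bounded by $2\kappa'$ using $V\le1-u^2$ (Lemma~\zcref[noname]{lem:slack}). This yields $|w(q,x)-x|\le2[\kappa(1)-\kappa(q)]$. I expect this maximum-principle computation for $w$, with the factor $2$, to be the main obstacle. A fallback is the Cole--Hopf representation for step $\alpha$, which writes $u$ as a tilted average of $\tanh(x+\text{Gaussian})$, plus the $L^1$ approximation from \cite{ACunique}.

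\emph{Step 2: the sigmoid bounds.} Set $D=2\kappa(1)$. Since $1-\tanh y=2/(e^{2y}+1)$ is decreasing in $y$, the lower estimate $\arctanh u\ge x-D$ gives
\[
 1-u\le 2e^{-2(x-D)}=Ce^{-2x}
\]
for all $x$. The upper estimate $\arctanh u\le x+D$ gives
\[
 1-u\ge 2/(e^{2x+2D}+1)\ge ce^{-2x}
\]
for $x\ge0$.

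\emph{Step 3: the exponential moments.} From \zeqref{eq:parisi-sde}, $|X_q-h|\le\kappa(1)+|\int_0^q\sqrt{\kappa'}\,dB|$. The stochastic integral is a time-changed Brownian motion with total variance $\kappa(1)-\kappa(0)$. Doob's or the reflection inequality then gives Gaussian tails for its supremum, hence every exponential moment of $\sup_q|X_q|$ is finite. The second moment follows from the first, since Step 1 gives $|\arctanh u(q,X_q)|\le|X_q|+2\kappa(1)$.
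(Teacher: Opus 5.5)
Your Steps~2 and~3 are correct and match the paper's deductions. Inverting the shift bound through $1-\tanh y=2/(1+e^{2y})$ gives $C=2e^{4\kappa(1)}$ and $c=2/(1+e^{4\kappa(1)})$. The exponential moments follow because the drift is bounded by $\kappa(1)$ and the martingale part is a Brownian motion run for time at most $\kappa(1)$.

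The gap is Step~1, which is the whole content of the lemma. You list three strategies: the representation $u(q,x)=\E_{q,x}\tanh X_1$, a maximum principle for $w=\arctanh u$, and Cole--Hopf with approximation. You carry out none of them and simply assert the conclusion, and you flag the decisive computation as the main obstacle. As written, the shift bound is unproved, and so is everything downstream of it. Your preliminary bound on $\Phi-\log(2\cosh x)$ is true but, as you note, does not control $u$.

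Either of your first two strategies closes the gap. The paper takes the first. Set $v=\kappa(1)-\kappa(q)$. Under $\E_{q,x}$ one has $X_1=x+Z+D$ with $Z\sim N(0,v)$ and $|D|\le v$. Convexity of $s\mapsto(1+s)^{-1}$ then gives
\[
 1-u(q,x)=2\E_{q,x}\frac{1}{1+e^{2X_1}}
 \ge\frac{2}{1+\E_{q,x}e^{2X_1}}
 \ge\frac{2}{1+e^{2x+4v}},
\]
which is $\arctanh u(q,x)\le x+2v$. The lower bound is symmetric. The factor $2$ is one $v$ from the drift and one from $\E e^{2Z}=e^{2v}$.

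Your maximum-principle route also works, and the factor $2$ is no obstacle. Avoid a PDE for $w$, which would need $C^{1,2}$ regularity that fails for general $\mu$. Instead apply It\^o's formula to $\arctanh$ of the martingale $u(s,X_s)$ from \zeqref{eq:martingale-u}, as in Lemma~\zcref[noname]{lem:radial-invariant}:
\[
 d\arctanh u(s,X_s)=\sqrt{\kappa'(s)}\,\rho_s\,dB_s+\kappa'(s)\,u(s,X_s)\,\rho_s^2\,ds,
 \qquad \rho_s=\frac{V(s,X_s)}{1-u(s,X_s)^2}\in(0,1].
\]
The bound $\rho_s\le1$ is exactly Lemma~\zcref[noname]{lem:slack}. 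Localize at the exit times of $\{|u|\le1-1/k\}$; this is harmless because both integrands are bounded and the path $u(s,X_s)$ stays in $(-1,1)$ on $[q,1]$. Then take expectations and compare with $\E_{q,x}X_1=x+\E_{q,x}\int_q^1\kappa'\alpha u\,ds$. Using $\arctanh u(1,\cdot)=\mathrm{id}$, this yields
\[
 \arctanh u(q,x)-x=\E_{q,x}\int_q^1\kappa'(s)\,u(s,X_s)\bigl(\alpha(s)-\rho_s^2\bigr)\,ds,
\]
so $|\arctanh u(q,x)-x|\le\kappa(1)-\kappa(q)$, half the stated bound.

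The paper's Jensen argument is shorter and uses only the SDE and the martingale representation. Your route needs Lemma~\zcref[noname]{lem:slack} and It\^o calculus for a singular function, but it gives the sharper constant.
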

\begin{proof}
Under $\E_{q,x}$, evolve the diffusion from $X_q=x$ to time $1$.
Writing
\[
 v=\kappa(1)-\kappa(q),\qquad
 Z=\int_q^1\sqrt{\kappa'(s)}\,dB_s,\qquad
 D=\int_q^1\kappa'(s)\alpha(s)u(s,X_s)\,ds,
\]
we have
\[
 X_1=x+Z+D,\qquad Z\sim N(0,v),\qquad |D|\le v,
\]
where the inequality follows because $|\alpha u|\le1$.  Moreover,
\zeqref{eq:martingale-u} and $u(1,x)=\tanh x$ give
\[
 u(q,x)=\E_{q,x}\tanh X_1.
\]
Since $s\mapsto(1+s)^{-1}$ is convex,
\[
 1-u(q,x)
 =2\E_{q,x}\frac1{1+e^{2X_1}}
 \ge
 \frac2{1+\E_{q,x}e^{2X_1}}.
\]
Using $D\le v$ and $\E e^{2Z}=e^{2v}$,
\[
 \E_{q,x}e^{2X_1}
 \le e^{2x+2v}\E e^{2Z}
 =e^{2x+4v},
\]
and hence
\begin{equation}\label{e:oneminusu}
 1-u(q,x)\ge\frac2{1+e^{2x+4v}}.
\end{equation}
Using $1-\tanh y=2/(1+e^{2y})$, we have 
\[
 1-u(q,x)
 =\frac{2}{1+\exp\big(2\arctanh u(q,x)\big)},
\] 
which together with \zeqref{e:oneminusu} gives
$\arctanh u(q,x)\le x+2v$.  Repeating the argument with
$e^{-2X_1}$ gives $\arctanh u(q,x)\ge x-2v$, proving
\zeqref{eq:sigmoid-shift}.  Then using \zeqref{e:oneminusu} with \zeqref{eq:sigmoid-shift} and
$0\le\kappa(1)-\kappa(q)\le\kappa(1)$ gives
\[
 1-u(q,x)\le 2e^{4\kappa(1)}e^{-2x},
 \qquad
 1-u(q,x)\ge
 \frac{2}{1+e^{4\kappa(1)}}e^{-2x}\quad(x\ge0),
\]
which is \zeqref{source:eq:sigmoid-bounds}.

Finally, along the diffusion started from $X_0=h$, the total drift has
absolute value at most $\kappa(1)$, while the martingale part is a
Brownian motion run for time at most $\kappa(1)$.  Thus
$\sup_{q\le1}|X_q|$ has all linear exponential moments.
Equation~\zeqref{eq:sigmoid-shift} then gives the same conclusion for
$\sup_{q\le1}|\arctanh u(q,X_q)|$.
\end{proof}

\subsection{Path density}
We next represent the law of the Parisi diffusion by a density with
respect to the Gaussian path $
 W_q=h+B_{\kappa(q)}$. 
This representation allows us to apply Gaussian integration by parts
under the reference law of $W$ and obtain the identity used later to
relate $X_r$ to the terminal magnetization.  In the path-density
formula, integrals against $\mu$ include any atoms at both endpoints. We use the same convention for branching paths below.

\begin{lemma}\label{scalar:gaussian}
Fix $S\in[Q,1]$, with $S>0$, and set
\[
 W_q=h+B_{\kappa(q)},\qquad 0\le q\le S.
\]
The law of $(X_t)_{0\le t\le S}$ is absolutely continuous with
respect to the law of $(W_t)_{0\le t\le S}$, and
for every bounded Borel measurable
$F:C([0,S])\to\R$,
\begin{equation}\label{e:pathden2}
 \E\bigl[F((X_t)_{t\le S})\bigr]
 =
 \E_W\bigl[F((W_t)_{t\le S})\mathcal D_S(W)\bigr],
\end{equation}
where $\E_W$ denotes expectation under the Gaussian law of $W$ and
\begin{equation}\label{scalar:pathdensity}
 \mathcal D_S(W)=
 \exp\left\{\Phi(S,W_S)
 -\int_{[0,S]}\Phi(t,W_t)\,\mu(dt)\right\}.
\end{equation}
Further, for $0\le r\le S$ and $f\in C^1(\R)$ satisfying
$|f(x)|+|f'(x)|\le Ce^{c|x|}$,
\begin{align}
\begin{split}
 \E[(X_r-h)f(X_S)]
 ={}&\kappa(r)\E[f'(X_S)+\tanh(X_S)f(X_S)]\\
 &-\int_{[0,S]}\kappa(r\wedge t)
          \E[u(t,X_t)f(X_S)]\,\mu(dt).
 \label{eq:scalar-ibp}
\end{split}
\end{align}
\end{lemma}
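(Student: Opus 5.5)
The plan is to obtain \zeqref{e:pathden2} from Girsanov's theorem, using It\^o's formula along $W$ to turn the stochastic exponent into the explicit form \zeqref{scalar:pathdensity}. The identity \zeqref{eq:scalar-ibp} then follows from Gaussian integration by parts in the Cameron--Martin direction $k(t)=\kappa(r\wedge t)$.

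\emph{Step 1 (Girsanov).} Write $W_q=h+\int_0^q\sqrt{\kappa'(s)}\,d\beta_s$ for a Brownian motion $\beta$. The drift $\kappa'\alpha u$ in \zeqref{eq:parisi-sde} has the form $\sqrt{\kappa'}\cdot(\sqrt{\kappa'}\alpha u)$, and $|\alpha u|\le1$ by \zeqref{e:pdebounds}. Hence Novikov's condition holds trivially. Also, $u$ is Lipschitz in $x$ because $|V|\le1$, so \zeqref{eq:parisi-sde} has a unique strong solution. Girsanov's theorem then gives the density
\[
 \exp\Bigl\{\int_0^S\alpha(t)u(t,W_t)\,dW_t-\frac12\int_0^S\kappa'(t)\alpha(t)^2u(t,W_t)^2\,dt\Bigr\}.
\]

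\emph{Step 2 (identifying the exponent).} By \zeqref{eq:parisi-pde} and It\^o's formula,
\[
 d\Phi(t,W_t)=u\,dW_t+\Bigl(\Phi_t+\tfrac12\kappa'V\Bigr)dt=u\,dW_t-\tfrac12\kappa'\alpha u^2\,dt.
\]
Multiply by the right-continuous, nondecreasing function $\alpha(t)=\mu([0,t])$. The product rule for a continuous semimartingale times a finite-variation function gives
\[
 d(\alpha\Phi)=\alpha\,d\Phi+\Phi(t,W_t)\,\mu(dt),
\]
with the atom at $0$ included through $\alpha(0-)=0$. Since $S\ge Q$, we have $\alpha(S)=1$. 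Integrating over $[0,S]$, the Girsanov exponent equals
\[
 \Phi(S,W_S)-\int_{[0,S]}\Phi(t,W_t)\,\mu(dt),
\]
which is $\log\mathcal D_S(W)$. For general $\mu$, $\Phi$ is differentiable in time only almost everywhere, so I would first carry this out for step distribution functions, where Cole--Hopf gives smooth pieces. I would then pass to the limit using the $L^1$ approximation of \cite[Proposition~2]{ACunique}, with uniform bounds from Lemma~\zcref[noname]{lem:scalar-spatial-regularity} and $|\Phi(t,x)|\le C(1+|x|)$.

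\emph{Step 3 (integration by parts).} By Step 2, the left side of \zeqref{eq:scalar-ibp} equals $\E_W[(W_r-h)\,\mathcal G(W)]$, where $\mathcal G(W)=f(W_S)\mathcal D_S(W)$. Since $\Cov(W_r,W_t)=\kappa(r\wedge t)$, the function $k(t)=\kappa(r\wedge t)$ lies in the Cameron--Martin space of $W-h$. The Cameron--Martin formula then gives
\[
 \E_W[(W_r-h)\,\mathcal G(W)]=\frac{d}{d\varepsilon}\Big|_{\varepsilon=0}\E_W[\mathcal G(W+\varepsilon k)].
\]
Differentiating inside the expectation and using $u=\Phi_x$ gives
\[
 \E_W\Bigl[\Bigl(\kappa(r)\bigl(f'(W_S)+u(S,W_S)f(W_S)\bigr)-f(W_S)\int_{[0,S]}\kappa(r\wedge t)u(t,W_t)\,\mu(dt)\Bigr)\mathcal D_S(W)\Bigr],
\]
where $k(S)=\kappa(r)$ because $r\le S$. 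Now $u(S,\cdot)=\tanh$ by \zeqref{eq:scalar-terminal}. Converting back to $X$ via \zeqref{e:pathden2}, and using Fubini on the $\mu$-integral, yields \zeqref{eq:scalar-ibp}.

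The main obstacle is justifying the differentiation under the expectation, and extending \zeqref{e:pathden2} to the unbounded integrands that appear here. For the latter I would truncate and use monotone or dominated convergence. For the former, the derivative in $\varepsilon$ is bounded by $C e^{c\sup_t|W_t|}\mathcal D_S(W+\varepsilon k)$, since $|u|\le1$ and $|f|+|f'|\le Ce^{c|x|}$. Under the $W$-law this is uniformly integrable for $|\varepsilon|\le1$. To see this, shift by the Cameron--Martin formula and pass to the $X$-law. Then apply Lemma~\zcref[noname]{scalar:tail}, which gives exponential moments of $\sup_q|X_q|$; equivalently, use the bound $|\log\mathcal D_S|\le C(1+\sup_t|W_t|)$ together with Gaussian tails.
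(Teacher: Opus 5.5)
Your proposal is correct. Steps~1--2 match the paper: Girsanov with the bounded kernel $\sqrt{\kappa'}\alpha u$, It\^o's formula along $W$, and the Stieltjes product rule with $\alpha(0-)=0$, $\alpha(S)=1$. Your step-function approximation makes explicit a point the paper passes over. If you use it, take the approximants supported in $[0,S]$ (e.g.\ by dyadic rounding down), so that $\alpha_j(S)=1$ even when $\mu$ has an atom at $S=Q$. Also, you need the limit only to identify the exponent, since the stochastic-integral form of the Girsanov density already holds for general $\alpha$.

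The genuine difference is Step~3. The paper replaces the $\mu$-integral in $\mathcal D_S$ by a finite quadrature. It then applies finite-dimensional Gaussian integration by parts to $(W_r-h,W_S-h,W_{t_1}-h,\ldots)$ with $\Cov(W_r,W_t)=\kappa(r\wedge t)$, and removes the quadrature using pathwise continuity of $t\mapsto\Phi(t,W_t)$ and $t\mapsto u(t,W_t)$. You instead differentiate directly along the Cameron--Martin direction $k=\kappa(r\wedge\cdot)$, whose Paley--Wiener integral is $W_r-h$ and whose squared norm is $\kappa(r)$. The coefficients $\kappa(r)=k(S)$ and $\kappa(r\wedge t)=k(t)$ then come out of a single differentiation with no discretization. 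The price is invoking the path-space Cameron--Martin theorem in place of elementary finite-dimensional calculus.

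Both routes rest on the same domination: $|\log\mathcal D_S(W+\varepsilon k)|\le C(1+\sup_t|W_t|)$ for $|\varepsilon|\le1$, together with $|u|\le1$ and the growth of $f,f'$. So your Gaussian-tail justification suffices without passing to the $X$-law. The truncation you describe is exactly what is needed, and left implicit in the paper, to apply the path-density identity to the unbounded integrand $(X_r-h)f(X_S)$.
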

\begin{proof}
We first identify the density of the Parisi diffusion with respect to
$W$. Realize its Gaussian reference law as
$W_q=h+\int_0^q\sqrt{\kappa'(s)}\,d\widetilde B_s$, which has
the same path law as $h+B_{\kappa(q)}$. In this realization,
\[
 dW_q=\sqrt{\kappa'(q)}\,d\widetilde B_q,
\]
while \zeqref{eq:parisi-sde} has the additional drift
$\kappa'(q)\alpha(q)u(q,X_q)$.  Since $0\le\alpha\le1$ and $|u|\le1$,
\[
 \int_0^S \kappa'(q)\alpha(q)^2u(q,W_q)^2\,dq
 \le \kappa(S)<\infty.
\]
So, Girsanov's theorem \cite[Theorem~5.7 and Corollary~5.9]{Miller2016}
applies, with logarithmic density
\[
 \int_0^S\sqrt{\kappa'(q)}\alpha(q)u(q,W_q)\,d\widetilde B_q
 -\frac12\int_0^S\kappa'(q)\alpha(q)^2u(q,W_q)^2\,dq.
\]
On the other hand, It\^o's formula and the Parisi PDE give
\[
 d\Phi(q,W_q)
 =-\frac12\kappa'(q)\alpha(q)u(q,W_q)^2\,dq
 +\sqrt{\kappa'(q)}u(q,W_q)\,d\widetilde B_q.
\]
Hence the logarithmic density is
\[
 \int_0^S\alpha(q)\,d\Phi(q,W_q).
\]
Since $d\alpha=\mu$, $\alpha(0-)=0$, and $\alpha(S)=1$, Stieltjes
integration by parts gives
\[
 \int_0^S\alpha(q)\,d\Phi(q,W_q)
 =
 \Phi(S,W_S)-\int_{[0,S]}\Phi(q,W_q)\,\mu(dq),
\]
which proves \zeqref{scalar:pathdensity}.  The integral over $[0,S]$
includes any atoms of $\mu$ at the endpoints; in particular, an atom
at zero contributes $-\mu(\{0\})\Phi(0,h)$.

We next derive \zeqref{eq:scalar-ibp}.  By
\zeqref{scalar:pathdensity},
\[
 \E[(X_r-h)f(X_S)]
 =
 \E_W[(W_r-h)\mathcal D_S(W)f(W_S)].
\]
To apply Gaussian integration by parts, first approximate the
$\mu$-integral in \zeqref{scalar:pathdensity} by a finite sum
\[
 \int_{[0,S]}\Phi(t,W_t)\,\mu(dt)
 \approx
 \sum_{j=1}^m a_j\Phi(t_j,W_{t_j}).
\]
The resulting integrand is a continuously differentiable function
of the finite Gaussian vector
\[
 (W_r-h,W_S-h,W_{t_1}-h,\ldots,W_{t_m}-h),
\]
whose covariances satisfy
\[
 \Cov(W_r,W_t)=\kappa(r\wedge t).
\]
The usual finite-dimensional Gaussian integration-by-parts formula
therefore gives two types of terms.  Differentiating
$e^{\Phi(S,W_S)}f(W_S)$ gives
\[
 \kappa(r)\mathcal D_S(W)
 \bigl[f'(W_S)+u(S,W_S)f(W_S)\bigr],
\]
while differentiating the $j$th term in the approximating sum gives
\[
 -a_j\kappa(r\wedge t_j)
 \mathcal D_S(W)u(t_j,W_{t_j})f(W_S).
\]
Passing to the limit in the finite-dimensional identity gives
\[
\begin{aligned}
 \E_W[(W_r-h)\mathcal D_S(W)f(W_S)]
 =&\kappa(r)\E_W\!\left[
 \mathcal D_S(W)\bigl(f'(W_S)+u(S,W_S)f(W_S)\bigr)\right]\\
 &-\int_{[0,S]}\kappa(r\wedge t)
 \E_W\!\left[
 \mathcal D_S(W)u(t,W_t)f(W_S)\right]\,\mu(dt).
\end{aligned}
\]
Using \zeqref{e:pathden2} to replace the
weighted law of $W$ by the law of $X$, and using
$u(S,\cdot)=\tanh$ gives \zeqref{eq:scalar-ibp}. 
The passage from the finite sums to the integral is justified because 
$t\mapsto\Phi(t,W_t)$ and $t\mapsto\kappa(r\wedge t)u(t,W_t)$ are
almost surely continuous, while $|u|\le1$ gives a uniform bound by
$e^{C+C\sup_{t\le S}|W_t|}$ times the prescribed exponential growth of
$f$ and $f'$.  Since $\sup_{t\le S}|W_t|$ has all linear exponential
moments, dominated convergence applies to both the density and the
integration-by-parts terms.
\end{proof}

Finally, we observe that the definition of $\nu_r^{q,h}$ and
\zeqref{eq:martingale-u} imply the useful identity
\begin{equation}\label{eq:branch-product}
 \int z_1z_2\,\nu_r^{S,h}(dz_1,dz_2)=\Gamma(r)
       \quad(0\le r\le S).
\end{equation}
Indeed, the conditional mean of either terminal magnetization $u(S,X_S^i)$ given
$X_r$ is $u(r,X_r)$. At zero field the  version of
\zeqref{eq:branch-product} that accommodates negative $r$ is $\operatorname{sgn}(r)\Gamma(|r|)$.

\section{Finite-replica pressure comparisons}\label{sec:sources}

In this section we prove the estimates used to identify the conditional
cavity laws in \zcref{sec:cavity}. In \zcref{src:replica-groups} we arrange
replicas in groups with prescribed overlaps and choose products of their
spins to characterize scalar magnetization moments. \zcref{src:replica-interpolation}
proves the corresponding upper bound for the constrained pressure. In
\zcref{src:source-suppression-section} we combine this comparison with
Gaussian concentration to show that configurations satisfying the overlap
constraints but violating the moment predictions have exponentially small
probability under the original Gibbs measure.

Throughout this section the covariance and deterministic field are
those of \zeqref{eq:mixture}--\zeqref{eq:hamiltonian}. Put
\[
 L=\kappa(1),\qquad
 B_\xi=\|\kappa'\|_{L^\infty([-1,1])}=\xi''(1)<\infty.
\]
All thermodynamic
comparisons below concern the original Gibbs measure $G_{N,h}$.

\subsection{Equal-sized replica groups and scalar moments}
\label{src:replica-groups}

For the two-group construction, fix an integer $k\ge1$, set $d=2k$,
and index the coordinates of the two groups of $k$ by $(g,j)$, where $g\in\{1,2\}$ and
$1\le j\le k$. Let $(g,j)$ have label $(g-1)k+j$.
For $0\le r\le S\le1$, prescribe the matrix
\[
 D_{ab}=
 \begin{cases}
 1,&a=b,\\
 S,&a\ne b\text{ in the same group},\\
 r,&a,b\text{ in different groups}.
 \end{cases}
\]
For $\eta>0$, the overlap constraint is
\begin{equation}\label{src:replica-shell}
 \mathcal C_\eta(D)=
 \left\{(\sigma^1,\ldots,\sigma^d):
       |R_{ab}-D_{ab}|<\eta\text{ for every }a<b\right\}.
\end{equation}
In the one-group construction, $d=k$ and every off-diagonal entry
of $D$ equals $S$. If $d=1$, the constraint is the entire
configuration space.

Choose integers $a,b\ge0$ with $a+b>0$ and $a,b\le k$ in the
two-group construction, and let
\[
 P_0(\epsilon)=
 \prod_{j=1}^a\epsilon^{1,j}
 \prod_{j=1}^b\epsilon^{2,j},
 \qquad \epsilon\in\{-1,1\}^{2k}.
\]
By convention, empty products are one. In the one-group construction, choose
$1\le a\le k$ and use $P_0(\epsilon)=\prod_{j=1}^a\epsilon^j$.
In either case $P_0$ takes values in $\{-1,1\}$. Define
\begin{align}
 A_N(\boldsymbol\sigma)
 &=\frac1N\sum_{i=1}^N
       P_0(\sigma_i^1,\ldots,\sigma_i^d),\notag\\
 \mathcal F_N^{D,P_0}(\theta)
 &=\frac1N\log
   \sum_{\mathcal C_\eta(D)}
   \exp\left\{
       \sum_{\ell=1}^d
          \left(H_N(\sigma^\ell)+h\sum_{i=1}^N\sigma_i^\ell\right)
       +\theta N A_N(\boldsymbol\sigma)\right\}.
 \label{src:replica-pressure}
\end{align}
Each site contributes the perturbation factor $e^{\theta P_0}$.
The constrained sum uses the counting measure on all spin coordinates,
including those absent from $P_0$.
An empty constrained sum has pressure $-\infty$ and Gibbs mass zero.

To define the coefficient $c$ of $\theta$ in the comparison bound,
construct $d$ solutions of \zeqref{eq:parisi-sde}, all starting
from $h$. In the two-group case, drive all solutions by the same
Brownian motion up to time $r$. Between $r$ and $S$, use one
Brownian motion for each group, shared by all copies in that
group and independent between groups. After $S$, use independent
Brownian motions for the individual copies. At each splitting
time, every continuation starts from the value already reached,
and the new Brownian increments are independent of the past.  At time one, sample signs independently
conditional on the terminal fields, with
\begin{equation}\label{e:termsign}
 \Prob_{\rm sc}(\epsilon^\ell=e\mid X_1^\ell)
 =\frac{e^{eX_1^\ell}}{2\cosh X_1^\ell},
 \qquad e\in\{-1,1\}.
\end{equation}
The one-group construction shares its path through $S$ and then
uses independent continuations. Write
\begin{equation}\label{src:source-coefficient}
 c(P_0;r,S)=\E_{\rm sc}P_0(\epsilon^1,\ldots,\epsilon^d)
\end{equation}
in the two-group case, and $c(P_0;S)$ in the one-group case.
Here $\E_{\rm sc}$ averages only the scalar paths and terminal signs.

\begin{example}
Fix $0\le r\le S\le1$ and consider the moment
\[
 \int z_1^2z_2\,\nu_r^{S,h}(dz_1,dz_2).
\]
Take four replicas, with groups $\{1,2\}$ and $\{3,4\}$, and
prescribe the overlap matrix
\[
 D=
 \begin{pmatrix}
 1&S&r&r\\
 S&1&r&r\\
 r&r&1&S\\
 r&r&S&1
 \end{pmatrix}.
\]
Thus the constraint requires $|R_{ab}-D_{ab}|<\eta$ for
every $a<b$. Choose
\[
 P_0(\epsilon)=\epsilon^1\epsilon^2\epsilon^3,
\]
so the additional term in the joint Hamiltonian is
\[
 \theta\sum_{i=1}^N
 \sigma_i^1\sigma_i^2\sigma_i^3.
\]
Replica $4$ remains in the sum of the four Hamiltonians and
in the constraints involving $R_{14}$, $R_{24}$, and $R_{34}$,
although its spin does not appear in this additional term.

Construct four solutions $X^1,\ldots,X^4$ of
\zeqref{eq:parisi-sde}, starting from $h$. They use common
Brownian increments up to time $r$. Between $r$ and $S$,
the increments are shared within each group and independent
between groups. After $S$, the four solutions use independent
Brownian increments. At each split, the new increments are
independent of the past, and each solution continues from
its current value. In particular,
\[
 X_S^1=X_S^2,\qquad X_S^3=X_S^4.
\]
Sample the terminal signs according to the conditional law
defined above. Given the four paths up to time $S$, these
signs are independent, and the martingale property gives
\[
 \E_{\rm sc}[\epsilon^a\mid\mathcal F_S]
 =u(S,X_S^a),\qquad 1\le a\le4,
\]
where $\mathcal F_S$ is the sigma-field generated by these
paths up to time $S$. Consequently,
\[
 \begin{aligned}
 c(P_0;r,S)
 &=\E_{\rm sc}[\epsilon^1\epsilon^2\epsilon^3]=\E_{\rm sc}\!\left[
      u(S,X_S^1)^2u(S,X_S^3)
    \right]=\int z_1^2z_2\,\nu_r^{S,h}(dz_1,dz_2).
 \end{aligned}
\]

The four-replica interpolation below uses the coefficient
\[
 \gamma(q)=
 \begin{cases}
 \alpha(q)/4,&0\le q<r,\\
 \alpha(q)/2,&r\le q<S,\\
 \alpha(q),&S\le q\le1.
 \end{cases}
\]
The denominators are the numbers of replicas sharing each
scalar field on the respective intervals. With this choice,
the scalar contribution to the interpolation bound at
$\theta=0$ is
\[
 4\Phi(0,h)-2\int_0^1q\kappa'(q)\alpha(q)\,dq
 =4P,
\]
and the comparison proved below gives
\[
 \E\mathcal F_N^{D,P_0}(\theta)
 \le
 4P+\theta\int z_1^2z_2\,\nu_r^{S,h}(dz_1,dz_2)
 +\frac{\theta^2}{2}+C_{4,\xi}\eta^2.
\]
Thus four replicas suffice to obtain this moment as the
coefficient of $\theta$ in the comparison bound.
\end{example}

We now formalize the computation in the previous example. 
\begin{lemma}\label{src:replica-coefficient}
For the monomials just defined,
\[
 c(P_0;r,S)=\nu_r^{S,h}(z_1^az_2^b),\qquad
 c(P_0;S)=\nu_{S,h}(z^a).
\]
For fixed $\xi$, $h$, group size, and monomial $P_0$, the
two-group map $
 (r,S)\mapsto c(P_0;r,S)$ 
is continuous on
$\{(r,S)\in[0,1]^2:0\le r\le S\}$, and the one-group map $S\mapsto c(P_0;S)$
is continuous on $[0,1]$. 
At zero field the two-group coefficient extends continuously to
$-S\le r<0$ by reflection of the second group's terminal signs.
\end{lemma}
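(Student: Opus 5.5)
We prove the coefficient identities by conditioning on the scalar paths up to time $S$, and then prove continuity by a synchronous coupling of the diffusions.

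\emph{Coefficient identities.} Let $\mathcal F_S$ be the $\sigma$-field generated by the $d$ scalar paths on $[0,S]$. After time $S$ every copy evolves with its own independent Brownian motion, and the terminal signs \zeqref{e:termsign} are sampled independently given the terminal fields. Hence, conditional on $\mathcal F_S$, the pairs $(X^\ell_{[S,1]},\epsilon^\ell)$ are independent across $\ell$, and
\[
 \E_{\rm sc}[P_0(\epsilon)\mid\mathcal F_S]=\prod_{j=1}^a\E_{\rm sc}[\epsilon^{1,j}\mid\mathcal F_S]\prod_{j=1}^b\E_{\rm sc}[\epsilon^{2,j}\mid\mathcal F_S].
\]
Each factor equals $\E_{S,X_S^\ell}\tanh X_1^\ell=u(S,X_S^\ell)$, by the Markov property and the representation $u(q,x)=\E_{q,x}\tanh X_1$ from the proof of Lemma~\zcref[noname]{scalar:tail}. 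Within a group the paths coincide up to $S$, so $X_S^{1,j}=X_S^{1,1}$ and $X_S^{2,j}=X_S^{2,1}$. The two group representatives share the same path up to time $r$ and have independent increments on $[r,S]$, which is exactly the construction defining $\nu_r^{S,h}$. Taking expectations therefore gives
\[
 c(P_0;r,S)=\E\bigl[u(S,X_S^{1,1})^a\,u(S,X_S^{2,1})^b\bigr]=\nu_r^{S,h}(z_1^az_2^b).
\]
The one-group case is identical with a single shared path, giving $\nu_{S,h}(z^a)$.

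\emph{Continuity.} Since $|u|\le1$, the integrand is bounded, so it suffices to show that $(u(S,X^1_S),u(S,X^2_S))$ converges in probability along sequences $(r_n,S_n)\to(r,S)$. We realize all parameters on one probability space using three fixed Brownian motions $B^0,B^1,B^2$. The shared path follows $B^0$ until time $r_n$, and branch $i$ follows $B^i$ afterwards. The two ingredients are as follows.
\begin{enumerate}[label=(\roman*)]
\item \emph{Pathwise stability of the SDE under a change of switching time.} The drift $\kappa'\alpha u(q,\cdot)$ is Lipschitz in $x$ uniformly in $q$, since $|u_x|=|V|\le1$ by Lemma~\zcref[noname]{lem:scalar-spatial-regularity}; it is merely bounded measurable in $q$. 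Changing $r_n$ to $r$ changes the driving noise only on $[r_n\wedge r,\,r_n\vee r]$, where its quadratic variation is at most $|\kappa(r_n)-\kappa(r)|$. A Gronwall estimate then gives $\E\sup_q|X^{i,n}_q-X^i_q|\to0$.
\item \emph{Joint continuity of $u$.} We need $u(S_n,x_n)\to u(S,x)$ whenever $(S_n,x_n)\to(S,x)$. Continuity in $x$ is uniform because $|V|\le1$. Continuity in $q$ follows from the representation $u(q,x)=\E_{q,x}\tanh X_1$ together with the same coupling stability for the starting time. Alternatively, it follows from the continuity of $u$ on $[0,1]\times\R$ recorded in Lemma~\zcref[noname]{lem:scalar-spatial-regularity}.
\end{enumerate}
Combining (i) and (ii), the coupled magnetizations converge in probability, and bounded convergence yields continuity of $(r,S)\mapsto c(P_0;r,S)$ and of $S\mapsto c(P_0;S)$.

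\emph{Main obstacle.} The delicate point is the boundary case $r=S$, where the independent segment degenerates, and the case $S_n\to S$ with the branch point moving. Both are handled by the same coupling, since the discrepancy is controlled by $|\kappa(S_n)-\kappa(S)|+|\kappa(r_n)-\kappa(r)|$.

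\emph{Zero-field extension.} For $-S\le r<0$, flipping the second group's terminal signs multiplies $P_0$ by $(-1)^b$. By the oddness of $u$, this matches the definition of $\nu_r^{S,0}$ as the image of $\nu_{|r|}^{S,0}$ under $(m_1,m_2)\mapsto(m_1,-m_2)$. Continuity at $r=0$ holds because at $h=0$ the diffusion starts at $0$ and $u(S,\cdot)$ is odd. Consequently $X^2_S\overset{d}{=}-X^2_S$ given $X_0=0$, independently of $X^1_S$, so $\nu_0^{S,0}$ is invariant under the reflection. The reflected and unreflected definitions therefore agree at $r=0$, and the extension is continuous there.
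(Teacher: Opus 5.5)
Your proposal is correct and follows essentially the same route as the paper: conditioning on the paths at time $S$ and using $u(S,x)=\E_{S,x}\tanh X_1$ for the coefficient identities, the same coupling by three Brownian motions with Doob and Gronwall (plus continuity of $u$ on $[0,1]\times\R$) for continuity, and the symmetry of the independent branches at $r=0$ for the zero-field extension. One small point: the discrepancy between the two driving noises has quadratic variation $2|\kappa(r_n)-\kappa(r)|$ rather than $|\kappa(r_n)-\kappa(r)|$, which changes nothing in the argument.
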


\begin{proof}
Let $X^{g,j}$ denote the scalar diffusion associated with replica
$j$ in group $g$. The $k$ diffusions in each group coincide up
to time $S$, so their values at that time satisfy
\[
 Y_g:=X_S^{g,1}=\cdots=X_S^{g,k},\qquad g=1,2.
\]
Conditional on $(Y_1,Y_2)$, the individual continuations after
$S$ and the terminal signs are independent.  Since $u(1,x)=\tanh x$,
\zeqref{eq:martingale-u} and \eqref{e:termsign} give
\[
 \E_{\rm sc}[\epsilon^{g,j}\mid Y_1,Y_2]
 =\E_{\rm sc}[\tanh(X_1^{g,j})\mid Y_g]
 =u(S,Y_g).
\]
Thus
\[
 c(P_0;r,S)
 =\E_{\rm sc}[u(S,Y_1)^a u(S,Y_2)^b]
 =\nu_r^{S,h}(z_1^az_2^b).
\]
For one group, the same argument gives
$c(P_0;S)=\E_{\rm sc}[u(S,X_S)^a]=\nu_{S,h}(z^a)$.

For continuity, let $B^0,B^1,B^2$ be independent Brownian
motions. For each $r$, drive the two scalar diffusions
$X^{r,1},X^{r,2}$ by
\[
 W_q^{r,g}
 =B^0_{q\wedge r}+B_q^g-B_{q\wedge r}^g,
 \qquad g=1,2.
\]
This couples all choices of the splitting time on one
probability space. The drift $\kappa'\alpha u$ is bounded
and spatially Lipschitz with bounds $B_\xi$. For each $g$,
the noise difference between the equations with splitting
times $r$ and $r'$ has quadratic variation
\[
 2\int_{r\wedge r'}^{r\vee r'}\kappa'(q)\,dq
 \le 2B_\xi|r-r'|.
\]
Doob's inequality and Gronwall's inequality therefore give
\[
 \E\max_{g=1,2}\sup_{q\le1}
 |X_q^{r,g}-X_q^{r',g}|^2
 \le C_\xi|r-r'|.
\]
The bounded drift and diffusion coefficient also give
$\E|X_S^{r,g}-X_{S'}^{r,g}|^2\le C_\xi|S-S'|$.
Consequently,
\[
 \E\max_{g=1,2}
 |X_S^{r,g}-X_{S'}^{r',g}|^2
 \le C_\xi\bigl(|r-r'|+|S-S'|\bigr).
\]
Since $u$ is continuous and bounded by one, the products
defining $c(P_0;r,S)$ converge in $L^1$ whenever
$(r',S')\to(r,S)$. This proves continuity on
$\{0\le r\le S\le1\}$, including its boundary. The same
argument with one diffusion proves continuity of $c(P_0;S)$.

At $h=0$, reflection defines
\[
 c(P_0;r,S)=(-1)^b c(P_0;|r|,S),\qquad -S\le r<0.
\]
When $r=0$, the two group magnetizations are independent
and symmetric, so $c(P_0;0,S)=(-1)^b c(P_0;0,S)$.
The extension is therefore continuous across $r=0$.
\end{proof}

\subsection{The finite-replica comparison}
\label{src:replica-interpolation}

\begin{lemma}\label{src:replica-upper}
For either the one-group or two-group construction, every $\theta\in\R$, every $\eta>0$,
and every $N$ for which the constraint $\mathcal C_\eta(D)$ is nonempty,
\begin{equation}\label{src:source-upper}
 \E\mathcal F_N^{D,P_0}(\theta)
 \le dP+\theta c+\frac{\theta^2}{2}
          +\frac{d(d-1)}4 B_\xi\eta^2,
\end{equation}
where $c$ is the corresponding coefficient in
Lemma~\zcref[noname]{src:replica-coefficient}.
At zero field the pair assertion also holds for $-S\le r<0$,
using its reflected coefficient.
\end{lemma}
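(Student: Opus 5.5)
We prove the bound by a Guerra-type interpolation for the $d$ coupled replicas, in the style of Talagrand's constrained two-replica bound. Along the interpolation the disorder is switched off while a cascade of Gaussian cavity fields is switched on. The cascade is arranged so that the scalar end of the interpolation reproduces exactly the branching diffusions used to define $c$.

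First reduce to the case where $\mu$ has finitely many atoms and $\xi$ is a finite mixture. The pressure $\mathcal F_N^{D,P_0}$ is continuous in the mixture coefficients, with dominated tails by \zeqref{eq:mixture}. The scalar quantities $\Phi(0,h)$, $\int q\kappa'\alpha$, and $c$ are continuous in $\mu$ in $L^1$ of the distribution function, by \cite[Proposition~2]{ACunique} and the coupling argument of Lemma~\zcref[noname]{src:replica-coefficient}. So it suffices to prove the bound for a step function $\alpha$ with jumps at $0=q_0<\dots<q_K=1$, refined so that $r$ and $S$ are among the $q_k$.

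Next build the comparison cascade. Build an RPC with weights from $\alpha$, but replace each level's index by a $d$-tuple of indices. The tuples are forced equal for all $d$ replicas below $r$, equal within each group between $r$ and $S$, and free above $S$. Use the per-replica normalization $\gamma$ from the example: on each interval, $\alpha$ divided by the number of replicas sharing the field. For each replica $\ell$ and site $i$, attach a Gaussian field $Z^\ell_i(\text{path})$ with covariance $\kappa$ increments along shared segments and independent increments after splitting. Interpolate with
\[
 H_t=\sqrt t\sum_\ell H_N(\sigma^\ell)+\sqrt{1-t}\sum_{\ell,i}Z_i^\ell\sigma_i^\ell+h\sum_{\ell,i}\sigma_i^\ell+\theta NA_N,
\]
restricted to $\mathcal C_\eta(D)$, and average over the cascade weights.

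Then differentiate in $t$ and compute the derivative. Gaussian integration by parts produces, for each pair of replica-copies and cascade samples, terms of the form $\xi(R)-R\xi'(\rho)+\theta_\rho$, where $\rho$ is the cascade overlap. These combine into $-\tfrac12\bigl[\xi(R)-R\xi'(\rho)+\rho\xi'(\rho)-\xi(\rho)\bigr]$ with appropriate $\alpha$-weights. Since $\xi$ is convex on $[-1,1]$ for even mixtures, the bracket is nonnegative whenever the overlaps are free, which handles inter-copy terms across different cascade branches. Within one copy, the replica pairs $(a,b)$ have $R_{ab}$ pinned within $\eta$ of $D_{ab}$, and $D_{ab}$ equals the cascade overlap at which their fields split. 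For these pairs the bracket is at most $\tfrac12\sup|\xi''|\,\eta^2$ by Taylor expansion. Summing over $d(d-1)/2$ pairs gives the error $\tfrac{d(d-1)}{4}B_\xi\eta^2$. Diagonal terms cancel because $R_{\ell\ell}=1$.

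Finally evaluate the $t=0$ end. There the system decouples over sites. Each site contributes $\log\sum_{\epsilon\in\{\pm1\}^d}\exp(\sum_\ell\epsilon^\ell(h+Z^\ell)+\theta P_0(\epsilon))$, averaged through the cascade recursion. Dropping the constraint only increases this sum. Writing $e^{\theta P_0}=\cosh\theta+P_0\sinh\theta$ and using the terminal sign law \eqref{e:termsign}, the recursive value equals
\[
 dP+\log(\cosh\theta+c\sinh\theta)\le dP+\theta c+\tfrac{\theta^2}{2}.
\]
The last inequality is Hoeffding's bound for a $\{\pm1\}$ variable with mean $c$. The subtraction of $\tfrac d2\int q\kappa'\alpha$ uses the $\gamma$ normalization exactly as in the example. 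The zero-field negative-$r$ case follows by flipping $\sigma^{2,j}\mapsto-\sigma^{2,j}$, a symmetry of the disorder law at $h=0$, which maps the problem to $|r|$ with $P_0$ multiplied by $(-1)^b$.

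The main obstacle is the $t=0$ identification: showing that the multi-replica cascade recursion, with fields shared across groups, reduces to the branching-diffusion expectation with the coefficient $c$. This needs a careful Cole--Hopf computation level by level, using the exponents $\gamma$.
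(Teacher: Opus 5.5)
\textbf{Gap: the $t=0$ evaluation.} Your plan parallels the paper's: interpolation with exponents $\gamma=\alpha/m(q)$, convexity of $\xi$ for the cross terms, the Taylor bound $\frac{d(d-1)}4B_\xi\eta^2$ for the constrained self-overlaps, dropping the constraint at $t=0$, Hoeffding, and the spin-flip reflection. The problem is the $t=0$ step, which asserts an identity that is false.

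You claim the recursion with terminal function $b_\theta(x)=\log\sum_{\epsilon}e^{x\cdot\epsilon+\theta P_0(\epsilon)}$ evaluates to $dP+\log(\cosh\theta+c\sinh\theta)$. Equivalently, $U_\theta(0,h\mathbf 1)-U_0(0,h\mathbf 1)=\log\E_{\rm sc}e^{\theta P_0}$. This already fails for the one-group construction with $d=1$ and $P_0(\epsilon)=\epsilon^1$:
\begin{itemize}
\item Here $b_\theta(x)=\log 2\cosh(x+\theta)$ and $\gamma=\alpha$, so $U_\theta(0,h)=\Phi(0,h+\theta)$, and $c=u(0,h)$.
\item Expanding both sides to second order in $\theta$ gives coefficients $V(0,h)$ and $1-u(0,h)^2$.
\item By Lemma~\ref{lem:slack}, these differ whenever $Q>0$.
\end{itemize}
The reason is that an averaging step $a^{-1}\log\E e^{a(\cdot)}$ with $a<1$ (a plain expectation when $a=0$) does not become linear under Cole--Hopf. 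Hence the ratio $e^{b_\theta-b_0}$ is not transported exactly; equality holds only when every exponent equals one. The ``careful Cole--Hopf computation level by level'' that you name as the main obstacle therefore cannot produce the identity you want from it.

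\textbf{What is needed.} What is true, and what the paper proves, is only the inequality $U_\theta(0,h\mathbf 1)-U_0(0,h\mathbf 1)\le\log\E_{\rm sc}e^{\theta P_0}$. It requires a comparison argument you have not supplied.
\begin{itemize}
\item Put $w=U_\theta-U_0$. At a step with exponent $a\in(0,1]$, $w(t_j,x)=a^{-1}\log\int e^{a w(t_{j+1},x+z)}\,Q_{0,j}^x(dz)$, where $Q^x_{0,j}$ is the Gaussian increment law reweighted by $e^{aU_0(t_{j+1},x+z)}$.
\item Since $\Lambda(a)=\log\int e^{aw}\,dQ$ is convex with $\Lambda(0)=0$, we have $\Lambda(a)\le a\Lambda(1)$. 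This gives $w(t_j,x)\le\log\int e^{w(t_{j+1},x+z)}\,Q_{0,j}^x(dz)$; Jensen covers $a=0$.
\item Each group contributes $m\Phi$ to $U_0$ and the exponent is $\alpha/m$. So the tilted kernels factor over groups and, by Cole--Hopf, are the transition laws of the Parisi diffusion with drift $\alpha\kappa'u$, branching at $r$ and $S$.
\item Iterating from $h\mathbf 1$ yields $\log\E_{\rm sc}e^{b_\theta(X_1)-b_0(X_1)}=\log\E_{\rm sc}e^{\theta P_0}$, after which your Hoeffding step applies.
\end{itemize}
Thus Cole--Hopf is needed only to identify the tilted kernels with the scalar law defining $c$, not to evaluate $U_\theta$.

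\textbf{Two smaller points.}
\begin{itemize}
\item The $d$ replicas must share a single cascade leaf, with only the $d$-dimensional field increments branching (per-level covariance $\int C(q)\,dq$). Separate cascade indices above $S$, as your ``free'' tuples suggest, would not give the single-exponent recursion you use at $t=0$.
\item $\gamma=\alpha/m$ can vanish, equal one, or repeat across levels. The cascade therefore has to be built with perturbed, strictly increasing exponents in $(0,1)$, and the limit taken using $L^1$-continuity of the recursion in $\gamma$.
\end{itemize}
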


\begin{proof}
We apply the Gaussian interpolation of
\cite[Lemmas~1--2, Eqs.~(40)--(47)]{PanVector} to compare
the constrained pressure with a recursion involving Gaussian
fields. Our choice of coefficients makes its value at
$\theta=0$, after subtracting the interpolation penalty,
equal to $dP$. We then bound the change caused by
$\theta P_0$ and the error from the overlap tolerance $\eta$.
We first treat step functions $\alpha$ and nonnegative
cross overlaps. The approximation argument and the
zero-field reflection are given at the end.

\emph{The path and recursion.}
For the two-group construction, set
\[
 B_{ab}(q)=
 \begin{cases}
 q,&a=b,\\
 q\wedge S,&a\ne b\text{ in the same group},\\
 q\wedge r,&a,b\text{ in different groups},
 \end{cases}
 \qquad 0\le q\le1.
\]
Let $m(q)$ be the number of replicas sharing a common Gaussian
field increment at level $q$. 
For the two-group construction,
\[
 m(q)=
 \begin{cases}
 2k,&0\le q<r,\\
 k,&r\le q<S,\\
 1,&S\le q\le1.
 \end{cases}
\]
We use a deterministic function $\gamma:[0,1]\to[0,1]$
to specify the exponential weighting in the backward
Gaussian recursion. If $\gamma=a>0$ on a step, the recursion
averages a random next-step value $F$ by
$a^{-1}\log\E e^{aF}$, where the expectation is over that
step's Gaussian increment. At $a=0$, it uses $\E F$.
We choose
\begin{equation}\label{src:replica-gamma}
 \gamma(q)=\frac{\alpha(q)}{m(q)}.
\end{equation}
This division will make a group of size $m$ contribute
$m$ times the scalar Parisi value when $\theta=0$.
Empty intervals are omitted. For one group, take diagonal
$q$, off-diagonal $q\wedge S$, and set $m(q)=k$ below $S$
and $m(q)=1$ at and above $S$. In both cases $B(0)=0$,
$B(1)=D$, and $\gamma$ is nondecreasing with values in $[0,1]$.

For matrices $A,B$ of size $d$, write
\[
 A:B=\sum_{a,b=1}^d A_{ab}B_{ab},\qquad
 \Xi(A)=\sum_{a,b=1}^d\xi(A_{ab}),\qquad
 \Theta(A)=A:\nabla\Xi(A)-\Xi(A).
\]
Here $\nabla\Xi(A)$ is the entrywise matrix
$(\kappa(A_{ab}))$. The entries of $B(q)$ are differentiable
except possibly at $r$ and $S$, where the groups of replicas
sharing a scalar field change. For
$q\in(0,1)\setminus\{r,S\}$, put
\[
 C(q)=\frac{d}{dq}\nabla\Xi(B(q)).
\]
Set $C(q)=0$ at the excluded points; these values do not
affect any integral in $q$. Within each group of replicas
sharing a scalar field, every entry of $C(q)$ equals
$\kappa'(q)$; between different current groups its entries
are zero. Thus $C(q)$ is positive semidefinite and
\[
 \sum_{a,b}|C_{ab}(q)|\le d^2B_\xi.
\]
The scalar function $\Theta(B(q))$ is nondecreasing.

The perturbed terminal function is
\begin{equation}\label{src:source-terminal}
 b_\theta(x)=
 \log\sum_{\epsilon\in\{-1,1\}^d}
       e^{x\cdot\epsilon+\theta P_0(\epsilon)},
 \qquad x\in\R^d.
\end{equation}
Since $\alpha$ is a step function, so is $\gamma$.
Choose a partition $0=t_0<\cdots<t_m=1$ containing the
splitting times of the construction, with $\gamma=a_j$
on $[t_j,t_{j+1})$. Let $Z_j$ be independent centered
Gaussian vectors with covariance
$\int_{t_j}^{t_{j+1}}C(q)\,dq$.
Starting with $V_m=b_\theta$, define
\[
 V_j(x)=
 \begin{cases}
 a_j^{-1}\log\E_{Z_j}e^{a_jV_{j+1}(x+Z_j)},&a_j>0,\\
 \E_{Z_j}V_{j+1}(x+Z_j),&a_j=0.
 \end{cases}
\]
For $t_j\le q<t_{j+1}$, define $U_\theta(q,x)$ by the
same averaging formula with terminal function $V_{j+1}$
and Gaussian covariance $\int_q^{t_{j+1}}C(s)\,ds$, and
set $U_\theta(1,x)=b_\theta(x)$. Thus
$U_\theta(t_j,x)=V_j(x)$. The subscript in $U_\theta$
always denotes the perturbation parameter. We write $U$ for
$U_\theta$ in the estimates below.

We next show that the recursion depends continuously on
$\gamma$ in $L^1([0,1])$. This will define $U_\theta$ for
general $\alpha$ and allow zero, repeated, or unit
coefficients in the cascade calculation. At the terminal
time and after each averaging step with coefficient in $[0,1]$,
\begin{equation}\label{src:replica-derivative-bounds}
 |\partial_aU|\le1,\qquad D^2U\ge0,\qquad
 0\le\partial_{aa}U\le1-(\partial_aU)^2,\qquad
 |\partial_{ab}U|\le1.
\end{equation}
At the terminal time, the gradient and Hessian are the
mean and covariance of the Ising signs. For an averaging
step with next-step function $F$, differentiation gives
gradient $\E_j\nabla F$ and Hessian
\[
 \E_jD^2F+a_j\Cov_j(\nabla F),
\]
where $\E_j$ denotes expectation under the Gaussian law
of $Z_j$ weighted by $e^{a_jF(x+Z_j)}$ and normalized
to have mass one. Positivity is preserved. If the bounds
hold for $F$, the $a$th diagonal entry is at most
\[
 1-\E_j[(\partial_aF)^2]
    +a_j\Var_j(\partial_aF)
 \le1-(\E_j[\partial_aF])^2.
\]
The off-diagonal bound follows from positivity of the
Hessian and the diagonal bounds.

On each interval the recursion solves
\[
 U_q+\frac12C(q):D^2U
       +\frac{\gamma(q)}2\nabla U^{\mathsf T}C(q)\nabla U=0.
\]
Subtract recursions with coefficients $\gamma,\widetilde\gamma$
and the same terminal function. Their difference solves
a linear equation with drift
\[
 \frac{\gamma}{2}C(\nabla U+\nabla\widetilde U)
\]
and forcing bounded in absolute value by
$\frac12|\gamma-\widetilde\gamma|\sum_{a,b}|C_{ab}|$.
The drift is bounded and spatially Lipschitz by
\zeqref{src:replica-derivative-bounds}. Applying It\^o's
formula along the diffusion with this drift and covariance
matrix $C(q)$, interval by interval, gives
\begin{equation}\label{src:replica-recursion-continuity}
 \sup_{q,x}|U(q,x)-\widetilde U(q,x)|
 \le\frac12\int_0^1|\gamma-\widetilde\gamma|
                         \sum_{a,b}|C_{ab}|\,dq.
\end{equation}
The stopping times used to keep the diffusion in bounded
spatial regions can be removed because the gradients are
bounded and the total covariance is finite. In particular,
if step functions $\gamma_n$ converge to $\gamma$ in
$L^1([0,1])$, their recursion values satisfy
\[
 \sup_{q,x}|U^{(n)}(q,x)-U^{(p)}(q,x)|
 \le\frac{d^2B_\xi}{2}\|\gamma_n-\gamma_p\|_{L^1}.
\]
They therefore converge uniformly to a limit independent
of the approximating sequence.

\emph{The finite-level cascade used in the interpolation.}
We use cascade weights to express the Gaussian recursion
as an expected logarithm of a partition sum. For this
calculation, temporarily allow any strictly increasing
coefficient list $0<a_0<\cdots<a_{m-1}<1$, with
$\gamma(q)=a_j$ on $[t_j,t_{j+1})$. We keep the partition
and covariance path fixed and return to $\gamma(q)=\alpha(q)/m(q)$
after deriving the interpolation bound.

At every vertex of depth $j$, independently generate
Poisson points on $(0,\infty)$ with intensity
$a_ju^{-1-a_j}\,du$. Multiply points along each depth-$m$
branch and normalize the products to weights
$(v_\beta)_{\beta\in\mathbb N^m}$. The normalized weights
sum to one almost surely.
Attach independent Gaussian increments to each edge,
independently of the Poisson points and the original
Hamiltonian. The vector increments have covariance
\[
 \nabla\Xi(B(t_{j+1}))-\nabla\Xi(B(t_j)),
\]
and the scalar increments have variance
$\Theta(B(t_{j+1}))-\Theta(B(t_j))$.
Take independent copies of the vector increments for each
site $i$, independently of the scalar increments. Let
$Z_i(\beta)$ and $Y(\beta)$ be the sums of the vector and
scalar increments along the path to $\beta$. Their
covariances are
\[
 \E Z_i(\beta)Z_i(\beta')^{\mathsf T}
       =\nabla\Xi(B_{\beta,\beta'}),\qquad
 \E Y(\beta)Y(\beta')=\Theta(B_{\beta,\beta'}),
\]
where $B_{\beta,\beta'}=B(t_\ell)$ if the leaves share
exactly $\ell$ initial edges; identical leaves use
$B(t_m)=D$.

For these fields, the cascade recursion identity
\cite[Lemma~3, Eq.~(3.13)]{PT} gives
\[
 \E\log\sum_\beta v_\beta
    e^{b_\theta(x+Z_i(\beta))}
 =U_\theta(0,x),
 \qquad x\in\R^d.
\]
The expectation is over the cascade weights and Gaussian
increments. To see the normalization at one level,
multiply each Poisson point by an independent positive
mark $V$ with $\E V^{a_j}<\infty$. The intensity is
multiplied by $\E V^{a_j}$, so the expected logarithm
of the total mass increases by
$a_j^{-1}\log\E V^{a_j}$. The cascade identity applies
this calculation successively from the leaves to the
root, producing the logarithmic averaging steps above.

We now check that the necessary moment conditions hold here. Write
$\Gamma_{\rm E}(s)=\int_0^\infty x^{s-1}e^{-x}\,dx$
for Euler's gamma function. A total mass with Poisson
intensity $au^{-1-a}\,du$, where $0<a<1$, has Laplace
transform $\exp\{-\Gamma_{\rm E}(1-a)t^a\}$.
Integrating this transform shows that its negative moments
are finite and its positive moments of order less than
$a$ are finite. In particular its logarithm is integrable.
The strict increase of the coefficients permits these
moment bounds to be applied from the bottom level upwards.
Each added mark is the exponential of a function of a
finite Gaussian vector, and that function has at most
linear growth. These marks have every positive power
moment required in conditional marking.

\emph{The interpolation derivative.}
Keep the external field and perturbation fixed and set
\[
 \mathcal H_{N,t}(\boldsymbol\sigma,\beta)
 =\sqrt t\sum_{\ell=1}^dH_N(\sigma^\ell)
  +\sqrt{1-t}\sum_{i=1}^N
       (\sigma_i^1,\ldots,\sigma_i^d)\cdot Z_i(\beta)
  +\sqrt{tN}\,Y(\beta).
\]
Define
\[
 \varphi_N(t)
 =\frac1N\E\log\!\left[
   \sum_{\beta\in\mathbb N^m}v_\beta
   \sum_{\boldsymbol\sigma\in\mathcal C_\eta(D)}
   \exp\!\left\{
     \mathcal H_{N,t}(\boldsymbol\sigma,\beta)
     +h\sum_{\ell=1}^d\sum_{i=1}^N\sigma_i^\ell
     +\theta NA_N(\boldsymbol\sigma)
   \right\}
 \right].
\]
Its expectation includes the disorder, cascade, and auxiliary
Gaussian fields. The sampling brackets below refer to this
normalized interpolating measure.

For two vector configurations, let
$A_{ab}=N^{-1}\sum_i\sigma_i^a\tau_i^b$ be their cross-overlap
matrix, and define
\[
 \mathcal E_\Xi(A,B)
 =\Xi(A)-\Xi(B)-\nabla\Xi(B):(A-B).
\]
For two independent samples
$(\boldsymbol\sigma^{\,1},\beta^1)$ and
$(\boldsymbol\sigma^{\,2},\beta^2)$ from the interpolating
measure, write
\[
 A^{u,v}_{ab}
 =\frac1N\sum_i\sigma_i^{u,a}\sigma_i^{v,b},
 \qquad u,v\in\{1,2\}.
\]
For fixed configurations and leaves, put
$B=B_{\beta,\beta'}$. The covariance of the interpolating
Hamiltonian, divided by $N$, is
\[
 \frac1N\E\!\left[
   \mathcal H_{N,t}(\boldsymbol\sigma,\beta)
   \mathcal H_{N,t}(\boldsymbol\tau,\beta')\right]
 =t\Xi(A)+(1-t)A:\nabla\Xi(B)+t\Theta(B).
\]
Its derivative is $\mathcal E_\Xi(A,B)$. Thus the scalar
field supplies precisely the term needed to obtain this
Taylor remainder. Gaussian integration by parts gives
\begin{equation}\label{src:replica-interpolation-derivative}
 \varphi_N'(t)=\frac12\E\left\langle
    \mathcal E_\Xi(A^{1,1},D)
    -\mathcal E_\Xi(A^{1,2},B_{\beta^1,\beta^2})
                                   \right\rangle_t
\end{equation}
for almost every $t\in(0,1)$. We establish this identity
by first restricting to finitely many leaves and then
passing to the limit in its integrated form.

Evenness and the nonnegative coefficients imply convexity
of $\xi$ on $[-1,1]$. Taylor's integral formula and the
bound for $\xi''$ give
\[
 0\le\xi(x)-\xi(y)-\kappa(y)(x-y)
       \le\frac{B_\xi}{2}(x-y)^2.
\]
Thus the second term of
\zeqref{src:replica-interpolation-derivative} is nonpositive.
The matrix $A^{1,1}$ has diagonal entries one, and its
$d(d-1)$ off-diagonal entries differ from those of $D$
by less than $\eta$. It follows that
\begin{equation}\label{src:replica-shell-error}
 \varphi_N'(t)\le e_d(\eta),
 \qquad e_d(\eta)=\frac{d(d-1)}4B_\xi\eta^2.
\end{equation}
We now justify the derivative identity and its integration
for the full cascade.

Order the Poisson points at each vertex decreasingly and
restrict the leaves to $\Lambda_L=\{1,\ldots,L\}^m$,
retaining the original weights $v_\beta$. Denote the
resulting partition sum by $Z_{N,t}^{(L)}$ and put
\[
 \varphi_N^{(L)}(t)=\frac1N\E\log Z_{N,t}^{(L)}.
\]
Conditional on the cascade weights, this partition sum
depends on a finite Gaussian vector. This remains true
for an infinite mixture, since the original Hamiltonian
has only $2^N$ values at fixed $N$. Gaussian integration
by parts therefore proves
\zeqref{src:replica-interpolation-derivative} with
$\varphi_N^{(L)}$ and the normalized restricted sampling
measure in place of their full-cascade counterparts.
Write $g_L(t)$ for this restricted right-hand side. Thus
\[
 \varphi_N^{(L)}(t)-\varphi_N^{(L)}(s)
 =\int_s^t g_L(u)\,du,
 \qquad 0\le s<t\le1.
\]
We pass to the limit separately on the two sides of
this identity.

For the expected logarithms, every $\Lambda_L$ contains
$\beta_0=(1,\ldots,1)$. Fix
$\boldsymbol\sigma_0\in\mathcal C_\eta(D)$ and put
$K_N=N(d|h|+|\theta|)$. Keeping this one summand gives
\[
 \log Z_{N,t}^{(L)}
 \ge\log v_{\beta_0}
       +\mathcal H_{N,t}(\boldsymbol\sigma_0,\beta_0)-K_N.
\]
The lower bound is integrable. Indeed, the moment bounds
above give integrable logarithms for the total cascade
mass and for the largest Poisson point at each vertex
along $\beta_0$. Hence $\E|\log v_{\beta_0}|<\infty$.
For the positive parts,
\[
 (\log Z_{N,t}^{(L)})^+\le Z_{N,t},\qquad
 \sup_{0\le t\le1}\E Z_{N,t}
 \le2^{Nd}\exp\{K_N+C_{d,\xi}N\}<\infty.
\]
The second bound follows from Gaussian exponential
moments and $\sum_\beta v_\beta=1$. Since
$Z_{N,t}^{(L)}\uparrow Z_{N,t}$, these bounds give
\[
 \lim_{L\rightarrow \infty} \varphi_N^{(L)}(t) = \varphi_N(t)
\]
for every fixed $t\in[0,1]$.

For the right-hand side, the normalized restricted sampling
measures converge in total variation to the full sampling
measure at each fixed $t$. Moreover,
\[
 |\mathcal E_\Xi(A,B)|
 \le2d^2\bigl(\|\xi\|_\infty+\|\kappa\|_\infty\bigr),
\]
where the norms are taken on $[-1,1]$. Thus $g_L(t)$
converges to the full right-hand side $g(t)$ and is bounded
uniformly in $L$ and $t$. Dominated convergence now gives
\[
 \varphi_N(t)-\varphi_N(s)=\int_s^t g(u)\,du,
 \qquad 0\le s<t\le1.
\]
Consequently $\varphi_N$ is absolutely continuous and
satisfies \zeqref{src:replica-interpolation-derivative}
almost everywhere.

\emph{The endpoints.}
At $t=1$, the scalar Gaussian field is independent of
the spin configuration, so the partition sum factors.
For a scalar increment $V$ of variance $\Delta\Theta$,
one recursion step contributes
\[
 \frac1{a_j}\log\E e^{a_j\sqrt N V}
 =\frac{Na_j}{2}\Delta\Theta.
\]
Dividing by $N$ and summing over the steps gives
\[
 \varphi_N(1)=\E\mathcal F_N^{D,P_0}(\theta)
       +\frac12\int_0^1\gamma(q)\,d\Theta(B(q)).
\]
At $t=0$, dropping the overlap constraint bounds the sum
from above. The coordinate sums then factor over sites
and have terminal function \zeqref{src:source-terminal}.
Independence of their Gaussian increments makes their
backward recursions additive. Consequently
\[
 \varphi_N(0)\le U_\theta(0,h\mathbf1).
\]
Combining the two endpoints with
\zeqref{src:replica-shell-error} gives
\begin{equation}\label{src:replica-endpoints}
 \E\mathcal F_N^{D,P_0}(\theta)
 \le U_\theta(0,h\mathbf1)
       -\frac12\int_0^1\gamma\,d\Theta(B)+e_d(\eta).
\end{equation}

For a general list $0\le a_0\le\cdots\le a_{m-1}\le1$, set
\[
 a_j^{(\varepsilon)}
 =(1-\varepsilon)a_j+\varepsilon\frac{j+1}{m+1},
 \qquad 0<\varepsilon<1,
\]
and let $\gamma^{(\varepsilon)}=a_j^{(\varepsilon)}$
on $[t_j,t_{j+1})$. These coefficients are strictly
increasing and lie in $(0,1)$. Apply the interpolation
bound just proved with $a_j^{(\varepsilon)}$, keeping
the partition and covariance path fixed.
Estimate~\zeqref{src:replica-recursion-continuity}
gives uniform convergence of the recursion values.
The penalties converge as well, since
\[
 \left|\int_0^1
   (\gamma^{(\varepsilon)}-\gamma)\,d\Theta(B)\right|
 \le d^2B_\xi
    \|\gamma^{(\varepsilon)}-\gamma\|_{L^1}.
\]
Thus \zeqref{src:replica-endpoints} holds for every
nondecreasing step coefficient with values in $[0,1]$.
We now return to the choice $\gamma(q)=\alpha(q)/m(q)$.

\emph{The cancellation at $\theta=0$.}
When $\theta=0$, the terminal spin sum factors over the
$d$ replica coordinates, giving
\[
 b_0(x)
 =\log\sum_{\epsilon\in\{-1,1\}^d}e^{x\cdot\epsilon}
 =\sum_{\ell=1}^d\log(2\cosh x_\ell),
 \qquad x\in\R^d.
\]
We evaluate the recursion at field vectors whose
coordinates agree within each current group. Consider
a step $[s,t]$ on which $\alpha=\alpha_0$ and no group
splits. If a group of size $m$ contributes $m\Phi(t,y)$
at time $t$, its common Gaussian increment gives
\[
 \frac{m}{\alpha_0}\log
 \E\exp\{\alpha_0\Phi(t,x+Z)\}
 =m\Phi(s,x),
 \qquad Z\sim N(0,\kappa(t)-\kappa(s)).
\]
Here the recursion coefficient is $\alpha_0/m$.
The equality is the scalar Parisi recursion, obtained
from the backward heat equation for $e^{\alpha_0\Phi}$
on $[s,t]$. For $\alpha_0=0$, the scalar PDE itself
is linear and the recursion uses ordinary expectation.
Contributions from distinct groups add because their
Gaussian increments are independent. At a splitting time,
all new groups start at the same field value, and their
sizes sum to the size of the original group. Their
contributions therefore agree with that of the original
group. Starting from the terminal function $b_0$ and
iterating backwards gives
\[
 U_0(0,h\mathbf1)=d\Phi(0,h).
\]

At level $q$, there are $d/m(q)$ groups, each containing
$m(q)^2$ ordered pairs of coordinates, including equal
indices. Within each group, $B_{ab}(q)=q$, so each pair
contributes $q\kappa'(q)$ to
$\frac{d}{dq}\Theta(B(q))$. Between groups, $B_{ab}$ is
constant and contributes zero. Therefore
\begin{equation}\label{src:replica-penalty}
 \frac12\int_0^1\gamma\,d\Theta(B)
 =\frac12\int_0^1\frac{\alpha(q)}{m(q)}
                    d\,m(q)q\kappa'(q)\,dq
 =\frac d2\int_0^1q\kappa'(q)\alpha(q)\,dq.
\end{equation}
Subtracting this term from $U_0(0,h\mathbf1)=d\Phi(0,h)$
gives $d$ times the scalar Parisi functional at $\alpha$.
For the minimizing distribution, this equals $dP$.

\emph{The effect of the added spin product.}
It remains to bound
$w(q,x)=U_\theta(q,x)-U_0(q,x)$. We compare the two
recursions using the Gaussian transitions weighted
by the values at $\theta=0$. Consider a step from
$t_{j+1}$ to $t_j$ with coefficient $a=a_j\in(0,1]$
and Gaussian increment $Z_j$. For a starting field $x$,
define the probability measure
\[
 Q_{0,j}^x(dz)
 =
 \frac{e^{aU_0(t_{j+1},x+z)}}
      {\E e^{aU_0(t_{j+1},x+Z_j)}}
 \,\Law(Z_j)(dz).
\]
Subtracting the two logarithmic recursions gives
\[
 \begin{aligned}
 w(t_j,x)
 &=\frac1a\log\int
     e^{a w(t_{j+1},x+z)}\,Q_{0,j}^x(dz)\\
 &\le\log\int
     e^{w(t_{j+1},x+z)}\,Q_{0,j}^x(dz).
 \end{aligned}
\]
Indeed, the logarithm of the moment generating function
is convex and vanishes at zero, so its value at $a\in(0,1]$
is at most $a$ times its value at one. For $a=0$, take
$Q_{0,j}^x=\Law(Z_j)$. The difference of the recursions is
then the expectation of $w(t_{j+1},x+Z_j)$, and Jensen's
inequality gives the same upper bound.

To identify these transitions, consider a step $[s,t]$
on which $\alpha=\alpha_0$ and each current group has
size $m$. When the coordinates in a group all start
at $x$, their common field at time $t$ has distribution
\[
 \exp\{\alpha_0[\Phi(t,y)-\Phi(s,x)]\}
 \,N(x,\kappa(t)-\kappa(s))(dy).
\]
Indeed, the group's contribution to $U_0$ is $m\Phi$
and the recursion coefficient is $\alpha_0/m$.
The scalar recursion normalizes this density.
Since $e^{\alpha_0\Phi}$ solves the backward heat
equation, the resulting diffusion has drift
\[
 \kappa'(q)\partial_x\log e^{\alpha_0\Phi(q,x)}
 =\alpha_0\kappa'(q)u(q,x).
\]
Conditional on their starting fields, distinct groups
have independent transitions. Starting at $h\mathbf1$,
the kernels $Q_{0,j}^x$ therefore give the coupled
scalar diffusion law in \zeqref{src:source-coefficient}.

Exponentiating and iterating over the steps gives
\[
 U_\theta(0,h\mathbf1)-U_0(0,h\mathbf1)
 \le\log\E_{\rm sc}
       e^{b_\theta(X_1)-b_0(X_1)},
 \qquad X_1=(X_1^1,\ldots,X_1^d).
\]
By the definition of the terminal function,
\[
 e^{b_\theta(X_1)-b_0(X_1)}
 =
 \frac{\sum_{\epsilon\in\{-1,1\}^d}
       e^{X_1\cdot\epsilon+\theta P_0(\epsilon)}}
      {\sum_{\epsilon\in\{-1,1\}^d}e^{X_1\cdot\epsilon}}
 =
 \E_{\rm sc}[e^{\theta P_0(\epsilon)}\mid X_1].
\]
Hence the preceding upper bound is
$f(\theta)=\log\E_{\rm sc}e^{\theta P_0(\epsilon)}$.
We have $f(0)=0$ and $f'(0)=c$. Moreover, $f''(\theta)$
is the variance of $P_0$ under the probability law with
density
$e^{\theta P_0}/\E_{\rm sc}e^{\theta P_0}$
relative to the scalar path and sign law. Since
$P_0\in\{-1,1\}$, this variance is at most one.
Therefore, for every $\theta\in\R$,
\[
 U_\theta(0,h\mathbf1)-U_0(0,h\mathbf1)
 \le f(\theta)\le\theta c+\frac{\theta^2}{2}.
\]

\emph{Passing to a general distribution function.}
Choose step distribution functions $\alpha_j$ converging
to $\alpha$ in $L^1([0,1])$, using partitions that contain
the splitting times of the construction. Put
$\gamma_j(q)=\alpha_j(q)/m(q)$, and let $U_{\theta,j}$ and $c_j$ 
be the recursion and scalar coefficient obtained from
$\alpha_j$. Then
$\|\gamma_j-\gamma\|_{L^1}\le\|\alpha_j-\alpha\|_{L^1}$, so
\zeqref{src:replica-recursion-continuity} gives
\[
 U_{\theta,j}(0,h\mathbf1)\longrightarrow
 U_\theta(0,h\mathbf1).
\]
The interpolation penalties also converge, since
$|d\Theta(B(q))/dq|\le d^2B_\xi$ almost everywhere.

Let $\Phi_j$ be the scalar Parisi solution for $\alpha_j$
and put $u_j=\partial_x\Phi_j$. The associated probability
measures converge weakly, and
\cite[Proposition~2, Eqs.~(18)--(23)]{ACunique} gives
$\|u_j-u\|_\infty\to0$ on $[0,1]\times\R$.
For the diffusion drifts $b_j=\kappa'\alpha_ju_j$ and
$b=\kappa'\alpha u$, we therefore have
\[
 \int_0^1\sup_{x\in\R}|b_j(q,x)-b(q,x)|\,dq
 \le B_\xi\bigl(
       \|\alpha_j-\alpha\|_{L^1}+\|u_j-u\|_\infty
       \bigr)\longrightarrow0.
\]
Construct the diffusions for $\alpha_j$ and $\alpha$
using the same Brownian motions and splitting times.
The uniform spatial Lipschitz bound on the drifts
and Gronwall's inequality give, for each replica $\ell$,
\[
 \E_{\rm sc}\sup_{0\le q\le1}
 |X_q^{j,\ell}-X_q^\ell|^2\longrightarrow0.
\]
The formulas in Lemma~\zcref[noname]{src:replica-coefficient},
together with $\|u_j-u\|_\infty\to0$ and $|u_j|\le1$,
then give convergence of the coefficients $c_j$ to $c$.

The scalar recursion also satisfies
\zeqref{src:replica-recursion-continuity}, with $d=1$,
so $\Phi_j(0,h)\to\Phi(0,h)$. Passing to the limit in
the calculation at $\theta=0$ and the bound for the
added spin product therefore gives
\[
 U_0(0,h\mathbf1)
   -\frac12\int_0^1\gamma\,d\Theta(B)=dP,
 \qquad
 U_\theta(0,h\mathbf1)-U_0(0,h\mathbf1)
   \le\theta c+\frac{\theta^2}{2}.
\]
The interpolation bound
\zeqref{src:replica-endpoints} passes to the limit by
the same recursion and penalty estimates. Combining
these inequalities proves \zeqref{src:source-upper}
for nonnegative cross overlaps.

Finally, suppose $h=0$ and $-S\le r<0$.
Reversing every spin in the second group leaves each even
Hamiltonian and every within-group overlap unchanged, and
maps the overlap constraint with cross overlap $r$ to the
one with cross overlap $|r|$. It replaces the monomial
$P_0$ by $(-1)^bP_0$, where $b$ is the number of its factors
from the second group. Apply the bound at $|r|$ with perturbation 
parameter $(-1)^b\theta$. The linear term is then
$\theta(-1)^bc(P_0;|r|,S)$ and the quadratic term is
$\theta^2/2$. By
Lemma~\zcref[noname]{src:replica-coefficient},
$(-1)^bc(P_0;|r|,S)$ is the reflected coefficient.
This proves the negative-overlap assertion.
\end{proof}

\subsection{Suppression of deviations under the original Gibbs measure}
\label{src:source-suppression-section}

\begin{lemma}\label{src:source-suppression}
Fix the one-group or two-group construction, with $d$ replicas,
and let $\delta>0$. There are $\eta_0,C,c_0>0$ and $N_0$ such that
\begin{equation}\label{src:source-suppression-bound}
 \E G_{N,h}^{\otimes d}
 \left(\mathcal C_\eta(D)\cap\{|A_N-c|>\delta\}\right)
 \le C e^{-c_0N}
\end{equation}
for every $N\ge N_0$, every $0<\eta\le\eta_0$, and every
monomial $P_0$ from the chosen construction.
The bound is uniform over deterministic $S\in[0,1]$ in
the one-group case and $0\le r\le S\le1$ in the two-group
case. At $h=0$, the latter range extends to $|r|\le S\le1$.
The constant $\eta_0$ depends only on $\xi,d,\delta$.
The constants $C,c_0,N_0$ may also depend on $h$, but are
independent of the overlap levels, $\eta$, and $P_0$.
\end{lemma}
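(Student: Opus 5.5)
We bound the event $\{A_N>c+\delta\}$ by an exponential Chebyshev argument with the perturbed pressure; the event $\{A_N<c-\delta\}$ is the same argument with $\theta<0$. Fix $\theta>0$ and write $Z_N^{\theta}=\exp\{N\mathcal F_N^{D,P_0}(\theta)\}$ for the constrained perturbed partition sum, and $Z_{N,h}$ for the unperturbed single-replica sum. On $\mathcal C_\eta(D)\cap\{A_N>c+\delta\}$ the weight $e^{\theta NA_N}$ exceeds $e^{\theta N(c+\delta)}$. Hence
\[
 G_{N,h}^{\otimes d}\bigl(\mathcal C_\eta(D)\cap\{A_N>c+\delta\}\bigr)
 \le \exp\Bigl\{N\mathcal F_N^{D,P_0}(\theta)-\theta N(c+\delta)-dNF_{N,h}\Bigr\}.
\]
The task is therefore to show that the exponent is at most $-c_0N$ off an event of exponentially small probability. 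Since Gibbs masses are at most one, the bad event contributes at most its probability to the expectation.

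The first step controls the deterministic part. Lemma~\zcref[noname]{src:replica-upper} gives
\[
 \E\mathcal F_N^{D,P_0}(\theta)\le dP+\theta c+\tfrac{\theta^2}{2}+\tfrac{d(d-1)}4B_\xi\eta^2.
\]
We also need $p_N=\E F_{N,h}\ge P-o(1)$, which follows from $p_N\to P$. We then choose $\theta=\delta$ and $\eta_0^2=\delta^2/(d^2B_\xi)$, so that the expected exponent is at most
\[
 N\Bigl(-\tfrac{\delta^2}{2}+\tfrac{\delta^2}{4}+o(1)\Bigr)\le -\tfrac{\delta^2}{8}N
\]
for $N\ge N_0$. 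The choice $\eta_0$ depends only on $\xi,d,\delta$. The convergence $p_N\to P$ is independent of $S,r,\eta,P_0$, and so is the upper bound, since it is uniform in these parameters. Thus $N_0$ is uniform, as the lemma requires.

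The second step is concentration. Both $\mathcal F_N^{D,P_0}(\theta)$ and $F_{N,h}$ are Lipschitz functions of the Gaussian disorder. Their Lipschitz constants are of order $\sqrt{\xi(1)d^2/N}$ in the coefficients $g^{(p)}$, because each is a log-sum-exp of $H_N$, which has variance $N\xi(1)$ per configuration. An infinite mixture is handled by the continuous version of $H_N$ from the appendix, and the constraint does not affect the Lipschitz bound. Gaussian concentration then gives
\[
 \Prob\bigl(|\mathcal F_N-\E\mathcal F_N|>t\bigr)\le 2e^{-Nt^2/(2d^2\xi(1))},
\]
and the same bound for $F_{N,h}$. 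Taking $t=\delta^2/(32d)$ and splitting the expectation on the good and bad events yields \zeqref{src:source-suppression-bound}, with $c_0$ depending on $\delta,d,\xi$ only. An empty constraint set gives the bound trivially.

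The third step extends the range at zero field. For $-S\le r<0$ we use the reflected coefficient from Lemma~\zcref[noname]{src:replica-upper}; the argument is otherwise identical. Equivalently, one can apply the global spin flip of the second group directly to the Gibbs measure.

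The main obstacle is that the $o(1)$ in $p_N\to P$ has no explicit rate. This is harmless, because it does not depend on the overlap levels, $\eta$ or $P_0$, so only $N_0$ depends on $h$ through it. We also need to check that the concentration step is legitimate for the constrained sum, where $\mathcal F_N$ could be $-\infty$. When the constraint is nonempty, $\mathcal F_N$ is finite and the Lipschitz estimate holds surely, so this is not a problem.
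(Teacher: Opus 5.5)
Your proposal is correct and follows essentially the same route as the paper. Both arguments bound the Gibbs mass by an exponential Chebyshev inequality with the tilted constrained pressure at $\theta=\pm\delta$, control its mean by the finite-replica interpolation bound together with $p_N\to P$ (which gives an $N_0$ uniform in the overlap levels, $\eta$ and $P_0$), and control fluctuations by Gaussian concentration of both pressures with Lipschitz constant of order $d\sqrt{\xi(1)/N}$; your explicit choices $\eta_0^2=\delta^2/(d^2B_\xi)$ and $t=\delta^2/(32d)$ simply play the role of the paper's margins $e_d(\eta_0)\le g/4$, $g/4$ and $g/(8d)$ with $g=\delta^2/2$.
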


\begin{proof}
We may assume
$\mathcal C_\eta(D)$ is nonempty. We first state concentration
bounds for the constrained pressure and the original pressure
$F_{N,h}$, whose expectation is $p_N$.

For a fixed replica tuple, the Gaussian random variable
$\sum_{\ell=1}^d H_N(\sigma^\ell)$ has a coefficient vector
of Euclidean norm at most $d\sqrt{N\xi(1)}$ when expressed
in terms of independent standard Gaussian variables.
The gradient of $\mathcal F_N^{D,P_0}(\theta)$ is $N^{-1}$
times a Gibbs average of these vectors. Thus its Lipschitz
constant is at most $d\sqrt{\xi(1)/N}$, uniformly in
$\theta$, the overlap constraints, and $P_0$. The same
argument gives Lipschitz constant $\sqrt{\xi(1)/N}$ for
$F_{N,h}$. Gaussian concentration
\cite[Theorem~3.25]{VanHandel2016} therefore gives
\[
 \Prob(|V_N-\E V_N|>u)
 \le 2\exp\{-u^2/(2\ell_N^2)\},
 \qquad u>0,
\]
where $V_N=\mathcal F_N^{D,P_0}(\theta)$ and
$\ell_N=d\sqrt{\xi(1)/N}$ for the constrained pressure,
and $V_N=F_{N,h}$ and $\ell_N=\sqrt{\xi(1)/N}$ for the
original pressure. 

Choose  $\theta=\delta$ and put
\[
 g=\theta\delta-\frac{\theta^2}{2}=\frac{\delta^2}{2}.
\]
Choose $\eta_0>0$ so that $e_d(\eta_0)\le g/4$.
By the Parisi formula, there is $N_0$, depending only on
$\xi,h,d,\delta$, such that
\[
 |p_N-P|\le \frac{g}{8d},
 \qquad N\ge N_0.
\]
The concentration bounds imply that, outside a disorder
event of probability at most $C_1e^{-c_1N}$,
\[
 \mathcal F_N^{D,P_0}(\pm\theta)
 \le \E\mathcal F_N^{D,P_0}(\pm\theta)+\frac g4,
 \qquad
 F_{N,h}\ge p_N-\frac{g}{8d}.
\]
Here $C_1,c_1$ are independent of the overlap levels,
$\eta$, and $P_0$.

For replica tuples with $A_N\ge c+\delta$, multiplication
by $e^{\theta N A_N}$ increases each weight by at least
$e^{\theta N(c+\delta)}$. Consequently,
\[
 G_{N,h}^{\otimes d}
   (\mathcal C_\eta(D)\cap\{A_N\ge c+\delta\})
 \le \exp\left\{
 N\left[\mathcal F_N^{D,P_0}(\theta)
          -dF_{N,h}-\theta(c+\delta)\right]\right\}.
\]
On the disorder event where the concentration bounds hold,
Lemma~\zcref[noname]{src:replica-upper} gives
\[
 \begin{aligned}
 \mathcal F_N^{D,P_0}(\theta)
       -dF_{N,h}-\theta(c+\delta)
 &\le -g+e_d(\eta)+\frac g4
             +d(P-p_N)+\frac g8\\
 &\le -g+\frac g4+\frac g4+\frac g8+\frac g8
 =-\frac g4.
 \end{aligned}
\]
For the lower tail, multiplication by $e^{-\theta N A_N}$
similarly gives
\[
 G_{N,h}^{\otimes d}
   (\mathcal C_\eta(D)\cap\{A_N\le c-\delta\})
 \le \exp\left\{
 N\left[\mathcal F_N^{D,P_0}(-\theta)
          -dF_{N,h}+\theta(c-\delta)\right]\right\}.
\]
The pressure bound at $-\theta$ has linear term $-\theta c$,
so the same calculation bounds this exponent by $-Ng/4$.

On the exceptional disorder event, the Gibbs probability
of the union of the two tails is at most one. Hence
\[
 \E G_{N,h}^{\otimes d}
 \left(\mathcal C_\eta(D)\cap\{|A_N-c|>\delta\}\right)
 \le 2e^{-Ng/4}+C_1e^{-c_1N}.
\]
This proves \zeqref{src:source-suppression-bound} with the
stated uniformity.
\end{proof}

\section{Conditional cavity laws}\label{sec:cavity}
In this section we use the estimates of \zcref{sec:sources} to identify
the conditional cavity laws needed in \zcref[nocomp]{sec:shells,sec:regression}.
\zcref{subsec:cavity-representation} constructs the limiting cavity
representation by removing and restoring one spin. In
\zcref{subsec:cavity-observables} we recover directing-vector norms from
the overlap array and control the error in approximating cavity
magnetizations using nearby replicas. \zcref{subsec:conditional-magnetizations}
combines these facts with the comparison and concentration estimates to identify the laws of one
magnetization and of a pair at equal norms, conditional on the full overlap
array. This conditioning retains overlaps with all additional replicas.

\subsection{Removing and restoring one spin}
\label{subsec:cavity-representation}

We use the standard one-spin cavity decomposition and Gibbs
reweighting; see \cite[Chapter~4]{PanchenkoBook} and
\cite[Section~2]{AuffingerJagannathSpinDistributions}.
We give the details needed to retain the added spin jointly
with the overlap array.

Write a configuration of the $(N+1)$-site system as
$(\sigma,\epsilon)\in\{-1,1\}^N\times\{-1,1\}$.
Keeping only interaction terms whose indices lie in
$\{1,\ldots,N\}$ gives the reduced Hamiltonian
\[
 H_N^{\rm red}(\sigma)
 =\sum_{p\in2\mathbb N}\frac{b_{p,N}}{N^{(p-1)/2}}
   \sum_{i_1,\ldots,i_p\le N}
   g^{(p)}_{i_1\cdots i_p}\sigma_{i_1}\cdots\sigma_{i_p},
 \qquad
 b_{p,N}=b_p\left(\frac{N}{N+1}\right)^{(p-1)/2}.
\]
The Gaussian variables are those of the full system
with all indices at most $N$. 
With the external field retained on the remaining sites,
the reduced Gibbs measure is
\[
 G_{N,h}^{\rm red}(\sigma)
 =\frac{1}{Z_{N,h}^{\rm red}}
   \exp\left(H_N^{\rm red}(\sigma)
             +h\sum_{i=1}^N\sigma_i\right).
\]
Summing over the last spin in the full Gibbs measure
additionally multiplies these weights by
\[
 \sum_{\epsilon\in\{-1,1\}}
 \exp\left\{
 H_{N+1}(\sigma,\epsilon)-H_N^{\rm red}(\sigma)
 +h\epsilon
 \right\},
\]
followed by normalization. 

Along a subsequence on which the overlap arrays of the
reduced and full systems converge jointly, we call their
limiting laws the \emph{predecessor} and \emph{successor}. 
The following lemma
describes the cavity reweighting that transforms a directing
measure of the predecessor into one of the successor. Passing to a further subsequence if necessary, we also
include the last-coordinate spins
$(\sigma_{N+1}^{\ell})_{\ell\ge1}$ in this joint convergence
in distribution. We denote the limiting signs by
$(\epsilon_\ell)_{\ell\ge1}$ and call them the
\emph{added-site spins}. In the directing-measure
representation, $\epsilon_\ell$ is a separate
$\{-1,1\}$-valued variable associated with the successor
vector $v_\ell$, not a coordinate of that vector.

\begin{lemma}\label{cav:representation}
Every physical overlap law admits the following representation
as a successor, jointly with a physical predecessor at the
same interaction coefficients and field.
Let $G$ be a directing measure of the predecessor.
Conditional on $G$, let $Y$ be a centered Gaussian field with
\[
 \E[Y(v)Y(w)\mid G]=\kappa(v\cdot w).
\]
Put
\[
 q(v)=\|v\|^2,\qquad
 a(v)=e^{[\kappa(1)-\kappa(q(v))]/2},\qquad
 Z_h=\int a(v)\cosh(h+Y(v))\,G(dv).
\]
Then $1\le Z_h<\infty$ almost surely, and a directing
measure of the successor is
\begin{equation}\label{eq:cavity-tilt}
 \widehat G(dv)
 =\frac{a(v)\cosh(h+Y(v))}{Z_h}\,G(dv).
\end{equation}
Conditional on $G,Y$ and vectors $v_1,\ldots,v_n$ sampled
independently from $\widehat G$, the added-site spins are
independent signs, with $\epsilon_\ell$ having mean
\begin{equation}\label{eq:cavity-mark}
 M(v_\ell)=\tanh(h+Y(v_\ell)).
\end{equation}
The measures $G$ and $\widehat G$ are mutually absolutely
continuous almost surely.
\end{lemma}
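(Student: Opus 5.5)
We plan to follow the standard one-spin cavity argument of \cite[Chapter~4]{PanchenkoBook}, keeping track of the added spins and of the issue that the limiting vectors need not lie on a fixed sphere.

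\emph{Finite-$N$ decomposition.} Write
\[
 H_{N+1}(\sigma,\epsilon)-H_N^{\rm red}(\sigma)=\epsilon\,y_N(\sigma)+r_N(\sigma),
\]
where $y_N$ collects the terms in which the last index appears exactly once, and $r_N$ collects the terms in which it appears at least twice. Because $p$ is even, a term with the last index appearing an even number of times does not depend on $\epsilon$, and one with an odd number (at least three) carries a factor $\epsilon$. Both pieces are independent of $H_N^{\rm red}$. A covariance count shows that $\E y_N(\sigma)y_N(\tau)=\kappa(R(\sigma,\tau))+O(N^{-1})$, and that the variance of $r_N$ is $O(N^{-1})$; here the condition $\sum 2^pb_p^2<\infty$ controls the combinatorial factors summed over $p$. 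Summing over $\epsilon$, and conditionally on $\sigma$ and the disorder, the full Gibbs measure reweights $G^{\rm red}_{N,h}$ by $2\cosh(h+y_N(\sigma))$ up to negligible factors. The added spin is then a sign with mean $\tanh(h+y_N(\sigma))$. This gives the mark formula \zeqref{eq:cavity-mark} at finite $N$.

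\emph{The missing factor $a(v)$.} The factor $a(v)=e^{[\kappa(1)-\kappa(q(v))]/2}$ arises because a limiting vector $v$ stands for a pure state of squared norm $q(v)$, not for a single configuration. Inside such a state, $y_N$ equals $Y(v)$ plus an independent fluctuation $z$ of variance $\kappa(1)-\kappa(q(v))$. Averaging $\cosh(h+Y+z)$ over $z$ gives $e^{[\kappa(1)-\kappa(q)]/2}\cosh(h+Y)$, and averaging $\sinh$ gives the same factor. So the mean of the added spin is still $\tanh(h+Y)$, while the weight acquires $a(v)$.

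To make this rigorous without pure states, we compute joint moments. Fix $n$, a bounded continuous function $f$ of the overlaps and added spins of $n$ replicas, and sample $n$ successor replicas. We write $\E\langle f\rangle_{\rm full}$ as
\[
 \E\Bigl\langle f\prod_\ell w_\ell\Bigr\rangle_{\rm red}\Big/\bigl(\E\langle w\rangle_{\rm red}\bigr)^n,
\]
corrected by a concentration step. Here $w_\ell$ is the $\cosh$ weight and the $\epsilon_\ell$ are sampled from their tilted laws. To handle the nonlinear denominator, we truncate $\cosh$ and expand the normalization, in the form $\langle w\rangle^{-n}=\int_0^\infty\cdots$ or through a power series after truncation, as Panchenko does. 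Conditionally on the reduced disorder, $(y_N(\sigma^\ell))$ is Gaussian with covariance $\kappa(R_{\ell\ell'})$ off the diagonal and $\kappa(1)$ on the diagonal. Each resulting expression is therefore a continuous functional of finitely many overlaps, uniformly integrable by Gaussian moment bounds. Passing to the joint subsequential limit and using the Dovbysh--Sudakov representation, the diagonal discrepancy $\kappa(1)$ versus $\kappa(\|v\|^2)$ becomes the independent self-noise that produces $a(v)$. This identifies the successor law with that generated by $\widehat G$ in \zeqref{eq:cavity-tilt}, together with the mark law.

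\emph{Remaining claims.} The predecessor is physical at the same coefficients only after the coefficient change $b_{p,N}\to b_p$. We plan to treat this by noting that the reduced system is the $N$-spin system at coefficients $b_{p,N}=b_p(1+O(1/N))$; the required comparison is what Appendix~\ref{app:cavity} is said to provide, so we would cite it. The bound $Z_h\ge1$ follows from $a\ge1$ and $\cosh\ge1$. Finiteness holds because $\E[Z_h\mid G]=\int a\,e^{\kappa(q)/2}\cdots<\infty$. Mutual absolute continuity holds because the density is strictly positive and finite.

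The main obstacle is the moment identification of the normalized tilt. This requires controlling the truncation of $\cosh$ and the ratio structure uniformly in $N$ (using exponential Gaussian moments), and correctly justifying that the self-overlap term yields exactly $a(v)$.
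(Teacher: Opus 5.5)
Your plan follows the paper's proof in all essentials. It uses the same splitting of $H_{N+1}$ into the reduced Hamiltonian, the single-occurrence field $z_N$ with covariance $\kappa(\frac{N}{N+1}R)$, and an independent repeated-index remainder of variance $O(N^{-1})$. It uses the same exact reweighting of $G^{\rm red}_{N,h}$ by $\cosh(h+z_N)$, with sign law $e^{\epsilon(h+z_N)}/(2\cosh(h+z_N))$. It uses the same realization $z_\ell=Y(v_\ell)+\eta_\ell$ with independent $\eta_\ell\sim N(0,\kappa(1)-\kappa(q(v_\ell)))$; integrating $\eta_\ell$ produces $a(v)$ and leaves the mean $\tanh(h+Y(v))$. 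And it cites Appendix~\zcref[noname]{app:cavity} for the change $b_{p,N}\to b_p$. That citation carries real weight, more than your phrase $b_{p,N}=b_p(1+O(1/N))$ suggests. The quantity $\max_\sigma|H_{N,\mathbf b_N}(\sigma)-H_{N,\mathbf b}(\sigma)|$ is of order one, so a crude perturbative bound gives nothing. The appendix instead proves $\E[D(G_{\mathbf b}\Vert G_{\mathbf b_N})+D(G_{\mathbf b_N}\Vert G_{\mathbf b})]\to0$ from convergence of the pressure derivatives, i.e.\ from differentiability of the Parisi formula in each active coefficient.

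The only genuine difference is the normalizer. You truncate $\cosh$ and approximate $x^{-n}$ by polynomials; the paper instead replaces $\mathcal Z_N$ by the sample mean of $\cosh(h+z_N)$ over $m$ auxiliary replicas. Both quantities are at least one and $|x^{-n}-y^{-n}|\le n|x-y|$ on $[1,\infty)$, so the error is $O(m^{-1/2})$ uniformly in $N$. One then takes $N\to\infty$ at fixed $m$, and afterwards $m\to\infty$ by the conditional strong law, whose conditional mean is exactly $Z_h$. This avoids any truncation and its uniform-in-$N$ removal. Your route also works: $\langle w\rangle^k$ is a $k$-replica expression whose limit is the $k$th power of the limiting normalizer, by conditional independence of the auxiliary replicas given $G,Y$.

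Two statements in your sketch need correcting. First, the display with $(\E\langle w\rangle_{\rm red})^n$ in the denominator, ``corrected by a concentration step,'' is wrong. The quantity $\langle\cosh(h+y_N)\rangle_{\rm red}$ does not concentrate: its limit is the random $Z_h$, and that randomness is exactly what makes $\widehat G$ a random tilt of $G$. After the remainder is removed, the identity is $\E\langle f\rangle_{\rm full}=\E\bigl[\langle f\prod_\ell w_\ell\rangle_{\rm red}\,\langle w\rangle_{\rm red}^{-n}\bigr]$, with the ratio inside the expectation. This is what your subsequent expansion of $\langle w\rangle^{-n}$ actually uses.

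Second, the repeated-index remainder cannot be discarded ``up to negligible factors'' configuration by configuration. Despite its $O(N^{-1})$ variance, its maximum over the $2^{N+1}$ configurations is in general of order one. What makes it negligible is its independence from the rest of the Hamiltonian. The paper exploits this through the bound $\E_W D(G\Vert G^W)\le\frac12\sup_x\Var W(x)$ and Pinsker's inequality; a Gaussian interpolation for finitely many replicas would also do.
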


\begin{proof}
Fix a subsequence realizing the prescribed successor law.
For $\sigma\in\{-1,1\}^N$ and $\epsilon\in\{-1,1\}$, separate
the interaction terms according to the number of occurrences
of the index $N+1$. This gives
\[
 H_{N+1}(\sigma,\epsilon)
 =H_N^{\rm red}(\sigma)+\epsilon z_N(\sigma)
   +W_N(\sigma,\epsilon).
\]
Here $H_N^{\rm red}$ contains the terms with no occurrence
of $N+1$ and has coefficients $b_{p,N}$. 
The term $\epsilon z_N$ contains those with exactly one
occurrence, and $W_N$ contains those with at least two.
These three Gaussian processes are independent because
they use disjoint sets of Gaussian coefficients.

There are $p$ possible positions for the unique occurrence
of $N+1$ in an interaction of order $p$. Consequently,
\[
 \begin{aligned}
 \E z_N(\sigma)z_N(\tau)
 &=\sum_{p\in2\mathbb N}
     p b_p^2\left(\frac{N}{N+1}\right)^{p-1}
     R(\sigma,\tau)^{p-1}=\kappa\left(\frac{N}{N+1}R(\sigma,\tau)\right).
 \end{aligned}
\]
Since $|\kappa'|\le B_\xi$, this covariance differs from
$\kappa(R(\sigma,\tau))$ by at most $B_\xi/(N+1)$,
uniformly in $\sigma,\tau$. The repeated-index estimate
in Appendix~\zcref[noname]{app:cavity} gives
\[
 \sup_{\sigma,\epsilon}
 \E W_N(\sigma,\epsilon)^2\le \frac{C_\xi}{N}.
\]
Let $\widetilde G_{N+1,h}$ be the Gibbs measure obtained
by omitting $W_N$. The independent Gaussian remainder
estimate in the same appendix then yields
\[
 \E\|G_{N+1,h}-\widetilde G_{N+1,h}\|_{\rm TV}
 \le \frac{C_\xi}{\sqrt N}.
\]
Thus this omission preserves every subsequential joint
limit of finitely many overlaps and added-site spins.

We next identify the overlap limits of the reduced systems.
Compare $G_{N,h}^{\rm red}$ with the original $N$-site
measure $G_{N,h}$, using the same Gaussian coefficients
and external field. The relative-entropy comparison in
Appendix~\zcref[noname]{app:cavity} proves
\[
 \E D(G_{N,h}^{\rm red}\Vert G_{N,h})\longrightarrow0,
\]
where $D$ denotes relative entropy. Pinsker's inequality
therefore gives
\[
 \E\|G_{N,h}^{\rm red}-G_{N,h}\|_{\rm TV}\longrightarrow0.
\]
The same conclusion holds for every fixed number of
replicas. After extracting a further subsequence along
which the reduced overlap arrays converge jointly with
the successor arrays, their limiting law is therefore
physical at the original coefficients and field.
Let $G$ direct this predecessor law.

For the system with $W_N$ omitted, restoring the last spin
has the exact conditional sampling formula
\[
 \widetilde G_{N+1,h}(\sigma,\epsilon)
 =
 G_{N,h}^{\rm red}(\sigma)
 \frac{e^{\epsilon(h+z_N(\sigma))}}{2\mathcal Z_N},
 \qquad
 \mathcal Z_N
 =G_{N,h}^{\rm red}\cosh(h+z_N)\ge1.
\]
In particular, summing over $\epsilon$ reweights the
reduced measure by $\cosh(h+z_N)$, whereas prescribing
$\epsilon=e$ gives numerator $e^{e(h+z_N)}/2$.

Appendix~\zcref[noname]{app:cavity-joint} justifies passing
this sampling formula to the limit against bounded
continuous functions of finitely many overlaps, with any
fixed collection of added-site spins prescribed. The
argument approximates $\mathcal Z_N$ by an average over
auxiliary replicas, with an error tending to zero uniformly
in $N$. This permits taking the overlap limit first and
then sending the number of auxiliary replicas to infinity.

We now identify the joint limit of the overlaps of replicas
$\sigma^1,\ldots,\sigma^n$ sampled from $G_{N,h}^{\rm red}$
and the variables $z_N(\sigma^1),\ldots,z_N(\sigma^n)$.
Conditional on $G$, sample $v_1,\ldots,v_n$ independently
from $G$. Represent the off-diagonal overlaps by
$v_\ell\cdot v_j$ and the additional variables by a
conditionally centered Gaussian vector $(z_1,\ldots,z_n)$
with
\[
 \E[z_\ell z_j\mid G,v_1,\ldots,v_n]
 =
 \begin{cases}
 \kappa(v_\ell\cdot v_j),&\ell\ne j,\\
 \kappa(1),&\ell=j.
 \end{cases}
\]
Let $Y$ be a centered Gaussian field, independent of the
sampled vectors conditional on $G$, with
\[
 \E[Y(v)Y(w)\mid G]=\kappa(v\cdot w).
\]
We can realize the Gaussian vector above as
\[
 z_\ell=Y(v_\ell)+\eta_\ell,\qquad
 \eta_\ell\sim
 N\bigl(0,\kappa(1)-\kappa(q(v_\ell))\bigr),
\]
where, conditional on $G$ and the vectors, the
$\eta_\ell$ are independent of one another and of $Y$.
The field $Y$ gives the required off-diagonal covariances,
while each $\eta_\ell$ increases the variance from
$\kappa(q(v_\ell))$ to $\kappa(1)$.
These noises are independent for distinct replica labels,
even when the sampled vectors coincide.

For a prescribed sign $e\in\{-1,1\}$, integration over
the additional noise gives
\[
 \E_\eta e^{e(h+Y(v)+\eta)}
 =
 e^{[\kappa(1)-\kappa(q(v))]/2}
 e^{e(h+Y(v))}
 =a(v)e^{e(h+Y(v))}.
\]
The limiting normalizing factor is consequently
\[
 Z_h
 =\int \E_\eta\cosh(h+Y(v)+\eta)\,G(dv)
 =\int a(v)\cosh(h+Y(v))\,G(dv).
\]
Since $q(v)\le1$ and $\kappa$ is nondecreasing on $[0,1]$,
we have $a(v)\ge1$ and $Z_h\ge1$. Moreover,
\[
 \E[Z_h\mid G]
 =\int a(v)e^{\kappa(q(v))/2}\cosh h\,G(dv)
 =e^{\kappa(1)/2}\cosh h<\infty.
\]
Thus $Z_h$ is finite almost surely.

It follows that, conditional on $G,Y$, the limiting joint
law of $n$ successor vectors and prescribed added-site
signs $e_1,\ldots,e_n$ is
\[
 \prod_{\ell=1}^n
 \left[
  \frac{a(v_\ell)e^{e_\ell(h+Y(v_\ell))}}{2Z_h}
  \,G(dv_\ell)
 \right].
\]
Summing over the signs gives independent vectors with
law $\widehat G$ from \zeqref{eq:cavity-tilt}.
Conditional on these vectors, the signs are independent
and satisfy
\[
 \Prob(\epsilon_\ell=e\mid G,Y,v_1,\ldots,v_n)
 =\frac{e^{e(h+Y(v_\ell))}}
        {2\cosh(h+Y(v_\ell))},
 \qquad e\in\{-1,1\}.
\]
Their conditional means are therefore
$\tanh(h+Y(v_\ell))$, proving \zeqref{eq:cavity-mark}.

For two full replicas $(\sigma^\ell,\epsilon_\ell)$ and
$(\sigma^j,\epsilon_j)$, their overlap is
\[
 R_{\ell j}^{\rm full}
 =\frac{N}{N+1}R(\sigma^\ell,\sigma^j)
   +\frac{\epsilon_\ell\epsilon_j}{N+1}.
\]
Its difference from the overlap of their first $N$ spins
is at most $2/(N+1)$. Hence $\widehat G$ directs the
prescribed successor overlap law. Finally, its density
with respect to $G$ is strictly positive and finite
$G$-almost everywhere, proving mutual absolute continuity.
\end{proof}

We write $v^1,v^2,\ldots$ for independent samples from
$\widehat G$ conditional on $G,Y$, and set $R_{ij}=v^i\cdot v^j$
for $i\ne j$. For Gaussian integration by parts, we use the
Gram diagonal $R_{ii}=q_i=\|v^i\|^2$. Put $Y_i=Y(v^i)$ and
$M_i=M(v^i)$. Expectations average over $G$, the Gaussian
field $Y$ conditional on $G$, and the replicas conditional
on $G,Y$.
For every $n\ge1$ and $c\ge0$,
\begin{equation}\label{cav:fieldtails}
 \E\widehat G^{\otimes n}
       \exp\left(c\sum_{i=1}^n|Y_i|\right)<\infty.
\end{equation}
Indeed, expand each factor of $\widehat G$ using
\zeqref{eq:cavity-tilt}, use $Z_h^{-n}\le1$, and bound each
numerator weight by $e^{\kappa(1)/2+|h|+|Y_i|}$.
The remaining integral is over $G^{\otimes n}$ and $Y$.
Conditional on $G$ and these vectors, the fields are Gaussian
with variances at most $\kappa(1)$, so their exponential
moments give the bound.

An interaction order $p$ is called active when $b_p>0$.

\begin{lemma}\label{cav:moments}
Let $B_\mu=\int q\kappa(q)\,\mu(dq)$. Every physical limit
satisfies
\begin{equation}\label{eq:active-moment}
 \E[R_{12}\kappa(R_{12})]=B_\mu.
\end{equation}
In any successor representation, site exchangeability and the
conditional spin law give, for every bounded Borel test function
$F$ of finitely many signed overlaps $R_{ij}$ with $i<j$, 
\begin{equation}\label{eq:siteidentity}
 \E[F R_{12}]=\E[F M_1M_2].
\end{equation}
\end{lemma}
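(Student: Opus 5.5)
The lemma has two parts, and I would prove them separately.

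\emph{The active moment identity \zeqref{eq:active-moment}.} I would follow Panchenko's derivative argument. Recall that $N^{-1}\E H_N(\sigma)^2=\xi(1)$. Gaussian integration by parts for $H_N$ gives, at finite $N$,
\[
 \frac{d}{dt}\E F_{N,h}(t)\Big|_{t=1}=\tfrac12\bigl(\xi(1)-\E\langle\xi(R_{12})\rangle\bigr)
\]
for the scaled Hamiltonian $tH_N$ (equivalently $\beta$-dependence). I would use this derivative with respect to an overall inverse-temperature scaling $\xi\mapsto t^2\xi$. The pressure $p_N(t)$ is convex in $t$ up to an affine correction. Its limit is the Parisi value $P(t)$, which is differentiable in $t$ by the envelope argument for the Parisi functional (Panchenko's differentiability \cite{PanDiff}, using uniqueness of the minimizer \cite{ACunique}). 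The derivative is
\[
 \frac{d}{dt}\mathcal P_{t^2\xi,h}(\mu)\Big|_{t=1},
\]
which equals $\xi(1)-\int\xi\,d\mu$ minus the appropriate term. Convexity lets derivatives of the finite-volume pressures converge to the derivative of the limit. After matching the two derivatives, the cleanest form of the resulting identity is $\E\langle R_{12}\kappa(R_{12})\rangle\to\int q\kappa(q)\mu(dq)$. I would arrange this by differentiating in the scaling $t\mapsto\xi(tq)$ rather than $t^2\xi$, and I would check that the PDE side produces $\int q\kappa'(q)\cdots$ terms that integrate by parts to $B_\mu$. This requires computing the derivative of the Parisi functional in the parameter using \zeqref{eq:parisi-pde}, \zeqref{eq:martingale-u} and the contact identity \zeqref{eq:contact}. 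I expect this bookkeeping, namely identifying the derivative of $\mathcal P$ along the chosen perturbation direction with exactly $B_\mu$ through $\Gamma(q)=q$ on $\supp\mu$, to be the main obstacle. Passing to the limit uses bounded continuity of $q\kappa(q)$ on $[-1,1]$.

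\emph{The site identity \zeqref{eq:siteidentity}.} At finite $N$, take $F$ to be a function of overlaps $R_{ij}$, $i<j\le n$. Site exchangeability of the Gibbs measure, averaged over the disorder, gives
\[
 \E\langle F R_{12}\rangle=\E\langle F\,\sigma^1_{N+1}\sigma^2_{N+1}\rangle
\]
for the $(N+1)$-site system. This holds exactly for $F$ of the full overlaps, since permuting sites preserves both the overlaps and the law of the disorder. In the limit along the successor subsequence, the joint convergence of overlaps and added-site spins in Lemma~\zcref[noname]{cav:representation} turns the right side into $\E[F\epsilon_1\epsilon_2]$. Replacing full overlaps by reduced ones costs $O(1/N)$, which is harmless. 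Conditioning on $G,Y,v^1,\ldots,v^n$ and using conditional independence of the signs with means \zeqref{eq:cavity-mark} gives $\E[FM_1M_2]$.

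To pass from bounded continuous $F$ to bounded Borel $F$, I would use a monotone class argument. Both sides are finite signed measures in $F$, because $|R_{12}|,|M_i|\le1$.
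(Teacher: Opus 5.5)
\paragraph{Comparison with the paper.} Your proof of \zeqref{eq:siteidentity} is the paper's proof: site exchangeability at size $N+1$, the joint convergence of Lemma~\zcref[noname]{cav:representation}, conditional independence of the added spins with means $M_i$, and extension from continuous to bounded Borel $F$.

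For \zeqref{eq:active-moment} the paper just invokes Corollary~\zcref[noname]{cor:active-moments}, which is proved one coefficient at a time. Lemma~\zcref[noname]{lem:active-derivative} differentiates the limiting pressure in a single active $b_p$, with the rest of the (possibly infinite) mixture frozen, and gives $\E\langle R_{12}^p\rangle_{N,h}\to\int q^p\,d\mu$. The series $r\kappa(r)=\sum_p pb_p^2r^p$, which converges uniformly on $[-1,1]$, is then summed.

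Your single scaling $\xi_t(q)=\xi(tq)$ is a genuinely different route. The $t^2\xi$ scaling you start with yields $\E\langle\xi(R_{12})\rangle$, not $R_{12}\kappa(R_{12})$, so only the second scaling matters. The step you call the main obstacle does work. At $t=1$ the derivative of $\xi_t''$ is $\theta''$ with $\theta(q)=q\kappa(q)$. The analogue of \zeqref{eq:fixed-measure-derivative} is then $\frac12\int_0^1\theta''(q)F(q)\,dq$, where $F(q)=\E V(q,X_q)+\alpha(q)(\Gamma(q)-q)=\int_q^1\alpha(s)\,ds$ by \zeqref{eq:derivative-stieltjes}. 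Integrating by parts with $\theta(0)=\theta'(0)=0$ gives $\frac12\int_0^1\theta'\alpha\,dq=\frac12(\kappa(1)-B_\mu)$. This matches $p_N'(1)=\frac12\bigl(\kappa(1)-\E\langle R_{12}\kappa(R_{12})\rangle_{N,h}\bigr)$.

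\paragraph{The gap.} The gap is in passing derivatives to the limit.

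First, the convexity sandwich needs the upper quotient $(p_N(1+s)-p_N(1))/s$, hence the Parisi formula at $\xi((1+s)q)$. Its coefficients $b_p^2(1+s)^p$ need not satisfy $\sum_p2^pb_p^2(1+s)^p<\infty$; take $b_p^2=2^{-p}p^{-2}$. The paper cites the Parisi formula only under \zeqref{eq:mixture}, so your upper bound is not available as written.

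Second, differentiability of the limit in your direction does not follow from \cite{PanDiff}, which the paper uses only for finite mixtures. An envelope argument needs joint continuity of the fixed-measure derivative in the parameter and the trial measure, plus convergence of minimizers via uniqueness. This is exactly what Lemma~\zcref[noname]{lem:active-derivative} establishes.

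Third, convexity along your nonlinear family is true but needs an argument rather than ``up to an affine correction''. With $\mathcal H_t=\sum_pb_pt^{p/2}H_{N,p}$ one has $Np_N''(t)=\E\bigl[\langle\dot{\mathcal H}_t^2\rangle_t-\langle\dot{\mathcal H}_t\rangle_t^2\bigr]+\E\langle\ddot{\mathcal H}_t\rangle_t$. The last term is nonnegative because $t\mapsto b_pt^{p/2}$ is convex and Gaussian integration by parts gives $\E\langle H_{N,p}\rangle_t=Nb_pt^{p/2}\bigl(1-\E\langle R_{12}^p\rangle_t\bigr)\ge0$.

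The repair is either to perturb one coefficient at a time, which keeps the mixture inside \zeqref{eq:mixture}, or to rescale only the orders $p\le K$ and let $K\to\infty$ using $\sum_p pb_p^2<\infty$. Either way you arrive at the paper's structure.
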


\begin{proof}
Equation~\zeqref{eq:active-moment} follows from
Corollary~\zcref[noname]{cor:active-moments}, since
$r\mapsto r\kappa(r)$ is bounded and continuous on $[-1,1]$.

Now let $F$ be a bounded continuous function of finitely
many signed overlaps. Write $R^{(N+1)}$ for the overlap
array of replicas from the full $(N+1)$-site system.
The overlaps are unchanged by a common permutation of
the sites, so site exchangeability gives
\[
 \begin{aligned}
 \E\langle F(R^{(N+1)})R_{12}^{(N+1)}\rangle
 &=\frac1{N+1}\sum_{i=1}^{N+1}
   \E\langle F(R^{(N+1)})\sigma_i^1\sigma_i^2\rangle\\
 &=\E\langle
   F(R^{(N+1)})\sigma_{N+1}^1\sigma_{N+1}^2\rangle.
 \end{aligned}
\]
By the joint convergence in
Lemma~\zcref[noname]{cav:representation}, this identity
passes to the successor limit as
\[
 \E[F(R)R_{12}]=\E[F(R)\epsilon_1\epsilon_2].
\]
Conditional on $G,Y$ and all sampled vectors appearing
in this expression, the signs $\epsilon_1,\epsilon_2$
are independent with means $M_1,M_2$. This proves
\zeqref{eq:siteidentity} for continuous $F$.
For each fixed finite set of overlap coordinates, the
two sides define finite signed Borel measures. Their
equality against continuous functions identifies these
measures, extending the identity to bounded Borel $F$.
\end{proof}

\begin{lemma}\label{cav:gaussian}
Fix $n\ge1$ and $1\le i\le n$. In a successor representation,
conditional on $G,Y$, let $v^1,\ldots,v^{n+1}$ be independent
samples from $\widehat G$. Set
\[
 Y_j=Y(v^j),\qquad M_j=\tanh(h+Y_j),
 \qquad 1\le j\le n+1,
\]
and write $R_{jk}=v^j\cdot v^k$ and
$\mathbf Y=(Y_1,\ldots,Y_n)$.
Let $F$ be a bounded Borel function of the signed overlaps
$(R_{jk})_{1\le j<k\le n}$.
Suppose $g\in C^1(\R^n)$ satisfies
\[
 |g(y)|+\sum_{j=1}^n|\partial_jg(y)|
 \le C\exp\left(c\sum_{j=1}^n|y_j|\right)
\]
for some $C,c>0$. Then
\begin{align}
 \E[FY_i g(\mathbf Y)]
 &=\E\Bigl[F\Bigl(
   \sum_{j=1}^n\kappa(R_{ij})\partial_jg(\mathbf Y)
   +g(\mathbf Y)\sum_{\ell=1}^n\kappa(R_{i\ell})M_\ell
   -n g(\mathbf Y)\kappa(R_{i,n+1})M_{n+1}
   \Bigr)\Bigr].
 \label{cav:gaussian-multivariate}
\end{align}
The diagonal entries in this identity are
$R_{ii}=\|v^i\|^2$. All terms are integrable.
\end{lemma}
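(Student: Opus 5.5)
The plan is to write everything under the predecessor measure $G$, where $Y$ is a genuine centered Gaussian field conditional on $G$. Then the identity is ordinary Gaussian integration by parts for $Y(v^i)$, with the extra terms coming from differentiating the normalization $Z_h^{-(n)}$ and the tilt weights in \zeqref{eq:cavity-tilt}.

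\emph{Step 1: rewrite the left side.} Expanding $\widehat G^{\otimes n}$ gives
\[
 \E[F Y_i g(\mathbf Y)]
 =\E\int F\,Y(v^i)\,g(\mathbf Y)
 \prod_{j=1}^n\frac{a(v^j)\cosh(h+Y(v^j))}{Z_h}\,G(dv^1)\cdots G(dv^n).
\]
Conditionally on $G$ and the vectors $v^1,\ldots,v^n$, the integrand is a functional of the Gaussian field $Y$. We apply the Gaussian integration-by-parts formula $\E[Y(v^i)\Psi(Y)]=\E\langle \kappa(v^i\cdot\,\cdot),D\Psi\rangle$ to
\[
 \Psi=g(\mathbf Y)\prod_j\cosh(h+Y_j)\,Z_h^{-n}.
\]
The derivative has three contributions.
\begin{itemize}
 \item Differentiating $g$ gives $\sum_j\kappa(R_{ij})\partial_jg$, using $\kappa(\|v^i\|^2)$ when $j=i$.
 \item Differentiating $\cosh(h+Y_\ell)$ gives $\kappa(R_{i\ell})\tanh(h+Y_\ell)=\kappa(R_{i\ell})M_\ell$.
 \item Differentiating $Z_h^{-n}$ gives $-nZ_h^{-n-1}\int a(w)\sinh(h+Y(w))\kappa(v^i\cdot w)\,G(dw)$.
\end{itemize}
The third term is rewritten as $-n\int\kappa(R_{i,n+1})M_{n+1}\,\widehat G(dv^{n+1})$, since $a\sinh/Z_h = M\cdot a\cosh/Z_h$. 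This reproduces the extra replica $v^{n+1}$ sampled from $\widehat G$. The factor $F$ depends only on the vectors, so it passes through unchanged. Reassembling the $\widehat G$ weights then yields \zeqref{cav:gaussian-multivariate}.

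\emph{Step 2: make the integration by parts rigorous.} The field $Y$ is indexed by a possibly infinite-dimensional support, and $Z_h$ is an integral over it, so we cannot invoke finite-dimensional IBP directly. We will first approximate $G$ by measures with finite support: take empirical measures of $K$ i.i.d.\ samples from $G$, or use a conditional approach. With finite support, $Y$ is a finite Gaussian vector and $Z_h$ is a finite sum, so the finite-dimensional formula applies verbatim with covariance $\kappa(w\cdot w')$.

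An alternative that avoids approximating $G$ is to interpolate the field. Set $Y^t(w)=\sqrt t\,Y(w)+\sqrt{1-t}\,Y'(w)$, or use Gaussian IBP in the form of the Cameron–Martin shift: differentiate $\E\Psi(Y+s\kappa(v^i\cdot\,\cdot))$ at $s=0$. The identity $\E[Y(v^i)\Psi(Y)]=\frac{d}{ds}\big|_{s=0}\E\Psi(Y+s\kappa(v^i\cdot\,\cdot))$ holds for Gaussian processes with a continuous version, provided $\Psi$ is differentiable along Cameron–Martin directions with integrable derivative. I would use this shift formulation, because the derivative of $Z_h$ along the shift is explicit. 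Differentiation under the expectation is justified by dominated convergence.

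\emph{Step 3: integrability.} This is the main obstacle. The negative powers of $Z_h$ are harmless since $Z_h\ge1$. The positive parts involve $\prod_j a\cosh(h+Y_j)$ times $e^{c\sum|Y_j|}$, together with terms $|Y_i|$ and $\sup_w|Y(w)|$-type contributions inside the $Z_h$ derivative. The latter are avoided because that derivative is itself bounded by $\kappa(1)\int a|\sinh|\,dG\le \kappa(1)\int a\cosh\,dG = \kappa(1)Z_h$. Hence that term is at most $n\kappa(1)Z_h^{-n}$ times the same weights. All remaining terms are bounded by exponential moments of finitely many Gaussian coordinates, uniformly along the shift $|s|\le1$. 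These are finite by the argument for \zeqref{cav:fieldtails}, which also shows that every term in \zeqref{cav:gaussian-multivariate} is integrable. If $F$ is only Borel, nothing changes, since it is a fixed bounded function of the sampled vectors.
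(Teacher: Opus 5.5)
Your Step~1 computation matches the paper's. It has the same three contributions: from $g$, from the factors $\cosh(h+Y_\ell)$, and from $Z_h^{-n}$, the last rewritten as an extra $\widehat G$-sample through $a\sinh(h+Y)=M\,a\cosh(h+Y)$. As in the paper, $a$, the vectors, and $F$ are held fixed during differentiation.

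Where you differ is in justifying integration by parts for an infinite-dimensional field. The paper replaces $Y$ by finite Gaussian projections $Y^{(m)}=\sum_{\alpha\le m}\zeta_\alpha f_\alpha$ with kernels $K_m$. It applies the elementary finite-dimensional formula in the coordinates $\zeta_\alpha$, then lets $m\to\infty$ using $K_m\to\kappa$, $|K_m|\le\kappa(1)$, $L^p$ convergence of the projected fields and normalizers, and uniform integrability. Your Cameron--Martin shift in the direction $\kappa(v^i\cdot\,\cdot)$ skips that limit: a single differentiation in $s$ under dominated convergence suffices. Your bound $|\partial_sZ_h|\le\kappa(1)Z_h$, together with $Z_h\ge1$ along the shift, gives the needed domination, and no supremum of $Y$ ever enters.

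The price is that you must say precisely where the shift formula holds. Do not rely on a continuous version of $Y$. The support of $G$ need not be compact, and $Y$ can be a.s.\ unbounded on it, for instance if it contains infinitely many vectors of squared norm $S$ with pairwise overlap $q<S$. So the Banach-space form of Cameron--Martin is not directly available.

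It is also unnecessary. Realize $Y(v)=\sum_j\zeta_j\langle\phi_\xi(v),e_j\rangle$ through the feature map $\phi_\xi(v)=\bigoplus_p\sqrt p\,b_p\,v^{\otimes(p-1)}$. Adding $s\kappa(v^i\cdot\,\cdot)$ is then translation of $(\zeta_j)$ by the $\ell^2$ vector $s(\langle\phi_\xi(v^i),e_j\rangle)_j$. The Cameron--Martin theorem for the product Gaussian measure on $\R^{\mathbb N}$ then gives the density $\exp\{sY(v^i)-\tfrac{s^2}{2}\kappa(\|v^i\|^2)\}$, with no continuity of $Y$ needed. The same realization supplies the joint measurability in $(v,\zeta)$ needed for your Fubini exchange in Step~1 and for $Z_h$ to be well defined.

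The empirical-measure option can be dropped. It would change both the normalizer and the sampling law of the reference vectors, and would need its own limit argument.
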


\begin{proof}
We condition on $G$ and express sampling from
$\widehat G^{\otimes n}$ using its density relative to
$G^{\otimes n}$,
\[
 \rho_Y(v^1,\ldots,v^n)
 =Z_h^{-n}\prod_{\ell=1}^n
   a(v^\ell)\cosh(h+Y(v^\ell)).
\]
This lets us differentiate with respect to the Gaussian
field while holding the sampled vectors, and hence $F$,
fixed.

First consider a finite Gaussian expansion
\[
 Y(v)=\sum_{\alpha=1}^m\zeta_\alpha f_\alpha(v),
 \qquad
 K(v,w)=\sum_{\alpha=1}^m f_\alpha(v)f_\alpha(w),
\]
where the $\zeta_\alpha$ are independent standard Gaussian
variables. We prove the identity with $K(v^i,v^j)$ in
place of $\kappa(R_{ij})$, keeping $a(v)$ fixed.
Differentiating the normalizing constant gives
\[
 \partial_{\zeta_\alpha}\log Z_h
 =\frac{G[a f_\alpha\sinh(h+Y)]}{Z_h}
 =\widehat G[f_\alpha M].
\]
Therefore
\[
 \partial_{\zeta_\alpha}\rho_Y
 =\rho_Y\left[
   \sum_{\ell=1}^n f_\alpha(v^\ell)M_\ell
   -n\widehat G[f_\alpha M]
 \right].
\]
The sum comes from differentiating the $n$ factors
$\cosh(h+Y(v^\ell))$, and the final term comes from
differentiating $Z_h^{-n}$.

For fixed vectors, Gaussian integration by parts gives
\[
 \E_\zeta[Y_i g(\mathbf Y)\rho_Y]
 =\sum_{\alpha=1}^m f_\alpha(v^i)
   \E_\zeta\!\left[
     \partial_{\zeta_\alpha}
       \bigl(g(\mathbf Y)\rho_Y\bigr)
   \right].
\]
The derivative of $g$ is
$\sum_j f_\alpha(v^j)\partial_jg(\mathbf Y)$.
Combining this with the derivative of $\rho_Y$ and summing
over $\alpha$ gives the three terms in
\zeqref{cav:gaussian-multivariate}, with covariance kernel
$K$. For the denominator term, use
\[
 \int K(v^i,w)M(w)\,\widehat G(dw)
 =
 \E\!\left[
 K(v^i,v^{n+1})M_{n+1}
 \,\middle|\,G,Y,v^1,\ldots,v^n
 \right].
\]
Multiplying by $F$ and integrating the fixed vectors
against $G^{\otimes n}$ proves the finite-dimensional
identity.

For the general field, take finite Gaussian projections
$Y^{(m)}$ with covariance kernels $K_m$. These satisfy
\[
 K_m(v,w)\longrightarrow\kappa(v\cdot w),
 \qquad
 |K_m(v,w)|\le\kappa(1),
\]
including on the diagonal. Under the coupling by Gaussian
projection,
\[
 \E\bigl[|Y^{(m)}(v)-Y(v)|^2\mid G,v\bigr]
 =\kappa(q(v))-K_m(v,v)\longrightarrow0.
\]
The variance bound and Gaussian moment formulas therefore
give convergence in every finite $L^p$, jointly over the
field and independent samples from $G$.

Define the projected sampling measures using $Y^{(m)}$
and the same function $a$. Their normalizers are at least
one and converge to $Z_h$ in every finite $L^p$.
The Gaussian exponential bounds used in
\zeqref{cav:fieldtails} hold uniformly for these projections.
Together with the growth assumptions on $g$ and its
derivatives, they give uniform integrability of every
term in the finite-dimensional identity and permit passage
to the limit. Note that $F$ is fixed during this
approximation. 
This proves the identity and the asserted integrability.
\end{proof}

\subsection{Squared norms and nearby replicas}\label{subsec:cavity-observables}
Let $\mathscr R$ be the completed sigma-field generated by all
successor off-diagonal overlaps. The next two lemmas recover squared norms from this array and bound
the difference between cavity magnetizations at nearby directing vectors.

\begin{lemma}\label{lem:norm-observable}
For each $i$, put $q_i=\|v^i\|^2$. With $\mathscr R$
completed under the joint law, the variables $q_i$ and,
for every fixed deterministic $S$, the  mass
$w_S=\widehat G(\{v:\|v\|^2=S\})$ are
$\mathscr R$-measurable.
More explicitly, complete the observed overlap array by
\[
 \overline R_{jj}=1,\qquad
 \overline R_{jk}=R_{jk}\quad(j\ne k).
\]
Then, almost surely, simultaneously for every $i$,
\begin{equation}\label{eq:norm-observable}
 q_i=
 \sup_{\substack{J\subset\mathbb N\setminus\{i\}\ {\rm finite}\\
                 (c_j)_{j\in J}\in\mathbb Q^J}}
 \left\{2\sum_{j\in J}c_jR_{ij}
       -\sum_{j,k\in J}c_jc_k\overline R_{jk}\right\},
\end{equation}
where the empty set is allowed and contributes zero.
Consequently, for distinct $i,j$ and every $\epsilon>0$,
\begin{equation}\label{mark:observable-ball}
 \{v^j\in B_\epsilon(v^i)\}
 =\{q_i+q_j-2R_{ij}<\epsilon^2\}\in\mathscr R.
\end{equation}
For every fixed deterministic $S$,
\begin{equation}\label{mark:observable-shell-mass}
 w_S=\lim_{n\to\infty}\frac1n
             \sum_{i=1}^n\one_{\{q_i=S\}}
 \quad\text{almost surely}.
\end{equation}
\end{lemma}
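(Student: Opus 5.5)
The plan is to work conditionally on $\widehat G$ and use the Hilbert-space meaning of the supremum in \zeqref{eq:norm-observable}. The successor array is directed by $\widehat G$, with $R_{jk}=v^j\cdot v^k$ for $j\ne k$. The completed entries $\overline R_{jk}$ are the Gram matrix of the vectors $v^j+\sqrt{1-q_j}\,e_j$, where the $e_j$ are orthonormal vectors orthogonal to $\ell^2$ and to each other. Let $u_J=\sum_{j\in J}c_j(v^j+\sqrt{1-q_j}e_j)$. For $i\notin J$,
\[
 2\sum_{j\in J}c_jR_{ij}-\sum_{j,k\in J}c_jc_k\overline R_{jk}
 =2\,v^i\cdot u_J-\|u_J\|^2
 =\|v^i\|^2-\|v^i-u_J\|^2 .
\]
This holds because $v^i$ is orthogonal to every $e_j$, so $v^i\cdot u_J=\sum_{j\in J} c_jR_{ij}$. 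Hence every term is at most $q_i$. Taking $J=\emptyset$ gives $0$, which is consistent with this bound.

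For the reverse inequality, it suffices to show that $v^i$ is approximated by such $u_J$. I will use rational coefficients with a single index and the atomless/atomic dichotomy. Conditional on $\widehat G$, the samples $(v^j)_{j\ne i}$ are i.i.d.\ and independent of $v^i$. Since $v^i\in\supp\widehat G$ almost surely, for every $\epsilon>0$ infinitely many $j$ have $\|v^j-v^i\|<\epsilon$.

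One close vector alone is not enough: with $J=\{j\}$ and $c_j=1$, the error $\|v^i-u_J\|^2$ contains the term $1-q_j$, which need not be small. So I will average $m$ such nearby replicas with coefficients $1/m$. The $e_j$ parts then contribute $m^{-2}\sum_{j\in J}(1-q_j)\le 1/m$, while the $v$ parts satisfy $\|v^i-m^{-1}\sum_{j\in J} v^j\|<\epsilon$. Letting $\epsilon\to0$ and $m\to\infty$ gives a supremum at least $q_i$. The statement holds simultaneously for all $i$ because $\mathbb N$ is countable. This yields \zeqref{eq:norm-observable} and hence the $\mathscr R$-measurability of $q_i$.

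The main subtlety is measurability. The supremum is over a countable family of $\mathscr R$-measurable expressions, so $q_i$ is measurable with respect to the completion. The identity holds almost surely, which uses that samples lie in the support.

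The ball identity \zeqref{mark:observable-ball} then follows from $\|v^j-v^i\|^2=q_i+q_j-2R_{ij}$.

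For \zeqref{mark:observable-shell-mass}, condition on $\widehat G$. The indicators $\one_{\{q_i=S\}}$ are i.i.d.\ Bernoulli with mean $w_S$, so the strong law of large numbers gives the limit almost surely. Integrating over $\widehat G$ removes the conditioning. Since each $q_i$ is $\mathscr R$-measurable, the limit, and hence $w_S$, is $\mathscr R$-measurable after completion.
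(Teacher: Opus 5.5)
Your proof is correct and matches the paper's argument. Your lifted vectors $v^j+\sqrt{1-q_j}\,e_j$ repackage the paper's identity that each term equals $q_i-\|v^i-z\|^2-\sum_{j\in J}c_j^2(1-q_j)$ with $z=\sum_{j\in J}c_jv^j$, and the rest is exactly the paper's route: averaging $m$ replicas in $B_\epsilon(v^i)$ with coefficients $1/m$ for the reverse inequality, then the conditional strong law for $w_S$ (your passing mention of a single-index, atomless/atomic dichotomy is never used and can be dropped).
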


\begin{proof}
We give an explicit countable version of the Gram
reconstruction in the proof of \cite[Lemma~4]{DS}.
The point to check is that the reconstruction still recovers
$q_i$ when the diagonal entries are set to one. We will
approximate $v^i$ by averages of many nearby replicas, making
the contribution of this diagonal discrepancy arbitrarily
small.

First fix $i$ and one choice of $J$ and $(c_j)_{j\in J}$
in \zeqref{eq:norm-observable}, and put
$z=\sum_{j\in J}c_jv^j$. Since the off-diagonal entries
are the inner products of the sampled vectors,
\[
 \begin{aligned}
 2\sum_{j\in J}c_jR_{ij}
 -\sum_{j,k\in J}c_jc_k\overline R_{jk}
 &=2v^i\cdot z-\|z\|^2
      -\sum_{j\in J}c_j^2(1-q_j)\\
 &=q_i-\|v^i-z\|^2
      -\sum_{j\in J}c_j^2(1-q_j)
 \le q_i.
 \end{aligned}
\]
Here the last inequality uses $q_j\le1$. Thus the supremum
is at most $q_i$.

For the reverse inequality, condition on $G,Y,v^i$.
Almost surely, $v^i$ belongs to the norm support of
$\widehat G$, so every ball $B_\epsilon(v^i)$ has positive
$\widehat G$-mass. Under this conditioning, the remaining
replicas are independent with law $\widehat G$. Hence, for
every positive rational $\epsilon$, infinitely many of them
lie in $B_\epsilon(v^i)$ almost surely.

Choose $m$ distinct such replica labels, let $J$ be their
set, and take $c_j=1/m$ for each $j\in J$. Then
\[
 \left\|v^i-\frac1m\sum_{j\in J}v^j\right\|<\epsilon,
 \qquad
 \sum_{j\in J}c_j^2(1-q_j)\le\frac1m.
\]
The corresponding expression in the supremum is therefore
at least $q_i-\epsilon^2-1/m$. Letting $m\to\infty$ and then
letting positive rational $\epsilon\rightarrow 0$ proves
\zeqref{eq:norm-observable}. The intersections needed over
$i$ and rational $\epsilon$ are countable, so the identity
holds simultaneously for all $i$ on one event of probability
one.

The supremum in \zeqref{eq:norm-observable} is countable,
and each expression uses only the observed off-diagonal
overlaps and the fixed diagonal entries. This proves
measurability of every $q_i$. The identity
\[
 \|v^i-v^j\|^2=q_i+q_j-2R_{ij}
\]
then gives \zeqref{mark:observable-ball}.

Finally, fix a deterministic $S$. Conditional on $G,Y$,
the indicators $\one_{\{q_i=S\}}$ are independent with
common mean $\widehat G(q=S)$. The conditional strong law
proves \zeqref{mark:observable-shell-mass}, and its right-hand
side is $\mathscr R$-measurable. This assertion holds outside
a null set for each fixed $S$, and outside one common null
set for any prescribed countable collection of values of $S$.
\end{proof}

\begin{lemma}\label{mark:ball-approximation}
Use the cavity representation in
Lemma~\zcref[noname]{cav:representation}, and let
$B_\epsilon(v)=\{w:\|w-v\|<\epsilon\}$ for $\epsilon>0$.
Sample $G$ and its conditional Gaussian field $Y$.
Conditional on $G,Y$, sample $v$ from $\widehat G$.
Conditional on $G,Y,v$, sample $w$ from
$\widehat G(\cdot\mid B_\epsilon(v))$.
Let $\E^{\rm ball}$ denote expectation over $G,Y,v,w$
under this sampling law.

Put
\[
 L=\kappa(1),\qquad
 C_\xi=\sum_{p\in2\mathbb N}p(p-1)^2b_p^2<\infty,\qquad
 C(L)=\left(2\sqrt L+\sqrt{4L+1}\right)^2.
\]
Then
\begin{equation}\label{mark:ball-estimate}
 \E^{\rm ball}|Y(w)-Y(v)|^2
       \le C(L)C_\xi\epsilon^2,\qquad
 \E^{\rm ball}|M(w)-M(v)|^2
       \le C(L)C_\xi\epsilon^2.
\end{equation}
\end{lemma}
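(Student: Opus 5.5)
The plan is to treat $\Delta=Y(w)-Y(v)$ by Gaussian integration by parts under the tilted sampling law, as in Lemma~\zcref[noname]{cav:gaussian}. Doing so should produce a quadratic inequality for $x=(\E^{\rm ball}\Delta^2)^{1/2}/(\sqrt{C_\xi}\,\epsilon)$, namely $x^2\le1+4\sqrt L\,x$. Its positive root is $2\sqrt L+\sqrt{4L+1}$, which is exactly the constant $C(L)$. The estimate for $M$ then follows at once, since $\tanh$ is $1$-Lipschitz:
\[
|M(w)-M(v)|\le|Y(w)-Y(v)|.
\]

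\emph{Step 1: two deterministic covariance bounds.} Conditional on $G$, the field $Y$ is Gaussian with kernel $\kappa(v\cdot w)=\sum_p pb_p^2\,(v^{\otimes(p-1)}\cdot w^{\otimes(p-1)})$. Hence
\[
\E[\Delta^2\mid G,v,w]=\sum_p pb_p^2\,\|v^{\otimes(p-1)}-w^{\otimes(p-1)}\|^2\le\sum_p pb_p^2(p-1)^2\|v-w\|^2\le C_\xi\epsilon^2 .
\]
This uses telescoping of tensor powers and $\|v\|,\|w\|\le1$. For any $x$ in the unit ball, the same tensor representation and Cauchy--Schwarz over the index $p$ give
\[
|\Cov(\Delta,Y(x)\mid G)|=|\kappa(w\cdot x)-\kappa(v\cdot x)|\le\Bigl(\sum_p pb_p^2\Bigr)^{1/2}\Bigl(\sum_p pb_p^2(p-1)^2\Bigr)^{1/2}\|v-w\|\le\sqrt{L C_\xi}\,\epsilon .
\]

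\emph{Step 2: the integration by parts.} I will write the ball sampling law relative to $G\otimes G$ with Gaussian $Y$. The density is
\[
\frac{a(v)\cosh(h+Y(v))}{Z_h}\cdot\frac{a(w)\cosh(h+Y(w))\one_{B_\epsilon(v)}(w)}{\int_{B_\epsilon(v)}a\cosh(h+Y)\,dG}.
\]
With $v,w$ held fixed, I apply Gaussian integration by parts to $\E[\Delta\cdot\Delta\,\rho]$, first for finite Gaussian projections and then passing to the limit as in Lemma~\zcref[noname]{cav:gaussian}. Differentiating $g=\Delta$ gives the variance term, which is at most $C_\xi\epsilon^2$ by Step 1. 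Differentiating the density gives four terms. Two come from the $\cosh$ factors at $v$ and at $w$, giving $M(v)\Cov(\Delta,Y(v))$ and $M(w)\Cov(\Delta,Y(w))$. The other two come from the normalizers $Z_h$ and $\int_{B_\epsilon(v)}a\cosh(h+Y)\,dG$, giving $\widehat G$-averages and ball-averages of $M(x)\Cov(\Delta,Y(x))$. Each of the four terms is multiplied by $\Delta$. Each has modulus at most $\sqrt{LC_\xi}\,\epsilon\,|\Delta|$, because $|M|\le1$ and Step 1 applies with $x$ anywhere in the unit ball. Taking expectations and applying Cauchy--Schwarz then yields
\[
\E^{\rm ball}\Delta^2\le C_\xi\epsilon^2+4\sqrt{LC_\xi}\,\epsilon\,(\E^{\rm ball}\Delta^2)^{1/2},
\]
which is the quadratic inequality above.

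\emph{Main obstacle.} The main difficulty is justifying this integration by parts for the ball-normalized density. The denominator $\int_{B_\epsilon(v)}a\cosh(h+Y)\,dG$ is random and may be small. Its logarithmic derivative is harmless, since it is a ball-average of $M f_\alpha$. What needs care is integrability and the passage from finite projections to the full field. I would handle the denominator through its ratio structure: it is at least $G(B_\epsilon(v))>0$, since $a\ge1$ and $\cosh\ge1$, and the ball is centred at a point of $\supp G$. I would then bound numerators via \zeqref{cav:fieldtails}. After that, the same uniform-integrability argument as in Lemma~\zcref[noname]{cav:gaussian} should pass the identity to the limit. I expect this justification, rather than the algebra, to be the delicate step.
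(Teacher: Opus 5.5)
Your proposal is correct and follows essentially the paper's argument: Gaussian integration by parts of $\Delta^2$ against the ball density with both normalizers, four score terms each bounded by $\sqrt{LC_\xi}\,\epsilon$, and the resulting quadratic inequality. The paper writes the density relative to $G(dv)\,G_{B_v}(dw)$, where $G_{B_v}$ is the normalized restriction of $G$ to $B_\epsilon(v)$, so its ball normalizer is at least one; this is your $G\otimes G$ density with the factor $G(B_\epsilon(v))$ absorbed. The only other difference is that the paper proves $A_m\le C(L)C_\xi\epsilon^2$ for each finite projection and passes to the limit by Fatou's lemma, which avoids having to pass the score terms to the limit, the step you flagged as delicate.
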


\begin{proof}
We prove the estimate conditional on $G$, with a constant
independent of $G$. Write $\E_Y$ for expectation over the
Gaussian randomness conditional on $G$. We first bound
the variance of $Y(w)-Y(v)$ at fixed vectors, and then
account for the dependence of the sampling law on $Y$.

Define the tensor-valued map
\[
 \phi_\xi(v)=
 \bigoplus_{p\in2\mathbb N}\sqrt p\,b_p\,v^{\otimes(p-1)}.
\]
For vectors in the unit ball,
\[
 K(v,w):=\langle\phi_\xi(v),\phi_\xi(w)\rangle
       =\kappa(v\cdot w),\qquad
 \|\phi_\xi(v)\|^2=\kappa(\|v\|^2)\le L.
\]
Expanding the difference of the tensor powers one factor
at a time gives
\[
 \|v^{\otimes(p-1)}-w^{\otimes(p-1)}\|
       \le(p-1)\|v-w\|.
\]
Consequently,
\begin{equation}\label{mark:ball-increment}
 \begin{split}
 \sigma^2(v,w)
 &:=\E_Y|Y(w)-Y(v)|^2=\|\phi_\xi(w)-\phi_\xi(v)\|^2
 \le C_\xi\|v-w\|^2.
 \end{split}
\end{equation}
Exponential summability of the mixture coefficients gives
$C_\xi<\infty$. Put $\chi=C_\xi\epsilon^2$.

To differentiate the sampling density while holding the
vectors fixed, we express it relative to a probability
measure independent of $Y$. For $B_v=B_\epsilon(v)$, set
\[
 b(v)=G(B_v),\qquad
 G_{B_v}(dw)=\frac{\one_{B_v}(w)}{b(v)}G(dw),\qquad
 Q_0(dv,dw)=G(dv)G_{B_v}(dw).
\]
For $G$-almost every $v$, every open ball centered at $v$
has positive $G$-mass, so $b(v)>0$. Define $G_{B_v}$
arbitrarily on the remaining null set.

Retain the factor $a$ from \zeqref{eq:cavity-tilt}, and write
\[
 a(v)=e^{[L-\kappa(\|v\|^2)]/2},\qquad
 W(v)=a(v)\cosh(h+Y(v)),
\]
\[
 Z_h=G(W),\qquad
 \overline Z_h(v)=G_{B_v}(W).
\]
Since $1\le a\le e^{L/2}$, both normalizing constants are
at least one. They are finite almost surely, since $G(W)$
is finite and $b(v)>0$ for $G$-almost every $v$.
The two sampling steps have densities
\[
 \widehat G(dv)=\frac{W(v)}{Z_h}G(dv),\qquad
 \widehat G(dw\mid B_v)
 =\frac{W(w)}{\overline Z_h(v)}G_{B_v}(dw).
\]
Thus their joint density relative to $Q_0$ is
\[
 f(v,w;Y)=\frac{W(v)W(w)}{Z_h\overline Z_h(v)},
\]
and
\begin{equation}\label{mark:ball-two-normalizers}
 \int f\,dQ_0=1
 \qquad\text{almost surely}.
\end{equation}
Indeed, integrating first in $w$ gives $W(v)/Z_h$.
The factor $b(v)$ is included in the reference measure
$Q_0$, while the density retains both normalizing constants
that depend on $Y$.

We first perform Gaussian integration by parts for finite
projections of the field. Choose an orthonormal basis
$(e_j)$ of the Hilbert space containing the vectors
$\phi_\xi(v)$, and let $(\zeta_j)$ be independent standard
Gaussian variables. We may realize $Y$ as the Gaussian
series with partial sums
\[
 Y^{(m)}(z)=
 \sum_{j=1}^m\zeta_j\langle\phi_\xi(z),e_j\rangle.
\]
Let $P_m$ be the orthogonal projection onto the span of
$e_1,\ldots,e_m$. The covariance of $Y^{(m)}$ is
\[
 K_m(z,z')=\langle P_m\phi_\xi(z),P_m\phi_\xi(z')\rangle.
\]
Orthogonal projection gives
\[
 K_m(z,z)\le L,\qquad
 \sigma_m^2(v,w)
 :=\|P_m(\phi_\xi(w)-\phi_\xi(v))\|^2
 \le\sigma^2(v,w)\le\chi
\]
for $Q_0$-almost every pair.

Define $W_m,Z_{h,m},\overline Z_{h,m},f_m$, and
$\widehat G_m$ by replacing $Y$ with $Y^{(m)}$ in the
preceding formulas. Keep $a$, the balls, and $Q_0$
unchanged, and put
\[
 M_m(z)=\tanh(h+Y^{(m)}(z)).
\]
In particular, both normalizing constants remain at least
one, and $\int f_m\,dQ_0=1$.

Fix a pair $(v,w)$ in the reference integral and set
\[
 D_m=Y^{(m)}(w)-Y^{(m)}(v),\qquad
 c_m(z)=K_m(w,z)-K_m(v,z).
\]
The differential operator
\[
 \mathcal D_m
 =\sum_{j=1}^m
   \langle\phi_\xi(w)-\phi_\xi(v),e_j\rangle
   \frac{\partial}{\partial\zeta_j}
\]
satisfies
\[
 \mathcal D_mY^{(m)}(z)=c_m(z),\qquad
 \mathcal D_mD_m=\sigma_m^2(v,w).
\]
Differentiating the two numerator factors and the two
normalizing constants gives
\begin{align}
 \mathcal S_m
 :=\mathcal D_m\log f_m
 ={}&c_m(v)M_m(v)+c_m(w)M_m(w)\notag\\
 &-\widehat G_m[c_m(z)M_m(z)]\notag\\
 &-\widehat G_m[c_m(z)M_m(z)\mid B_v].
 \label{mark:ball-score}
\end{align}
The last two terms come from $Z_{h,m}$ and
$\overline Z_{h,m}(v)$, respectively. The vectors, the ball,
its $G$-mass, and $a$ are fixed during differentiation.

Cauchy--Schwarz in the feature space gives
\[
 |c_m(z)|
 \le\sqrt{K_m(z,z)\sigma_m^2(v,w)}
 \le\sqrt{L\sigma_m^2(v,w)}.
\]
Since $|M_m|\le1$ and both averages in
\zeqref{mark:ball-score} are under probability measures,
\begin{equation}\label{mark:ball-score-bound}
 |\mathcal S_m|
 \le4\sqrt{L\sigma_m^2(v,w)}
 \le4\sqrt{L\chi}.
\end{equation}
This bound does not involve $b(v)$.

The integrals needed for Gaussian integration by parts
are finite. Indeed,
\[
 0\le f_m
 \le W_m(v)W_m(w)
 \le e^L\cosh(h+Y^{(m)}(v))
          \cosh(h+Y^{(m)}(w)).
\]
Both field values have variance at most $L$. Gaussian
exponential moments therefore bound every fixed polynomial
moment of $D_m$ times the right side, uniformly in $m$
and the reference pair.

Since $D_m$ is the linear combination of Gaussian
coordinates defining $\mathcal D_m$, integration by parts
gives
\[
 \E_Y[D_m^2f_m]
 =\E_Y[\mathcal D_m(D_mf_m)]
 =\sigma_m^2(v,w)\E_Y f_m
   +\E_Y[D_mf_m\mathcal S_m].
\]
Put
\[
 A_m=\int\E_Y[D_m^2f_m]\,dQ_0.
\]
Integrating the preceding identity, using
$\sigma_m^2\le\chi$ and \zeqref{mark:ball-score-bound},
and applying Cauchy--Schwarz under the probability measure
with density $f_m$ gives
\[
 A_m\le\chi+4\sqrt{L\chi}\sqrt{A_m}.
\]
For $\chi>0$, the number $x=\sqrt{A_m/\chi}$ satisfies
$x^2-4\sqrt L\,x-1\le0$. Hence
\[
 A_m\le
 \left(2\sqrt L+\sqrt{4L+1}\right)^2\chi=C(L)\chi.
\]
If $\chi=0$, then $D_m=0$ almost surely for
$Q_0$-almost every pair, so $A_m=0$.

It remains to pass from the projected fields to $Y$.
Under the Gaussian series coupling,
\[
 \E_Y|Y^{(m)}(z)-Y(z)|^2
 =\|(I-P_m)\phi_\xi(z)\|^2\longrightarrow0.
\]
The $v$-marginal of $Q_0$ is $G$, and its $w$-marginal
is absolutely continuous with respect to $G$. The variance
bound therefore gives convergence of both field values
in $L^2$ under the product of the Gaussian law and $Q_0$.
Gaussian exponential moments also give
\[
 \int\E_Y\bigl[
   |W_m(v)-W(v)|+|W_m(w)-W(w)|
 \bigr]\,dQ_0\longrightarrow0.
\]
The normalizing constants converge by the inequalities
\[
 \begin{split}
 &\E_Y|Z_{h,m}-Z_h|
 +\int\E_Y|\overline Z_{h,m}(v)-\overline Z_h(v)|\,G(dv)\\
 &\qquad\le
 \int\E_Y\bigl[
   |W_m(v)-W(v)|+|W_m(w)-W(w)|
 \bigr]\,dQ_0.
 \end{split}
\]
Their lower bound one implies convergence of their
inverses. Consequently,
\[
 D_m^2f_m\longrightarrow
 |Y(w)-Y(v)|^2f(v,w;Y)
\]
in probability under the product of the Gaussian law
and $Q_0$. Fatou's lemma along an almost surely convergent
subsequence yields
\[
 \int\E_Y\bigl[
   |Y(w)-Y(v)|^2f(v,w;Y)
 \bigr]\,dQ_0
 \le C(L)\chi.
\]
Averaging over $G$ proves the first bound in
\zeqref{mark:ball-estimate}. The second follows from
$M(z)=\tanh(h+Y(z))$ and the fact that
$x\mapsto\tanh(h+x)$ is $1$-Lipschitz.
\end{proof}

\subsection{Conditional magnetization laws}
\label{subsec:conditional-magnetizations}

Auffinger and Jagannath have previously obtained a branching-diffusion
representation of spin distributions under the cavity equations
and the Ghirlanda--Guerra identities
\cite[Theorem~1.2]{AuffingerJagannathSpinDistributions}.
Here we derive the required conditional magnetization laws from
the finite-replica concentration estimates for the unperturbed model,
and subsequently use them to prove the full identities.

\begin{proposition}\label{mark:conditional}
Use the cavity representation of
Lemma~\zcref[noname]{cav:representation}, with predecessor
directing measure $G$, Gaussian field $Y$, and successor
directing measure $\widehat G$. Conditional on $G,Y$,
sample $v^1,v^2,\ldots$ independently from $\widehat G$,
and put
\[
 q_i=\|v^i\|^2,\qquad
 M_i=\tanh(h+Y(v^i)).
\]
Let $\mathscr R$ be the completed sigma-field generated
by the overlaps $(v^i\cdot v^j)_{i<j}$. By
Lemma~\zcref[noname]{lem:norm-observable}, $q_1$ is
$\mathscr R$-measurable. The conditional law of $M_1$
given $\mathscr R$ is $\nu_{q_1,h}$.  Explicitly, for every bounded
$\mathscr R$-measurable $F$ and every bounded Borel
$f:[-1,1]\to\R$,
\begin{equation}\label{mark:onepoint}
 \E[F f(M_1)]=\E[F\nu_{q_1,h}(f)].
\end{equation}

For each fixed deterministic $S\in[0,1]$, conditional on
$\mathscr R$, the pair $(M_1,M_2)$ has law
$\nu_{R_{12}}^{S,h}$ on the event $\{q_1=q_2=S\}$.
Thus, for every bounded $\mathscr R$-measurable $F$ and
every bounded Borel $\psi:[-1,1]^2\to\R$,
\begin{equation}\label{mark:pair}
 \E[F\one_{\{q_1=q_2=S\}}\psi(M_1,M_2)]
 =\E[F\one_{\{q_1=q_2=S\}}\nu_{R_{12}}^{S,h}(\psi)].
\end{equation}
The right-hand integrand is defined to be zero outside
$\{q_1=q_2=S\}$. On this event, $R_{12}\ge0$ almost surely
when $h\ne0$.

Moreover, almost surely, simultaneously for every $i$,
\begin{equation}\label{mark:radiuscontact}
 q_i=\Gamma(q_i).
\end{equation}
For each fixed deterministic $S\in[0,1]$, almost surely
on $\{q_1=q_2=S\}$,
\begin{equation}\label{mark:paircontact}
 \begin{aligned}
 R_{12}&=\Gamma(R_{12}) &&\text{if }h\ne0,\\
 |R_{12}|&=\Gamma(|R_{12}|) &&\text{if }h=0.
 \end{aligned}
\end{equation}
\end{proposition}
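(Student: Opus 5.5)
The plan is to identify moments of $M_1$ (and of the pair $(M_1,M_2)$) by combining the site identity of Lemma~\zcref[noname]{cav:moments} with the finite-replica suppression of Lemma~\zcref[noname]{src:source-suppression}. Both laws live on $[-1,1]$ and $[-1,1]^2$, so polynomial moments determine them. It therefore suffices to prove \zeqref{mark:onepoint} for $f(z)=z^a$ and \zeqref{mark:pair} for $\psi(z_1,z_2)=z_1^az_2^b$, for $F$ a bounded continuous function of finitely many overlaps. A monotone class argument then extends this to bounded $\mathscr R$-measurable $F$; the functions $q_i$ are handled through Lemma~\zcref[noname]{lem:norm-observable}.

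\emph{One-point law.} Fix $a\ge1$ and introduce auxiliary replicas $v^{2},\ldots,v^{a}$ forced to lie near $v^1$. At finite $N$, choose $a$ replicas of the $(N+1)$-site system whose mutual overlaps are all within $\eta$ of some level $S$. On this event, Lemma~\zcref[noname]{src:source-suppression} with the one-group construction and $P_0=\prod_{j\le a}\epsilon^j$ shows that $A_N=N^{-1}\sum_i\sigma_i^1\cdots\sigma_i^a$ is within $\delta$ of $\nu_{S,h}(z^a)$, except on an event of annealed probability $Ce^{-c_0N}$. The bound is uniform in $S$, so we may cover $[0,1]$ by a finite $\eta$-grid and apply it on each cell. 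Site exchangeability, as in the proof of \zeqref{eq:siteidentity}, converts $\E\langle F\,\one_{\mathcal C}\,A_N\rangle$ into $\E\langle F\,\one_{\mathcal C}\,\sigma^1_{N+1}\cdots\sigma^a_{N+1}\rangle$. In the successor limit, conditional independence of the added-site spins turns this into $\E[F\one_{\mathcal C}\prod_{j\le a}M_j]$.

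In the limit, use Lemma~\zcref[noname]{mark:ball-approximation} and \zeqref{mark:observable-ball} to replace $M_j$ by $M_1$, at cost $O(\epsilon)$, on the event $v^j\in B_\epsilon(v^1)$. Note that this event is itself an overlap event: it says the overlaps $R_{1j}$ are near $q_1$, and $q_1$ is $\mathscr R$-measurable. Continuity of $S\mapsto\nu_{S,h}(z^a)$ (Lemma~\zcref[noname]{src:replica-coefficient}) lets us send $\eta,\epsilon,\delta\to0$.

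Here is the main technical obstacle: the constraint event must have positive probability, and the conditioning on it has to be undone. Following the outline, I would weight by indicators $\one\{\|v^j-v^1\|<\epsilon\}$ and divide by the conditional ball mass. That mass is a limit of empirical frequencies over further replicas, hence $\mathscr R$-measurable and positive $\widehat G$-a.s. I would handle this by a truncated ratio, with $F$ allowed to depend on these extra replicas. Since $F$ may also involve other replicas, the auxiliary ones must be labels disjoint from those used in $F$; exchangeability permits this relabeling.

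\emph{Pair law.} Proceed in the same way with two groups of $k$ replicas clustered near $v^1$ and near $v^2$, using the two-group construction with levels $(R_{12},S)$ on $\{q_1=q_2=S\}$. At zero field, negative $r$ is handled by the reflected coefficient. At $h\ne0$, nonnegativity of $R_{12}$ on the event needs a separate argument. I would use the $h\ne0$ constraint $|r|\le S$ with $r<0$: the constrained pressure for negative cross overlap lies strictly below $2P$ by the interpolation and the Parisi formula (the negative-overlap Guerra bound). This suppresses such configurations exponentially, though this step may need an extra comparison not yet stated.

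\emph{Contact identities.} These follow from the laws just identified together with \zeqref{eq:siteidentity}. Taking $F$ localized to $v^2\in B_\epsilon(v^1)$ in $\E[FR_{12}]=\E[FM_1M_2]$ gives $q_1\approx\E[M_1^2\mid\mathscr R]=\Gamma(q_1)$, proving \zeqref{mark:radiuscontact}. Taking general $F$ on $\{q_1=q_2=S\}$ together with \zeqref{mark:pair} and \zeqref{eq:branch-product} gives $R_{12}=\Gamma(R_{12})$, with the signed version $\operatorname{sgn}(r)\Gamma(|r|)$ at zero field. This proves \zeqref{mark:paircontact}.
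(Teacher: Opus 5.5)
The flagged step, $R_{12}\ge0$ on $\{q_1=q_2=S\}$ at $h\ne0$, is a genuine gap, and your suggested justification does not hold up. The step matters for more than the side remark. Without it, the kernel $\nu^{S,h}_{R_{12}}$ in \zeqref{mark:pair} is undefined on $\{R_{12}<0\}$. Also, the two-group comparison of Lemma~\zcref[noname]{src:replica-upper} is proved only for $0\le r\le S$ when $h\ne0$: its negative-$r$ extension reverses the spins of one group, and that requires the symmetry of $h=0$.

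The inequality you want is true, but it does not follow "by the interpolation and the Parisi formula." At $\theta=0$, that lemma bounds the constrained two-replica pressure by $2P+\tfrac12B_\xi\eta^2$, which is never below $2P$. Exponential suppression needs a strict deficit, and that requires a separate two-dimensional Guerra--Talagrand bound for cross overlaps below $q_0=\min\supp\mu$.

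The paper imports this from \cite[Theorem~7(i)]{ChenGT}. That result gives $\E\langle\one_{\{R_{12}\le q_0-\epsilon\}}\rangle_{N,h}\le C_\epsilon e^{-c_\epsilon N}$, where $q_0\ge u(0,h)^2>0$ by \zeqref{eq:contact} and the martingale property. The paper then passes to the limit with continuous cutoffs to get $R_{12}\ge q_0$ for all distinct pairs. You need to cite this theorem or prove such a strict bound.

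The rest of your outline follows the paper's proof, but one intermediate statement has to be made explicit. Reducing the final identities to continuous cylinder $F$ is not enough. The weights you insert into the suppression bound are not cylinder functions:
\begin{itemize}
\item grid-cell indicators of $q_1$;
\item ball indicators $\{q_1+q_j-2R_{1j}<\epsilon^2\}$, where already $q_j$ depends on infinitely many overlaps;
\item the truncated inverse mass $\widehat G(B_\epsilon(v^1))^{-(a-1)}$.
\end{itemize}
In addition, $\one_{\mathcal C}$ must be replaced by a continuous cutoff $\chi$ before taking limits.

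The paper handles this as follows. Let $Z_t$ be the added-spin monomial at the tuple $t$. It passes $|\E[T\chi(Z_t-c)]|\le\delta\E[T\chi]$ to the limit for nonnegative continuous cylinder $T$. Then, using positivity of $\delta\chi\mathcal L\pm\zeta_\chi$ with $\zeta_\chi(A)=\E[\one_{\{R\in A\}}\chi(Z_t-c)]$, it extends the inequality to all bounded nonnegative $\mathscr R$-measurable $T$. This gives $|\E[Z_t\mid\mathscr R]-c|\le\delta$ almost surely on the constraint event.

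With that bound, your inverse-mass weighting is a valid alternative to the paper's sequential selection of the first $k$ auxiliary replicas in $B_\epsilon(v^g)$. Integrating out deterministic auxiliary labels given $G,Y,v^1$ yields exactly the law $\widehat G(\cdot\mid B_\epsilon(v^1))$, so Lemma~\zcref[noname]{mark:ball-approximation} applies, and dominated convergence removes the truncation. Your route keeps every label deterministic and avoids partitioning over randomly selected tuples; the paper's route avoids dividing by ball masses. Including $v^1$ itself in the tuple is fine. So is deriving \zeqref{mark:radiuscontact} from the $a=2$ one-point law localized to $v^2\in B_\epsilon(v^1)$.
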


\begin{proof}
We identify the conditional laws by their moments. To obtain a
power of the magnetization at a sampled vector, we select several
replicas near that vector and compare the product of their added-site
spins with the corresponding scalar moment. We first derive moment estimates conditional on the full overlap array. 

Let $\varepsilon^i$ be the added-site spin associated with $v^i$ in
Lemma~\zcref[noname]{cav:representation}. Conditional on $G,Y$ and
the entire sequence of vectors, these spins are independent with
means $M_i$. All expectations below are under this joint cavity law,
including the auxiliary replicas introduced in Step~3.

\emph{1. Moment estimates conditional on the full overlap array.}
Fix a one-group or two-group construction from
\zcref{src:replica-groups}, with $d$ replicas, deterministic scalar
parameters, prescribed overlap matrix $D$, and spin monomial $P_0$.
Let $c$ be its scalar coefficient from
Lemma~\zcref[noname]{src:replica-coefficient}. Thus $c=c(P_0;S)$
in the one-group case and $c=c(P_0;r,S)$ in the two-group case.
Fix $\delta>0$ and choose a width $\eta>0$ allowed by
Lemma~\zcref[noname]{src:source-suppression}.

For a deterministic ordered tuple $t=(i_1,\ldots,i_d)$ of
distinct replica labels, put
\[
 Z_t=P_0(\varepsilon^{i_1},\ldots,\varepsilon^{i_d}),
 \qquad
 \mathcal C_\eta^t(D)=
 \left\{|R_{i_ai_b}-D_{ab}|<\eta
          \text{ for every }a<b\right\}.
\]
We claim that
\begin{equation}\label{mark:source-conditional-bound}
 \left|\E[Z_t\mid\mathscr R]-c\right|\le\delta
 \quad\text{almost surely on }\mathcal C_\eta^t(D).
\end{equation}
The conditional expectation in this estimate retains all overlaps,
including those involving labels outside $t$.

To prove the claim, let $T$ be any nonnegative continuous function
of finitely many overlap coordinates. For $d\ge2$, define
\[
 \chi(R)=\min\left\{1,
   \left(\eta-\max_{a<b}|R_{i_ai_b}-D_{ab}|\right)_+
 \right\}.
\]
This function is continuous, takes values in $[0,1]$, and is
positive exactly on $\mathcal C_\eta^t(D)$. For $d=1$, use
$\chi=1$, since there is no overlap constraint.

In the full $N$-site system, let $T_N$ and $\chi_N$ be the
same functions of the overlaps, and let
$Z_{t,N}=P_0(\sigma_N^{i_1},\ldots,\sigma_N^{i_d})$ be the
spin monomial at the site removed in the cavity construction.
The empirical spin observable $A_N$ from \zeqref{src:replica-pressure}
is the average of this monomial over all sites. A common
permutation of sites leaves the disorder law and all overlap
factors unchanged. Each term in the average therefore has the
same expectation, giving
\[
 \E\langle T_N\chi_N A_N\rangle
 =\E\langle T_N\chi_N Z_{t,N}\rangle.
\]
On $\{|A_N-c|\le\delta\}$, the difference $|A_N-c|$ is at
most $\delta$. On its complement, it is at most two, since
$|A_N|,|c|\le1$. Lemma~\zcref[noname]{src:source-suppression}
thus gives
\begin{equation}\label{mark:source-finite-bound}
 \left|\E\langle T_N\chi_N(Z_{t,N}-c)\rangle\right|
 \le\delta\,\E\langle T_N\chi_N\rangle
       +2\|T\|_\infty C e^{-c_0N}.
\end{equation}
All additional labels used by $T_N$ are included in this
expectation. After bounding $T_N$ by its supremum and integrating out
those labels, apply Lemma~\zcref[noname]{src:source-suppression}
to the event that the overlap constraints hold and
$|A_N-c|>\delta$.

We now pass to the given successor limit. The joint convergence
of overlaps and added-site spins in
Lemma~\zcref[noname]{cav:representation} applies because the
test functions are bounded and continuous and use finitely many
labels. At full-system size $N$, each full overlap differs from
its reduced-system counterpart by at most $2/N$, so either
choice gives the same limit. Consequently,
\[
 \left|\E[T\chi(Z_t-c)]\right|
 \le\delta\,\E[T\chi].
\]

We now extend this inequality to arbitrary overlap events.
Let $\mathcal L$ be the limiting overlap law on the compact
array space $\Omega$, and define the finite signed measure
\[
 \zeta_\chi(A)=
 \E[\one_{\{R\in A\}}\chi(R)(Z_t-c)].
\]
The preceding inequality says that both measures
$\delta\chi\mathcal L+\zeta_\chi$ and
$\delta\chi\mathcal L-\zeta_\chi$ integrate every nonnegative
continuous cylinder function to a nonnegative number.
Such functions uniformly approximate every nonnegative
continuous function on $\Omega$. The characterization of
positive Borel measures by their integrals against continuous
functions therefore shows that both measures are nonnegative.
The inequality extends to every bounded nonnegative Borel $T$,
and hence
\[
 \left|\chi\,\E[Z_t-c\mid\mathscr R]\right|
 \le\delta\chi
 \quad\text{almost surely}.
\]
Dividing on $\{\chi>0\}$ proves
\zeqref{mark:source-conditional-bound}. Completing the
overlap sigma-field does not change these expectations.

The later selections require these bounds to hold simultaneously.
Use all rational scalar parameter points in the permitted
domains, all integer group sizes, all their spin monomials, all
deterministic tuples of distinct labels, and accuracies
$\delta=2^{-m}$, $m\ge1$. For each construction, group size,
and accuracy, choose one positive width that works for every
parameter point and monomial; this is possible by the uniformity
in Lemma~\zcref[noname]{src:source-suppression}. Fix versions of
the conditional expectations of the spin monomials and intersect
the resulting countably many events of probability one. On that
intersection, all the required conditional bounds hold.

\emph{2. Positivity of the overlaps at nonzero field.}
We verify that, when $h\ne0$, the pair kernel will be evaluated
within its domain $0\le r\le S$. Let $q_0=\min\supp\mu$.
The contact identity \zeqref{eq:contact} and the martingale
identity \zeqref{eq:martingale-u} give
\[
 q_0=\Gamma(q_0)
 =\E u(q_0,X_{q_0})^2
 \ge\bigl(\E u(q_0,X_{q_0})\bigr)^2
 =u(0,h)^2>0.
\]
The last inequality uses $u(0,0)=0$ and the strict increase
of $x\mapsto u(0,x)$.

We apply \cite[Theorem~7(i), Eq.~(29)]{ChenGT}. In its
equations (1)--(4), take $\beta=1$ and $\gamma_p=b_p$.
The covariance is then $N\xi(R)$, and the spin sum uses
counting measure, as here. The function $\xi$ is nonzero,
even, and convex on $[-1,1]$, its coefficients satisfy the
required exponential summability by \zeqref{eq:mixture},
and the deterministic field is nonzero. The terminal potential
in that reference is $\log\cosh$, with $\log2$ added separately
to the functional; adding this constant to the potential gives
our convention and leaves the minimizer unchanged. The theorem
yields, for every $\epsilon>0$,
\begin{equation}\label{eq:positive-overlap}
 \E\left\langle
   \one_{\{R_{12}\le q_0-\epsilon\}}
 \right\rangle_{N,h}
 \le C_\epsilon e^{-c_\epsilon N}.
\end{equation}
Passing to the limit with continuous cutoffs supported below
$q_0$, and exhausting $[-1,q_0)$ by countably many such cutoffs,
gives $R_{12}\ge q_0$ almost surely. Replica exchangeability
and a countable intersection give this conclusion simultaneously
for every pair of distinct labels.

\emph{3. Selecting nearby replicas using the overlap array.}
We use additional replicas to select vectors near $v^1$ and $v^2$,
while keeping all overlaps among the original replicas in the
conditioning. Conditional on $G,Y$,
sample two infinite sequences of vectors independently from
$\widehat G$, independently also of the original sequence,
and attach their added-site spins as in
Lemma~\zcref[noname]{cav:representation}. Assign these vectors
labels distinct from the original labels. Any fixed enumeration
of the enlarged family has the same conditional independent
sampling law as the original sequence. Thus Step~1 and
Lemma~\zcref[noname]{lem:norm-observable} apply to its full
overlap array. Write $\mathscr R^+$ for the completed
sigma-field generated by this enlarged array; it contains
$\mathscr R$. We use the simultaneous conditional bounds
from Step~1 with $\mathscr R^+$ in place of $\mathscr R$.

For a positive rational $\epsilon$ and an integer $k\ge1$,
take the first $k$ vectors in auxiliary sequence $g$ that
belong to $B_\epsilon(v^g)$, for $g=1,2$. Denote them by
$w_\epsilon^{g,1},\ldots,w_\epsilon^{g,k}$, and let
$T_\epsilon^{g,j}$ be their labels. Almost surely, each
center $v^g$ lies in the support of $\widehat G$, so its
ball has positive mass and all these labels are finite.
By \zeqref{mark:observable-ball}, membership in the ball is
determined by the enlarged overlap array. The selected labels
are therefore $\mathscr R^+$-measurable.

Conditional on $G,Y,v^1,v^2$, the accepted vectors in group
$g$ are independent with law
$\widehat G(\cdot\mid B_\epsilon(v^g))$, and the two groups
are independent. To verify the normalization, put
$p=\widehat G(B_\epsilon(v^g))>0$. For a Borel set $A$,
summing over the number of trials before the first acceptance
gives
\[
 \sum_{\ell=0}^\infty(1-p)^\ell
       \widehat G(A\cap B_\epsilon(v^g))
 =\frac{\widehat G(A\cap B_\epsilon(v^g))}{p}.
\]
The unused trials remain independent with law $\widehat G$,
so this calculation applies at every successive acceptance.

To use the conditional moment estimates at these random labels,
let $T_\epsilon$ denote a selected ordered tuple and, as in
Step~1, let $Z_t$ denote its spin monomial at deterministic
labels $t$. Partitioning by the countably many possible tuples
gives
\begin{equation}\label{mark:source-selected-labels}
 \E[Z_{T_\epsilon}\mid\mathscr R^+]
 =\sum_t\one_{\{T_\epsilon=t\}}
        \E[Z_t\mid\mathscr R^+].
\end{equation}
Indeed, each selection event is $\mathscr R^+$-measurable,
and the series is dominated in absolute value by one. Thus
the conditional bound for each deterministic tuple also holds
for the selected tuple whenever it satisfies the corresponding
overlap constraint.

The selected vectors satisfy the required constraints because
their inner products are close to those of their centers.
Since all vectors have norm at most one,
\begin{equation}\label{mark:source-selected-overlaps}
 \begin{aligned}
 |w_\epsilon^{g,j}\cdot w_\epsilon^{g,\ell}-q_g|
 &<2\epsilon &&(j\ne\ell),\\
 |w_\epsilon^{1,j}\cdot w_\epsilon^{2,\ell}-R_{12}|
 &<2\epsilon.
 \end{aligned}
\end{equation}
For example, the first bound follows from
\[
 |w_\epsilon^{g,j}\cdot w_\epsilon^{g,\ell}
      -v^g\cdot v^g|
 \le\|w_\epsilon^{g,j}-v^g\|\,
             \|w_\epsilon^{g,\ell}\|
       +\|v^g\|\,\|w_\epsilon^{g,\ell}-v^g\|
 <2\epsilon.
\]
For the cross-group bound, use $v^1$ and $v^2$ as the two
centers in this inequality.

We also control the change in the spin moments caused by this
selection. For integers $0\le a,b\le k$, take the monomial
using the first $a$ selected spins in group one and the first
$b$ in group two, and put
\[
 P_\epsilon=
 \prod_{j=1}^a M(w_\epsilon^{1,j})
 \prod_{j=1}^b M(w_\epsilon^{2,j}).
\]
Conditional on $G,Y$ and all vectors, the selected labels are
known and their distinct spins are independent. Hence the
conditional expectation of $Z_{T_\epsilon}$ is $P_\epsilon$.
In particular, for every bounded $\mathscr R^+$-measurable $H$,
\[
 \E[H Z_{T_\epsilon}]=\E[H P_\epsilon].
\]
Each pair $(v^g,w_\epsilon^{g,j})$ has, marginally, the
sampling law of Lemma~\zcref[noname]{mark:ball-approximation}.
Telescoping the difference of two products with factors in
$[-1,1]$, and then applying that lemma and Cauchy--Schwarz
to each factor, gives
\begin{equation}\label{mark:source-product-error}
 \E|P_\epsilon-M_1^aM_2^b|
 \le(a+b)\sqrt{C(L)C_\xi}\,\epsilon,
 \qquad L=\kappa(1).
\end{equation}
The case $b=0$ uses only the first auxiliary sequence.
The bound is independent of the masses of the sampled balls.

\emph{4. One-point moments at the random squared norm.}
Fix an integer $a\ge1$, take $k=\max\{a,2\}$, and use the
one-group monomial formed from the first $a$ selected spins.
The choice $k\ge2$ supplies within-group overlaps even when
$a=1$.
We approximate the  squared norm $q_1$ by finitely many
deterministic parameters, so that Step~1 applies.

Fix $\delta=2^{-m}$ and the common admissible width $\eta$ for
this group size and accuracy. By the continuity in
Lemma~\zcref[noname]{src:replica-coefficient}, there are
finitely many rational points $s_1,\ldots,s_J\in[0,1]$ such
that, for every $s\in[0,1]$, at least one index $j$ satisfies
\[
 |s-s_j|<\eta/4,
 \qquad
 |\nu_{s,h}(z^a)-\nu_{s_j,h}(z^a)|\le\delta.
\]
Choose the least such index for $s=q_1$. This index is
$\mathscr R$-measurable because $q_1$ is $\mathscr R$-measurable and
the scalar coefficient is continuous.

If $0<\epsilon<\eta/8$, the within-group bound in
\zeqref{mark:source-selected-overlaps} gives
\[
 |w_\epsilon^{1,\ell}\cdot w_\epsilon^{1,\ell'}-s_j|
 <2\epsilon+\eta/4<\eta,
 \qquad \ell\ne\ell'.
\]
The selected tuple therefore satisfies the one-group constraint
at $s_j$. Equations~\zeqref{mark:source-conditional-bound}
and \zeqref{mark:source-selected-labels} give
\[
 \left|\E[Z_{T_\epsilon}\mid\mathscr R^+]
              -\nu_{q_1,h}(z^a)\right|\le2\delta.
\]
One $\delta$ comes from the conditional moment estimate and the other
from replacing $s_j$ by $q_1$. The choice among the finitely
many $s_j$ is valid because all their conditional estimates
hold on the common event from Step~1.

For any bounded $\mathscr R$-measurable $F$, use
Step~3 to replace the spin product by $M_1^a$. We obtain
\[
 \left|\E[F M_1^a]-\E[F\nu_{q_1,h}(z^a)]\right|
 \le\|F\|_\infty
       \left(2\delta+a\sqrt{C(L)C_\xi}\,\epsilon\right).
\]
First let positive rational $\epsilon\downarrow0$, keeping
$a,k,\delta,\eta$ and the finite parameter set fixed.
Then let $\delta=2^{-m}\downarrow0$. This proves
\[
 \E[F M_1^a]=\E[F\nu_{q_1,h}(z^a)].
\]
For $a=0$, the identity holds because $\nu_{q_1,h}$ is a
probability measure.

\emph{5. Pair moments at a fixed common squared norm.}
Fix the deterministic $S\in[0,1]$ in the statement and put
\[
 I_S=\one_{\{q_1=q_2=S\}}.
\]
Fix integers $a,b\ge0$ with $a+b>0$, take
$k=\max\{a,b,2\}$, and use two groups of size $k$ with
the spin monomial from Step~3. Coordinates absent from this
monomial remain in all the overlap constraints.

On $\{I_S=1\}$, Cauchy--Schwarz gives $|R_{12}|\le S$.
If $h\ne0$, Step~2 also gives $R_{12}\ge0$. Thus
$(R_{12},S)$ belongs to the domain of the scalar pair kernel.
Fix $\delta=2^{-m}$ and its common width $\eta$. The function
\[
 (r,s)\longmapsto\nu_r^{s,h}(z_1^az_2^b)
\]
is uniformly continuous on $0\le r\le s\le1$, and at zero
field on the larger compact domain $|r|\le s\le1$, by
Lemma~\zcref[noname]{src:replica-coefficient}. Choose finitely
many rational points in the relevant domain so that every
parameter pair is within $\eta/4$ in both coordinates of
one of them and the corresponding coefficients differ by
at most $\delta$.

On $\{I_S=1\}$, choose the least index with these properties
for $(R_{12},S)$. This choice is $\mathscr R$-measurable. When
$0<\epsilon<\eta/8$, both bounds in
\zeqref{mark:source-selected-overlaps} show that the selected
tuple satisfies the constraint at this rational parameter
point: each constrained overlap differs from its prescribed
value by less than $2\epsilon+\eta/4<\eta$.
Applying the conditional moment bound at the selected labels
and parameter point, and then accounting for the coefficient
error, gives
\begin{equation}\label{mark:source-selected-bound}
 \left|\E[FI_S Z_{T_\epsilon}]
       -\E[FI_S\nu_{R_{12}}^{S,h}(z_1^az_2^b)]\right|
 \le2\delta\|F\|_\infty
\end{equation}
for every bounded $\mathscr R$-measurable $F$.
As in the statement, the integrand in the second expectation is defined
to be zero outside $\{I_S=1\}$.

By Step~3, $Z_{T_\epsilon}$ can first be replaced in this
expectation by $P_\epsilon$. Equation~\zeqref{mark:source-product-error}
then bounds the additional error in replacing $P_\epsilon$
by $M_1^aM_2^b$ by
\[
 \|F\|_\infty(a+b)\sqrt{C(L)C_\xi}\,\epsilon.
\]
Letting positive rational $\epsilon\downarrow0$ with the
degree, accuracy, width, and parameter set fixed, and then
letting $\delta=2^{-m}\downarrow0$, proves
\[
 \E[FI_S M_1^aM_2^b]
 =\E[FI_S\nu_{R_{12}}^{S,h}(z_1^az_2^b)].
\]
The constant monomial gives the same identity when $a=b=0$.

We now pass from moments to bounded Borel test functions.
The scalar laws are Borel probability kernels by their weak
continuity. For a fixed bounded nonnegative overlap multiplier
$F$, the two sides of the one-point moment identity define
finite measures on $[-1,1]$. They agree on monomials, hence
on polynomials, and then on continuous functions by uniform
polynomial approximation. Uniqueness of finite Borel measures
on this compact interval gives equality for every bounded
Borel test function. The same argument on $[-1,1]^2$, with
the multiplier $FI_S$, applies to the pair moments.
Writing a general bounded $F$ as the difference of its positive
and negative parts proves \zeqref{mark:onepoint} and
\zeqref{mark:pair}. Since $F$ was arbitrary in the original
signed-overlap sigma-field $\mathscr R$, these are the
conditional laws stated in the proposition.

\emph{6. Identities for squared norms and overlaps.}
We combine the conditional moment estimates with the identity
that relates a spin product to an overlap. By
\zeqref{eq:siteidentity} and conditional independence of the
added-site spins, for every deterministic pair of distinct
labels $i,j$ and every bounded cylinder function $H$ of the
enlarged overlap array,
\[
 \E[H\varepsilon^i\varepsilon^j]=\E[H R_{ij}].
\]
The enlarged sequence has the same joint sampling law as the
original sequence, so the identity applies to its labels as
well. The monotone-class theorem extends it to every bounded
$\mathscr R^+$-measurable $H$, giving
\[
 \E[\varepsilon^i\varepsilon^j\mid\mathscr R^+]=R_{ij}.
\]
A countable intersection makes this identity simultaneous for
all deterministic pairs. Partitioning by selected labels as
in \zeqref{mark:source-selected-labels} then gives the same
identity for any pair of distinct labels selected using the
overlap array.

To prove \zeqref{mark:radiuscontact}, use the one-group
construction of Step~4 with $a=k=2$. Its scalar coefficient
is $\nu_{s,h}(z^2)=\Gamma(s)$. The conditional moment estimate
in that step, followed by the preceding site identity, gives
\[
 \left|R_{T_\epsilon^{1,1},T_\epsilon^{1,2}}
            -\Gamma(q_1)\right|\le2\delta
\]
on the common event of probability one, for every sufficiently
small positive rational $\epsilon$. The geometric bound in
\zeqref{mark:source-selected-overlaps} therefore yields
\[
 |q_1-\Gamma(q_1)|\le2\epsilon+2\delta.
\]
Letting rational $\epsilon\downarrow0$ and then
$\delta=2^{-m}\downarrow0$ proves $q_1=\Gamma(q_1)$.
Replica exchangeability and a countable intersection prove
the assertion simultaneously for all $i$.

For \zeqref{mark:paircontact}, apply \zeqref{mark:pair} with
$\psi(z_1,z_2)=z_1z_2$. By \zeqref{eq:branch-product},
\[
 \nu_{R_{12}}^{S,h}(z_1z_2)=
 \begin{cases}
 \Gamma(R_{12}),&h\ne0,\\
 \operatorname{sgn}(R_{12})\Gamma(|R_{12}|),&h=0
 \end{cases}
 \quad\text{on }\{I_S=1\}.
\]
The site identity with multiplier $FI_S$ gives
$\E[FI_S M_1M_2]=\E[FI_S R_{12}]$. 
Since $F$ is arbitrary,
$R_{12}=\nu_{R_{12}}^{S,h}(z_1z_2)$ almost surely on
$\{I_S=1\}$. 
This gives
$R_{12}=\Gamma(R_{12})$ when $h\ne0$. At zero field and
$R_{12}\ne0$, multiplication by $\operatorname{sgn}(R_{12})$
gives $|R_{12}|=\Gamma(|R_{12}|)$; at $R_{12}=0$, the
identity holds because $\Gamma(0)=0$.

Every thermodynamic limit in Step~1 uses finitely many
deterministic replica labels. The selections are made in the
limiting array. For each monomial, the group size, 
accuracy, width, and finite parameter set are fixed before
the ball radius tends to zero; the accuracy then tends
to zero. The pair assertions have been proved for each fixed
deterministic $S$. Any prescribed countable family of such
values can be treated on one event of probability one.
\end{proof}

\section{Reduction to spheres}\label{sec:shells}

In this section, we begin the proof of \zcref{prop:marginal-sphere} below, which identifies the distribution of $S_{12}$. We reduce its proof to identifying the overlap distribution
under restrictions of the directing measure to sets of fixed norm.
After treating the case $Q=0$, we use the conditional cavity laws
in \zcref{subsec:radial-reduction} to exclude squared norms below
$Q$ and show that the remaining squared norms belong to a
deterministic countable set. In \zcref{subsec:normalized-shells},
we restrict the directing measure to each such set of positive
mass and divide by that mass to obtain a probability measure.
We show that the cavity laws remain valid for these restrictions,
with the reweighting normalized over each restricted set.
Finally, we show that identifying the overlap distribution under
every such restriction determines the full overlap distribution
and forces the directing measure to be concentrated on vectors
of squared norm $Q$.

\subsection{Distribution of an overlap}

\begin{proposition}
\label{prop:marginal-sphere}
Under \zeqref{eq:mixture}, every physical overlap law satisfies
$S_{12}\sim\mu_{\xi,h}$, and every physical directing
measure is supported on the sphere of squared radius $Q$.
Consequently, for continuous test functions,
\begin{align}
 \lim_{N\to\infty}\E\langle f(R_{12})\rangle_{N,h}
 &=\int f\,d\mu_{\xi,h} &&(h\ne0),\label{eq:main-field}\\
 \lim_{N\to\infty}\E\langle f(|R_{12}|)\rangle_{N,0}
 &=\int f\,d\mu_{\xi,0}.\label{eq:main-zero}
\end{align}
\end{proposition}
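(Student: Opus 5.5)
The plan is to identify the law of $S_{12}$ under every physical overlap law by working in a successor representation from Lemma~\zcref[noname]{cav:representation}. Since every physical law arises as a successor, it suffices to prove the claims for $\widehat G$ and its sampled replicas. The limits \zeqref{eq:main-field}--\zeqref{eq:main-zero} then follow by compactness: every subsequence of $\mathcal L_N$ has a further subsequence with a physical limit, and all such limits have the same $S_{12}$-marginal $\mu$.

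\emph{The case $Q=0$.} Here $\mu=\delta_0$, and \zeqref{eq:active-moment} gives $\E[R_{12}\kappa(R_{12})]=B_\mu=0$. Since $r\kappa(r)\ge0$, with equality only at $r=0$ for an even nonzero mixture, this forces $R_{12}=0$ almost surely. The directing measure must then be supported on $\{0\}$, which is the sphere of squared radius $Q=0$.

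\emph{Radial reduction for $Q>0$.} Proposition~\zcref[noname]{mark:conditional} gives $q_i=\Gamma(q_i)$ for every squared norm. I would first exclude $q_i<Q$, aiming for the following contradiction. On $\{q_1=s<Q\}$, combine \zeqref{eq:siteidentity}, the one-point law \zeqref{mark:onepoint}, and the Gaussian integration by parts of Lemma~\zcref[noname]{cav:gaussian} with $n=1$ and $g(y)=y$, compared against the scalar identity \zeqref{eq:scalar-ibp}. The strict slack $1-u^2-V>0$ of Lemma~\zcref[noname]{lem:slack} for $q<Q$ should then make the cavity and scalar identities incompatible. Next I would show that the fixed points of $\Gamma$ in $[Q,1]$ that can carry positive $\widehat G$-mass form a deterministic countable set. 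A candidate argument is that $\Gamma'\le1$ together with the contact structure makes the set of fixed points of $\Gamma$ with positive $\mu$-weight or isolated behaviour countable, and any continuum of fixed points would have to lie in $\supp\mu\subset[0,Q]$. Since $w_S$ is $\mathscr R$-measurable by Lemma~\zcref[noname]{lem:norm-observable}, I can then restrict $\widehat G$ to each sphere $\{\|v\|^2=S\}$ with $w_S>0$ and normalize. The reweighting in \zeqref{eq:cavity-tilt} restricts to the same form with the normalizer taken over that sphere, so the pair law \zeqref{mark:pair} and the Gaussian identity \zeqref{cav:gaussian-multivariate} continue to hold under each normalized restriction.

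\emph{Identification on each sphere.} This is the main obstacle. On a sphere of squared radius $S$, I would apply \zeqref{cav:gaussian-multivariate} with $f(Y_1)$ equal to $e^{tY_1}$ (for $h\ne0$) or $\sinh(tY_1)$ (for $h=0$). Each field and magnetization term is then replaced by its scalar Parisi expectation using \zeqref{mark:pair}, and \zeqref{eq:scalar-ibp} turns the resulting identity into one involving overlaps only. Letting $t\to\infty$ separates overlap levels through their different exponential rates. Comparing the three-replica identities should give ultrametricity of $S_{ij}$ and, at zero field, positivity of $R_{12}R_{13}R_{23}$. Using exponentials of linear combinations of two fields, I would then derive the threshold identity $\Prob(S_{1,n+1}>q\mid\mathcal R_n)=n^{-1}\bigl(\mu((q,1])+\sum_{\ell=2}^n\one_{\{S_{1\ell}>q\}}\bigr)$. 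I would prove it first on the event that some $S_{1j}\in(0,q)$, and remove that restriction by adding a replica weighted toward overlaps in $(0,q)$ and using exchangeability. The atoms at $0$ and at the bottom of $\supp\mu$ would be settled with the moment identity for $\xi'$ and total mass. Taking $n=1$ gives $S_{12}\sim\mu$ on each restricted sphere.

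\emph{Conclusion.} If some sphere of positive mass had squared radius $S>Q$, then two nearby vectors drawn from it, as selected in Step~3 of the proof of Proposition~\zcref[noname]{mark:conditional}, would have overlap exceeding $Q$ with positive probability, contradicting $S_{12}\sim\mu$ and $Q=\max\supp\mu$. Hence $\widehat G$ is concentrated on the sphere of squared radius $Q$, that restriction is all of $\widehat G$, and $S_{12}\sim\mu$ holds for the original array.
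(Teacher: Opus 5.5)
Your treatment of $Q=0$ follows the paper's route, as do the restriction to spheres, the on-sphere identification and the final contradiction. For $Q=0$ you should add the trace argument $\E\tr(T_G^2)=\E R_{12}^2=0$ to pass from vanishing overlaps to $G=\delta_0$. The genuine gaps are both in the radial reduction.

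First, your mechanism for excluding $q_1<Q$ does not close. With $n=1$ and $g(y)=y$, \zeqref{cav:gaussian-multivariate} contains the cross term $\E[\kappa(R_{12})Y_1M_2]$. It pairs the field at $v^1$ with the magnetization at $v^2$, which may have a different norm. None of the available tools determines this term: the one-point law does not, the pair law \zeqref{mark:pair} is known only on $\{q_1=q_2=S\}$, and the site identity \zeqref{eq:siteidentity} converts only $M_1M_2$. Localizing on $\{q_1=s\}$ makes this worse, because the only external input, $\E[R_{12}\kappa(R_{12})]=B_\mu$, is a global identity.

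The paper instead takes $g(y)=\tanh(h+y)$. Since $\sech^2+\tanh^2=1$, the remaining cross term is $\kappa(R_{12})M_1M_2$, so $\E[Y_1M_1]=\E\kappa(q_1)-B_\mu$ holds without localization. The one-point law gives $\E[Y_1M_1]=\E[\kappa(q_1)-\mathcal J_h(q_1)]$, where $\mathcal J_h(q)=\kappa(q)-\E[M_q(\arctanh M_q-h)]$. The slack of Lemma~\zcref[noname]{lem:slack} enters through It\^o's formula for this invariant: $\mathcal J_h'=\kappa'[1-\E(V/(1-M_q^2))^2]$, which is positive on $(0,Q)$ and zero on $(Q,1)$. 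Then \zeqref{eq:scalar-ibp} at $r=S=Q$ with $f=\tanh$, together with the contact identity, gives $\mathcal J_h(Q)=B_\mu$. Hence $\E[B_\mu-\mathcal J_h(q_1)]=0$ with a nonnegative integrand that is positive on $\{q_1<Q\}$. Without such a monotone invariant, ``the slack should make the identities incompatible'' cannot be turned into a proof.

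Second, your argument for countability of the possible squared norms is not valid. The inequality $\Gamma'\le1$ in \zeqref{eq:contact} holds only on $\supp\mu\subset[0,Q]$, so it says nothing about fixed points of $\Gamma$ in $(Q,1)$. There is no reason a continuum of such fixed points must lie in $\supp\mu$. Excluding intervals would not give countability anyway, since a Cantor set contains no interval.

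The step is essential. If the law of $\|v\|^2$ under $\widehat G$ had a continuous part, $\{q_1=q_2=S\}$ would be null for every deterministic $S$. The equal-norm pair law that drives your sphere identification would then say nothing about that part, and your final step could not produce a deterministic $S>Q$ with $\Prob(w_S>0)>0$.

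The paper's argument is analytic. For $q\ge Q$ one has $u=\tanh$ and drift $\kappa'\tanh$, so the transition density is explicit. Then \zeqref{eq:gamma-heat} gives $1-\Gamma(q)=e^{-v/2}\E[\sech(X_Q)(P_v\sech)(X_Q)]$ with $v=\kappa(q)-\kappa(Q)$. The heat semigroup is analytic in $v>0$, and $\kappa$ is analytic by the exponential summability in \zeqref{eq:mixture}. Hence $\Gamma(q)-q$ is real analytic on $(Q,1)$. Since $\Gamma(1)<1$, it is not identically zero, so its zeros are isolated. Combined with $q_1=\Gamma(q_1)$ and $q_1\ge Q$, this confines all squared norms to the countable closed set $\{Q\}\cup\mathcal Z_+$, which is what makes the sphere-by-sphere reduction possible.
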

The proof is completed in Section~\zcref[noname]{sec:regression}, where the
same argument on each sphere identifies the marginal and proves the
Ghirlanda--Guerra identities.

\begin{lemma}\label{lem:zero-measure}
If $h=0$ and $\mu=\delta_0$, then \zeqref{eq:main-zero} holds and
every physical directing measure is supported at the origin.
\end{lemma}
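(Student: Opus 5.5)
The plan is to use the active moment identity \zeqref{eq:active-moment} from Lemma~\zcref[noname]{cav:moments}. If $\mu=\delta_0$, then
\[
 B_\mu=\int q\kappa(q)\,\mu(dq)=0,
\]
so every physical limit satisfies $\E[R_{12}\kappa(R_{12})]=0$. We have $\kappa(r)=\xi'(r)=\sum_p p\,b_p^2 r^{p-1}$ with only odd powers of $r$ and nonnegative coefficients, and $\xi\not\equiv0$. Hence $r\kappa(r)=\sum_p p b_p^2 r^p\ge0$ on $[-1,1]$, with equality only at $r=0$, since some $b_p>0$ makes $r^p>0$ for $r\ne0$. A nonnegative random variable with zero mean vanishes almost surely, so $R_{12}=0$ almost surely under every physical overlap law. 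Replica exchangeability then gives $R_{ij}=0$ almost surely for all $i<j$ after a countable intersection.

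Next, \zeqref{eq:main-zero} follows by a subsequence argument. The laws of $R_{12}$ under $\E\langle\cdot\rangle_{N,0}$ are tight on $[-1,1]$, and by compactness of $\Omega$ every subsequence has a further subsequence along which the array laws converge to a physical overlap law. Along it the limit of $R_{12}$ is $\delta_0$. Therefore $\E\langle f(|R_{12}|)\rangle_{N,0}\to f(0)=\int f\,d\mu$ for continuous $f$, and since the limit does not depend on the subsequence, the full sequence converges.

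For the directing measure, let $G$ be a physical directing measure. Conditional on $G$, two i.i.d.\ samples $v^1,v^2$ have $v^1\cdot v^2=0$ almost surely. Then
\[
 \E\Bigl\|\int v\,G(dv)\Bigr\|^2=\E\iint v\cdot w\,G(dv)G(dw)=0,
\]
which only shows the barycenter vanishes; it does not yet give support at the origin. The real step is to show $\|v\|=0$ for $G$-almost every $v$. For this I would use the norm reconstruction in Lemma~\zcref[noname]{lem:norm-observable}. That lemma is stated for successor representations, but its proof uses only the Dovbysh--Sudakov structure (i.i.d.\ samples and support), so it applies to any directing measure. Its supremum formula \zeqref{eq:norm-observable} expresses $q_i$ through the off-diagonal overlaps, which all vanish. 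For any finite $J$ and rational $c_j$, the expression equals $-\sum_j c_j^2$, whose supremum is $0$ (attained by the empty set). Hence $q_i=0$ almost surely for every $i$, so $G(\{v:\|v\|>0\})=0$ almost surely, and $G=\delta_0$.

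The main obstacle is this last step: ruling out a directing measure whose support consists of mutually orthogonal nonzero vectors, for example a diffuse measure on an orthonormal-like family. Zero overlaps alone are consistent with such a $G$. What excludes it is that $G$-almost every point lies in the support of $G$, so nearby samples exist whose overlap with $v^i$ is approximately $\|v^i\|^2$; this is exactly the mechanism in the proof of \zeqref{eq:norm-observable}. I would write out that argument directly: for $G$-almost every $v^1$, infinitely many samples lie in $B_\epsilon(v^1)$, and each such sample $v^j$ satisfies $v^1\cdot v^j\ge\|v^1\|^2-\epsilon$. Since every such overlap is zero, $\|v^1\|^2\le\epsilon$ for all rational $\epsilon>0$, so $\|v^1\|=0$.
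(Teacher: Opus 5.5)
Your proposal is correct. Its first half is essentially the paper's argument. The paper kills $R_{12}$ through one active moment, $\E\langle |R_{12}|^p\rangle_{N,0}\to\int q^p\,d\mu=0$ for a fixed $p$ with $b_p>0$. Your use of $\E[R_{12}\kappa(R_{12})]=B_\mu=0$ with $r\kappa(r)\ge0$ comes from the same corollary on active moments. Your explicit subsequence argument for the convergence statement is what the paper leaves implicit.

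For the support statement you take a different route. The paper uses a Hilbert--Schmidt argument: $T_G=\int v\otimes v\,G(dv)$ is positive and trace class, and
\[
 \tr(T_G^2)=\iint (v\cdot w)^2\,G(dv)G(dw)
\]
has expectation $\E R_{12}^2=0$. Hence $T_G=0$ almost surely, and $\int\|v\|^2\,G(dv)=\tr T_G=0$.

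You instead use that $G$-almost every point lies in $\supp G$, which holds by separability. For such $v^1$, the ball $B_\epsilon(v^1)$ has positive $G$-mass, so it contains some $v^2$ with $v^1\cdot v^2=0$. This gives $\|v^1\|^2\le\epsilon$. Equivalently, you apply the norm-reconstruction formula with all off-diagonal entries zero. Your observation that the proof of that formula uses only the Dovbysh--Sudakov structure (i.i.d.\ samples, support in the unit ball) is right, so the reuse is legitimate.

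The trace argument is shorter. It needs only the single second moment $\E R_{12}^2=0$ and no support property or conditional-sampling bookkeeping. It also disposes of your worry about mutually orthogonal support vectors automatically, since the product measure $G^{\otimes2}$ includes the diagonal. Your argument is more geometric and is a special case of the local mechanism, recovering $\|v^i\|^2$ from nearby replicas, that the paper uses later to read squared norms off the overlap array.
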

\begin{proof}
Choose an even interaction order $p$ with $b_p>0$.
Corollary~\zcref[noname]{cor:active-moments} gives
\[
 \lim_{N\to\infty}\E\langle |R_{12}|^p\rangle_{N,h}
 =\int q^p\,\mu(dq)=0.
\]
It follows that $R_{12}=0$ almost surely in every physical
overlap limit.

Let $G$ direct any such limit. Since $G$ is supported in
the unit ball, the operator
\[
 T_G=\int v\otimes v\,G(dv),
 \qquad (v\otimes v)x=(v\cdot x)v,
\]
is positive and trace class. Moreover,
\[
 \E\tr(T_G^2)
 =\E\int (v\cdot w)^2\,G(dv)G(dw)
 =\E R_{12}^2=0.
\]
The trace is nonnegative, so $\tr(T_G^2)=0$ almost surely.
Thus $T_G=0$ almost surely, and consequently
\[
 \int\|v\|^2\,G(dv)=\tr T_G=0.
\]
Hence $G=\delta_0$ almost surely.
\end{proof}

In the rest of this section $Q>0$. This includes every nonzero-field
case, since $q_0=\min\supp\mu>0$ when $h\ne0$, as shown in
Section~\zcref[noname]{subsec:conditional-magnetizations}.

\subsection{A radial identity and the possible squared norms}\label{subsec:radial-reduction}
To prove that $q_1\ge Q$ almost surely, we construct a scalar
function $\mathcal J_h$ bounded above by $B_\mu$, with equality
exactly on $[Q,1]$. Proposition~\zcref[noname]{prop:radial-reduction}
will use Gaussian integration by parts to show that
$\E\mathcal J_h(q_1)=B_\mu$. The nonnegative difference
$B_\mu-\mathcal J_h(q_1)$ must then vanish almost surely,
excluding squared norms below $Q$.

\begin{lemma}\label{lem:radial-invariant}
For $q\in[0,1]$, put $M_q=u(q,X_q)$ and define
\begin{equation}\label{eq:radial-invariant}
 \mathcal J_h(q)=\kappa(q)
      -\E[M_q(\arctanh M_q-h)],\qquad
 B_\mu=\int t\kappa(t)\,\mu(dt),
\end{equation}
where expectation is over the scalar diffusion.
Then $\mathcal J_h$ is absolutely continuous on $[0,1]$,
and for almost every $q\in[0,1]$,
\begin{equation}\label{eq:radial-derivative}
 \mathcal J_h'(q)=\kappa'(q)
 \left[1-\E\left(\frac{V(q,X_q)}{1-M_q^2}\right)^2\right].
\end{equation}
Moreover, $\mathcal J_h$ is strictly increasing on $[0,Q]$,
and
\begin{equation}\label{eq:radial-inequality}
 \mathcal J_h(q)<B_\mu\quad(0\le q<Q),\qquad
 \mathcal J_h(q)=B_\mu\quad(Q\le q\le1).
\end{equation}
\end{lemma}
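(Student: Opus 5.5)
The plan is to compute $\mathcal J_h'$ by It\^o's formula along the Parisi diffusion, then read off monotonicity from Lemma~\zcref[noname]{lem:slack} and the terminal value from Lemma~\zcref[noname]{scalar:gaussian}.

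\emph{Derivative.} Set $\phi(m)=m(\arctanh m-h)$ on $(-1,1)$. A direct computation gives
\[
 \phi'(m)=\arctanh m-h+\frac{m}{1-m^2},\qquad \phi''(m)=\frac{2}{(1-m^2)^2}.
\]
By \zeqref{e:pdebounds2}, $|M_q|<1$, so $\phi(M_q)$ is defined. The martingale identity \zeqref{eq:martingale-u} gives $dM_q=\sqrt{\kappa'(q)}\,V(q,X_q)\,dB_q$. It\^o's formula therefore yields
\[
 d\phi(M_q)=\phi'(M_q)\sqrt{\kappa'}\,V\,dB_q+\kappa'(q)\frac{V^2}{(1-M_q^2)^2}\,dq .
\]
Taking expectations and integrating from $s$ to $t$ will give
\[
 \mathcal J_h(t)-\mathcal J_h(s)=\int_s^t\kappa'(q)\Bigl[1-\E\Bigl(\frac{V}{1-M_q^2}\Bigr)^2\Bigr]dq .
\]
This yields both absolute continuity and \zeqref{eq:radial-derivative}.

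\emph{Justification (main obstacle).} Two points need care. First, $\phi'$ is unbounded near $\pm1$, so the stochastic integral is only a local martingale. Second, for general $\alpha$, \zeqref{eq:martingale-u} holds only in integrated form. I would localize with the stopping times $\tau_K=\inf\{q:|\arctanh M_q|\ge K\}$ and then let $K\to\infty$. The drift integrand is harmless: Lemma~\zcref[noname]{lem:slack} gives $0<V\le 1-M^2$, so $V/(1-M^2)\in(0,1]$ and the drift density is bounded by $\kappa'$. For the martingale term, note $|\phi'(m)|\,(1-m^2)\le |\arctanh m|+|h|+1$, and $\E\exp(\lambda\sup_q|\arctanh u(q,X_q)|)<\infty$ by Lemma~\zcref[noname]{scalar:tail}. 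This gives the square-integrability needed for the martingale term to have mean zero, and domination of $\phi(M_{q\wedge\tau_K})$ as $K\to\infty$. The general-$\alpha$ case then follows by the step-function approximation already used in \cite{ACunique}, or directly from the integrated form.

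\emph{Monotonicity and constancy.} For $0<q<Q$, Lemma~\zcref[noname]{lem:slack} gives $V<1-M_q^2$ strictly, and $\kappa'(q)>0$. Hence $\mathcal J_h'>0$ almost everywhere on $(0,Q)$, so $\mathcal J_h$ is strictly increasing on $[0,Q]$. For $q\ge Q$, the same lemma gives $V=1-M_q^2$, so $\mathcal J_h'=0$ and $\mathcal J_h$ is constant on $[Q,1]$.

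\emph{Terminal value.} It remains to show $\mathcal J_h(1)=B_\mu$. At $q=1$ we have $M_1=\tanh X_1$ and $\arctanh M_1=X_1$, so
\[
 \mathcal J_h(1)=\kappa(1)-\E[(X_1-h)\tanh X_1].
\]
Apply \zeqref{eq:scalar-ibp} with $S=r=1$, which is allowed since $1\ge Q$, and $f=\tanh$. Because $f'+\tanh\cdot f=\operatorname{sech}^2+\tanh^2=1$, this gives
\[
 \E[(X_1-h)\tanh X_1]=\kappa(1)-\int\kappa(t)\,\E[u(t,X_t)\tanh X_1]\,\mu(dt).
\]
By the martingale property, $\E[u(t,X_t)\tanh X_1]=\E u(t,X_t)^2=\Gamma(t)$. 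By \zeqref{eq:contact}, $\Gamma(t)=t$ on $\supp\mu$. Hence
\[
 \mathcal J_h(1)=\int t\kappa(t)\,\mu(dt)=B_\mu .
\]
Combined with strict increase on $[0,Q]$ and constancy on $[Q,1]$, this proves \zeqref{eq:radial-inequality}.
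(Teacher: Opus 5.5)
Your proposal is correct and follows the paper's proof. The paper likewise applies It\^o's formula to $m(\arctanh m-h)$ along $M_q$ with a localization; your stopping at $|\arctanh M_q|\ge K$ is equivalent to its stopping at $|M_q|\ge1-k^{-1}$. It uses the bound $0<V/(1-M_q^2)\le1$ from Lemma~\zcref[noname]{lem:slack} both for domination and for the sign of the derivative, and it obtains the constant from the scalar integration-by-parts identity with $f=\tanh$, the martingale property, and $\Gamma(t)=t$ on $\supp\mu$. The only difference is cosmetic: you evaluate the constant at $q=1$ with $r=S=1$ rather than at $q=Q$ with $r=S=Q$, which is equally valid since $\mathcal J_h$ is constant on $[Q,1]$ and Lemma~\zcref[noname]{scalar:gaussian} allows any $S\in[Q,1]$.
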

\begin{proof}
Set
\[
 g(m)=m(\arctanh m-h),\qquad -1<m<1.
\]
Then $g''(m)=2/(1-m^2)^2$. Apply It\^o's formula to
$g(M_q)$ using \zeqref{eq:martingale-u}, first stopping at
\[
 \tau_k=\inf\{t\in[0,1]:|M_t|\ge1-k^{-1}\}\wedge1.
\]
Lemma~\zcref[noname]{lem:slack} gives
\[
 0<\frac{V(q,X_q)}{1-M_q^2}\le1,
\]
so the drift term in It\^o's formula is bounded by
$\kappa'(q)$. Moreover,
\[
 \sup_{q\le1}|g(M_q)|
 \le |h|+\sup_{q\le1}|\arctanh M_q|
\]
is integrable by Lemma~\zcref[noname]{scalar:tail}.
The paths of $M$ are continuous and remain in $(-1,1)$,
so $\tau_k\to1$ almost surely. Dominated convergence
therefore removes the stopping and yields
\[
 \mathcal J_h(q)-\mathcal J_h(0)
 =\int_0^q\kappa'(s)
   \left[1-\E\left(
     \frac{V(s,X_s)}{1-M_s^2}
   \right)^2\right]ds.
\]
This proves absolute continuity and
\zeqref{eq:radial-derivative}.

By Lemma~\zcref[noname]{lem:slack}, the ratio inside the
expectation is strictly less than one for $s<Q$ and
equals one for $s\ge Q$. Since $\kappa'(s)>0$ for $s>0$,
the integral formula shows that $\mathcal J_h$ is strictly
increasing on $[0,Q]$ and constant on $[Q,1]$.

It remains to evaluate this constant. By
\zeqref{eq:scalar-terminal}, $M_Q=\tanh X_Q$.
The martingale property gives, for $0\le t\le Q$,
\[
 \E[M_t\tanh X_Q]
 =\E[M_t\,\E(M_Q\mid\mathcal F_t)]
 =\E M_t^2=\Gamma(t),
\]
where $\mathcal F_t$ is the Brownian filtration of the
scalar diffusion. Apply \zeqref{eq:scalar-ibp} with
$r=S=Q$ and $f=\tanh$. Since
$f'(x)+\tanh(x)f(x)=1$, we obtain
\[
 \E[(X_Q-h)\tanh X_Q]
 =\kappa(Q)-\int_{[0,Q]}\kappa(t)\Gamma(t)\,\mu(dt)
 =\kappa(Q)-B_\mu.
\]
The last equality uses $\supp\mu\subset[0,Q]$ and
$\Gamma(t)=t$ on $\supp\mu$, by \zeqref{eq:contact}.
Finally, $\arctanh M_Q=X_Q$, so
\zeqref{eq:radial-invariant} gives $\mathcal J_h(Q)=B_\mu$.
The strict increase below $Q$ and constancy above $Q$
now prove \zeqref{eq:radial-inequality}.
\end{proof}

\begin{proposition}\label{prop:radial-reduction}
Almost surely, every physical directing measure is supported
on vectors whose squared norms belong to
\begin{equation}\label{eq:radial-set}
 \{Q\}\cup\mathcal Z_+,\qquad
 \mathcal Z_+=\{s\in(Q,1):\Gamma(s)=s\}.
\end{equation}
The set $\mathcal Z_+$ is deterministic and has no accumulation
point in $(Q,1)$. In particular, it is countable.
\end{proposition}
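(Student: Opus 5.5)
The support statement splits into two inclusions. First, every squared norm is at least $Q$, proved by the radial identity announced before Lemma~\zcref[noname]{lem:radial-invariant}. Second, every squared norm is a fixed point of $\Gamma$. The second part is already given by \zeqref{mark:radiuscontact}.

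Since every physical overlap law is a successor (Lemma~\zcref[noname]{cav:representation}), I work in a successor representation with directing measure $\widehat G$, replicas $v^i$, $q_i=\|v^i\|^2$, $Y_i$ and $M_i$. The goal is to show $\E\mathcal J_h(q_1)=B_\mu$.

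Apply Lemma~\zcref[noname]{cav:gaussian} with $n=1$, $F\equiv1$, and $g(y)=1$. This gives
\[
 \E[Y_1]=\E[\kappa(q_1)M_1]-\E[\kappa(R_{12})M_2].
\]
That is not quite the right combination. Instead I take $g(y)=\tanh(h+y)$, which yields
\[
 \E[Y_1M_1]=\E\bigl[\kappa(q_1)(1-M_1^2)+\kappa(q_1)M_1^2\bigr]-\E[\kappa(R_{12})M_1M_2],
\]
and hence $\E[Y_1M_1]=\E\kappa(q_1)-\E[\kappa(R_{12})M_1M_2]$. Here $Y_1=\arctanh M_1-h$.

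\begin{itemize}
\item By \zeqref{mark:onepoint}, $\E[M_1(\arctanh M_1-h)]=\E\,\E[M_q(\arctanh M_q-h)]|_{q=q_1}$. Integrability of $\arctanh M_q$ holds by Lemma~\zcref[noname]{scalar:tail}, so truncation followed by monotone convergence handles the unbounded test function.
\item The identity therefore reads $\E\mathcal J_h(q_1)=\E[\kappa(R_{12})M_1M_2]$.
\item By the site identity \zeqref{eq:siteidentity} with $F=\kappa(R_{12})$, the right side equals $\E[R_{12}\kappa(R_{12})]$. This is $B_\mu$ by \zeqref{eq:active-moment}.
\end{itemize}

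Combining these, $\E[B_\mu-\mathcal J_h(q_1)]=0$ with a nonnegative integrand by \zeqref{eq:radial-inequality}. Hence $q_1\ge Q$ almost surely.

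The passage from sampled replicas to $\widehat G$ is routine. Conditional on $G,Y$, the conclusion $q_1\in\{Q\}\cup\mathcal Z_+$ almost surely means that $\widehat G$ gives full mass to that set, since $Q$ itself is a fixed point of $\Gamma$ by \zeqref{eq:contact}. Because $G$ and $\widehat G$ are mutually absolutely continuous, the predecessor's directing measure is also supported there. Every physical law arises as a predecessor, since predecessors are physical at the same coefficients and any law is a successor of some predecessor. The conclusion therefore transfers to all physical directing measures, with the caveat that directing measures are only unique up to isometry, which does not affect norms. The set $\mathcal Z_+$ is deterministic because $\Gamma$ depends only on $\xi$ and $h$.

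The main obstacle is showing that $\mathcal Z_+$ has no accumulation point in $(Q,1)$. My plan is analyticity. On $[Q,1]$ we have $u=\tanh$ by \zeqref{eq:scalar-terminal}, and $X_s=X_Q+\int_Q^s\kappa'\tanh(X)\,dt+\text{noise}$. So $\Gamma(s)=\E\tanh^2(X_s)$ for a diffusion with smooth coefficients, and by \zeqref{eq:gamma-derivative} $\Gamma'(s)=\kappa'(s)\E[(1-\tanh^2X_s)^2]$. Analyticity in $s$ is unclear if $\kappa$ is merely smooth, so I would try a convexity or strict-monotonicity argument instead. The aim is to show that $\Gamma(s)-s$ has isolated zeros, for example by showing that at any zero $s_0>Q$ with accumulation the derivatives of $\Gamma(s)-s$ all vanish. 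Exponential summability of the coefficients makes $\xi$ analytic on a neighborhood of $[-1,1]$, which should let one transport analyticity to $s\mapsto\E\tanh^2 X_s$ via a time change $t=\kappa(s)$, under which the process is an analytic-coefficient diffusion. If the zero set accumulated at an interior point, $\Gamma(s)\equiv s$ would then hold on $(Q,1)$, contradicting $\Gamma(1)<1$ together with continuity at $1$. I would first try to make this analyticity argument rigorous; it is the step most likely to need extra care.
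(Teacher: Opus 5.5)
Your derivation of $\E\mathcal J_h(q_1)=B_\mu$, and hence $q_1\ge Q$, is exactly the paper's argument. It uses Lemma~\zcref[noname]{cav:gaussian} with $g=\tanh(h+\cdot)$, the one-point law \zeqref{mark:onepoint}, the site identity with multiplier $\kappa(R_{12})$, and \zeqref{eq:active-moment}. Combining it with \zeqref{mark:radiuscontact} is also what the paper does.

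Three small points in this half:
\begin{itemize}
\item You should say explicitly that $\Gamma(1)<1$ excludes $q_1=1$.
\item For $h\ne0$ the function $m(\arctanh m-h)$ is not nonnegative. Monotone convergence therefore needs the bounded term $-hm$ split off. Alternatively, use dominated convergence via \zeqref{cav:fieldtails} and Lemma~\zcref[noname]{scalar:tail}.
\item Your justification that every physical law is a predecessor does not prove it, but you do not need it. Every physical law is a successor by Lemma~\zcref[noname]{cav:representation}. Sampled squared norms are functions of the overlap array by Lemma~\zcref[noname]{lem:norm-observable}. So the support statement for $\widehat G$ transfers to every directing measure of that law.
\end{itemize}

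The genuine gap is the second half: the claim that $\mathcal Z_+$ has no accumulation point in $(Q,1)$ is only a plan. Your fallback, showing that all derivatives of $\Gamma(s)-s$ vanish at an accumulation point, cannot work without analyticity, since smooth functions can be flat. Saying the time-changed process ``has analytic coefficients'' does not by itself give analyticity of $s\mapsto\E\tanh^2X_s$. You need analyticity in time of $P_t\tanh^2$ with bounds uniform in the starting point. This is because you must average against the law of $X_Q$, which is not explicit: it depends on $\alpha$ on $[0,Q]$.

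The missing observation concerns the interval $[Q,1]$, where $\alpha=1$ and $u=\tanh$. There the generator $\tfrac12f''+\tanh(x)f'$ equals $(\cosh x)^{-1}\bigl(\tfrac12(f\cosh)''-\tfrac12 f\cosh\bigr)$. So the diffusion, run in the clock $v=\kappa(q)-\kappa(Q)$, is the $\cosh$-transform of Brownian motion killed at rate $1/2$. Its transition density from $x$ to $y$ is $e^{-v/2}\frac{\cosh y}{\cosh x}\varphi_v(y-x)$, which gives
\[
 1-\Gamma(q)=e^{-v/2}\,\E\bigl[\sech(X_Q)\,(P_v\sech)(X_Q)\bigr],
 \qquad (P_vf)(x)=\int f(y)\varphi_v(y-x)\,dy .
\]
The Gaussian kernel extends holomorphically to $\Re v>0$, with $|(P_v\sech)(x)|\le(|v|/\Re v)^{1/2}$ uniformly in $x$. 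Hence the expectation is real analytic in $v>0$, whatever the law of $X_Q$.

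By \zeqref{eq:mixture}, $\kappa$ is analytic on $\{|q|<2\}$, and it is strictly increasing. So $\Gamma(q)-q$ is real analytic on the whole interval $(Q,1)$. Your endgame, using $\Gamma(1)<1$ and continuity at $1$, then completes the proof exactly as in the paper.
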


\begin{proof}
Represent an arbitrary physical overlap law as a successor
using Lemma~\zcref[noname]{cav:representation}. Apply
Lemma~\zcref[noname]{cav:gaussian} with $n=1$, $F=1$, and
$g(y)=\tanh(h+y)$. Since $\sech^2+\tanh^2=1$, this gives
\[
 \E[Y_1M_1]
 =\E\kappa(q_1)-\E[\kappa(R_{12})M_1M_2]
 =\E\kappa(q_1)-B_\mu.
\]
The last equality follows from \zeqref{eq:siteidentity},
with multiplier $\kappa(R_{12})$, and
\zeqref{eq:active-moment}.

To express the left side through the scalar law, put
$f(m)=m(\arctanh m-h)$ for $|m|<1$. Apply
\zeqref{mark:onepoint} to bounded continuous truncations
of $f$. Under the cavity law, $f(M_1)=Y_1M_1$ is integrable
by \zeqref{cav:fieldtails}. Under the scalar diffusion law, 
Lemma~\zcref[noname]{scalar:tail} gives
\[
 \E\sup_{0\le s\le1}|f(u(s,X_s))|
 \le |h|+\E\sup_{0\le s\le1}
                 |\arctanh u(s,X_s)|<\infty.
\]
Dominated convergence therefore removes the truncations,
including at the random parameter $q_1$, and yields
\[
 \E[Y_1M_1]
 =\E[\nu_{q_1,h}(f)]
 =\E\kappa(q_1)-\E\mathcal J_h(q_1).
\]
Comparison with the integration-by-parts identity proves
\[
 \E\mathcal J_h(q_1)=B_\mu.
\]
By \zeqref{eq:radial-inequality},
$B_\mu-\mathcal J_h(q_1)$ is nonnegative and strictly
positive when $q_1<Q$. Hence $q_1\ge Q$ almost surely.
Equation~\zeqref{mark:radiuscontact} also gives
$\Gamma(q_1)=q_1$, and $\Gamma(1)<1$ excludes $q_1=1$.
Thus $q_1$ belongs to the set in \zeqref{eq:radial-set}
almost surely.

We next prove that the zeros defining $\mathcal Z_+$ are
isolated. For $q>Q$, put $v=\kappa(q)-\kappa(Q)>0$.
On $[Q,1]$, the scalar drift is $\kappa'(q)\tanh x$, so
the transition density from $X_Q=x$ to $X_q=y$ is
\[
 e^{-v/2}\frac{\cosh y}{\cosh x}\varphi_v(y-x)\,dy,
\]
where $\varphi_v$ is the centered Gaussian density of
variance $v$. Define heat convolution by
\[
 (P_vf)(x)=\int_{\R}f(y)\varphi_v(y-x)\,dy.
\]
Since $u(q,x)=\tanh x$ for $q\ge Q$, integrating
$\sech^2 y$ against the transition density gives
\begin{equation}\label{eq:gamma-heat}
 1-\Gamma(q)
 =e^{-v/2}\E\bigl[
    \sech(X_Q)(P_v\sech)(X_Q)\bigr].
\end{equation}
For bounded inputs, heat convolution is real analytic
in $v>0$, with derivative bounds uniform in $x$ on
compact positive-time intervals. These bounds permit
differentiation under the expectation in
\zeqref{eq:gamma-heat}. Moreover, \zeqref{eq:mixture}
makes $\kappa$ analytic on a neighborhood of $[0,1]$.
Thus $\Gamma(q)-q$ is real analytic on $(Q,1)$.
It is not identically zero, since continuity at one
and $\Gamma(1)<1$ give
\[
 \lim_{q\uparrow1}(\Gamma(q)-q)=\Gamma(1)-1<0.
\]
Its zeros therefore have no accumulation point in
$(Q,1)$ and form a countable set. This set is deterministic
because $\Gamma$ is determined by the fixed mixture and field.

Finally, by \zeqref{eq:contact} and $\Gamma(1)<1$,
\[
 A:=\{Q\}\cup\mathcal Z_+
   =\{s\in[Q,1]:\Gamma(s)=s\}
\]
is closed. Conditioning the almost-sure restriction
$q_1\in A$ on $G,Y$ gives
\[
 \widehat G(\{v:\|v\|^2\in A\})=1
 \qquad\text{almost surely}.
\]
Lemma~\zcref[noname]{lem:norm-observable} determines sampled
squared norms from the overlap array, so the same restriction
holds for every directing measure of the chosen physical law.
That law was arbitrary, proving the proposition. Mutual
absolute continuity of $G$ and $\widehat G$ also transfers
the restriction to the predecessor $G$.
\end{proof}

\subsection{Restriction to a sphere and conditional cavity laws}
\label{subsec:normalized-shells}

We now restrict the directing measures to vectors of a fixed
squared norm and show that the conditional magnetization laws
remain valid. Lemma~\zcref[noname]{lem:norm-observable} allows
this restriction to be expressed through the overlap array.

Use the representation in
Lemma~\zcref[noname]{cav:representation}. For a deterministic
$S\in[Q,1]$, set
\[
 \Sigma_S=\{v:\|v\|^2=S\},\qquad
 w_S=\widehat G(\Sigma_S),\qquad
 E_S=\{G(\Sigma_S)>0\}.
\]
Mutual absolute continuity of $G$ and $\widehat G$ gives
$E_S=\{w_S>0\}$ almost surely. Assume $\Prob(E_S)>0$.
On $E_S$, define the normalized restrictions
\[
 G_S(dv)=\frac{\one_{\Sigma_S}(v)}{G(\Sigma_S)}\,G(dv),
 \qquad
 \widehat G_S(dv)=
 \frac{\one_{\Sigma_S}(v)}{w_S}\,\widehat G(dv).
\]

Write $\E_S$ for expectation over $G,Y$ conditional on $E_S$.
Under $\mathbf E_S$, first sample $G$ conditional on $E_S$,
then sample its Gaussian field $Y$, and finally sample
$v^1,v^2,\ldots$ independently from $\widehat G_S$ conditional
on $G,Y$. Thus, for a bounded function $H$ involving $n$ replicas,
\[
 \mathbf E_S[H]=\E_S[\widehat G_S^{\otimes n}(H)].
\]
For these replicas, put
\[
 R_{ij}=v^i\cdot v^j,\qquad
 Y_i=Y(v^i),\qquad M_i=\tanh(h+Y_i).
\]
Let $\mathscr R^S$ be the completion under this law of
$\sigma(R_{ij}:i<j<\infty)$.

For $0\le r\le S$, let $p_r^{S,h}$ denote the law of
$(X_S^1,X_S^2)$ for two scalar paths that agree up to time
$r$ and, conditional on their common value at $r$, evolve
independently until time $S$. When $h=0$ and $-S\le r<0$,
define $p_r^{S,0}$ as the image of $p_{|r|}^{S,0}$ under
$(x_1,x_2)\mapsto(x_1,-x_2)$.

\begin{proposition}\label{prop:shell-marking}
The normalized restrictions satisfy
\begin{equation}\label{eq:shell-cavity}
 \widehat G_S(dv)=
 \frac{\cosh(h+Y(v))}{G_S\cosh(h+Y)}\,G_S(dv).
\end{equation}
The denominator is finite and at least one almost surely. Here
\[
 G_S\cosh(h+Y)=\int\cosh(h+Y(v))\,G_S(dv)
\]
denotes expectation over $v$ sampled from $G_S$, with
$G$ and $Y$ held fixed.

Under $\mathbf E_S$, conditional on $\mathscr R^S$, $M_1$
has law $\nu_{S,h}$ and $(M_1,M_2)$ has law
$\nu_{R_{12}}^{S,h}$. At nonzero field, $R_{12}\ge0$
almost surely. In particular,
\begin{equation}\label{mark:triangle}
 \Law_{\mathbf E_S}
 (h+Y_1,h+Y_2\mid R_{12},R_{13},R_{23})
 =p_{R_{12}}^{S,h}.
\end{equation}
Equations~\zeqref{mark:radiuscontact} and
\zeqref{mark:paircontact} hold for these replicas; in particular,
$\Gamma(S)=S$.

The Gaussian integration-by-parts identity of
Lemma~\zcref[noname]{cav:gaussian} also holds under
$\mathbf E_S$, with $G,\widehat G$ replaced by
$G_S,\widehat G_S$, diagonal overlaps $R_{ii}=S$, and
normalizer $G_S\cosh(h+Y)$.
\end{proposition}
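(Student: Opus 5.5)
Most of the statement is inherited directly from the full-measure results, so I will argue by restriction. Start with \zeqref{eq:shell-cavity}. By \zeqref{eq:cavity-tilt}, on $\Sigma_S$ the factor $a(v)=e^{[\kappa(1)-\kappa(S)]/2}$ is a constant. Restricting $\widehat G$ to $\Sigma_S$ and normalizing therefore cancels both this constant and $Z_h$, which gives \zeqref{eq:shell-cavity}. The denominator is at least one because $\cosh\ge1$. It is finite almost surely on $E_S$ because $G(\Sigma_S)G_S\cosh(h+Y)\le G\cosh(h+Y)\le Z_h<\infty$ and $G(\Sigma_S)>0$ there.

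For the conditional magnetization laws I would not redo the cavity comparison. Instead I will realize $\mathbf E_S$-sampling inside the original sequence. Conditional on $G,Y$, take the i.i.d.\ $\widehat G$-sample $v^1,v^2,\ldots$ and let $\tau_1<\tau_2<\cdots$ be the successive labels $i$ with $q_i=S$. On $E_S$ these labels are almost surely finite, and by Lemma~\zcref[noname]{lem:norm-observable} they are $\mathscr R$-measurable. By the geometric-trials computation in Step~3 of Proposition~\zcref[noname]{mark:conditional}, the selected vectors are i.i.d.\ $\widehat G_S$ conditional on $G,Y$. Moreover $E_S=\{w_S>0\}$ is $\mathscr R$-measurable by \zeqref{mark:observable-shell-mass}.

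Now take any bounded function $F$ of the overlaps among the selected vectors, multiply by $\one_{E_S}$, and partition over the countably many values of $(\tau_1,\tau_2)$, as in \zeqref{mark:source-selected-labels}. With this, \zeqref{mark:onepoint} (where $q_{\tau_1}=S$) and \zeqref{mark:pair} at the deterministic $S$ transfer to the selected replicas. Those identities condition on the full $\mathscr R$, which contains $\sigma$(overlaps of the selected vectors), so the tower property gives the laws $\nu_{S,h}$ and $\nu^{S,h}_{R_{12}}$ given $\mathscr R^S$. Positivity at nonzero field comes from Step~2 of the same proposition. The contact identities \zeqref{mark:radiuscontact} and \zeqref{mark:paircontact} transfer the same way, and in particular $\Gamma(S)=S$ since $q_{\tau_1}=S$.

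To obtain \zeqref{mark:triangle}, I will show that the law of $(h+Y_1,h+Y_2)$ given the three overlaps is $p^{S,h}_{R_{12}}$. The pair law of $(M_1,M_2)$ given the overlaps is already known to be $\nu^{S,h}_{R_{12}}$ from the previous step. The map $x\mapsto\tanh x$ is a bijection, and $u(S,\cdot)$ is also a bijection: for $S\ge Q$ it equals $\tanh$ by \zeqref{eq:scalar-terminal}. Hence $\nu^{S,h}_r$ is precisely the image of $p^{S,h}_r$ under $(\tanh,\tanh)$, and inverting these maps turns the magnetization law into the field law.

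The main obstacle is the Gaussian integration-by-parts identity under $\mathbf E_S$. I would redo the proof of Lemma~\zcref[noname]{cav:gaussian} with reference measure $G_S^{\otimes n}$ and density $\rho_Y=(G_S\cosh(h+Y))^{-n}\prod_\ell\cosh(h+Y(v^\ell))$. Differentiating the normalizer now produces $\widehat G_S[f_\alpha M]$, so the $(n+1)$st replica is sampled from $\widehat G_S$, and the diagonal covariance is $\kappa(S)$. The finite-projection approximation and the uniform integrability, modeled on \zeqref{cav:fieldtails}, go through verbatim because the normalizer is at least one. The one subtle point is that we condition on $E_S$, a $G$-measurable event. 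Since $Y$ is drawn conditionally on $G$, its Gaussian law is unchanged under this conditioning, so the integration by parts is still valid.
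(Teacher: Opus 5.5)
Your proposal is correct. The cavity formula, the passage from magnetizations to fields via $u(S,\cdot)=\tanh$ for $S\ge Q$, and the integration-by-parts identity all follow the paper essentially verbatim; for the last of these, the key point in both is that $E_S$ is $G$-measurable, so the conditional Gaussian law of $Y$ is unchanged. For finiteness of the denominator you bound $G(\Sigma_S)\,G_S\cosh(h+Y)\le Z_h$, whereas the paper computes $\E_Y[G_S\cosh(h+Y)\mid G]=e^{\kappa(S)/2}\cosh h$; either suffices.

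The real difference is how you transfer \zeqref{mark:onepoint}, \zeqref{mark:pair}, positivity and the contact identities to $\mathbf E_S$. The paper never selects labels. It inserts the bounded $\mathscr R$-measurable multiplier $W_{\delta,n}=\chi_\delta(w_S)\,w_S^{-n}\prod_{j\le n}\one_{\{q_j=S\}}$ and uses $\E[W_{\delta,n}H\mid G,Y]=\chi_\delta(w_S)\,\widehat G_S^{\otimes n}(H)$. It then lets $\delta\downarrow0$; the cutoff $\chi_\delta$ keeps $w_S^{-n}$ bounded, and measurability of $w_S$ comes from \zeqref{mark:observable-shell-mass}.

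You instead realize the $\mathbf E_S$ sample as the subsequence of labels with $q_i=S$ under $\Prob(\cdot\mid E_S)$. You then partition over the values of $(\tau_1,\tau_2)$ as in \zeqref{mark:source-selected-labels}. Since \zeqref{mark:onepoint} and \zeqref{mark:pair} already allow arbitrary bounded $\mathscr R$-measurable multipliers, this partition is enough even when $F$ involves further selected labels.

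Your route avoids the small-mass cutoff and the limit in $\delta$. It also makes the transfer of statements that hold simultaneously for all labels immediate, namely positivity from Step~2 and the radial contact identity. In exchange, it needs replica exchangeability to apply the identities at arbitrary deterministic labels. It also needs the check, which you indicate, that the law of $G$, $Y$ and the selected vectors under $\Prob(\cdot\mid E_S)$ is exactly the $\mathbf E_S$ sampling law.

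The only detail left implicit is at zero field with $R_{12}<0$. There, the reflected definitions of $\nu^{S,0}_{r}$ and $p^{S,0}_{r}$ correspond because $\arctanh$ is odd.
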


\begin{proof}
On $\Sigma_S$, the radial factor in
\zeqref{eq:cavity-tilt} is the constant
$e^{[\kappa(1)-\kappa(S)]/2}$. Restriction and normalization
therefore cancel this factor, giving
\zeqref{eq:shell-cavity}. Its denominator is at least one.
Moreover,
\[
 \E_Y[G_S\cosh(h+Y)\mid G]
 =e^{\kappa(S)/2}\cosh h<\infty,
\]
so it is finite almost surely.

We next transfer the conditional magnetization laws.
Work first with replicas sampled from the original
$\widehat G$. For $0<\delta<1/2$, choose a continuous
function $\chi_\delta:[0,1]\to[0,1]$ that is zero on
$[0,\delta]$ and one on $[2\delta,1]$. Define
\[
 W_{\delta,n}
 =\chi_\delta(w_S)w_S^{-n}
       \prod_{j=1}^n\one_{\{q_j=S\}},
\]
with value zero when $w_S=0$. This weight is bounded
and measurable with respect to the original full overlap
sigma-field by Lemma~\zcref[noname]{lem:norm-observable}.

The factor $w_S^{-n}$ cancels the conditional probability
that all $n$ replicas lie in $\Sigma_S$. More precisely,
for any bounded function $H$ of these replicas and their
field values,
\[
 \E[W_{\delta,n}H\mid G,Y]
 =\chi_\delta(w_S)\widehat G_S^{\otimes n}(H),
\]
where the right side is defined to be zero outside $E_S$.
Its absolute value is at most $\|H\|_\infty$. Since
$\chi_\delta(w_S)\to\one_{E_S}$ almost surely, dominated
convergence gives
\[
 \lim_{\delta\downarrow0}\E[W_{\delta,n}H]
 =\Prob(E_S)\,\mathbf E_S[H].
\]

Let $F$ be a bounded Borel function of finitely many
overlaps, and choose $n$ to include every replica label
used by $F$ and the magnetization test. Apply
\zeqref{mark:onepoint} and \zeqref{mark:pair} with
multiplier $W_{\delta,n}F$. On the support of this weight,
all the displayed squared norms equal $S$. Passing to
the limit and dividing by $\Prob(E_S)$ yields
\[
 \begin{aligned}
 \mathbf E_S[Ff(M_1)]
 &=\mathbf E_S[F\nu_{S,h}(f)],\\
 \mathbf E_S[F\psi(M_1,M_2)]
 &=\mathbf E_S[F\nu_{R_{12}}^{S,h}(\psi)]
 \end{aligned}
\]
for bounded Borel test functions $f$ and $\psi$. The number of
additional labels used by $F$ is arbitrary. A monotone-class
argument therefore extends these identities to every bounded
$\mathscr R^S$-measurable $F$, including the completion.
The same restriction transfers
\zeqref{mark:radiuscontact}, \zeqref{mark:paircontact}, and
the nonnegativity assertion at nonzero field. A countable
intersection gives the assertions simultaneously for all
replica labels.

Since $S\ge Q$, \zeqref{eq:scalar-terminal} gives
$u(S,x)=\tanh x$. Applying $\arctanh$ coordinatewise to
the conditional pair law therefore gives
\[
 \Law_{\mathbf E_S}(h+Y_1,h+Y_2\mid\mathscr R^S)
 =p_{R_{12}}^{S,h}.
\]
At zero field, the reflected definition is preserved
because $\arctanh$ is odd. Conditioning this identity
further on $(R_{12},R_{13},R_{23})$ proves
\zeqref{mark:triangle}.

Finally, $E_S$ depends only on $G$, so conditioning on
$E_S$ preserves the conditional Gaussian law of $Y$.
The proof of Lemma~\zcref[noname]{cav:gaussian} thus applies
with reference measure $G_S$ and the density in
\zeqref{eq:shell-cavity}. The field variance on $\Sigma_S$
is $\kappa(S)$, and the normalizer is at least one.
Gaussian exponential moments consequently give the
integrability and convergence of finite Gaussian
projections required in that proof.
\end{proof}

Let $\lambda_S$ denote the law under $\mathbf E_S$ of
$R_{12}$ when $h\ne0$, and of $|R_{12}|$ when $h=0$.
Since both sampled vectors have squared norm $S$,
Cauchy--Schwarz gives $|R_{12}|\le S$. At nonzero field,
\zeqref{eq:positive-overlap} gives
$\widehat G^{\otimes2}(R_{12}<q_0)=0$ almost surely.
Consequently, on $E_S$,
\[
 0\le\widehat G_S^{\otimes2}(R_{12}<q_0)
 \le w_S^{-2}\widehat G^{\otimes2}(R_{12}<q_0)=0.
\]
Thus $\lambda_S$ is supported in $[q_0,S]$ when $h\ne0$
and in $[0,S]$ when $h=0$.

\begin{proposition}\label{prop:shell-reduction}
Fix a physical overlap law and its successor representation.
Suppose that, for every deterministic
$S\in\{Q\}\cup\mathcal Z_+$ with $\Prob(E_S)>0$, the
normalized restriction to $\Sigma_S$ satisfies
$\lambda_S=\mu$. Then every directing measure of this
physical overlap law is supported on the sphere of squared
radius $Q$, and $S_{12}$ has law $\mu$.

If the hypothesis holds for every physical subsequential
limit, the convergence statements of
Proposition~\zcref[noname]{prop:marginal-sphere} follow.
\end{proposition}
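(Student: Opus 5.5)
By Proposition~\zcref[noname]{prop:radial-reduction}, I will show that every sphere $\Sigma_S$ with $S\in\mathcal Z_+$ carries zero mass, so that $\widehat G$ and hence $G$ live on $\Sigma_Q$. Once this is known, the law of $S_{12}$ equals $\lambda_Q=\mu$. The same conclusion then holds for the original law through Lemma~\zcref[noname]{cav:representation}, since the successor is the given physical law.

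\emph{Step 1: excluding spheres above $Q$.} Fix $S\in\mathcal Z_+$ with $\Prob(E_S)>0$. By hypothesis, $\lambda_S=\mu$, so two independent samples from $\widehat G_S$ have $|R_{12}|\le Q<S$ almost surely. On the other hand, $\widehat G_S$ is a probability measure on $\Sigma_S$ with positive mass. Almost every sampled $v^1$ lies in its support, so the ball $B_\epsilon(v^1)\cap\Sigma_S$ has positive $\widehat G_S$-mass for every $\epsilon>0$. A second sample $v^2$ in this ball satisfies $v^1\cdot v^2\ge S-\epsilon^2/2$. Choosing $\epsilon$ with $S-\epsilon^2/2>Q$ gives $\widehat G_S^{\otimes 2}(R_{12}>Q)>0$ on $E_S$, which contradicts $\lambda_S=\mu$. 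Hence $\Prob(E_S)=0$ for every $S\in\mathcal Z_+$.

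Since $\mathcal Z_+$ is deterministic and countable, a countable union gives $\widehat G(\Sigma_Q)=1$ almost surely. By mutual absolute continuity, $G(\Sigma_Q)=1$ as well. In particular $\Prob(E_Q)=1$ and $\widehat G_Q=\widehat G$, so the law of $S_{12}$ under the physical law is $\lambda_Q=\mu$.

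\emph{Step 2: transfer to every directing measure.} Every directing measure of the same overlap law reproduces the same sampled squared norms, which by Lemma~\zcref[noname]{lem:norm-observable} are functions of the overlap array. Therefore $q_i=Q$ almost surely, and every directing measure is supported on $\Sigma_Q$.

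\emph{Step 3: the convergence statements.} Suppose the hypothesis holds for every physical subsequential limit. Then each such limit has $S_{12}\sim\mu$. By compactness of $\Omega$, every subsequence of $(\mathcal L_N)$ has a further convergent subsequence, and $f(R_{12})$, or $f(|R_{12}|)$ at zero field, is a bounded continuous cylinder function. The full sequences in \zeqref{eq:main-field} and \zeqref{eq:main-zero} therefore converge.

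I expect the main obstacle to be the support argument in Step~1, specifically checking that the ball $B_\epsilon(v^1)$ has positive mass under the restricted measure $\widehat G_S$ itself, not just under $\widehat G$. This holds because $v^1$ is sampled from $\widehat G_S$ and is therefore almost surely in the support of $\widehat G_S$. The remaining steps are bookkeeping with countable null sets.
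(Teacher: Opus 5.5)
Your proof is correct and follows the paper's argument. For each $S\in\mathcal Z_+$ with $\Prob(E_S)>0$ you produce pairs on $\Sigma_S$ with overlap $S-\tfrac12\|v^1-v^2\|^2>Q$, contradicting $\lambda_S=\mu$; you then deduce $\widehat G(\Sigma_Q)=1$, read off $\lambda_Q=\mu$, transfer the support statement through the overlap-measurability of squared norms, and finish by compactness. The only difference is cosmetic: you get a ball of positive $\widehat G_S$-mass because $\widehat G_S$-almost every point lies in its support, whereas the paper uses a countable cover of $\Sigma_S$ by $\epsilon$-balls, and both rest equally on separability.
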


\begin{proof}
By Proposition~\zcref[noname]{prop:radial-reduction}, the
sampled squared norms belong almost surely to the countable
set $\{Q\}\cup\mathcal Z_+$. Suppose that
$\widehat G(\{v:\|v\|^2>Q\})>0$ with positive probability.
Countability then gives a deterministic $S\in\mathcal Z_+$
such that $\Prob(w_S>0)>0$. By mutual absolute continuity,
this is equivalent to $\Prob(E_S)>0$.

Choose $\epsilon>0$ such that
$2\epsilon<\sqrt{2(S-Q)}$. Separability gives a countable
cover of $\Sigma_S$ by open balls of radius $\epsilon$.
In every environment where $\widehat G_S$ is defined,
at least one such ball $B$ has positive $\widehat G_S$-mass.
For $v^1,v^2\in B\cap\Sigma_S$,
\[
 R_{12}
 =S-\tfrac12\|v^1-v^2\|^2
 >Q.
\]
Thus
\[
 \widehat G_S^{\otimes2}(R_{12}>Q)
 \ge \widehat G_S(B)^2>0.
\]
Averaging over the environment and field yields, in either
definition of $\lambda_S$,
\[
 \lambda_S((Q,1])
 \ge\mathbf E_S[\one_{\{R_{12}>Q\}}]>0.
\]
This contradicts $\lambda_S=\mu$, since
$\supp\mu\subset[0,Q]$.

It follows that $\widehat G(\Sigma_Q)=1$ almost surely.
Mutual absolute continuity gives $G(\Sigma_Q)=1$ as well.
Hence $\Prob(E_Q)=1$ and $\widehat G_Q=\widehat G$, so the
hypothesis at $S=Q$ identifies the original law of $S_{12}$
as $\mu$. By Lemma~\zcref[noname]{lem:norm-observable},
the distribution of sampled squared norms is determined
by the overlap array. The support conclusion therefore
holds for every directing measure of this physical law.

Finally, every sequence of system sizes has a further
subsequence along which the overlap arrays converge.
If the hypothesis holds for every such physical limit,
all resulting laws of $S_{12}$ equal $\mu$. Compactness
then gives full-sequence convergence, proving the stated
consequences of
Proposition~\zcref[noname]{prop:marginal-sphere}.
\end{proof}

\section{Identification of the overlap law on a sphere}\label{sec:regression}
\label{sec:zero-identification}\label{sec:all-replica}
In this section we identify the averaged overlap law for replicas
sampled from the normalized restriction of $\widehat G$ to a sphere
$\{v:\|v\|^2=S\}$ of positive mass and prove the full
Ghirlanda--Guerra identities for these replicas.
\zcref[range]{subsec:si-coordinates,mix:section} use the
covariance coordinate $r=\kappa(q)$ to compare integration
by parts for the cavity field with the scalar Parisi identities.
\zcref[range]{source:subsection,geo:section} construct functions
that distinguish overlap levels and use them to prove
ultrametricity and, at zero field, positivity of the product
of three nonzero overlaps.
\zcref[range]{field:section,subsec:si-outside} determine the
conditional probability that a new replica has overlap above
a threshold with a specified earlier replica. Initially,
another earlier replica must have strictly positive overlap
below that threshold with the specified one. Here overlaps
are taken in absolute value at zero field.
\zcref[range]{subsec:si-weighted-exchange,subsec:si-distribution-completion}
remove this restriction by reweighting an additional replica
and identify the full conditional distribution, including
its possible endpoint atoms. The reduction in
\zcref{sec:shells} then shows that the original directing
measure is supported on $\{v:\|v\|^2=Q\}$ and completes
the proof of \zcref{thm:gg}.

Fix a deterministic $S\in[Q,1]$, with $Q>0$, such that
\[
 E_S=\{G(\{v:\|v\|^2=S\})>0\}
\]
has positive probability. Retain the normalized restrictions
$G_S,\widehat G_S$ from Section~\zcref[noname]{sec:shells}.
Under $\mathbf E_S$, we condition $G$ on $E_S$, sample $Y$
conditional on $G$, and then sample replicas independently
from $\widehat G_S$. The sigma-field $\mathscr R^S$ is
generated by all their signed overlaps and completed under
this law. Put $L=\kappa(1)$.
Proposition~\zcref[noname]{prop:shell-marking} identifies
the conditional laws of one and two magnetizations given
$\mathscr R^S$. Since $S\ge Q$, we have $u(S,x)=\tanh x$.
Applying $\arctanh$ therefore identifies the conditional
laws of the fields $h+Y(v^i)=\arctanh M_i$ with those of
the scalar fields at time $S$. Gaussian integration by parts
uses the partition function
\[
 G_S\cosh(h+Y)
 =\int\cosh(h+Y(v))\,G_S(dv)
\]
appearing in \zeqref{eq:shell-cavity}.

\subsection{Covariance coordinates and conditional field laws}
\label{subsec:si-coordinates}

The covariance of the fields at $v,w$ is $\kappa(v\cdot w)$.
We represent this covariance as an inner product and replace
the scalar time parameter $q$ by $t=\kappa(q)$, under which
the scalar diffusion has unit diffusion coefficient.
Let $\mathcal H$ be the separable Hilbert space containing
the support of $G$, and define, for $\|v\|\le1$,
\begin{equation}\label{eq:gram-lift}
 \mathcal V(v)=\bigoplus_{p\in2\mathbb N}
              \sqrt p\,b_p\,v^{\otimes(p-1)}
 \in\bigoplus_{p\in2\mathbb N}\mathcal H^{\otimes(p-1)}.
\end{equation}
This is the map $\phi_\xi$ from the proof of
Lemma~\zcref[noname]{mark:ball-approximation}, where its
convergence and Lipschitz bound were established. 
Moreover,
\[
 \mathcal V(v)\cdot\mathcal V(w)=\kappa(v\cdot w),\qquad
 \|\mathcal V(v)\|^2=\kappa(S)\quad(\|v\|^2=S).
\]
Thus $\mathcal V$ is continuous, and it is odd because
each tensor power $p-1$ is odd.

For replicas sampled from $\widehat G_S$, put
\begin{equation}\label{eq:clock-change}
 K=\kappa(S),\qquad \rho=\kappa_\#\mu,\qquad
 T_{ij}=\kappa(R_{ij})\ (i\ne j),\quad T_{ii}=K.
\end{equation}
Here $\rho$ is the distribution of $\kappa(q)$ when
$q$ has distribution $\mu$. Since $\mu$ is supported
on $[0,Q]$ and $Q\le S$, $\rho$ is supported on $[0,K]$.
The array $T$ satisfies
$T_{ij}=\mathcal V(v^i)\cdot\mathcal V(v^j)$ for all
$i,j$. Its diagonal $K$ is the variance of $Y(v^i)$;
the directing vectors have squared norm $S$, while the
finite-system self-overlap is $1$.

Define
\[
 U_{ij}=
 \begin{cases}
 T_{ij},&h\ne0,\\
 |T_{ij}|,&h=0,
 \end{cases}
 \qquad U_{ii}=K,
\]
and let $\mathcal A_n$ be the completion under $\mathbf E_S$
of $\sigma(T_{ij}:1\le i<j\le n)$. Since $\kappa$ is an
odd, strictly increasing homeomorphism from $[-S,S]$ onto
$[-K,K]$, $\mathcal A_n$ is also the completed sigma-field
generated by $(R_{ij})_{1\le i<j\le n}$. In particular,
$\mathcal A_1$ is the completion of the trivial sigma-field.
Expectations conditional on $\mathcal A_n$ are taken under
the full sampling law $\mathbf E_S$.

We now make the same change of coordinate in the scalar
PDE and diffusion. Since $\alpha=1$ on $[S,1]$, the scalar
PDE gives
\[
 \Phi(S,x)=\log(2\cosh x)+\frac{L-K}{2}.
\]
Subtracting this constant gives the terminal value
$\log(2\cosh x)$ at the new terminal time $K$. Define
\begin{equation}\label{eq:si-time-changed-scalar}
 \begin{aligned}
 \phi(t,x)&=\Phi(\kappa^{-1}(t),x)-\frac{L-K}{2},\\
 a(t)&=\rho([0,t])=\alpha(\kappa^{-1}(t)),\\
 \mathsf u(t,x)&=\phi_x(t,x),\qquad
 \mathsf V(t,x)=\phi_{xx}(t,x),\\
 \mathsf X_t&=X_{\kappa^{-1}(t)},\qquad 0\le t\le K.
 \end{aligned}
\end{equation}
If $B_q$ drives the original scalar diffusion, then
\[
 \widetilde B_t
 =\int_0^{\kappa^{-1}(t)}\sqrt{\kappa'(q)}\,dB_q
\]
is a centered continuous Gaussian process with covariance
$\E[\widetilde B_t\widetilde B_s]=\min(t,s)$, and hence
is a standard Brownian motion. Substituting
$dt=\kappa'(q)\,dq$ in the integrated PDE and SDE gives
\begin{equation}\label{eq:si-clock-pde}
 \begin{aligned}
 \phi_t&=-\tfrac12(\phi_{xx}+a\phi_x^2),
 &\phi(K,x)&=\log(2\cosh x),\\
 d\mathsf X_t
 &=a(t)\mathsf u(t,\mathsf X_t)\,dt+d\widetilde B_t,
 &\mathsf X_0&=h.
 \end{aligned}
\end{equation}
The substitution is made in the time integrals and remains
valid when $\kappa'(0)=0$.

Let $\mathbb E_{\rm sc}$ denote expectation over the scalar
paths in \zeqref{eq:si-clock-pde}. For $r\in[0,K]$, let
$p_r$ be the joint law of $(\mathsf X_K^1,\mathsf X_K^2)$
when the two paths agree up to time $r$ and, conditional
on their common value at $r$, continue independently to
time $K$. At zero field, for $r\in[-K,0)$, define $p_r$
as the image of $p_{|r|}$ under
$(x_1,x_2)\mapsto(x_1,-x_2)$.
With $W_i=h+Y(v^i)$,
Proposition~\zcref[noname]{prop:shell-marking} gives,
for distinct $i,j$,
\begin{equation}\label{eq:si-field-interface}
 \Law_{\mathbf E_S}(W_i\mid\mathscr R^S)
 =\Law_{\rm sc}(\mathsf X_K),\qquad
 \Law_{\mathbf E_S}(W_i,W_j\mid\mathscr R^S)
 =p_{T_{ij}}.
\end{equation}
Thus the conditional pair law is indexed by the field
covariance $T_{ij}$, which is also the time at which the
two scalar paths separate when $T_{ij}\ge0$.

The conditional field laws in \zeqref{eq:si-field-interface}
describe the fields once the overlaps are given. To determine
the overlap distribution itself, we use Gaussian integration
by parts under the reweighted measure. Differentiating its
partition function introduces an additional replica, and
comparison with the scalar identities constrains the overlaps
of this replica with the earlier ones. The following proposition
states the resulting conditional law and geometric conclusions.
Its proof occupies the following subsections and is completed
in \zcref{subsec:si-distribution-completion}.

\begin{proposition}\label{prop:physical-shell-identification}
Assume \zeqref{eq:mixture}, fix $h\in\R$, and let
$\mu=\mu_{\xi,h}$ and $Q=\max\supp\mu>0$.
Fix a deterministic $S\in[Q,1]$ such that
\[
 \Prob(E_S)>0,\qquad
 E_S=\{G(\{v:\|v\|^2=S\})>0\}.
\]
Use the normalized restrictions $G_S,\widehat G_S$ to
$\{v:\|v\|^2=S\}$, with the cavity formula
\zeqref{eq:shell-cavity} and conditional field laws
\zeqref{eq:si-field-interface}. Retain the sampling law
$\mathbf E_S$, the arrays $T,U$, and the sigma-fields
$\mathcal A_n$ defined above, with
$K=\kappa(S)$ and $\rho=\kappa_\#\mu$.

For every $n\ge1$,
\begin{equation}\label{eq:si-shell-conditional-law}
 \Law_{\mathbf E_S}(U_{1,n+1}\mid\mathcal A_n)
 =\frac1n\rho+\frac1n\sum_{j=2}^n\delta_{U_{1j}}
 \quad\text{almost surely}.
\end{equation}
For $n=1$, the sum is empty, so $U_{12}$ has law $\rho$.
Consequently, under $\mathbf E_S$, $R_{12}$ has law $\mu$
when $h\ne0$, and $|R_{12}|$ has law $\mu$ when $h=0$.
Thus $\lambda_S=\mu$.

Moreover, almost surely, for every triple of distinct
indices $i,j,k$,
\[
 U_{jk}\ge\min(U_{ij},U_{ik}).
\]
At zero field, almost surely for all such triples,
\begin{equation}\label{eq:si-shell-signs}
 T_{ij}\ne0,\ T_{ik}\ne0
 \quad\Longrightarrow\quad
 \begin{cases}
 T_{jk}\ne0,\\
 \operatorname{sgn}T_{jk}
 =\operatorname{sgn}T_{ij}\operatorname{sgn}T_{ik}.
 \end{cases}
\end{equation}
\end{proposition}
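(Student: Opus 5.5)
The plan follows the outline in Section~\zcref[noname]{subsec:proof-ideas}. We combine the shell version of Lemma~\zcref[noname]{cav:gaussian} (Proposition~\zcref[noname]{prop:shell-marking}) with the conditional field laws \zeqref{eq:si-field-interface} and the scalar integration-by-parts formula \zeqref{eq:scalar-ibp}, after the time change to the coordinate $t=\kappa(q)$.

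\emph{Overlap-only identities.} Take $n$ replicas, a bounded $\mathcal A_n$-measurable $F$, and $g(\mathbf Y)=f(W_1)$ with $f$ of exponential growth. The cavity identity reads
\[
 n\,\mathbf E_S[F\,T_{1,n+1}f(W_1)M_{n+1}]
 =\mathbf E_S\Bigl[F\Bigl(Kf'(W_1)-(W_1-h)f(W_1)+f(W_1)\sum_{\ell\le n}T_{1\ell}M_\ell\Bigr)\Bigr].
\]
Conditional on $\mathscr R^S\supset\mathcal A_{n+1}$, every term involves at most two fields. So by \zeqref{eq:si-field-interface} each term can be replaced by a scalar functional:
\[
 \Psi_f(r)=\mathbb E_{p_r}[f(x_1)\tanh x_2],\qquad
 \mathbb E_{\rm sc}[(\mathsf X_K-h)f(\mathsf X_K)].
\]
The second of these is rewritten by \zeqref{eq:scalar-ibp}, in the form $Kc_f-\int\,\Psi_f(t\wedge\cdot)\ldots\,\rho(dt)$. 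Here the term $\int_{[0,K]}\kappa(t)\mathbb E[u f]\,\mu(dt)$ becomes $\int \Psi_f(t)\,\rho(dt)$, because $\mathbb E[\mathsf u(t,\mathsf X_t)f(\mathsf X_K)]=\Psi_f(t)$ by the martingale property. The $Kf'$ term and the $\ell=1$ diagonal term cancel against $Kc_f$. The result is the overlap-only identity
\[
 n\,\mathbf E_S[F\,T_{1,n+1}\Psi_f(T_{1,n+1})]
 =\mathbf E_S[F]\int t\,\Psi_f(t)\,\rho(dt)+\sum_{\ell=2}^n\mathbf E_S[F\,T_{1\ell}\Psi_f(T_{1\ell})].
\]
At zero field one uses the reflected $p_r$ and odd $f$, so that $r\Psi_f(r)$ is a function of $|r|$. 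If the family $\{t\Psi_f(t)\}$ were rich enough to separate points of $[0,K]$, \zeqref{eq:si-shell-conditional-law} would follow at once. This family is not obviously determining, however; for $f=\tanh$ it only gives $t\Gamma$-type information. So a more refined choice of $f$ is needed.

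\emph{Ultrametricity and signs.} I would take $f(x)=e^{sx}$ at $h\ne0$ and $f(x)=\sinh(sx)$ at $h=0$, and let $s\to\infty$. Since the paths after the split time $r$ are independent, $\Psi_f(r)$ behaves like $\mathbb E[e^{s\mathsf X_r}\cdot(\text{terminal factor})]$. The leading exponential rate in $s$ should be strictly increasing in $r$. Apply the identity with three replicas, using a two-field version $g(W_1,W_2)=e^{s_1W_1+s_2W_2}$ and comparing the coefficients at distinct rates. The terms that violate $U_{23}\ge\min(U_{12},U_{13})$, or at zero field have $T_{12}T_{13}T_{23}<0$, then appear with a rate that cannot be matched by any other term, so they must carry zero mass.

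\emph{Conditional tails.} For a threshold $q$ that is an atom neither of $\rho$ nor of the unknown marginal, choose linear combinations of $e^{sW_1}$ and $e^{sW_j}$ whose limits as $s\to\infty$ isolate $\one_{\{U_{1,n+1}>q\}}$. This works on the event that some $U_{1j}\in(0,q)$: by ultrametricity, the new replica's overlap with $1$ is then compared with $U_{1j}$. To remove that restriction, enlarge the sample by one replica and weight toward its overlap with replica~$1$ lying in $(0,q)$. Exchangeability converts the error for $n$ replicas into a weighted error for $n+1$ replicas, and that error vanishes. Letting the thresholds range over a dense set, and determining the atoms at $0$ and at $\min\supp\rho$ from total mass and from the first moment identity with $f=\tanh$, gives \zeqref{eq:si-shell-conditional-law}. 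The case $n=1$ yields $\lambda_S=\mu$.

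I expect the main obstacle to be the asymptotic separation in $s$. Showing that $\Psi$-type functionals have strictly ordered exponential rates at different split times needs the sigmoid bounds of Lemma~\zcref[noname]{scalar:tail}, and those rates must stay uniform enough to pass the limit inside $\mathbf E_S$ using \zeqref{cav:fieldtails}.
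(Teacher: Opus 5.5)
Your first identity is correct: it is exactly the diagonal case \zeqref{mix:GG} of Lemma~\zcref[noname]{mix:identity}.

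\textbf{The main gap: two-field test functions.} It comes when you switch to $g(W_1,W_2)=e^{s_1W_1+s_2W_2}$ for ultrametricity and to combinations of $e^{sW_1}$, $e^{sW_j}$ for the tails. Your remark that every term involves at most two fields holds only for one-field $g$. For $g$ depending on $W_1,W_2$, integration by parts produces $\mathbf E_S[F\,g(W_1,W_2)M_\ell]$ for old labels $\ell\ge3$, and also $\mathbf E_S[F\,T_{i,n+1}g(W_1,W_2)M_{n+1}]$. These involve three fields jointly. But \zeqref{eq:si-field-interface} identifies only one- and two-field conditional laws, so there is nothing yet to ``compare coefficients'' against. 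If you instead mean sums of one-field exponentials, their $s\to\infty$ limits are constant on $(0,K]$ and cannot produce an arbitrary threshold.

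The mechanism you are missing is in Section~\zcref[noname]{field:section}:
\begin{itemize}
\item The weight $e^{z(W_1-\theta W_2)}$ has conditional expectation $Z_z(r,\theta)$ given the overlaps, a deterministic function of $r$. So the tilt preserves the conditional law of all further overlaps, \zeqref{eq:si-tilt-preserves-array}.
\item A second Gaussian integration by parts gives a bound, uniform in $z$, on $Y_k-z(T_{k1}-\theta T_{k2})$; see \zeqref{eq:all-replica-residual}. Hence $M_k\to\operatorname{sgn}(T_{k1}-\theta T_{k2})$ under the tilt.
\item With $\theta=\tau/r$, the previously proved ultrametricity and sign relation turn the limit of $T_{2k}M_k$ into $2r\one_{\{U_{1k}>\tau\}}-U_{2k}$; see \zeqref{eq:all-replica-sign-threshold}.
\item The scalar side comes from the two-path density \zeqref{field:tree-weight} and its explicit limit \zeqref{eq:all-replica-scalar-limit}.
\end{itemize}

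\textbf{Ultrametricity and signs.} You do not need two fields here. What you are missing is the cross identity \zeqref{mix:main}: the same one-field $g=f(W_j)$, but integrated by parts in the direction $Y_i$ with $i\ne j$. All its terms still involve at most two fields.

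Your rate heuristic is also wrong as stated. After dividing by its value at $K$, $\Psi_{f_y}(r)\to1$ for every $r>0$, so the leading rate does not depend on $r$. Levels are separated only by the deficits $\delta_y(r)=1-C_y(r)\asymp e^{-2ry/K}$ of Lemma~\zcref[noname]{source:prop:ordered}. The paper subtracts the $n=2$ cross identities from the diagonal ones to get \zeqref{geo:dissipation}. It bounds the right side by $K\delta_y(r)$ and divides by $\delta_y(r)$. The signs come from comparing the $y\to\infty$ limit of the cross identity with \zeqref{eq:si-first-moment}.

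\textbf{Two smaller points in the completion.}
\begin{itemize}
\item Your witness weight must itself satisfy the Ghirlanda--Guerra identity \zeqref{eq:si-witness-diagonal}. This is why the paper takes $\psi_y(r)=r\delta_y(r)/d_y$, a difference of identities you already have.
\item Thresholds in $(0,a_*)$, with $a_*=\inf(\supp\rho\cap(0,K])$, are reached by no witness. You need the separate gap argument \zeqref{eq:si-initial-gap} before total mass and one moment can fix the atoms.
\end{itemize}
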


\subsection{Scalar and cavity integration-by-parts identities}

We state the scalar and cavity integration-by-parts formulas
that will be compared using the conditional field laws
\zeqref{eq:si-field-interface}. For the rest of this section,
write $\mathbb E_{\rm ph}$ for $\mathbf E_S$. This expectation
averages over $G$ conditional on $E_S$, then over $Y$
conditional on $G$, and finally over replicas sampled
independently from $\widehat G_S$ conditional on $G,Y$.

The scalar estimates from \zcref{sec:scalar}, after the change
of time $t=\kappa(q)$, give
\begin{equation}\label{eq:si-scalar-estimates}
 \begin{gathered}
 |\mathsf u(t,x)|<1,\qquad
 0<\mathsf V(t,x)\le1-\mathsf u(t,x)^2,\\
 d\,\mathsf u(t,\mathsf X_t)
 =\mathsf V(t,\mathsf X_t)\,d\widetilde B_t,\\
 1-\mathsf u(t,x)\le C_K e^{-2x}\quad(x\in\R),\qquad
 1-\mathsf u(t,x)\ge c_K e^{-2x}\quad(x\ge0).
 \end{gathered}
\end{equation}
These bounds hold for $0\le t\le K$, with
$0<c_K\le C_K<\infty$ depending only on $K$.

Since $u(S,x)=\tanh x$, the proof of
Lemma~\zcref[noname]{scalar:tail} can be stopped at time $S$.
After the change of time $t=\kappa(q)$, it gives
\[
 |\arctanh\mathsf u(t,x)-x|\le2(K-t).
\]
Thus the last two bounds in \zeqref{eq:si-scalar-estimates}
hold with $c_K=e^{-4K}$ and $C_K=2e^{4K}$. Also,
\[
 \sup_{0\le t\le K}|\mathsf X_t|
 \le |h|+K+\sup_{0\le t\le K}|\widetilde B_t|,
\]
so this supremum has finite exponential moments of every
positive order.

Under a reference expectation $\mathbb E_0$, let $B$ be a
standard Brownian motion and put $W_t^0=h+B_t$.
Lemma~\zcref[noname]{scalar:gaussian}, under the same change
of time, shows that the law of $\mathsf X$ relative to the
law of $W^0$ has density
\begin{equation}\label{eq:si-scalar-density}
 \mathcal D(W^0)=
 \exp\left\{\phi(K,W_K^0)
       -\int_{[0,K]}\phi(t,W_t^0)\,\rho(dt)\right\},
 \qquad \mathbb E_0\mathcal D=1.
\end{equation}
The integral includes any atoms of $\rho$ at zero and at $K$.
The constant subtracted in \zeqref{eq:si-time-changed-scalar}
cancels from the exponent because $\rho([0,K])=1$.

For $f\in C^1(\R)$ satisfying
$|f(x)|+|f'(x)|\le Ce^{c|x|}$ and $r\in[0,K]$,
the same change of time in \zeqref{eq:scalar-ibp} gives
\begin{equation}\label{eq:si-scalar-ibp}
 \begin{aligned}
 \mathbb E_{\rm sc}[(\mathsf X_r-h)f(\mathsf X_K)]
 ={}&r\,\mathbb E_{\rm sc}
 [f'(\mathsf X_K)+\tanh\mathsf X_K f(\mathsf X_K)]\\
 &-\int_{[0,K]}\min(r,t)\,
 \mathbb E_{\rm sc}
 [\mathsf u(t,\mathsf X_t)f(\mathsf X_K)]\,\rho(dt).
 \end{aligned}
\end{equation}

For the cavity formula, put
\[
 Y_i=Y(v^i),\qquad W_i=h+Y_i,\qquad M_i=\tanh W_i,
 \qquad
 Z_S=\int\cosh(h+Y(v))\,G_S(dv)\ge1.
\]
We first check integrability under the reweighted sampling
law. Relative to $G_S^{\otimes m}$, the density of the
sampled vectors is $Z_S^{-m}\prod_{i=1}^m\cosh W_i$.
Using $Z_S^{-m}\le1$ and
$\cosh W_i\le e^{|h|+|Y_i|}$ reduces the required bound
to Gaussian exponential moments at fixed reference vectors.
Their field variances are at most $K$, so
\begin{equation}\label{eq:si-field-moments}
 \mathbb E_{\rm ph}
 \exp\left(c\sum_{i=1}^m|Y_i|\right)
 \le 2^m\exp\left\{m|h|+\frac K2m^2(c+1)^2\right\}
 <\infty
 \qquad(m\ge1,\ c>0).
\end{equation}

Fix $n\ge1$ and $1\le i\le n$. Let $F$ be bounded and
$\mathcal A_n$-measurable, and suppose $g\in C^1(\R^n)$
satisfies
\[
 |g(w)|+\sum_{j=1}^n|\partial_jg(w)|
 \le C\exp\left(c\sum_{j=1}^n|w_j|\right).
\]
With $\mathbf W=(W_1,\ldots,W_n)$, Gaussian integration
by parts gives 
\begin{equation}\label{eq:unit-ibp}
 \begin{aligned}
 \mathbb E_{\rm ph}[F Y_i g(\mathbf W)]
 =\mathbb E_{\rm ph}\Bigl[F\Bigl(
 &\sum_{j=1}^nT_{ij}\partial_jg(\mathbf W)
 +g(\mathbf W)\sum_{\ell=1}^nT_{i\ell}M_\ell-n g(\mathbf W)T_{i,n+1}M_{n+1}
 \Bigr)\Bigr].
 \end{aligned}
\end{equation}
Here $v^{n+1}$ is an additional independent sample from
$\widehat G_S$, conditional on $G,Y$.

To derive the formula, condition on $G$ and integrate
the reference vectors against $G_S^{\otimes n}$. Hold
these vectors and $F$ fixed during Gaussian differentiation.
Their sampling density is
\[
 Z_S^{-n}\prod_{\ell=1}^n\cosh W_\ell.
\]
Differentiating $g$ gives the first sum in
\zeqref{eq:unit-ibp}. Differentiating the numerator factors
gives their logarithmic derivatives $M_\ell$ and hence
the second sum. For a bounded deterministic function $b$,
the derivative of the partition function is
\[
 \left.\frac{d}{ds}\right|_{s=0}\log Z_S(Y+sb)
 =\int b(v)M(v)\,\widehat G_S(dv).
\]
Gaussian integration by parts pairs this derivative with
the covariance $\kappa(v^i\cdot v)$. Differentiating
$Z_S^{-n}$ therefore contributes
\[
 -n g(\mathbf W)
   \int\kappa(v^i\cdot v)M(v)\,\widehat G_S(dv).
\]
Representing this integral by the additional replica
$v^{n+1}$ gives the last term of \zeqref{eq:unit-ibp}.

These calculations can first be made for finite orthogonal
projections of the Gaussian field. Their covariance kernels
converge to $\kappa(v\cdot w)$, their variances are bounded
by $K$, and their partition functions are at least one.
Conditional on $G$, the projected fields and their
$\cosh$ weights converge in every finite $L^p$ under
Gaussian expectation and any fixed number of independent
$G_S$-samples. Jensen's inequality gives the same convergence
for the partition functions. For $x,y\ge1$,
\[
 |x^{-n}-y^{-n}|\le n|x-y|,
\]
so their inverse powers also converge. The exponential
bounds used in \zeqref{eq:si-field-moments} hold uniformly
over the projections and justify passage of every term
to \zeqref{eq:unit-ibp}.

\subsection{Overlap identities from conditional field laws}
\label{mix:section}

To eliminate the fields and magnetizations from
\zeqref{eq:unit-ibp}, we compute the corresponding expectations
under the scalar pair law $p_r$. The resulting quantities
are deterministic functions of the covariance $r$.

Fix $f\in C^1(\R)$ satisfying
\begin{equation}\label{eq:si-terminal-tests}
 |f(x)|+|f'(x)|\le Ce^{c|x|}\qquad(x\in\R)
\end{equation}
for some $C,c>0$, and define
\begin{equation}\label{mix:profiles}
 C_f(r)=\mathbb E_{\rm sc}
 [\mathsf u(r,\mathsf X_r)f(\mathsf X_K)],
 \qquad
 A(r)=\int_r^K a(t)\,dt,\qquad 0\le r\le K.
\end{equation}
Consider two scalar paths that agree up to time $r$ and
continue independently conditional on their common value
$\mathsf X_r$. The martingale property gives
\[
 \mathbb E_{\rm sc}
 [\tanh\mathsf X_K^1\mid\mathsf X_r]
 =\mathsf u(r,\mathsf X_r).
\]
Conditional independence of the continuations therefore yields
\[
 \mathbb E_{\rm sc}
 [\tanh\mathsf X_K^1 f(\mathsf X_K^2)]
 =C_f(r).
\]
Thus $C_f(r)$ is the scalar expectation corresponding to
a magnetization at one replica multiplied by $f$ of the
field at another.

We also need the expectation with the first magnetization
replaced by its field. Integrating the scalar SDE from
$r$ to $K$ and using the same martingale property gives
\[
 \begin{aligned}
 \mathbb E_{\rm sc}^{r,x}[\mathsf X_K-h]
 &=x-h+\int_r^K a(t)\,
   \mathbb E_{\rm sc}^{r,x}
   [\mathsf u(t,\mathsf X_t)]\,dt\\
 &=x-h+A(r)\mathsf u(r,x).
 \end{aligned}
\]
Conditioning the two continuations on $\mathsf X_r$ now gives
\[
 \mathbb E_{\rm sc}
 [(\mathsf X_K^1-h)f(\mathsf X_K^2)]
 =\mathbb E_{\rm sc}
 [(\mathsf X_r-h)f(\mathsf X_K)]
  +A(r)C_f(r).
\]
To express the first term by scalar integration by parts,
define, for bounded Borel $C:[0,K]\to\R$,
\begin{equation}\label{mix:correction}
 (\mathcal KC)(r)
 =A(r)C(r)+rC(K)
  -\int_{[0,K]}\min(r,t)C(t)\,\rho(dt).
\end{equation}
Since
$C_f(K)=\mathbb E_{\rm sc}
[\tanh\mathsf X_K f(\mathsf X_K)]$,
\zeqref{eq:si-scalar-ibp} yields
\begin{equation}\label{mix:scalarK}
 \mathbb E_{\rm sc}
 [(\mathsf X_K^1-h)f(\mathsf X_K^2)]
 -r\mathbb E_{\rm sc}f'(\mathsf X_K)
 =(\mathcal KC_f)(r).
\end{equation}
The subtracted term is the derivative contribution that
also appears in the cavity integration-by-parts formula.
The calculation includes $r=0$, where the paths are
independent and the pair expectation is $A(0)C_f(0)$,
and $r=K$, where the paths coincide.

We finally express these formulas for signed covariances.
At nonzero field, define
\[
 H_f(r)=C_f(r),\qquad
 K_f(r)=(\mathcal KC_f)(r),\qquad 0\le r\le K.
\]
At zero field, restrict to odd $f$ and define, for
$-K\le r\le K$,
\[
 H_f(r)=\operatorname{sgn}(r)C_f(|r|),\qquad
 K_f(r)=\operatorname{sgn}(r)(\mathcal KC_f)(|r|),
 \qquad \operatorname{sgn}(0)=0.
\]
For negative $r$, reflecting the second endpoint changes
the sign of $f(\mathsf X_K^2)$, which gives these extensions.
At $r=0$, $\mathsf u(0,0)=0$ implies
$C_f(0)=(\mathcal KC_f)(0)=0$. Consequently, throughout
the permitted covariance range,
\[
 \begin{aligned}
 H_f(r)
 &=\int\tanh x_1\,f(x_2)\,p_r(dx_1,dx_2),\\
 K_f(r)
 &=\int(x_1-h)f(x_2)\,p_r(dx_1,dx_2)
   -r\mathbb E_{\rm sc}f'(\mathsf X_K).
 \end{aligned}
\]

\begin{lemma}\label{mix:identity}
Fix $n\ge1$, $1\le i,j\le n$, and $f\in C^1(\R)$
satisfying \zeqref{eq:si-terminal-tests}, with $f$ odd
when $h=0$. Under the sampling law defining
$\mathbb E_{\rm ph}$, almost surely,
\begin{equation}\label{mix:main}
 n\mathbb E_{\rm ph}
 [T_{i,n+1}H_f(T_{j,n+1})\mid\mathcal A_n]
 =
 \sum_{\ell=1}^nT_{i\ell}H_f(T_{j\ell})-K_f(T_{ij}).
\end{equation}
Taking $i=j$ gives
\begin{equation}\label{mix:GG}
 n\mathbb E_{\rm ph}
 [U_{i,n+1}C_f(U_{i,n+1})\mid\mathcal A_n]
 =
 \int_{[0,K]}tC_f(t)\,\rho(dt)
 +\sum_{\substack{1\le\ell\le n\\\ell\ne i}}
 U_{i\ell}C_f(U_{i\ell})
 \quad\text{almost surely}.
\end{equation}
\end{lemma}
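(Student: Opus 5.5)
Apply the cavity integration-by-parts identity \zeqref{eq:unit-ibp} with $g(\mathbf W)=f(W_j)$, i.e.\ $g$ depending only on the $j$th field. The growth condition on $g$ follows from \zeqref{eq:si-terminal-tests}. The identity then reads
\[
 \mathbb E_{\rm ph}[FY_if(W_j)]
 =\mathbb E_{\rm ph}\Bigl[F\Bigl(T_{ij}f'(W_j)+f(W_j)\sum_{\ell=1}^nT_{i\ell}M_\ell-nf(W_j)T_{i,n+1}M_{n+1}\Bigr)\Bigr],
\]
for every bounded $\mathcal A_n$-measurable $F$. The plan is to replace each expectation of fields and magnetizations by its scalar counterpart, using the conditional field laws \zeqref{eq:si-field-interface}, conditioning first on the larger sigma-field $\mathscr R^S$ (which contains $\mathcal A_{n+1}$).

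\emph{Terms not involving replica $n+1$.} Conditional on $\mathscr R^S$:
\begin{itemize}
\item For the pair $(i,j)$ with $i\ne j$, the joint law of $(W_i,W_j)$ is $p_{T_{ij}}$. Since $Y_i=W_i-h$, the definitions of $H_f$ and $K_f$ give
\[
 \mathbb E_{\rm ph}\bigl[Y_if(W_j)-T_{ij}f'(W_j)\bigm|\mathscr R^S\bigr]=K_f(T_{ij}).
\]
Here we use that the one-point law is that of $\mathsf X_K$, so $\mathbb E f'(W_j)=\mathbb E_{\rm sc}f'(\mathsf X_K)$.
\item When $i=j$, the same quantity is computed from the one-point law. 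Via \zeqref{eq:si-scalar-ibp} at $r=K$, it is $(\mathcal KC_f)(K)=K_f(K)$, consistent with $T_{ii}=K$.
\item Each term $T_{i\ell}M_\ell f(W_j)$ has conditional expectation $T_{i\ell}H_f(T_{\ell j})$. For $\ell=j$, one uses the one-point law and $H_f(K)=C_f(K)$.
\end{itemize}
One must check integrability, which follows from \zeqref{eq:si-field-moments}, and the need for odd $f$ at zero field, which makes the reflected kernels consistent with $H_f,K_f$ at negative covariance. Since $T$ is symmetric, $H_f(T_{j\ell})=H_f(T_{\ell j})$.

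\emph{The last term, involving replica $n+1$.} Conditioning on $\mathscr R^S$ gives $\mathbb E[FT_{i,n+1}H_f(T_{j,n+1})]$; note that $T_{i,n+1}$ is $\mathscr R^S$-measurable. The pair $(M_{n+1},W_j)$ has law $p_{T_{j,n+1}}$ transported by $\tanh$ in the first coordinate. Equating both sides for all $F$ yields \zeqref{mix:main}.

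\emph{Specialization $i=j$.} Here $T_{i\ell}H_f(T_{i\ell})=U_{i\ell}C_f(U_{i\ell})$, both at nonzero field and at zero field, since $\operatorname{sgn}(r)\,r=|r|$. The diagonal term $\ell=i$ contributes $KC_f(K)$. It remains to show
\[
 KC_f(K)-K_f(K)=\int tC_f(t)\,\rho(dt).
\]
Since $A(K)=0$, we have $K_f(K)=KC_f(K)-\int\min(K,t)C_f(t)\rho(dt)$, and the support of $\rho$ lies in $[0,K]$. This gives \zeqref{mix:GG}.

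\emph{Main obstacle.} The delicate point is the legitimacy of the conditional replacements. The conditional pair law \zeqref{eq:si-field-interface} holds for deterministic distinct labels given all of $\mathscr R^S$, and this includes replica $n+1$. The test functions are unbounded, so one must pass from bounded Borel tests to exponential growth by truncation and dominated convergence, using \zeqref{eq:si-field-moments}. Finally, the identity must be transferred from $\mathscr R^S$ to $\mathcal A_n$ by the tower property.
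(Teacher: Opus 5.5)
Your proposal is correct and follows essentially the same route as the paper. Both arguments apply \zeqref{eq:unit-ibp} with $g(\mathbf W)=f(W_j)$ and replace each field and magnetization term by $H_f$ or $K_f$ using the conditional pair and one-point laws given $\mathscr R^S$. Both then handle the replica-$(n+1)$ term by conditioning on a $\sigma$-field that contains $T_{i,n+1}$; you use $\mathscr R^S$ directly where the paper uses $\mathcal A_{n+1}$, and this makes no difference. The case $i=j$ is obtained in both through $A(K)=0$ and $\supp\rho\subset[0,K]$, so that the $\ell=i$ term $KC_f(K)$ cancels against $K_f(K)$.
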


\begin{proof}
The conditional field laws \zeqref{eq:si-field-interface}
and the scalar formulas for $H_f,K_f$ give, for
$1\le a,b\le m$,
\[
 \begin{aligned}
 \mathbb E_{\rm ph}[f(W_a)M_b\mid\mathcal A_m]
 &=H_f(T_{ab}),\\
 \mathbb E_{\rm ph}
 [Y_a f(W_b)-T_{ab}f'(W_b)\mid\mathcal A_m]
 &=K_f(T_{ab}).
 \end{aligned}
\]
Indeed, first condition on $\mathscr R^S$ and apply
the scalar pair law, then condition on $\mathcal A_m$.
When $a=b$, the one-field law gives the same formulas
with $T_{aa}=K$. The definitions of $H_f,K_f$ include
negative covariances at zero field. The scalar exponential
moments and \zeqref{eq:si-field-moments} ensure integrability.

Apply \zeqref{eq:unit-ibp} with $g(\mathbf W)=f(W_j)$.
Its derivative contribution is $T_{ij}f'(W_j)$.
Since the identity holds for every bounded
$\mathcal A_n$-measurable multiplier $F$, the preceding
conditional expectations with $m=n$ give
\[
 n\mathbb E_{\rm ph}
 [T_{i,n+1}f(W_j)M_{n+1}\mid\mathcal A_n]
 =
 \sum_{\ell=1}^nT_{i\ell}H_f(T_{j\ell})-K_f(T_{ij}).
\]
To evaluate the remaining magnetization term, we must
retain the overlaps involving replica $n+1$. Since
$T_{i,n+1}$ is $\mathcal A_{n+1}$-measurable, the first
conditional formula with $m=n+1$ gives
\[
 \begin{aligned}
 &\mathbb E_{\rm ph}
 [T_{i,n+1}f(W_j)M_{n+1}\mid\mathcal A_n]\\
 &\qquad=
 \mathbb E_{\rm ph}\!\left[
 T_{i,n+1}\,
 \mathbb E_{\rm ph}[f(W_j)M_{n+1}\mid\mathcal A_{n+1}]
 \,\middle|\,\mathcal A_n\right]\\
 &\qquad=
 \mathbb E_{\rm ph}
 [T_{i,n+1}H_f(T_{j,n+1})\mid\mathcal A_n].
 \end{aligned}
\]
This proves \zeqref{mix:main}.

For $i=j$, use
\[
 H_f(K)=C_f(K),\qquad
 K_f(K)=KC_f(K)-\int_{[0,K]}tC_f(t)\,\rho(dt).
\]
Thus the $\ell=i$ term cancels the term $KC_f(K)$
in $K_f(K)$. At nonzero field, $T_{i\ell}=U_{i\ell}\ge0$.
At zero field, the definition of $H_f$ gives
\[
 T_{i\ell}H_f(T_{i\ell})
 =|T_{i\ell}|C_f(|T_{i\ell}|)
 =U_{i\ell}C_f(U_{i\ell}).
\]
These substitutions yield \zeqref{mix:GG}, also when
$n=1$, for which the sum is empty.
\end{proof}

\subsection{Exponential test functions}
\label{source:subsection}

We choose exponential functions of the terminal field in
Lemma~\zcref[noname]{mix:identity} to distinguish different
covariance values. For $y>0$, put $\theta_y=y/K$ and define
\[
 f_y(x)=
 \begin{cases}
 e^{\theta_yx},&h\ne0,\\
 \sinh(\theta_yx),&h=0.
 \end{cases}
\]
The choice at zero field satisfies the oddness requirement
in that lemma. Set
\[
 Z_y=C_{f_y}(K)
 =\mathbb E_{\rm sc}
 [\tanh\mathsf X_K f_y(\mathsf X_K)].
\]
Whenever $Z_y>0$, normalize $C_{f_y}$ by its value at $K$,
giving
\begin{equation}\label{source:eq:profiles}
 C_y(r)=\frac{C_{f_y}(r)}{Z_y}
 =\frac{\mathbb E_{\rm sc}
 [\mathsf u(r,\mathsf X_r)f_y(\mathsf X_K)]}{Z_y},
 \qquad
 \delta_y(r)=1-C_y(r),\qquad 0\le r\le K.
\end{equation}
Thus $C_y(K)=1$. The following lemma shows that, at each
fixed $r\in(0,K)$, the difference $\delta_y(r)$ decays
exponentially in $y$, with a faster rate at larger $r$.
These different rates will distinguish overlap levels
in the identities of Lemma~\zcref[noname]{mix:identity}.

\begin{lemma}\label{source:prop:ordered}
At zero field, $Z_y>0$ for every $y>0$. At nonzero field,
$Z_y>0$ for all sufficiently large $y$. For these values
of $y$, $C_y$ is continuous and strictly increasing on
$[0,K]$, with $C_y(K)=1$. In particular,
$\delta_y(r)>0$ for $0\le r<K$ and $\delta_y(K)=0$.

There exists $y_0>0$, depending only on $\xi,h,S$, such that
\[
 \sup_{y\ge y_0}\sup_{0\le r\le K}|C_y(r)|<\infty.
\]
As $y\to\infty$, $C_y$ converges pointwise on $[0,K]$ to
\begin{equation}\label{eq:si-profile-limit}
 C_\infty(r)=
 \begin{cases}
 1,&0<r\le K,\\
 c,&r=0,
 \end{cases}
 \qquad c=\mathsf u(0,h)\in(-1,1).
\end{equation}
At zero field, $c=0$.

For each fixed $r\in(0,K)$, there are constants
$c_r,C_r>0$ and $Y_r\ge y_0$, depending only on
$\xi,h,S,r$, such that
\begin{equation}\label{source:eq:fixed-tail}
 c_re^{-2ry/K}\le\delta_y(r)\le C_re^{-2ry/K}
 \qquad(y\ge Y_r).
\end{equation}
Consequently, for every fixed $0\le t<s\le K$,
\begin{equation}\label{source:eq:separation}
 \lim_{y\to\infty}\frac{\delta_y(s)}{\delta_y(t)}=0.
\end{equation}
\end{lemma}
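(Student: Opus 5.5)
The plan is to compute $C_{f_y}(r)$ through the martingale structure of \zeqref{eq:si-clock-pde}, and then to read off all asymptotics from an exponential tilt of the scalar path. Put $g_y(r,x)=\mathbb E_{\rm sc}^{r,x}f_y(\mathsf X_K)$, which is a martingale along $\mathsf X$. Then
\[
 C_{f_y}(r)=\mathbb E_{\rm sc}\bigl[\mathsf u(r,\mathsf X_r)\,g_y(r,\mathsf X_r)\bigr],
\]
and $\mathsf u(r,\mathsf X_r)$ is also a martingale with $d\mathsf u=\mathsf V\,d\widetilde B$. Since both factors are martingales, It\^o's product rule will give
\[
 \frac{d}{dr}C_{f_y}(r)=\mathbb E_{\rm sc}\bigl[\mathsf V(r,\mathsf X_r)\,\partial_xg_y(r,\mathsf X_r)\bigr]
\]
for almost every $r$, with integrability supplied by the exponential moments of $\sup_t|\mathsf X_t|$.

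\textbf{Monotonicity, positivity of $Z_y$, and the pointwise limit.} The drift $a\mathsf u$ is spatially Lipschitz, so the stochastic flow is order preserving. Because $f_y$ is strictly increasing in both cases ($e^{\theta x}$ and $\sinh\theta x$), this gives $\partial_xg_y>0$. Together with $\mathsf V>0$ from \zeqref{eq:si-scalar-estimates}, it follows that $C_{f_y}$ is strictly increasing, and it is continuous by dominated convergence. For positivity of $Z_y$ at zero field, note that $\tanh x\sinh(\theta x)\ge0$ with equality only at $x=0$, so $Z_y>0$ for every $y>0$. At nonzero field I will instead show that
\[
 \frac{Z_y}{\mathbb E_{\rm sc}e^{\theta_y\mathsf X_K}}\longrightarrow1,
\]
which forces $Z_y>0$ for large $y$. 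To prove this, use the density \zeqref{eq:si-scalar-density} and tilt the reference Brownian motion by $e^{\theta W_K^0}$; under the tilt $W^0$ becomes $h+\theta t+B_t$. On $[0,K]$ the bound $|\phi(t,x)-\log(2\cosh x)|\le C$ holds, which comes from \zeqref{eq:sigmoid-shift} after integrating in $x$. Under the tilt, $W_t^0\approx\theta t$ for all $t$ bounded away from $0$. Consequently the factor $\mathcal D$ equals a deterministic exponential in $\theta$ multiplied by a random factor that is tight and has uniformly bounded moments. This common deterministic factor cancels in every ratio with denominator $Z_y$. The same tilted representation gives the uniform bound $\sup_{y\ge y_0}\sup_r|C_y(r)|\le\sup_y\mathbb E|f_y|/Z_y<\infty$. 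It also gives the pointwise limit: $C_y(r)\to1$ for $r>0$, because $\mathsf u(r,\mathsf X_r)\to1$ under the tilt, and $C_y(0)=\mathsf u(0,h)\,\mathbb E f_y/Z_y\to c$. At zero field, $c=\mathsf u(0,0)=0$.

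\textbf{The two-sided tail \zeqref{source:eq:fixed-tail}.} The starting point is the decomposition
\[
 \delta_y(r)=\frac{\mathbb E_{\rm sc}\bigl[(1-\mathsf u(r,\mathsf X_r))f_y\bigr]-\mathbb E_{\rm sc}\bigl[(1-\tanh\mathsf X_K)f_y\bigr]}{Z_y}.
\]
Under the tilt, $\mathsf X_K\approx\theta K=y$, so the second term is $O(e^{-2y})$. Since $r<K$, this is negligible compared with $e^{-2ry/K}$. For the first term, apply \zeqref{eq:si-scalar-estimates}, which gives $1-\mathsf u(r,x)\asymp e^{-2x}$ for $x\ge0$ and only an upper bound for $x<0$. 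Under the tilted law one has $\mathsf X_r=\theta r+O(1)$ with Gaussian fluctuations, so
\[
 \mathbb E\bigl[e^{-2\mathsf X_r}\bigr]\asymp e^{-2\theta r}=e^{-2ry/K}.
\]
For the lower bound, restrict to the event $\{|\mathsf X_r-\theta r|\le1\}$, which has tilted probability bounded below. For the upper bound, control the region $\mathsf X_r<0$ by the Gaussian tail of the tilted fluctuation; this tail is $e^{-c\theta^2}$, which is much smaller than $e^{-2\theta r}$. The separation \zeqref{source:eq:separation} is then immediate from the two different exponential rates.

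\textbf{Expected main obstacle.} The hardest step will be making the tilt rigorous uniformly in $\theta$. Concretely, one must show that after the tilt, the random part of $\mathcal D$, namely $\exp\{\phi(K,W_K)-\int\phi(t,W_t)\rho(dt)\}$ with the deterministic $\theta$-exponential removed, has moments bounded above and below uniformly in $\theta$. The difficulty is that the $\rho$-integral includes times $t$ near $0$, where $W_t$ is not yet large, so the linear approximation $\phi(t,x)\approx|x|$ is not sharp there. I would handle this by using only $|\phi(t,x)-|x||\le C$ together with the Lipschitz bound $|\phi_x|\le1$. With these, the random factor is bounded by $\exp\{C+2\sup_t|B_t|\}$ above and by $\exp\{-C-2\sup_t|B_t|\}$ below, uniformly in $\theta$, and both bounds have all exponential moments.
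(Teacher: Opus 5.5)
Your nonzero-field argument is essentially the paper's. You tilt by $e^{\theta\mathsf X_K}$, translate the reference path by Cameron--Martin, bound the remainder of the log-density by $2\sup_{t\le K}|B_t|$ using only $|\phi_x|\le1$, and feed the two-sided sigmoid bounds into
\[
 \delta_y(r)=\frac{d_\theta(r)-d_\theta(K)}{1-d_\theta(K)},
 \qquad d_\theta(r)=1-\mathbb E_\theta\,\mathsf u(r,\mathsf X_r).
\]
This is your decomposition after dividing numerator and denominator by $\mathbb E_{\rm sc}e^{\theta\mathsf X_K}$.

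\textbf{The gap is the zero-field case.} There $f_y=\sinh(\theta_y x)$ is a signed weight, so $f_y(\mathsf X_K)/Z_y$ defines no tilted probability law. None of the steps you carry out ``under the tilt'' then apply. Concretely, $\mathsf X_K$ is symmetric when $h=0$, so $\mathbb E_{\rm sc}\sinh(\theta\mathsf X_K)=0$ and
\[
 \frac{\mathbb E_{\rm sc}[(1-\tanh\mathsf X_K)f_y(\mathsf X_K)]}{Z_y}=-1,
 \qquad
 \frac{\mathbb E_{\rm sc}[(1-\mathsf u(r,\mathsf X_r))f_y(\mathsf X_K)]}{Z_y}=-C_y(r).
\]
Both terms of your decomposition are of order one. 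The assertion that the second is $O(e^{-2y})$ is false, and the decomposition gives no estimate. Several other steps also rest on a positive weight: the uniform bound through $\mathbb E|f_y|/Z_y$, the lower bound by restriction to an event, and the limit $\mathsf u(r,\mathsf X_r)\to1$.

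The missing idea is a reflection identity. At $h=0$ the function $\mathsf u(t,\cdot)$ is odd, so $-\mathsf X$ solves the same equation driven by $-\widetilde B$. Hence the path law is invariant under $\mathsf X\mapsto-\mathsf X$, which gives $\mathbb E_{\rm sc}[\mathsf u(r,\mathsf X_r)e^{-\theta\mathsf X_K}]=-\mathbb E_{\rm sc}[\mathsf u(r,\mathsf X_r)e^{\theta\mathsf X_K}]$. Therefore
\[
 \mathbb E_{\rm sc}[\mathsf u(r,\mathsf X_r)\sinh(\theta\mathsf X_K)]
 =\mathbb E_{\rm sc}[\mathsf u(r,\mathsf X_r)e^{\theta\mathsf X_K}],
 \qquad 0\le r\le K,
\]
where $r=K$ gives $Z_y$. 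So $C_y(r)=\mathbb E_\theta\mathsf u(r,\mathsf X_r)/\mathbb E_\theta\tanh\mathsf X_K$ with the positive exponential tilt at both fields. Your nonzero-field estimates then cover $h=0$ verbatim, and this is exactly how the paper proceeds.

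A smaller point: the fixed-$r$ tail bound holds only for $r\in(0,K)$. The separation limit with $t=0$ or $s=K$ should instead be taken from $\delta_y(0)\to1-c>0$ and $\delta_y(K)=0$.

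\textbf{Monotonicity.} Your route differs from the paper's. The It\^o product rule requires the martingale $g_y(t,\mathsf X_t)$ to have integrand $\partial_xg_y(t,\mathsf X_t)$ against $\widetilde B$. That needs spatial differentiability of $g_y$, and some care since $a$ may jump. Also, order preservation of the flow yields only that $g_y(t,\cdot)$ is strictly increasing, not $\partial_xg_y>0$; the first-variation factor $\exp\int_t^Ka\,\mathsf V\,ds>0$ would supply that.

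The paper avoids derivatives altogether. For $r<s$,
\[
 Z_y\bigl[C_y(s)-C_y(r)\bigr]
 =\mathbb E_{\rm sc}\bigl[\operatorname{Cov}\bigl(\mathsf u(s,\mathsf X_s),g_y(s,\mathsf X_s)\bigm|\mathcal F_r\bigr)\bigr]>0,
\]
with $\mathcal F_r$ the scalar Brownian filtration. The covariance is positive because both are strictly increasing functions of $\mathsf X_s$, whose conditional law given $\mathcal F_r$ is equivalent to $N(\mathsf X_r,s-r)$ by Girsanov. Your version can be made rigorous, but the paper's needs only monotonicity.
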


\begin{proof}
For $\theta>0$, define the tilted scalar expectation by
\[
 \mathbb E_\theta F
 =\frac{\mathbb E_{\rm sc}[F e^{\theta\mathsf X_K}]}
        {\mathbb E_{\rm sc}e^{\theta\mathsf X_K}}.
\]
At nonzero field, the definition of $C_y$ gives
\begin{equation}\label{source:eq:tilted-ratio}
 C_y(r)=
 \frac{\mathbb E_{\theta_y}\mathsf u(r,\mathsf X_r)}
      {\mathbb E_{\theta_y}\tanh\mathsf X_K}.
\end{equation}
This formula also holds at zero field. Indeed, reflection of
the entire scalar path changes the signs of both
$\mathsf u(r,\mathsf X_r)$ and $\tanh\mathsf X_K$.
Replacing the exponential by the hyperbolic sine therefore
leaves both unnormalized expectations unchanged.

We first estimate the fields under this tilted law.
Cameron--Martin translation in
\zeqref{eq:si-scalar-density} gives, for every bounded Borel
function $F$ of the path,
\[
 \mathbb E_\theta F(\mathsf X)
 =\frac{\mathbb E_0\!\left[
 F\bigl((h+\theta t+B_t)_{0\le t\le K}\bigr)e^{R_\theta(B)}
 \right]}
 {\mathbb E_0e^{R_\theta(B)}},
\]
where
\[
 \begin{aligned}
 R_\theta(B)={}&
 \phi(K,h+\theta K+B_K)-\phi(K,h+\theta K)\\
 &-\int_{[0,K]}
 \bigl[\phi(t,h+\theta t+B_t)-\phi(t,h+\theta t)\bigr]\,\rho(dt).
 \end{aligned}
\]
The terms depending only on the deterministic path
$t\mapsto h+\theta t$ cancel between numerator and
denominator. Since $|\phi_x|\le1$, the Brownian supremum
$\mathcal B=\sup_{0\le t\le K}|B_t|$ satisfies
\begin{equation}\label{source:eq:tilt-comparison}
 |R_\theta(B)|
 \le |B_K|+\int_{[0,K]}|B_t|\,\rho(dt)
 \le2\mathcal B.
\end{equation}
Consequently,
\[
 0<\mathbb E_0e^{-2\mathcal B}
 \le\mathbb E_0e^{R_\theta(B)}
 \le\mathbb E_0e^{2\mathcal B}<\infty,
\]
uniformly in $\theta$.

Put
\[
 d_\theta(r)=1-\mathbb E_\theta\mathsf u(r,\mathsf X_r).
\]
The upper bound in \zeqref{eq:si-scalar-estimates} gives
\[
 d_\theta(r)
 \le C_K e^{-2h-2\theta r}
 \frac{\mathbb E_0e^{4\mathcal B}}
      {\mathbb E_0e^{-2\mathcal B}}
 \le C_{K,h}e^{-2\theta r},
 \qquad 0\le r\le K.
\]
For a lower bound, fix $r>0$ and choose $R<\infty$ such
that $\mathbb P_0(\mathcal B\le R)>0$. For sufficiently
large $\theta$, $h+\theta r+B_r\ge0$ on this event.
The lower bound in \zeqref{eq:si-scalar-estimates} and
\zeqref{source:eq:tilt-comparison} then give
\[
 d_\theta(r)
 \ge c_K e^{-2h-2\theta r-4R}
 \frac{\mathbb P_0(\mathcal B\le R)}
      {\mathbb E_0e^{2\mathcal B}}
 \ge c_{K,h,R}e^{-2\theta r}.
\]
In particular, $d_\theta(K)\to0$. Since
$\mathsf u(K,x)=\tanh x$, the denominator in
\zeqref{source:eq:tilted-ratio} equals $1-d_{\theta_y}(K)$
and is at least $1/2$ for all sufficiently large $y$.
This proves eventual positivity of $Z_y$. At zero field,
$Z_y>0$ for every $y>0$, because
$\tanh x\,\sinh(\theta_yx)>0$ for $x\ne0$ and the law of
$\mathsf X_K$ is nondegenerate.

We next prove strict monotonicity in $r$. Define
\[
 h_y(t,x)=\mathbb E_{\rm sc}^{t,x}f_y(\mathsf X_K).
\]
The scalar drift is spatially Lipschitz. Solutions driven
by the same Brownian path therefore remain strictly
ordered when their initial values are strictly ordered.
Since $f_y$ is strictly increasing, $h_y(t,\cdot)$ is
strictly increasing. The function $\mathsf u(t,\cdot)$
is also strictly increasing because $\mathsf V>0$.

Let $\mathcal F_r$ be the scalar Brownian filtration.
Both $\mathsf u(t,\mathsf X_t)$ and
$h_y(t,\mathsf X_t)$ are square-integrable martingales, and
\[
 Z_yC_y(r)
 =\mathbb E_{\rm sc}
 [\mathsf u(r,\mathsf X_r)h_y(r,\mathsf X_r)].
\]
Thus, for $0\le r<s\le K$,
\[
 Z_y[C_y(s)-C_y(r)]
 =
 \mathbb E_{\rm sc}\!\left[
 \operatorname{Cov}\bigl(
 \mathsf u(s,\mathsf X_s),h_y(s,\mathsf X_s)
 \mid\mathcal F_r\bigr)\right]>0.
\]
To see the strict inequality, boundedness of the drift
and Girsanov's theorem imply that, conditional on
$\mathcal F_r$, the law of $\mathsf X_s$ is equivalent to
$N(\mathsf X_r,s-r)$. The covariance of two strictly
increasing functions under this law is positive. Indeed,
it equals half the expected product of their differences
at two independent samples, and that product is strictly
positive almost surely. Continuity of $C_y$ follows from
continuity of $\mathsf u(r,\mathsf X_r)$, its boundedness,
and integrability of $f_y(\mathsf X_K)$.
Together with $C_y(K)=1$, this proves the assertions about
monotonicity and positivity of $\delta_y$.

Finally, \zeqref{source:eq:tilted-ratio} gives
\[
 \delta_y(r)=
 \frac{d_{\theta_y}(r)-d_{\theta_y}(K)}
      {1-d_{\theta_y}(K)}.
\]
For each fixed $r\in(0,K)$, the preceding bounds imply
\[
 d_{\theta_y}(K)
 =O(e^{-2\theta_yK})
 =o(e^{-2\theta_yr}).
\]
Subtracting this term from the two-sided bounds for
$d_{\theta_y}(r)$ proves
\zeqref{source:eq:fixed-tail}.

For all sufficiently large $y$, the denominator in
\zeqref{source:eq:tilted-ratio} is at least $1/2$, so
$|C_y(r)|\le2$ uniformly in $r$. At every fixed $r>0$,
both numerator and denominator tend to one. At $r=0$,
the numerator is the deterministic value
$\mathsf u(0,h)$, which equals zero when $h=0$.
This proves \zeqref{eq:si-profile-limit}, with
$c=\mathsf u(0,h)\in(-1,1)$.

For $0<t<s<K$, division of the bounds in
\zeqref{source:eq:fixed-tail} gives
\[
 0\le\frac{\delta_y(s)}{\delta_y(t)}
 \le\frac{C_s}{c_t}e^{-2(s-t)y/K}\longrightarrow0.
\]
The cases $t=0$ and $s=K$ follow from
$\delta_y(0)\to1-c>0$ and $\delta_y(K)=0$.
This proves \zeqref{source:eq:separation}.
\end{proof}

Taking $f=f_y/Z_y$ in
Lemma~\zcref[noname]{mix:identity} gives $C_f=C_y$.
The preceding lemma implies
\[
 rC_y(r)\longrightarrow r,\qquad 0\le r\le K,
\]
with a bound uniform in $r$ and all sufficiently large $y$.
At $r=0$, both sides vanish regardless of the value of
$C_y(0)$. Put
\[
 m_1=\int_{[0,K]}t\,\rho(dt)=K-A(0).
\]
Multiply \zeqref{mix:GG} by any bounded
$\mathcal A_n$-measurable function and take expectations.
Dominated convergence then gives
\begin{equation}\label{eq:si-first-moment}
 n\mathbb E_{\rm ph}[U_{i,n+1}\mid\mathcal A_n]
 =m_1+\sum_{\substack{\ell\le n\\\ell\ne i}}U_{i\ell},
 \qquad n\ge1,\quad 1\le i\le n,
\end{equation}
because the multiplier is arbitrary.

For each fixed $r\in(0,K]$, dominated convergence in
\zeqref{mix:correction} also gives
\begin{equation}\label{eq:si-limit-correction}
 \lim_{y\to\infty}(\mathcal KC_y)(r)
 =(\mathcal KC_\infty)(r)=A(0).
\end{equation}
Indeed, $C_\infty(r)=C_\infty(K)=1$, and the factor
$\min(r,t)$ vanishes at $t=0$. Hence
\[
 \begin{aligned}
 (\mathcal KC_\infty)(r)
 &=A(r)+r-\int_{[0,K]}\min(r,t)\,\rho(dt)\\
 &=A(r)+r-\int_0^r(1-a(t))\,dt
 =A(0).
 \end{aligned}
\]

\subsection{Signs and ultrametricity}
\label{geo:section}

\begin{lemma}\label{geo:sign}
Under the sampling law defining $\mathbb E_{\rm ph}$,
almost surely for every triple of distinct labels $i,j,k$,
\[
 T_{ij}\ne0,\quad T_{ik}\ne0
 \quad\Longrightarrow\quad T_{jk}\ne0.
\]
At zero field, whenever these overlaps are nonzero,
\[
 \operatorname{sgn}T_{jk}
 =\operatorname{sgn}T_{ij}\operatorname{sgn}T_{ik}.
\]

At zero field, the conditional laws of the unsigned array
$U$ also satisfy
\[
 \Law_{\rm ph}(U\mid T_{12}=r)
 =\Law_{\rm ph}(U\mid |T_{12}|=r)
\]
for $\Law_{\rm ph}(|T_{12}|)$-almost every $r\in(0,K]$.
\end{lemma}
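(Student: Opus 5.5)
The plan is to pass to the limit $y\to\infty$ in the off-diagonal identity \zeqref{mix:main} of Lemma~\zcref[noname]{mix:identity} with $n=2$, $i=1$, $j=2$ and $f=f_y/Z_y$. The result is then compared with the first-moment identity \zeqref{eq:si-first-moment}, so as to obtain a conditional expectation of a nonnegative quantity that equals zero. By exchangeability of the replicas under $\mathbb E_{\rm ph}$ and a countable intersection over triples, it suffices to treat the labels $(i,j,k)=(2,1,3)$, or any single fixed relabeling.

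\emph{Step 1: the limit of \zeqref{mix:main}.} With $n=2$, $i=1$, $j=2$, the identity \zeqref{mix:main} reads
\[
 2\,\mathbb E_{\rm ph}[T_{13}H_y(T_{23})\mid\mathcal A_2]
 =T_{11}H_y(T_{21})+T_{12}H_y(T_{22})-K_y(T_{12}),
\]
where $H_y=H_{f_y/Z_y}$ and $K_y=K_{f_y/Z_y}$. We have $T_{11}=K$, and $H_y(T_{22})=C_y(K)=1$. By Lemma~\zcref[noname]{source:prop:ordered}, $H_y(r)\to s(r)$ pointwise. At zero field, $s(r)=\operatorname{sgn}(r)$. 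At nonzero field, $s(r)=1$ for $r>0$ and $s(0)=c\in(-1,1)$. The bound $\sup_{y\ge y_0}\sup_r|C_y(r)|<\infty$ allows dominated convergence after multiplying by an arbitrary bounded $\mathcal A_2$-measurable function. By \zeqref{eq:si-limit-correction}, $K_y(T_{12})\to s(T_{12})A(0)$ on $\{T_{12}\ne0\}$. There, using $K-A(0)=m_1$, the limit becomes
\[
 2\,\mathbb E_{\rm ph}[T_{13}\,s(T_{23})\mid\mathcal A_2]
 =s(T_{12})\bigl(m_1+|T_{12}|\bigr)
\]
almost surely on $\{T_{12}\ne0\}$. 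I will restrict attention to this event throughout, which avoids the value of $C_y(0)$ and the limit of $K_y(0)$.

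\emph{Step 2: comparison with the first moment.} By \zeqref{eq:si-first-moment} with $n=2$, we have $2\,\mathbb E_{\rm ph}[U_{13}\mid\mathcal A_2]=m_1+U_{12}$.

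At zero field, multiplying by $\operatorname{sgn}T_{12}$ gives
\[
 \mathbb E_{\rm ph}\bigl[\,|T_{13}|-\operatorname{sgn}(T_{12})\,T_{13}\operatorname{sgn}(T_{23})\mid\mathcal A_2\bigr]=0
\]
on $\{T_{12}\ne0\}$. The integrand is nonnegative, so it vanishes almost surely. Hence on $\{T_{12}\ne0,\,T_{13}\ne0\}$ we get $T_{23}\ne0$ and $\operatorname{sgn}T_{23}=\operatorname{sgn}T_{12}\operatorname{sgn}T_{13}$. This is the claim for the triple with apex label $1$.

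At nonzero field, where $T\ge0$, the same subtraction gives
\[
 \mathbb E_{\rm ph}[T_{13}(1-s(T_{23}))\mid\mathcal A_2]=0
 \quad\text{on }\{T_{12}>0\}.
\]
Here $1-s(T_{23})=(1-c)\one_{\{T_{23}=0\}}$ with $1-c>0$, and $T_{13}\ge0$. Therefore $T_{13}>0$ forces $T_{23}\ne0$.

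\emph{Step 3: the unsigned conditional law at zero field.} I would use the global spin-flip symmetry. At $h=0$ the finite-volume Gibbs measure is invariant under $\sigma\mapsto-\sigma$. Since replicas are conditionally i.i.d., the law of the overlap array is invariant under flipping the signs of all entries $R_{2j}$, $j\ne2$. This invariance passes to physical limits, in particular to the successor law. To transfer it to $\mathbf E_S$, I would check that the weights $W_{\delta,n}$ used in the proof of Proposition~\zcref[noname]{prop:shell-marking} are invariant under the flip. Each $q_i$ in \zeqref{eq:norm-observable} is unchanged, because the rational coefficient $c_2$ may be replaced by $-c_2$. The shell masses $w_S$ are then unchanged by \zeqref{mark:observable-shell-mass}. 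Consequently, under $\mathbb E_{\rm ph}$ the law of $T$ is invariant under the map that flips row and column $2$. This map preserves $U$ and sends $\{T_{12}=r\}$ to $\{T_{12}=-r\}$. Hence $\Law(U\mid T_{12}=r)=\Law(U\mid T_{12}=-r)$ for almost every $r$, and averaging the two gives the law conditional on $|T_{12}|=r$.

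The main obstacle I expect is Step 1. One must justify the uniform domination of $T_{i\ell}H_y(T_{j\ell})$ and of $K_y$, and handle the fact that convergence holds only pointwise and excludes $r=0$. Restricting to $\{T_{12}\ne0\}$ through the multiplier, before letting $y\to\infty$, should take care of this.
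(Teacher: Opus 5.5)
Your proposal is correct and follows the paper's proof essentially step for step. You pass to $y\to\infty$ in \zeqref{mix:main} with $n=2$, $i=1$, $j=2$, subtract \zeqref{eq:si-first-moment} on $\{T_{12}\ne0\}$ to get a nonnegative integrand with zero conditional mean, and derive the conditional-law assertion from replica sign-flip invariance transferred to the normalized restriction through flip-invariance of the squared norms and of $w_S$; the paper does the same, and the one difference is at nonzero field, where the paper simply observes that \zeqref{eq:positive-overlap} makes every off-diagonal $T_{ij}$ strictly positive, so your limiting argument there (using $C_\infty(0)=c<1$) is valid but unnecessary.
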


\begin{proof}
At nonzero field, \zeqref{eq:positive-overlap} implies
that every off-diagonal $T_{ij}$ is strictly positive.
It therefore remains to consider zero field.

On the event $\{T_{12}\ne0\}$, put
$r=|T_{12}|$ and $s=\operatorname{sgn}T_{12}$.
Equation~\zeqref{eq:si-first-moment} with $n=2$ gives
\[
 2\mathbb E_{\rm ph}[|T_{13}|\mid T_{12}]=m_1+r.
\]
Next apply \zeqref{mix:main} with $n=2$, $i=1$, $j=2$,
and $f=f_y/Z_y$, and multiply by $s$. This gives
\[
 \begin{aligned}
 &2\mathbb E_{\rm ph}
 [sT_{13}\operatorname{sgn}(T_{23})C_y(|T_{23}|)
 \mid T_{12}]\\
 &\hspace{2cm}
 =KC_y(r)+r-(\mathcal KC_y)(r).
 \end{aligned}
\]
Letting $y\to\infty$ and using
\zeqref{eq:si-limit-correction}, we obtain
\[
 2\mathbb E_{\rm ph}
 [sT_{13}\operatorname{sgn}(T_{23})\mid T_{12}]
 =K+r-A(0)=m_1+r.
\]
The passage to the limit follows by dominated convergence
after multiplication by bounded functions of $T_{12}$
supported on $\{T_{12}\ne0\}$.

Subtracting the two limiting identities shows that
\[
 \mathbb E_{\rm ph}
 \bigl[|T_{13}|-sT_{13}\operatorname{sgn}(T_{23})
 \mid T_{12}\bigr]=0
 \quad\text{on }\{T_{12}\ne0\}.
\]
The integrand is nonnegative, so it vanishes almost surely
on this event. If $T_{13}\ne0$, it follows that
$T_{23}\ne0$ and
\[
 \operatorname{sgn}T_{23}
 =s\,\operatorname{sgn}T_{13}.
\]
Permutation of the labels and a countable intersection
give the assertions for every triple.

For the conditional-law assertion, recall that independent
replica sign flips preserve the original zero-field
overlap law. This invariance passes to the normalized
restriction to $\{v:\|v\|^2=S\}$. Indeed, put
\[
 w_S=\widehat G(\{v:\|v\|^2=S\}).
\]
On $\{w_S>0\}$, restricting and normalizing the sampling
law of $m$ replicas multiplies its density by
\[
 w_S^{-m}\prod_{\ell=1}^m
 \one_{\{\|v^\ell\|^2=S\}}.
\]
The squared norms are measurable from overlaps and
unchanged by replica sign flips. The same holds for
$w_S$, which is the limiting empirical fraction of
replicas with squared norm $S$. Thus this density and
the conditioning event $E_S=\{w_S>0\}$ are invariant.
The law defining $\mathbb E_{\rm ph}$ consequently
retains replica sign-flip invariance.

A flip of replica $2$ leaves the entire unsigned array
$U$ unchanged and reverses $T_{12}$. Hence, for every
bounded Borel function $F$ of $U$ and Borel set
$B\subset(0,K]$,
\[
 \begin{aligned}
 \mathbb E_{\rm ph}[F(U)\one_{\{T_{12}\in B\}}]
 &=\mathbb E_{\rm ph}[F(U)\one_{\{-T_{12}\in B\}}]\\
 &=\tfrac12\mathbb E_{\rm ph}
 [F(U)\one_{\{|T_{12}|\in B\}}].
 \end{aligned}
\]
Taking $F=1$ gives the corresponding relation between
the conditioning marginals. Disintegration then proves
the stated equality of conditional laws.
\end{proof}

\begin{lemma}\label{geo:um}
Under the sampling law defining $\mathbb E_{\rm ph}$,
almost surely
\[
 U_{ij}\ge\min(U_{ik},U_{jk})
\]
for every triple of distinct labels $i,j,k$.
\end{lemma}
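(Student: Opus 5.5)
The plan is to extract, from the identities of Lemma~\zcref[noname]{mix:identity} with the normalized exponential tests $f=f_y/Z_y$, an identity conditional on $\mathcal A_2$ whose right side is $O(\delta_y(U_{12}))$. Its left side is nonnegative and, on a violation of ultrametricity, much larger than $\delta_y(U_{12})$. The separation of rates \zeqref{source:eq:separation} then forces violations to have probability zero. By exchangeability and a countable intersection over label triples, it suffices to work with labels $1,2,3$ and condition on $\mathcal A_2=\sigma(T_{12})$. The target is the statement: given $U_{12}=r$, either $U_{13}=U_{23}$, or both are $\ge r$. This implies $U_{ij}\ge\min(U_{ik},U_{jk})$ for every labelling of the triangle, since the two smallest of the three overlaps must then be equal.

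\emph{Step 1 (two identities).} Apply \zeqref{mix:main} with $n=2$, $i=1$, $j=2$ and $f=f_y/Z_y$. At zero field, first multiply by $s=\operatorname{sgn}T_{12}$ on $\{T_{12}\ne0\}$. Then use Lemma~\zcref[noname]{geo:sign} to replace $sT_{13}\operatorname{sgn}(T_{23})$ by $U_{13}$, which is legitimate because when $T_{13},T_{23}\ne0$ the signs multiply. Writing $C_y=1-\delta_y$ and expanding $\mathcal K$ in \zeqref{mix:correction}, a direct computation gives
\[
 2\mathbb E_{\rm ph}[U_{13}\delta_y(U_{23})\mid\mathcal A_2]
 =(K-A(r))\delta_y(r)+\int_{[0,K]}\min(r,t)\delta_y(t)\,\rho(dt)
\]
on $\{U_{12}=r\}$. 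To get this, subtract the \zeqref{mix:main} identity from the first-moment identity \zeqref{eq:si-first-moment}. Next, apply \zeqref{mix:GG} with $i=2$, $n=2$, and subtract it from \zeqref{eq:si-first-moment} in the same way:
\[
 2\mathbb E_{\rm ph}[U_{23}\delta_y(U_{23})\mid\mathcal A_2]
 =\int t\delta_y(t)\,\rho(dt)+r\delta_y(r).
\]
Subtracting the second display from the first leaves
\[
 2\mathbb E_{\rm ph}[(U_{13}-U_{23})\delta_y(U_{23})\mid\mathcal A_2]
 =(K-A(r)-r)\delta_y(r)-\int_{(r,K]}(t-r)\delta_y(t)\,\rho(dt).
\]
All terms below level $r$ have cancelled. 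Since $\delta_y$ is decreasing, the right side has absolute value at most $C\delta_y(r)$.

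\emph{Step 2 (symmetrization).} The conditional law given $U_{12}$ is invariant under swapping labels $1$ and $2$. At zero field, the last assertion of Lemma~\zcref[noname]{geo:sign} lets us condition on $|T_{12}|$. Hence the same bound holds with $1$ and $2$ interchanged. Adding the two gives
\[
 \mathbb E_{\rm ph}\bigl[(U_{13}-U_{23})(\delta_y(U_{23})-\delta_y(U_{13}))\,\big|\,U_{12}=r\bigr]\le C\delta_y(r).
\]
The integrand is nonnegative, by the strict monotonicity of $C_y$ in Lemma~\zcref[noname]{source:prop:ordered}.

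\emph{Step 3 (separation).} Fix $r\in(0,K]$ and divide by $\delta_y(r)$. Consider the event where $U_{13}\ne U_{23}$ and $m=\min(U_{13},U_{23})<r$. There \zeqref{source:eq:separation} gives $\delta_y(m)/\delta_y(r)\to\infty$. Also $\delta_y(\max)/\delta_y(m)\to0$ when $\max>m$. So the integrand divided by $\delta_y(r)$ tends to $+\infty$ on that event, and Fatou's lemma shows the event is null. For $r=0$ the desired inequality is automatic for this vertex, since $U_{12}\ge0$. The case $r=K$ is handled the same way, using $\delta_y(K)=0$ and $\delta_y(m)>0$. One technical point remains: the dominated convergence limits must be taken after multiplying by bounded functions of $T_{12}$. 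I would carry this out with test multipliers supported on $\{U_{12}\in[r_1,r_2]\}$ and uniform bounds on $\delta_y$ over that interval, rather than pointwise in $r$.

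The main obstacle is Step 3, namely making the separation argument uniform over the conditioning value $r$. The rates in \zeqref{source:eq:fixed-tail} are only pointwise in $r$. The right side is $O(\delta_y(r))$ for each fixed $r$, but we need a bound integrated against the law of $U_{12}$. I would handle this with rational thresholds $t<s$. On $\{U_{12}\ge s\}$, bound the right side by $C\delta_y(s)$, using monotonicity of $\delta_y$. On the violation event $\{\min(U_{13},U_{23})\le t,\ |U_{13}-U_{23}|>\epsilon\}$, bound the integrand below by $\epsilon(\delta_y(t)-\delta_y(t+\epsilon))$, which is at least a constant times $\epsilon\,\delta_y(t)$ for large $y$. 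Comparing $\delta_y(s)$ with $\delta_y(t)$ shows each such event is null, and a countable union over rational $t<s$ and $\epsilon$ finishes the argument.
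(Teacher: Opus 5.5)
Your proposal is correct and is essentially the paper's proof. You use the same diagonal and cross identities from Lemma~\ref{mix:identity}, obtaining the $(2,1)$ versions by swapping labels $1,2$ and rewriting in terms of $\delta_y$ via \eqref{eq:si-first-moment}. These give $0\le\mathbb E_{\rm ph}[(U_{13}-U_{23})(\delta_y(U_{23})-\delta_y(U_{13}))\mid\mathcal A_2]\le K\delta_y(U_{12})$, and the separation \eqref{source:eq:separation} together with conditional Fatou then rules out violations.

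The uniformity in $r$ that you flag is not actually an obstacle. Since $\delta_y(U_{12})$ is $\mathcal A_2$-measurable, you can divide by it inside the conditional expectation, and then \eqref{source:eq:separation} is only needed pointwise at each sample point, exactly as the paper does. So the rational-threshold fallback can be dropped. As stated, its lower bound $\epsilon(\delta_y(t)-\delta_y(t+\epsilon))$ would in any case require $\max(U_{13},U_{23})\ge t+\epsilon$, which does not follow from your event.
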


\begin{proof}
It suffices to exclude triples whose three overlaps have
a unique minimum. Put
\[
 r=U_{12},\qquad X=U_{13},\qquad Y=U_{23}.
\]
We condition throughout on the signed overlap $T_{12}$.
On $\{r>0\}$, Lemma~\zcref[noname]{geo:sign} converts
the signed cross terms in \zeqref{mix:main} into
$XC(Y)$ and $YC(X)$, after multiplication by
$\operatorname{sgn}(T_{12})$ at zero field.
If either $X$ or $Y$ vanishes, that lemma implies that
both vanish, so these formulas still apply.

Fix $C=C_y$. The $n=2$ diagonal identities
\zeqref{mix:GG} for $i=1,2$ give
\[
 \mathbb E_{\rm ph}[XC(X)+YC(Y)\mid T_{12}]
 =\int_{[0,K]}tC(t)\,\rho(dt)+rC(r).
\]
The cross identities \zeqref{mix:main} for
$(i,j)=(1,2),(2,1)$ give
\[
 \mathbb E_{\rm ph}[XC(Y)+YC(X)\mid T_{12}]
 =(K-A(r))C(r)
   +\int_{[0,K]}\min(r,t)C(t)\,\rho(dt).
\]
Subtracting and using
\[
 K-r-A(r)=\int_{(r,K]}(t-r)\,\rho(dt)
\]
yields
\begin{equation}\label{geo:dissipation}
 \mathbb E_{\rm ph}
 [(X-Y)(C(X)-C(Y))\mid T_{12}]
 =\int_{(r,K]}(t-r)(C(t)-C(r))\,\rho(dt).
\end{equation}
Both integrands are nonnegative because $C$ is increasing.
Moreover, $C(t)-C(r)\le1-C(r)=\delta_y(r)$ for $t>r$.
Consequently,
\[
 0\le
 \mathbb E_{\rm ph}
 [(X-Y)(\delta_y(Y)-\delta_y(X))\mid T_{12}]
 \le K\delta_y(r).
\]

Take $y$ through sufficiently large positive integers,
so that $Z_y>0$ and all these identities hold outside
one null set. On $\{0<r<K\}$, divide by $\delta_y(r)>0$.
The resulting nonnegative random variable has conditional
expectation at most $K$. Suppose that $X\ne Y$ and
$\min(X,Y)<r$, and write
\[
 z=\min(X,Y),\qquad w=\max(X,Y).
\]
The divided integrand then equals
\[
 (w-z)\frac{\delta_y(z)-\delta_y(w)}{\delta_y(r)}
 =(w-z)\frac{\delta_y(z)}{\delta_y(r)}
       \left(1-\frac{\delta_y(w)}{\delta_y(z)}\right)
 \longrightarrow\infty.
\]
Indeed, $z<r$ and $z<w$, so
\zeqref{source:eq:separation} applies to both ratios,
including when $z=0$. Conditional Fatou's lemma and
the bound $K$ therefore imply
\[
 \mathbb P_{\rm ph}
 (X\ne Y,\ \min(X,Y)<r\mid T_{12})=0
 \quad\text{on }\{0<r<K\}.
\]

On $\{r=K\}$, the right side of
\zeqref{geo:dissipation} is zero. Strict monotonicity of
$C_y$ then forces $X=Y$. On $\{r=0\}$,
Lemma~\zcref[noname]{geo:sign}, applied with common
label $3$, prevents $X$ and $Y$ from both being positive.

If three overlaps had a unique minimum, we could relabel
the replicas so that $r$ is one of the larger overlaps.
The remaining overlaps would then be unequal and have
minimum below $r$, contradicting the preceding conclusions.
Replica exchangeability and a countable intersection
give the assertion for every triple.
\end{proof}

\subsection{Cancellation of the terms involving two old replicas}
\label{field:section}

Fix $n\ge2$. Our next goal is to determine
$\mathbb P_{\rm ph}(U_{1,n+1}>\tau\mid\mathcal A_n)$
on the event $0<U_{12}<\tau$, for $\tau\in(0,K)$.
We first prepare the scalar integration-by-parts identity
that will cancel the terms involving only replicas $1$
and $2$ in the corresponding cavity identity.

Throughout this subsection, work on $\{0<|T_{12}|<K\}$
and put $r=|T_{12}|$. At zero field, if $T_{12}<0$,
we change the signs of the variables associated with
replica $2$. More precisely, set
$e_2=\operatorname{sgn}(T_{12})$ and $e_j=1$ for
$j\ne2$, and replace
\[
 (Y_j,W_j,M_j,T_{jk})
 \quad\text{by}\quad
 (e_jY_j,e_jW_j,e_jM_j,e_je_kT_{jk}).
\]
Apply this change separately on the two events
$\{T_{12}>0\}$ and $\{T_{12}<0\}$. During Gaussian
differentiation, the reference vectors and hence the
signs $e_j$ are fixed. Since $h=0$, the transformed
variables still satisfy $W_j=Y_j$ and $M_j=\tanh W_j$.
Their overlap between replicas $1$ and $2$ is $r>0$,
and their conditional pair field law is $p_r$.

Below, we use the same symbols for the transformed
variables. All expectations remain under the original
sampling law, and every $\mathcal A_m$ remains the
original signed-overlap sigma-field, including the
original sign of $T_{12}$. At nonzero field no sign
change is needed.

For $z>0$ and $\theta>1$, define
\begin{equation}\label{eq:si-pair-normalizer}
 Z_z(r,\theta)
 =\mathbb E_{\rm sc}
 e^{z(\mathsf X_K^1-\theta\mathsf X_K^2)},
\end{equation}
where the two scalar paths coincide through $r$ and
have independent continuations conditional on their
common value at $r$. Scalar exponential moment bounds
give $0<Z_z(r,\theta)<\infty$.
The conditional field law
\zeqref{eq:si-field-interface} gives, for every finite
$m\ge n$,
\[
 \mathbb E_{\rm ph}
 [e^{z(W_1-\theta W_2)}\mid\mathcal A_m]
 =Z_z(r,\theta).
\]
Whenever the numerators are absolutely integrable, put
\begin{equation}\label{eq:si-pair-tilts}
 \begin{aligned}
 \mathbb E_z[H\mid\mathcal A_m]
 &=\frac{\mathbb E_{\rm ph}
 [H e^{z(W_1-\theta W_2)}\mid\mathcal A_m]}
 {Z_z(r,\theta)},\\
 \mathbb E_{{\rm sc},z}J
 &=\frac{\mathbb E_{\rm sc}
 [J e^{z(\mathsf X_K^1-\theta\mathsf X_K^2)}]}
 {Z_z(r,\theta)}.
 \end{aligned}
\end{equation}

If $H$ is bounded and $\mathcal A_m$-measurable for
some finite $m\ge n$, conditioning first on
$\mathcal A_m$ gives
\[
 \begin{aligned}
 &\mathbb E_{\rm ph}
 [H e^{z(W_1-\theta W_2)}\mid\mathcal A_n]\\
 &\qquad
 =\mathbb E_{\rm ph}[H Z_z(r,\theta)\mid\mathcal A_n]
 =Z_z(r,\theta)\mathbb E_{\rm ph}[H\mid\mathcal A_n].
 \end{aligned}
\]
Here $r$ is $\mathcal A_n$-measurable. Consequently,
\begin{equation}\label{eq:si-tilt-preserves-array}
 \mathbb E_z[H\mid\mathcal A_n]
 =\mathbb E_{\rm ph}[H\mid\mathcal A_n].
\end{equation}
Thus exponential reweighting changes the field
distribution while preserving the conditional law of
every finite collection of additional overlaps.

We now compute the density of the scalar pair law
relative to its Gaussian counterpart, with $r\in(0,K)$
fixed. Let $B^1,B^2$ be standard Brownian paths that
coincide through $r$ and have independent increments
after $r$. Put
\[
 W_s^{0,j}=h+B_s^j,\qquad j=1,2,
\]
and write $W_s^0$ for their common value when $s\le r$.
Let $\mathbb E_0$ denote expectation over these Gaussian
paths. The scalar pair law has density
$e^{\mathcal S_r(W^0)}$, where
\begin{equation}\label{field:tree-weight}
 \begin{aligned}
 \mathcal S_r(W^0)={}&
 \phi(K,W_K^{0,1})+\phi(K,W_K^{0,2})
 -a(r)\phi(r,W_r^0)\\
 &-\int_{[0,r]}\phi(s,W_s^0)\,\rho(ds)\\
 &-\sum_{j=1}^2
 \int_{(r,K]}\phi(s,W_s^{0,j})\,\rho(ds).
 \end{aligned}
\end{equation}
Indeed, the calculation leading to
\zeqref{eq:si-scalar-density}, applied to the common
segment, gives the logarithmic density
\[
 a(r)\phi(r,W_r^0)
 -\int_{[0,r]}\phi(s,W_s^0)\,\rho(ds).
\]
For continuation $j$, conditional on the field at $r$,
it gives
\[
 \phi(K,W_K^{0,j})-a(r)\phi(r,W_r^0)
 -\int_{(r,K]}\phi(s,W_s^{0,j})\,\rho(ds).
\]
Adding the three expressions proves
\zeqref{field:tree-weight}. Each segment density is a
normalized Girsanov density, so
$\mathbb E_0e^{\mathcal S_r}=1$.

To integrate by parts against the centered second
endpoint $W_K^{0,2}-h$, we differentiate this density.
The covariance of $W_K^{0,2}-h$ with a field at time $s$
is $s$ on 
the common segment, $r$ on the first continuation,
and $s$ on the second continuation. The terminal
potentials therefore contribute
$r\tanh W_K^{0,1}+K\tanh W_K^{0,2}$.
The negative potential terms contribute $-J_r$,
which, under the scalar pair law, is
\begin{equation}\label{eq:si-J}
 \begin{aligned}
 J_r={}&r a(r)\mathsf u(r,\mathsf X_r)
 +\int_{[0,r]}s\,\mathsf u(s,\mathsf X_s)\,\rho(ds)\\
 &+\int_{(r,K]}
 \bigl[r\,\mathsf u(s,\mathsf X_s^1)
       +s\,\mathsf u(s,\mathsf X_s^2)\bigr]\,\rho(ds).
 \end{aligned}
\end{equation}
All coefficients are nonnegative, and their sum is
\[
 r a(r)+\int_{[0,r]}s\,\rho(ds)
 +\int_{(r,K]}(r+s)\,\rho(ds)
 =r+m_1.
\]
Since $|\mathsf u|\le1$, we obtain
$|J_r|\le r+m_1\le2K$.
The intervals in these formulas place any atom at $r$
in the integral over $[0,r]$. 
\begin{lemma}\label{lem:si-pair-subtraction}
Fix $n\ge2$, $z>0$, and $\theta>1$, and use the sign
convention and tilted expectations defined above.
Almost surely on $\{0<r<K\}$,
\begin{equation}\label{eq:all-replica-subtraction}
 n\mathbb E_z[T_{2,n+1}M_{n+1}\mid\mathcal A_n]
 =\mathbb E_{{\rm sc},z}J_r
 +\sum_{\ell=3}^nT_{2\ell}
       \mathbb E_z[M_\ell\mid\mathcal A_n].
\end{equation}
The sum is empty when $n=2$.
\end{lemma}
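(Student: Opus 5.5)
The plan is to apply the cavity integration-by-parts identity \zeqref{eq:unit-ibp} with $i=2$ and the exponential test function $g(\mathbf W)=e^{z(W_1-\theta W_2)}$. Then I would compute the resulting terms that involve only replicas $1,2$ via the scalar pair law $p_r$, and show that they cancel against a scalar Gaussian integration-by-parts identity for the branching density $e^{\mathcal S_r}$ of \zeqref{field:tree-weight}. What survives is exactly \zeqref{eq:all-replica-subtraction}.

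\emph{Step 1 (cavity side).} Working on $\{0<r<K\}$, after the sign change at zero field, the transformed variables still satisfy \zeqref{eq:unit-ibp}. The reason is that the reference vectors, and hence the signs $e_j$, are held fixed during Gaussian differentiation, and the covariances transform as $e_je_kT_{jk}$. Since $\partial_1g=zg$ and $\partial_2g=-z\theta g$, and since $T_{21}=r$ and $T_{22}=K$, I get for bounded $\mathcal A_n$-measurable $F$:
\[
 \mathbb E_{\rm ph}\bigl[F Y_2 g\bigr]
 =\mathbb E_{\rm ph}\Bigl[F g\Bigl(z(r-\theta K)+rM_1+KM_2
 +\sum_{\ell=3}^nT_{2\ell}M_\ell-nT_{2,n+1}M_{n+1}\Bigr)\Bigr].
\]
Integrability follows from \zeqref{eq:si-field-moments}. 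Next I divide by the $\mathcal A_n$-measurable normalizer $Z_z(r,\theta)$ (inserting it through $F$) and pass to conditional expectations. The terms $Y_2g$, $z(r-\theta K)g$ and $g(rM_1+KM_2)$ depend only on $(W_1,W_2)$. By \zeqref{eq:si-field-interface}, their conditional expectations given $\mathcal A_n$ equal $Z_z(r,\theta)$ times the tilted scalar expectations $\mathbb E_{{\rm sc},z}$ of $\mathsf X_K^2-h$, of $z(r-\theta K)$, and of $r\tanh\mathsf X_K^1+K\tanh\mathsf X_K^2$, respectively. For $\ell\ge3$, the factor $T_{2\ell}$ is $\mathcal A_n$-measurable, so those terms become $T_{2\ell}\mathbb E_z[M_\ell\mid\mathcal A_n]$. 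The last term is kept as $n\mathbb E_z[T_{2,n+1}M_{n+1}\mid\mathcal A_n]$.

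\emph{Step 2 (scalar side).} Under $\mathbb E_0$ with density $e^{\mathcal S_r}$, I integrate by parts against the centered Gaussian $W_K^{0,2}-h$ with weight $e^{z(W_K^{0,1}-\theta W_K^{0,2})+\mathcal S_r}$. The derivative of the exponential weight contributes $z(r-\theta K)$, since $\operatorname{Cov}(W_K^{0,2},W_K^{0,1})=r$ and $\operatorname{Var}W_K^{0,2}=K$. Differentiating $\mathcal S_r$ gives $r\tanh W_K^{0,1}+K\tanh W_K^{0,2}-J_r$, using the covariances already listed before \zeqref{eq:si-J}. This yields
\[
 \mathbb E_{{\rm sc},z}[\mathsf X_K^2-h]
 =z(r-\theta K)+\mathbb E_{{\rm sc},z}\bigl[r\tanh\mathsf X_K^1+K\tanh\mathsf X_K^2\bigr]-\mathbb E_{{\rm sc},z}J_r .
\]
To justify it, I would proceed as in Lemma~\zcref[noname]{scalar:gaussian}. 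First replace the $\rho$-integrals by finite sums, including atoms at $0$, $r$ and $K$, with the atom at $r$ placed in the common segment. Then use finite-dimensional Gaussian integration by parts. Finally pass to the limit by dominated convergence, using $|\phi_x|\le1$, $|J_r|\le2K$, and Gaussian exponential moments of the path supremum (these are finite since $z>0$ and $\theta$ is fixed).

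\emph{Step 3 (cancellation).} I substitute the scalar identity into the conditioned cavity identity. The $z(r-\theta K)$ terms and the terminal $\tanh$ terms cancel, leaving
\[
 0=-\mathbb E_{{\rm sc},z}J_r+\sum_{\ell=3}^nT_{2\ell}\mathbb E_z[M_\ell\mid\mathcal A_n]-n\mathbb E_z[T_{2,n+1}M_{n+1}\mid\mathcal A_n].
\]
Rearranging gives \zeqref{eq:all-replica-subtraction}, first on each of the events $\{T_{12}>0\}$ and $\{T_{12}<0\}$, and then almost surely on $\{0<r<K\}$.

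I expect the main obstacle to be Step 2. The point needing care is getting the covariances of $W_K^{0,2}$ with the fields on each branch exactly right, including the term $-a(r)\phi(r,W_r^0)$ and the placement of an atom of $\rho$ at $r$, so that the derivative of $\mathcal S_r$ reproduces $J_r$. Alongside this, one must verify that the sign-flip convention at zero field is compatible both with \zeqref{eq:unit-ibp} and with the reflected pair law $p_{T_{12}}$.
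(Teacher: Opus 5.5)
Your proposal is correct and follows the paper's proof essentially step for step: the cavity identity \zeqref{eq:unit-ibp} with $i=2$ and $g=e^{z(W_1-\theta W_2)}$, then scalar integration by parts against $W_K^{0,2}-h$ under the tilted branching density $e^{\mathcal S_r}$, then cancellation of the two-replica terms through the conditional pair law $p_r$, with a quadrature-plus-dominated-convergence justification. One sign slip needs fixing: subtracting your two identities gives $0=\mathbb E_{{\rm sc},z}J_r+\sum_{\ell=3}^nT_{2\ell}\mathbb E_z[M_\ell\mid\mathcal A_n]-n\mathbb E_z[T_{2,n+1}M_{n+1}\mid\mathcal A_n]$, not your displayed line with $-\mathbb E_{{\rm sc},z}J_r$, which would rearrange to the wrong sign of $J_r$; the corrected line yields \zeqref{eq:all-replica-subtraction} directly.
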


\begin{proof}
Apply \zeqref{eq:unit-ibp} with $i=2$ and
\[
 g(\mathbf W)=e^{z(W_1-\theta W_2)}.
\]
Since $T_{21}=r$ and $T_{22}=K$, the derivative term is
\[
 \sum_{j=1}^nT_{2j}\partial_jg
 =z(r-\theta K)g.
\]
Using arbitrary bounded $\mathcal A_n$-measurable
multipliers and dividing the resulting conditional
identity by $Z_z(r,\theta)$ gives
\begin{equation}\label{eq:si-physical-pair}
 \begin{aligned}
 \mathbb E_z[Y_2\mid\mathcal A_n]-z(r-\theta K)
 ={}&r\mathbb E_z[M_1\mid\mathcal A_n]
     +K\mathbb E_z[M_2\mid\mathcal A_n]\\
 &+\sum_{\ell=3}^nT_{2\ell}
       \mathbb E_z[M_\ell\mid\mathcal A_n]\\
 &-n\mathbb E_z[T_{2,n+1}M_{n+1}\mid\mathcal A_n].
 \end{aligned}
\end{equation}

For the scalar calculation, integrate by parts against
$W_K^{0,2}-h$ under the Gaussian pair law, with weight
\[
 e^{\mathcal S_r(W^0)+z(W_K^{0,1}-\theta W_K^{0,2})}.
\]
Differentiating the exponential factor involving the
two endpoints gives $z(r-\theta K)$. Differentiating
$\mathcal S_r$ gives the two terminal magnetizations
with coefficients $r,K$, followed by $-J_r$, as
computed in \zeqref{eq:si-J}. After normalization,
\begin{equation}\label{eq:si-scalar-pair}
 \begin{aligned}
 \mathbb E_{{\rm sc},z}[\mathsf X_K^2-h]-z(r-\theta K)
 ={}&r\mathbb E_{{\rm sc},z}\tanh\mathsf X_K^1+K\mathbb E_{{\rm sc},z}\tanh\mathsf X_K^2
   -\mathbb E_{{\rm sc},z}J_r.
 \end{aligned}
\end{equation}

The conditional field law
\zeqref{eq:si-field-interface} implies
\[
 \mathbb E_z[\psi(W_1,W_2)\mid\mathcal A_n]
 =\mathbb E_{{\rm sc},z}
   \psi(\mathsf X_K^1,\mathsf X_K^2)
\]
whenever the expectations are absolutely integrable.
Apply this with $\psi(x_1,x_2)=x_2-h$, $\tanh x_1$,
and $\tanh x_2$. Since $Y_2=W_2-h$, the centered
field terms and the two magnetization terms agree in
\zeqref{eq:si-physical-pair} and
\zeqref{eq:si-scalar-pair}. Subtracting leaves precisely
\zeqref{eq:all-replica-subtraction}.

To justify the scalar integration by parts, first
replace the measure integrals in
\zeqref{field:tree-weight} by finite quadrature,
retaining any atoms at $0,r,K$ exactly. The sum of
the absolute potential coefficients is
\[
 2+a(r)+\rho([0,r])+2\rho((r,K])=4.
\]
The linear growth bound for $\phi$ and the bound
$|\phi_x|\le1$ therefore dominate the approximating
weights and their Gaussian derivatives by
\[
 C\exp\left\{
 C\max_{j=1,2}\sup_{0\le t\le K}|W_t^{0,j}-h|
 \right\}
\]
for fixed $z,\theta$. This bound is integrable under
the Gaussian pair law. Continuity of $\phi$ and
$\mathsf u$ along the paths permits passage from
quadrature to the measure integrals by dominated
convergence. Thus both identities hold for every
fixed finite $z,\theta$.
\end{proof}

We choose conditional versions of these identities before
allowing $\theta$ to depend on an overlap. For each $n\ge2$,
fix a regular conditional law of the fields and overlaps
of the first $n+1$ replicas given $\mathcal A_n$.
First prove the integrated identities for rational $z>0$
and $\theta>1$, against arbitrary bounded
$\mathcal A_n$-measurable functions. The corresponding
conditional identities then hold on one event of
probability one for all these parameters and replica counts.

By \zeqref{eq:si-field-moments}, we may also require on
this event that
\[
 \mathbb E_{\rm ph}\!\left[
 \exp\left(c\sum_{i=1}^{n+1}|Y_i|\right)
 \,\middle|\,\mathcal A_n\right]<\infty
 \qquad(n\ge2,\ c\in\mathbb N).
\]
These conditional moments dominate the exponential
functions and their derivatives on bounded parameter
sets. Conditional dominated convergence therefore makes
the conditional expectations continuous in $z,\theta$.
The scalar expectations are continuous as well, and
$Z_z(r,\theta)$ is positive and bounded away from zero
on compact parameter sets. Thus the identities extend,
on the same event, to every finite $z>0$ and $\theta>1$.

Approximation by simple functions then permits bounded
$\mathcal A_n$-measurable choices of the parameters.
In particular, for fixed $\tau\in(0,K)$, choose
$\theta=\tau/r$ on $\{1/M\le r<\tau\}$ and take the
union over positive integers $M$. This gives the
identities on $\{0<r<\tau\}$.
During Gaussian differentiation, the reference vectors
are fixed, so their overlaps and the parameters chosen
from those overlaps are held constant.

\subsection{Threshold probabilities with a smaller positive overlap}
\label{subsec:si-outside}

An old replica with $0<U_{1j}<\tau$ allows us to apply
the comparison of two cavity fields from the preceding
subsection. We use it to determine the conditional
probability that $U_{1,n+1}>\tau$.

Write $\ell=\Law_{\rm ph}(U_{12})$ for the overlap
marginal, which has not yet been identified. We call
$\tau\in(0,K)$ a common continuity threshold if
\[
 \rho(\{\tau\})=\ell(\{\tau\})=0.
\]

\begin{lemma}\label{prop:outside-attachment}
Fix $n\ge2$ and a deterministic common continuity
threshold $\tau$. Almost surely on the event
\[
 \{\text{there exists }j\in\{2,\ldots,n\}
       \text{ with }0<U_{1j}<\tau\},
\]
we have
\begin{equation}\label{eq:outside-attachment}
 n\Prob_{\rm ph}(U_{1,n+1}>\tau\mid\mathcal A_n)
 =
 \rho((\tau,K])
 +\sum_{j=2}^n\one_{\{U_{1j}>\tau\}}.
\end{equation}
\end{lemma}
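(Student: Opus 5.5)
We would prove \zeqref{eq:outside-attachment} by letting $z\to\infty$ in Lemma~\zcref[noname]{lem:si-pair-subtraction} with the overlap-dependent choice $\theta=\tau/r$. On $\{0<r<\tau\}$ this choice was already justified at the end of Section~\zcref[noname]{field:section}. The first step is a reduction. By exchangeability of the labels $2,\ldots,n$, and by partitioning the event into the $\mathcal A_n$-measurable events $\{j\text{ is the least index with }0<U_{1j}<\tau\}$, it suffices to prove the identity on $\{0<U_{12}<\tau\}$. After the sign convention of Section~\zcref[noname]{field:section}, $r=U_{12}$, and Lemma~\zcref[noname]{geo:sign} makes all remaining signs consistent. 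Lemma~\zcref[noname]{geo:um} then fixes the geometry of every further replica $k$ relative to replicas $1,2$. If $U_{1k}>r$, then $U_{2k}=r$. If $U_{1k}<r$, then $U_{2k}=U_{1k}$. If $U_{1k}=r$, then $U_{2k}\ge r$.

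The second step identifies the $z\to\infty$ limits of both sides of \zeqref{eq:all-replica-subtraction}. Heuristically, the tilt $e^{z(W_1-\theta W_2)}$ shifts the Gaussian field at a vector $v$ by $z\bigl(\kappa(v^1\cdot v)-\theta\kappa(v^2\cdot v)\bigr)$. For a replica with $U_{1k}=t$ this shift is:
\begin{itemize}
\item $z(t-\tau)$ when $t>r$;
\item $zt(1-\tau/r)<0$ when $0<t<r$;
\item zero when $t=0$.
\end{itemize}
Hence $M_k\to+1$ when $U_{1k}>\tau$, and $M_k\to-1$ when $0<U_{1k}<\tau$. The threshold case $U_{1k}=\tau$ is excluded because $\tau$ is a common continuity threshold. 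The case $U_{1k}=0$ contributes with coefficient $T_{2k}=0$. Plugging these limits into $T_{2k}M_k$ gives:
\begin{itemize}
\item $r$ if $U_{1k}>\tau$;
\item $-r$ if $r<U_{1k}<\tau$;
\item $-U_{1k}$ if $U_{1k}\le r$.
\end{itemize}
The same computation applies to $J_r$ in \zeqref{eq:si-J} under $\mathbb E_{{\rm sc},z}$. The common segment at time $s\le r$ is pushed down by $zs(1-\theta)$, path $1$ after $r$ is pushed up, and path $2$ after $r$ is pushed down. Each $\mathsf u$ therefore tends to $\pm1$ according to the same rule. The sum $\rho((\tau,K])$ comes from these limits.

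To make this rigorous on the scalar side, we would use the Cameron--Martin translation already carried out in the proof of Lemma~\zcref[noname]{source:prop:ordered}, together with the tail bounds \zeqref{eq:si-scalar-estimates}. On the cavity side we cannot use the joint law of three fields, since only pair laws are known. Instead we would work with the finite Gaussian projections used to prove \zeqref{eq:unit-ibp}. For these, the tilt by $e^{z(W_1-\theta W_2)}$ is exactly a Cameron--Martin shift of $Y$, multiplied by the ratio of the cavity densities $Z_S^{-m}\prod\cosh W_\ell$ at the shifted and unshifted fields. We would show that this ratio, and the unshifted fields, remain tight uniformly in $z$ under $\mathbb E_z$. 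The tool for this is \zeqref{eq:si-tilt-preserves-array} together with conditional exponential moments analogous to \zeqref{eq:si-field-moments}. With tightness in hand, $M_{n+1}-\tanh(\text{shift})\to0$ in probability. Bounded convergence then passes the limit inside $\mathbb E_z[\,\cdot\mid\mathcal A_n]$, and \zeqref{eq:si-tilt-preserves-array} converts the limit into $\mathbb E_{\rm ph}[\,\cdot\mid\mathcal A_n]$ of an explicit function of $U_{1,n+1}$.

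The final step is bookkeeping. The limiting identity expresses $n\,\mathbb E_{\rm ph}[\phi(U_{1,n+1})\mid\mathcal A_n]$ in terms of old overlaps and $\rho$, where $\phi(t)$ equals $r$ for $t>\tau$, $-r$ for $r<t<\tau$, and $-t$ for $t\le r$. Note that $\phi(t)+\min(t,r)=2r\one_{\{t>\tau\}}$. We would add to this identity the first-moment identity \zeqref{eq:si-first-moment}, taken in the analogous form for $\min(U_{1,n+1},r)$. That form follows from \zeqref{mix:main} with $C_\infty$ replaced by a profile converging to $\one_{(0,r]}$-type limits, or directly from \zeqref{eq:si-limit-correction} and ultrametricity. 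The sum isolates $2rn\,\Prob_{\rm ph}(U_{1,n+1}>\tau\mid\mathcal A_n)$. Dividing by $2r>0$, the old-replica and $J_r$ contributions should collapse to $\rho((\tau,K])+\sum_{j\ge2}\one_{\{U_{1j}>\tau\}}$.

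We expect the main obstacle to be the cavity-side justification of the limit of $M_{n+1}$ and the $M_\ell$: controlling the density ratio under the Cameron--Martin shift uniformly in $z$. This control must hold without a three-field law, and it must handle replicas whose overlap with replica $1$ equals $r$ exactly. For the latter we would use the ultrametric case split and continuity of $\ell$ at $\tau$ only where it is needed.
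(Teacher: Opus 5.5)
Your outline follows the paper's route: let $z\to\infty$ in Lemma~\zcref[noname]{lem:si-pair-subtraction} with $\theta=\tau/r$, read off the limits of the $M_k$ from the sign of $b_k=T_{k1}-\theta T_{k2}$, compute the scalar limit by Cameron--Martin, and close with a first-moment identity. The bookkeeping step, however, contains a genuine error.

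\textbf{The bookkeeping error.} Your table assigns $T_{2k}M_k\to-U_{1k}$ on $\{U_{1k}\le r\}$. This is wrong on the subcase $U_{1k}=r<U_{2k}$, which appears in your own ultrametric case split.
\begin{itemize}
\item On that subcase, after orientation, $b_k$ has the sign of $-(\tau U_{2k}/r-r)<0$ times the common sign of $T_{1k},T_{2k}$. Hence $T_{2k}M_k\to-U_{2k}$, not $-r$.
\item For $k=n+1$ this event is not null in general. It is the event $\{U_{2,n+1}>r\}$ that the new replica branches off replica $2$ above level $r$.
\item So the correct limiting function is $2r\one_{\{U_{1k}>\tau\}}-U_{2k}$, which depends on both overlaps, not $\phi(U_{1k})$.
\end{itemize}
Consequently your closing step does not collapse, even granting a $\min$-identity of Ghirlanda--Guerra form. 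The correct scalar limit is $\lim_z\mathbb E_{{\rm sc},z}J_r=-(m_1+r)+2r\rho((\tau,K])$, and it contains $m_1=\int t\,\rho(dt)$. An identity for $\min(U_{1,n+1},r)$ would instead carry $\int\min(t,r)\,\rho(dt)$. The residual $-\int_{(r,K]}(t-r)\,\rho(dt)$ is nonzero whenever $\rho((r,K])>0$. You also give no derivation of that $\min$-identity.

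None is needed. Keep $-U_{2k}$ and use the already proved \zeqref{eq:si-first-moment} with $i=2$:
$n\mathbb E_{\rm ph}[U_{2,n+1}\mid\mathcal A_n]=m_1+r+\sum_{\ell=3}^nU_{2\ell}$.
This cancels exactly the $-(m_1+r)$ from $J_r$ and the $-U_{2\ell}$ from the old replicas. Dividing by $2r$ then gives \zeqref{eq:outside-attachment}.

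A related slip on the scalar side: after time $r$, path $1$ is displaced by $z(s-\tau)$, which is negative on $(r,\tau)$. This is what produces $\rho((\tau,K])$ rather than $\rho((r,K])$, so path $1$ is not ``pushed up'' on all of $(r,K]$.

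\textbf{The cavity-side limit.} For the step you flag as the main obstacle, you do not need to shift $Y$ by Cameron--Martin and control the ratio of cavity densities. That ratio is awkward because its normalizer $Z_S$ depends on the shifted field over all of $G_S$. The paper instead argues as follows.
\begin{itemize}
\item Apply \zeqref{eq:unit-ibp} with $m$ replicas, in direction $Y_k$, to $E_ke^{z(W_1-\theta W_2)}$, where $E_k=Y_k-zb_k$ and the overlap-dependent $b_k,\theta$ are held fixed.
\item Differentiating the exponential produces exactly $zb_kE_k$. This leaves $\mathbb E_z[E_k^2\mid\mathcal A_m]=K+\mathbb E_z[E_kD_k\mid\mathcal A_m]$ with $|D_k|\le2mK$.
\item Hence $\mathbb E_z[E_k^2\mid\mathcal A_m]\le(2mK+\sqrt K)^2$ uniformly in $z$, and Chebyshev gives $M_k\to\operatorname{sgn}(b_k)$ on $\{b_k\ne0\}$.
\item For $k=n+1$, take $m=n+1$; replica $n+2$ then enters only through $|M_{n+2}|\le1$.
\item The tower property with \zeqref{eq:si-tilt-preserves-array} returns the limit to $\mathbb E_{\rm ph}[\,\cdot\mid\mathcal A_n]$.
\end{itemize}
This bounded-score estimate is what any tightness claim for your density ratio would have to reproduce. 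It uses only the pair laws, and it handles replicas with $U_{1k}=r$ without special treatment.
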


\begin{proof}
By relabeling the old replicas, it suffices to work on
$\{0<U_{12}<\tau\}$. Put $r=U_{12}$ and use the sign
convention of the preceding subsection, so that
$T_{12}=r>0$. Choose
\[
 \theta=\frac{\tau}{r}>1.
\]
The preceding argument justifies this overlap-dependent
choice in the identities for every finite $z>0$.
We will pass to the limit $z\to\infty$ in
\zeqref{eq:all-replica-subtraction}.

We first estimate the cavity fields $Y_k$ under the 
exponential tilt. For $m\ge n$ and $1\le k\le m$, put
\[
 b_k=T_{k1}-\theta T_{k2},\qquad E_k=Y_k-zb_k.
\]
Apply \zeqref{eq:unit-ibp} with $m$ replicas, in
direction $Y_k$, to
$E_ke^{z(W_1-\theta W_2)}$.
The reference vectors, and hence $b_k$ and $\theta$,
are held fixed during differentiation.
Differentiating $E_k$ contributes $K$, while
differentiating the exponential contributes $zb_kE_k$.
Moving the latter term to the left gives
\[
 \mathbb E_z[E_k^2\mid\mathcal A_m]
 =K+\mathbb E_z[E_kD_k\mid\mathcal A_m],
 \qquad
 D_k=\sum_{\ell=1}^mT_{k\ell}M_\ell
      -mT_{k,m+1}M_{m+1}.
\]
The coefficient $m$ comes from differentiating
$Z_S^{-m}$ in the sampling density. Since
$|D_k|\le2mK$, conditional Cauchy--Schwarz gives
$x^2\le K+2mKx$, where
$x=(\mathbb E_z[E_k^2\mid\mathcal A_m])^{1/2}$.
Therefore
\begin{equation}\label{eq:all-replica-residual}
 \mathbb E_z[E_k^2\mid\mathcal A_m]
 \le(2mK+\sqrt K)^2.
\end{equation}
Chebyshev's inequality implies that $Y_k/z$ converges
to $b_k$ in tilted conditional probability.
Since $M_k=\tanh(h+Y_k)$ and $|M_k|\le1$, it follows
on $\{b_k\ne0\}$ that
\[
 \mathbb E_z[
 |M_k-\operatorname{sgn}(b_k)|\mid\mathcal A_m]
 \longrightarrow0.
\]

We next express these limits through threshold
indicators. For every $k\notin\{1,2\}$,
the sign relations and ultrametricity give
\begin{equation}\label{eq:all-replica-sign-threshold}
 T_{2k}\operatorname{sgn}(T_{1k}-\theta T_{2k})
 =2r\one_{\{U_{1k}>\tau\}}-U_{2k}
 \qquad\text{when }U_{1k}\ne\tau.
\end{equation}
Indeed, if either $U_{1k}$ or $U_{2k}$ vanishes, both
vanish by Lemma~\zcref[noname]{geo:sign}.
Otherwise, $T_{1k}$ and $T_{2k}$ have a common sign
$\varepsilon$ because $T_{12}>0$. Put
$x=U_{1k}$ and $y=U_{2k}$.
If $x>r$, ultrametricity gives $y=r$, and
\[
 b_k=\varepsilon(x-\tau),\qquad
 T_{2k}\operatorname{sgn}(b_k)
 =r\operatorname{sgn}(x-\tau).
\]
The remaining possibilities are $x=y\le r$ and
$x=r<y$. In both cases $x-\theta y<0$, so
$T_{2k}\operatorname{sgn}(b_k)=-y$.
These cases prove \zeqref{eq:all-replica-sign-threshold}.

In particular, if $b_k=0$ and $U_{1k}\ne\tau$, then
$T_{2k}=0$. Thus no magnetization limit is needed on
that event. Replica exchangeability and
$\ell(\{\tau\})=0$ exclude $U_{1k}=\tau$ almost surely,
and hence conditionally almost surely.
For old labels $k=3,\ldots,n$, the preceding estimates
therefore give the limits of
$T_{2k}\mathbb E_z[M_k\mid\mathcal A_n]$ directly.

For $k=n+1$, use \zeqref{eq:all-replica-residual}
with $m=n+1$. This introduces replica $n+2$ through
$D_k$, for which only $|M_{n+2}|\le1$ is needed.
The conditional expectation
$\mathbb E_z[T_{2,n+1}M_{n+1}\mid\mathcal A_{n+1}]$
is bounded in absolute value by $K$. 
The tower property and
\zeqref{eq:si-tilt-preserves-array} give
\[
 \begin{aligned}
 &\mathbb E_z[T_{2,n+1}M_{n+1}\mid\mathcal A_n]\\
 &\quad=
 \mathbb E_{\rm ph}\!\left[
 \mathbb E_z[T_{2,n+1}M_{n+1}\mid\mathcal A_{n+1}]
 \,\middle|\,\mathcal A_n\right]\longrightarrow
 2r\Prob_{\rm ph}(U_{1,n+1}>\tau\mid\mathcal A_n)
 -\mathbb E_{\rm ph}[U_{2,n+1}\mid\mathcal A_n].
 \end{aligned}
\]
Here conditional dominated convergence applies under
the conditional law of the additional overlaps given
$\mathcal A_n$, which is unchanged by the tilt.

It remains to compute the scalar term
$\mathbb E_{{\rm sc},z}J_r$.
Cameron--Martin translation of the reference Brownian
pair gives the displacement $z\lambda$, where
\[
 \lambda(s)=s(1-\theta)\quad(0\le s\le r),
 \qquad
 \lambda^1(s)=s-\tau,\quad
 \lambda^2(s)=r-\theta s\quad(r\le s\le K).
\]
Set
\[
 \mathcal B=
 \max_{j=1,2}\sup_{0\le s\le K}|W_s^{0,j}-h|.
\]
After subtracting the deterministic term
$\mathcal S_r(h+z\lambda)$, the translated logarithmic
density has remainder
\[
 R_z(W^0)=
 \mathcal S_r(W^0+z\lambda)-\mathcal S_r(h+z\lambda).
\]
The bound $|\phi_x|\le1$ and the total absolute
coefficient $4$ in \zeqref{field:tree-weight} give
$|R_z|\le4\mathcal B$. Hence
\[
 \frac{e^{R_z(W^0)}}{\mathbb E_0e^{R_z(W^0)}}
 \le
 \frac{e^{4\mathcal B}}{\mathbb E_0e^{-4\mathcal B}},
\]
an integrable bound independent of $z$.

The common displacement is negative for $s\in(0,r]$,
and the displacement on continuation $2$ is negative
throughout $[r,K]$. On continuation $1$, its sign is
that of $s-\tau$. The scalar estimates and oddness of
$\mathsf u$ imply
$\mathsf u(s,x)\to\pm1$ as $x\to\pm\infty$.
Since $\rho(\{\tau\})=0$, evaluating $J_r$ along
the translated paths therefore gives the deterministic
limit
\begin{equation}\label{eq:all-replica-scalar-limit}
 \begin{aligned}
 \lim_{z\to\infty}\mathbb E_{{\rm sc},z}J_r
 ={}&-r a(r)-\int_{[0,r]}s\,\rho(ds)\\
 &+\int_{(r,K]}
 [r\operatorname{sgn}(s-\tau)-s]\,\rho(ds)\\
 ={}&-(m_1+r)+2r\rho((\tau,K]).
 \end{aligned}
\end{equation}
To justify the expectation limit under the varying
densities, subtract this deterministic value from
$J_r$ and apply dominated convergence using
$|J_r|\le2K$ and the preceding density bound.
The coefficient at zero vanishes, while the
displacements at $r$ and $K$ are nonzero, so atoms
at those points are included in the calculation.
For the last equality, the terms with coefficient
$-s$ sum to $-m_1$, and the remaining terms sum to
\[
 -r a(r)-r\rho((r,K])+2r\rho((\tau,K])
 =-r+2r\rho((\tau,K]).
\]

Put
\[
 p=\Prob_{\rm ph}(U_{1,n+1}>\tau\mid\mathcal A_n),
 \qquad
 B=\sum_{\ell=3}^nU_{2\ell},
 \qquad
 N_\tau=\sum_{\ell=3}^n\one_{\{U_{1\ell}>\tau\}}.
\]
By \zeqref{eq:si-first-moment}, the left side of
\zeqref{eq:all-replica-subtraction} converges to
\[
 2nrp-(m_1+r+B).
\]
By \zeqref{eq:all-replica-scalar-limit} and
\zeqref{eq:all-replica-sign-threshold}, its right
side converges to
\[
 -(m_1+r)+2r\rho((\tau,K])+2rN_\tau-B.
\]
Canceling $-m_1-r-B$ and dividing by $2r>0$ gives
\[
 np=\rho((\tau,K])+N_\tau.
\]
Since $U_{12}=r<\tau$, the missing indicator for label
$2$ is zero. This proves \zeqref{eq:outside-attachment}
on $\{0<U_{12}<\tau\}$.
The sign change leaves all $U$-entries unchanged.
Relabeling the old replicas and taking the finite
union over the possible labels $j=2,\ldots,n$
completes the proof.
\end{proof}

\subsection{Removing the restriction on the old overlaps}
\label{subsec:si-weighted-exchange}

The preceding lemma requires $U_{1j}\in(0,\tau)$ for
some old replica $j$. To remove this restriction, we
assign an additional replica a nonnegative weight
$\psi(U_{1,n+1})$ that vanishes at zero and is small
on $[\tau,K]$. Exchanging two additional replicas
relates the error in the desired conditional probability
to a weighted error for the enlarged sample.
When $U_{1,n+1}\in(0,\tau)$, that enlarged sample
satisfies the hypothesis of
Lemma~\zcref[noname]{prop:outside-attachment}, so its
error vanishes.

\begin{lemma}\label{lem:si-weighted-witness}
Fix $n\ge1$ and a deterministic common continuity
threshold $\tau$. Let $\psi:[0,K]\to[0,\infty)$ be
bounded and Borel, with $\psi(0)=0$ and
$\int_{[0,K]}\psi\,d\rho=1$. Suppose, almost surely,
\begin{equation}\label{eq:si-witness-diagonal}
 m\mathbb E_{\rm ph}
 [\psi(U_{1,m+1})\mid\mathcal A_m]
 =1+\sum_{j=2}^m\psi(U_{1j}),
 \qquad m=n,n+1.
\end{equation}
For $m=n,n+1$, define
\[
 \begin{aligned}
 p_m&=\Prob_{\rm ph}(U_{1,m+1}>\tau\mid\mathcal A_m),\\
 B_m&=\rho((\tau,K])
       +\sum_{j=2}^m\one_{\{U_{1j}>\tau\}},\\
 \Delta_m&=mp_m-B_m,
 \end{aligned}
 \qquad
 D_n=1+\sum_{j=2}^n\psi(U_{1j}).
\]
Then, almost surely,
\begin{equation}\label{eq:si-weighted-defect}
 \frac{D_n}{n}\Delta_n
 =\mathbb E_{\rm ph}
 [\psi(U_{1,n+1})\Delta_{n+1}\mid\mathcal A_n].
\end{equation}
If $\sup_{t\in[\tau,K]}\psi(t)\le\varepsilon$
for some $\varepsilon\ge0$, then
\begin{equation}\label{eq:si-defect-bound}
 \left|p_n-\frac{B_n}{n}\right|
 \le(n+1)\varepsilon
 \quad\text{almost surely}.
\end{equation}
\end{lemma}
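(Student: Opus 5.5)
The plan is to prove \zeqref{eq:si-weighted-defect} by computing its right-hand side directly, exchanging the roles of replicas $n+1$ and $n+2$. We then deduce \zeqref{eq:si-defect-bound} from Lemma~\zcref[noname]{prop:outside-attachment} applied to the enlarged sample of $n+1$ replicas.

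\emph{Step 1: exchangeability.} Under $\mathbb E_{\rm ph}$ the replicas are i.i.d.\ samples from $\widehat G_S$ conditional on $G,Y$. Swapping the labels $n+1$ and $n+2$ therefore preserves the joint law of the whole array and fixes every generator of $\mathcal A_n$. Hence, for bounded Borel $\Psi$,
\[
 \mathbb E_{\rm ph}[\Psi(U_{1,n+1},U_{1,n+2})\mid\mathcal A_n]
 =\mathbb E_{\rm ph}[\Psi(U_{1,n+2},U_{1,n+1})\mid\mathcal A_n].
\]

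\emph{Step 2: the two pieces of $\Delta_{n+1}$.} Since $p_{n+1}=\mathbb E_{\rm ph}[\one_{\{U_{1,n+2}>\tau\}}\mid\mathcal A_{n+1}]$, the tower property and Step~1 give
\[
 (n+1)\mathbb E_{\rm ph}[\psi(U_{1,n+1})p_{n+1}\mid\mathcal A_n]
 =\mathbb E_{\rm ph}\bigl[\one_{\{U_{1,n+1}>\tau\}}\,(n+1)\mathbb E_{\rm ph}[\psi(U_{1,n+2})\mid\mathcal A_{n+1}]\bigm|\mathcal A_n\bigr].
\]
Hypothesis \zeqref{eq:si-witness-diagonal} with $m=n+1$ replaces the inner factor by $D_n+\psi(U_{1,n+1})$. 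The right side therefore equals
\[
 D_np_n+\mathbb E_{\rm ph}[\psi(U_{1,n+1})\one_{\{U_{1,n+1}>\tau\}}\mid\mathcal A_n].
\]
For the other piece, note that $B_{n+1}=B_n+\one_{\{U_{1,n+1}>\tau\}}$, and that \zeqref{eq:si-witness-diagonal} with $m=n$ gives $\mathbb E_{\rm ph}[\psi(U_{1,n+1})\mid\mathcal A_n]=D_n/n$. Thus
\[
 \mathbb E_{\rm ph}[\psi(U_{1,n+1})B_{n+1}\mid\mathcal A_n]
 =\frac{B_nD_n}{n}+\mathbb E_{\rm ph}[\psi(U_{1,n+1})\one_{\{U_{1,n+1}>\tau\}}\mid\mathcal A_n].
\]
Subtracting the two displays, the cross terms cancel and we are left with $D_np_n-B_nD_n/n=D_n\Delta_n/n$. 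This is \zeqref{eq:si-weighted-defect}. All terms are bounded, so no integrability issues arise.

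\emph{Step 3: the bound.} We split the right side of \zeqref{eq:si-weighted-defect} according to the value of $U_{1,n+1}$.
\begin{itemize}
 \item On $\{U_{1,n+1}=0\}$ the weight vanishes, since $\psi(0)=0$.
 \item On $\{U_{1,n+1}=\tau\}$ we have a null set, because $\ell(\{\tau\})=0$ and replicas are exchangeable.
 \item On $\{0<U_{1,n+1}<\tau\}$, the enlarged sample of $n+1\ge2$ replicas has the witness $j=n+1$. Lemma~\zcref[noname]{prop:outside-attachment}, applied with $n+1$ in place of $n$, gives $\Delta_{n+1}=0$ almost surely.
 \item On $\{U_{1,n+1}>\tau\}$ we have $0\le\psi\le\varepsilon$. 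Moreover $0\le(n+1)p_{n+1}\le n+1$ and $0\le B_{n+1}\le n+1$, so $|\Delta_{n+1}|\le n+1$.
\end{itemize}
Combining these cases, the right side of \zeqref{eq:si-weighted-defect} has absolute value at most $(n+1)\varepsilon$. Since $D_n\ge1$, we obtain
\[
 \bigl|p_n-B_n/n\bigr|=\frac{|\Delta_n|}{n}\le\frac{(n+1)\varepsilon}{D_n}\le(n+1)\varepsilon .
\]

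The only delicate point is Step~1. We must check that the exchange of labels $n+1$ and $n+2$ commutes with conditioning on the completed $\sigma$-field $\mathcal A_n$, and that the conditional hypotheses can be used inside the tower property. Both hold because the swap is a measure-preserving map fixing $\mathcal A_n$ pointwise. The remaining steps are bookkeeping.
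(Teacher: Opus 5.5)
Your proposal is correct and follows essentially the same route as the paper: you swap labels $n+1$ and $n+2$, pass through $\mathcal A_{n+1}$ with the diagonal hypothesis at $m=n+1$ and $m=n$, and then apply Lemma~\zcref[noname]{prop:outside-attachment} to the enlarged sample of $n+1$ replicas to get $\Delta_{n+1}=0$ on $\{0<U_{1,n+1}<\tau\}$. Your separate null-set treatment of $\{U_{1,n+1}=\tau\}$ is harmless but unnecessary, since the assumption $\sup_{t\in[\tau,K]}\psi(t)\le\varepsilon$ already covers that point.
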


\begin{proof}
Put $a=n+1$, $b=n+2$, and $q=U_{1a}$.
The tower property gives
\[
 \begin{aligned}
 H&:=\mathbb E_{\rm ph}
 [\psi(q)p_{n+1}\mid\mathcal A_n]=\mathbb E_{\rm ph}
 [\psi(q)\one_{\{U_{1b}>\tau\}}\mid\mathcal A_n].
 \end{aligned}
\]
Exchanging labels $a,b$ leaves $\mathcal A_n$ unchanged.
Conditioning next on $\mathcal A_{n+1}$ and applying
\zeqref{eq:si-witness-diagonal} with $m=n+1$ yields
\begin{equation}\label{eq:si-weighted-exchange}
 \begin{aligned}
 (n+1)H
 &=(n+1)\mathbb E_{\rm ph}\!\left[
 \one_{\{q>\tau\}}
 \mathbb E_{\rm ph}[\psi(U_{1b})\mid\mathcal A_{n+1}]
 \,\middle|\,\mathcal A_n\right]\\
 &=\mathbb E_{\rm ph}
 [\one_{\{q>\tau\}}(D_n+\psi(q))\mid\mathcal A_n]\\
 &=D_np_n+
 \mathbb E_{\rm ph}[\psi(q)\one_{\{q>\tau\}}\mid\mathcal A_n].
 \end{aligned}
\end{equation}
The same assumption with $m=n$ gives
\[
 \mathbb E_{\rm ph}[\psi(q)\mid\mathcal A_n]=D_n/n.
\]
Since $B_{n+1}=B_n+\one_{\{q>\tau\}}$, we obtain
\[
 \begin{aligned}
 &\mathbb E_{\rm ph}
 [\psi(q)\Delta_{n+1}\mid\mathcal A_n]\\
 &\quad=(n+1)H-\frac{B_nD_n}{n}
 -\mathbb E_{\rm ph}
 [\psi(q)\one_{\{q>\tau\}}\mid\mathcal A_n]\\
 &\quad=D_n\left(p_n-\frac{B_n}{n}\right)
 =\frac{D_n}{n}\Delta_n.
 \end{aligned}
\]
This proves \zeqref{eq:si-weighted-defect}.

On $\{0<q<\tau\}$, the enlarged sample has $n+1\ge2$
old replicas, and label $a$ satisfies $0<U_{1a}<\tau$.
Lemma~\zcref[noname]{prop:outside-attachment} therefore
gives $\Delta_{n+1}=0$ on this event.
At $q=0$, $\psi(q)=0$. On $\{q\ge\tau\}$, we have
$\psi(q)\le\varepsilon$ and $|\Delta_{n+1}|\le n+1$,
the latter following from
$0\le(n+1)p_{n+1},B_{n+1}\le n+1$.
Thus \zeqref{eq:si-weighted-defect} and $D_n\ge1$ give
\[
 \left|p_n-\frac{B_n}{n}\right|
 \le\frac{1}{D_n}
 \mathbb E_{\rm ph}
 [\psi(q)|\Delta_{n+1}|\mid\mathcal A_n]
 \le(n+1)\varepsilon,
\]
which proves \zeqref{eq:si-defect-bound}.
\end{proof}

Fix a common continuity threshold $\tau$ with
$\rho((0,\tau))>0$. We construct weights satisfying
Lemma~\zcref[noname]{lem:si-weighted-witness} whose
supremum on $[\tau,K]$ tends to zero.

Choose $0<a_0\le s<\tau$ such that
$\rho([a_0,s])>0$. For all sufficiently large $y$, put
\[
 d_y=\int_{[0,K]}r\delta_y(r)\,\rho(dr).
\]
Since $\delta_y$ is decreasing and positive on $[0,K)$,
\begin{equation}\label{eq:si-witness-positivity}
 d_y\ge a_0\rho([a_0,s])\delta_y(s)>0.
\end{equation}
We may therefore define
\begin{equation}\label{eq:si-witness}
 \psi_y(r)=\frac{r\delta_y(r)}{d_y},
 \qquad 0\le r\le K.
\end{equation}
For each such $y$, this function is bounded, continuous,
and nonnegative, with $\psi_y(0)=0$ and
$\int_{[0,K]}\psi_y\,d\rho=1$.
Subtracting \zeqref{mix:GG} with $f=f_y/Z_y$ from
\zeqref{eq:si-first-moment}, then dividing by $d_y$,
proves \zeqref{eq:si-witness-diagonal} for every
$m\ge1$.

Moreover, \zeqref{source:eq:separation} gives
\begin{equation}\label{eq:si-witness-tail}
 \sup_{r\in[\tau,K]}\psi_y(r)
 \le
 \frac{K\delta_y(\tau)}
      {a_0\rho([a_0,s])\delta_y(s)}
 \longrightarrow0.
\end{equation}
Apply Lemma~\zcref[noname]{lem:si-weighted-witness}
for each $y$, then let $y\to\infty$ through integers.
The error bound \zeqref{eq:si-defect-bound} yields,
almost surely for every $n\ge1$,
\begin{equation}\label{eq:si-threshold-gg}
 n\Prob_{\rm ph}(U_{1,n+1}>\tau\mid\mathcal A_n)
 =\rho((\tau,K])
  +\sum_{j=2}^n\one_{\{U_{1j}>\tau\}}.
\end{equation}
The error estimate depends only on the supremum of
$\psi_y$ on $[\tau,K]$, so possible growth below
$\tau$ does not affect this limit.

If $\rho$ is supported on $\{0,K\}$, then $d_y=0$.
This case is treated in
\zcref{subsec:si-distribution-completion}.

\subsection{Completing the conditional overlap law}
\label{subsec:si-distribution-completion}

\begin{lemma}\label{lem:si-distribution-completion}
Fix $n\ge1$. Suppose \zeqref{mix:GG} and
\zeqref{eq:si-first-moment} hold with
$m\in\{1,n,n+1\}$ old replicas, using $f=f_y/Z_y$
in \zeqref{mix:GG} for all sufficiently large $y$.
Suppose also that \zeqref{eq:outside-attachment} holds
with $n+1$ old replicas at every common continuity
threshold. Then, almost surely,
\[
 \Law_{\rm ph}(U_{1,n+1}\mid\mathcal A_n)
 =\frac1n\rho+\frac1n\sum_{j=2}^n\delta_{U_{1j}}.
\]
\end{lemma}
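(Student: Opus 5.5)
The plan is to compare $\nu_n:=\Law_{\rm ph}(U_{1,n+1}\mid\mathcal A_n)$ with the target $\pi_n:=\frac1n\rho+\frac1n\sum_{j=2}^n\delta_{U_{1j}}$ through their tail functions $\tau\mapsto\nu_n((\tau,K])$ at common continuity thresholds. It then closes the remaining atoms using total mass and the first moment \zeqref{eq:si-first-moment}. Both measures live on $[0,K]$. Common continuity thresholds are co-countable in $(0,K)$, so they are dense. Hence agreement of tails at all such $\tau$ identifies $\nu_n$ and $\pi_n$ on $(0,K)$ away from atoms, and right limits recover $\nu_n((\tau,K])$ for every $\tau\in(0,K)$. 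Set $a=\inf(\supp\rho\cap(0,K])$, with the convention $a=K$ if $\rho$ is supported on $\{0,K\}$.

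\emph{Step 1: thresholds $\tau>a$.} Here $\rho((0,\tau))>0$. I would repeat the construction of $\psi_y$ in \zeqref{eq:si-witness}. The assumed identities \zeqref{mix:GG} and \zeqref{eq:si-first-moment} with $m=n,n+1$ give the witness diagonal \zeqref{eq:si-witness-diagonal}. The assumed \zeqref{eq:outside-attachment} with $n+1$ old replicas gives $\Delta_{n+1}=0$ on $\{0<U_{1,n+1}<\tau\}$, and that is the only input the proof of Lemma~\zcref[noname]{lem:si-weighted-witness} needs. Letting $y\to\infty$ and using \zeqref{eq:si-witness-tail} yields \zeqref{eq:si-threshold-gg}, that is, $\nu_n((\tau,K])=\pi_n((\tau,K])$.

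\emph{Step 2: the gap $(0,a)$.} This is the main obstacle, since no witness weight is available there. I would subtract \zeqref{mix:GG} from \zeqref{eq:si-first-moment} to obtain
\[
 n\mathbb E_{\rm ph}[U_{1,n+1}\delta_y(U_{1,n+1})\mid\mathcal A_n]
 =\int t\delta_y(t)\,\rho(dt)+\sum_{j=2}^nU_{1j}\delta_y(U_{1j}).
\]
Fix $0<t_0<t_1<a$ and divide by $\delta_y(t_1)$. The $\rho$-term is at most $K\delta_y(a')/\delta_y(t_1)\to0$ for any $a'\in(t_1,a)$ with $\rho((0,a'))=0$; an atom at $0$ contributes nothing because of the factor $t$. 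By \zeqref{source:eq:separation}, $t\delta_y(t)/\delta_y(t_1)$ tends to $\infty$ on $(0,t_1)$ and to $0$ on $(t_1,K]$. Localizing with $t_0,t_1$ then forces new mass in $(0,t_1)$ to match old overlaps there. Concretely, I would compare with the same identity for $t_0$ and use conditional Fatou. The goal is to conclude that $\nu_n$ restricted to $(0,a)$ equals $\frac1n\sum_j\delta_{U_{1j}}$ restricted there. If the blow-up is only one-sided, I would fall back on the exchange trick of Lemma~\zcref[noname]{lem:si-weighted-witness}. In that fallback the weight is concentrated near the old overlaps lying in $(0,a)$, and Lemma~\zcref[noname]{prop:outside-attachment} applies with $n+1$ old replicas whenever some old overlap lies in $(0,\tau)$.

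\emph{Step 3: endpoint atoms.} After Steps 1--2, $\nu_n$ and $\pi_n$ agree on every interval $(\tau,K]$ with $\tau\in(0,K)$, $\tau\ne a$. Letting $\tau\uparrow K$ gives equality of the atoms at $K$. The only unresolved quantities are the atom at $0$ and the atom at $a$ (the bottom of the positive support). These are exactly the points where right limits of the tail fail to see the split, and they carry two unknown masses. Total mass $1$ gives one linear equation. The first-moment identity \zeqref{eq:si-first-moment}, $n\,\mathbb E[U_{1,n+1}\mid\mathcal A_n]=m_1+\sum_{j\ge2}U_{1j}$, which $\pi_n$ also satisfies, gives a second. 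Since $a>0$, these two equations determine both masses, so $\nu_n=\pi_n$. The degenerate case $\rho$ supported on $\{0,K\}$ is covered by the same two equations with $a=K$.
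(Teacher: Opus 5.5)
Your Steps 1 and 3 are the paper's argument. Set $\eta_n:=n\nu_n-\rho-\sum_{j=2}^n\delta_{U_{1j}}$. The threshold identity \zeqref{eq:si-threshold-gg}, at a countable dense set of common continuity thresholds above $a$, kills $\eta_n$ on $(a,K]$; you need countably many thresholds so the identities hold simultaneously. Zero total mass and \zeqref{eq:si-first-moment} then kill the atoms at $0$ and $a$. Your claim in Step 3 that $\nu_n$ and $\pi_n$ agree on $(\tau,K]$ also for $\tau<a$ is premature, since it would already settle the atom at $a$; your two-unknown count afterwards is the correct statement.

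The gap is Step 2. With $n$ old replicas, $\eta_n$ restricted to $(0,a)$ is $n\nu_n-\sum_{j\ge2}\delta_{U_{1j}}$, which is a signed measure. Dividing $\int t\delta_y(t)\,\eta_n(dt)=0$ by $\delta_y(t_1)$ blows up positive and negative mass on $(0,t_1)$ at the same time. Conditional Fatou then gives nothing, and ``new mass must match the old overlaps'' does not follow from the localization as you describe it. Your fallback also fails. The weighted-exchange lemma requires $\int\psi\,d\rho=1$, which is impossible for a weight concentrated in $(0,a)$, where $\rho$ has no mass.

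The missing idea is to use the hypothesis you never invoke: the identities with $m=1$ old replica. Subtracting them gives
\[
 \int r\delta_y(r)\,\ell(dr)=\int r\delta_y(r)\,\rho(dr)\le K\delta_y(t)\qquad(t<a),
\]
where $\ell=\Law_{\rm ph}(U_{12})$ is a \emph{positive} measure. For $0<a_0\le s<t<a$ this gives $a_0\ell([a_0,s])\le K\delta_y(t)/\delta_y(s)\to0$ by \zeqref{source:eq:separation}, hence $\ell((0,a))=0$. Exchangeability then gives $\mathbb E_{\rm ph}\,\nu_n((0,a))=\ell((0,a))=0$ and $U_{1j}\notin(0,a)$ almost surely. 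So $\eta_n$ vanishes on $(0,a)$ with no sign issue at all. This is how the paper proceeds, and it proves the stronger fact that the gap carries no overlaps.

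Your conditional route can be rescued, but only with an extra ordering argument. List the finitely many old values $b_1<\cdots<b_k$ in $(0,a)$ and work upward. Once $\eta_n$ vanishes on $(0,b_{i-1}]$, it is nonnegative on $(b_{i-1},b_i)$. Dividing by $\delta_y(s)$ for $s$ in that gap then removes its mass, and dividing by $\delta_y(b_i)$ forces $\eta_n(\{b_i\})=0$.
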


\begin{proof}
Let $\Pi_n$ be a regular conditional law of
$U_{1,n+1}$ given $\mathcal A_n$, and define the
finite signed measure
\begin{equation}\label{eq:conditional-defect}
 \eta_n=n\Pi_n-\rho-\sum_{j=2}^n\delta_{U_{1j}}.
\end{equation}
Its total mass is zero, and
\zeqref{eq:si-first-moment} gives
\[
 \int_{[0,K]}r\,\eta_n(dr)=0.
\]
For $n=1$, $\Pi_1=\ell$ almost surely.

Since $\rho$ and $\ell$ have at most countably many
atoms, choose a deterministic countable dense set
$D\subset(0,K)$ of common continuity thresholds.
By \zeqref{eq:si-threshold-gg}, on one event of
probability one,
\[
 \eta_n((\tau,K])=0
 \qquad\text{for every }\tau\in D
 \text{ with }\rho((0,\tau))>0.
\]

If $\rho((0,\varepsilon))>0$ for every
$\varepsilon>0$, these identities hold for all
$\tau\in D$. The tails on this dense set determine
the restriction of $\eta_n$ to $(0,K]$, so that
restriction is zero. Its zero total mass then gives
$\eta_n(\{0\})=0$.

In the remaining case, put
\[
 a_*=\inf(\supp\rho\cap(0,K])>0.
\]
We first show that $\ell((0,a_*))=0$.
Subtracting the two identities with one old replica
gives
\begin{equation}\label{eq:si-deficit-marginal}
 \int_{[0,K]}r\delta_y(r)\,\ell(dr)
 =\int_{[0,K]}r\delta_y(r)\,\rho(dr).
\end{equation}
For $0<a_0\le s<t<a_*$, nonnegativity and
monotonicity imply
\[
 a_0\ell([a_0,s])\delta_y(s)
 \le\int_{[0,K]}r\delta_y(r)\,\rho(dr)
 \le K\delta_y(t).
\]
The last inequality uses
$\supp\rho\subset\{0\}\cup[a_*,K]$ and the vanishing
factor $r$ at zero. Divide by $\delta_y(s)>0$ and
apply \zeqref{source:eq:separation} to obtain
$\ell([a_0,s])=0$. A countable covering of
$(0,a_*)$ yields
\begin{equation}\label{eq:si-initial-gap}
 \ell((0,a_*))=0.
\end{equation}

By replica exchangeability,
\[
 \mathbb E_{\rm ph}\Pi_n((0,a_*))
 =\ell((0,a_*))=0.
\]
Thus $\Pi_n$ assigns no mass to this interval
almost surely, and none of the old overlaps lies
there almost surely. Consequently, $\eta_n$ is
supported on $\{0\}\cup[a_*,K]$.

If $a_*<K$, then $\rho((0,\tau))>0$ for every
$\tau>a_*$. The identities at the thresholds in
$D\cap(a_*,K)$ therefore show that $\eta_n$
vanishes on $(a_*,K]$. If $a_*=K$, this interval
is empty. In either case, $\eta_n$ is supported
on $\{0,a_*\}$. Its first moment gives
\[
 0=\int_{[0,K]}r\,\eta_n(dr)
   =a_*\eta_n(\{a_*\}),
\]
so $\eta_n(\{a_*\})=0$. Its zero total mass then
gives $\eta_n(\{0\})=0$.

The determining thresholds were chosen countably.
Thus, on one event of probability one, $\eta_n=0$
as a measure, which proves the conditional law
\zeqref{eq:si-shell-conditional-law} for every
bounded Borel test function.
\end{proof}

\begin{proof}[Proof of Proposition~{\zcref[noname]{prop:physical-shell-identification}}]
Lemma~\zcref[noname]{geo:sign} proves the sign relation
\zeqref{eq:si-shell-signs} at zero field, and
Lemma~\zcref[noname]{geo:um} proves ultrametricity of $U$.

Fix $n\ge1$. Lemma~\zcref[noname]{mix:identity}
gives \zeqref{mix:GG} with $f=f_y/Z_y$ for all
sufficiently large $y$. Taking the limit gives
\zeqref{eq:si-first-moment}. Both identities hold
with $1,n,n+1$ old replicas.
Lemma~\zcref[noname]{prop:outside-attachment}
gives \zeqref{eq:outside-attachment} with $n+1$
old replicas at every common continuity threshold.
Lemma~\zcref[noname]{lem:si-distribution-completion}
therefore proves \zeqref{eq:si-shell-conditional-law}.
A countable intersection over $n$ makes these
equalities of conditional measures simultaneous.

Taking $n=1$ gives
\[
 \kappa_\#\lambda_S=\rho=\kappa_\#\mu.
\]
Since $\kappa$ is strictly increasing, this implies
$\lambda_S=\mu$.
\end{proof}

\begin{proof}[Proof of Proposition~{\zcref[noname]{prop:marginal-sphere}} and
Theorem~{\zcref[noname]{thm:gg}}]
If $h=0$ and $\mu=\delta_0$,
Lemma~\zcref[noname]{lem:zero-measure} gives the zero
array, which satisfies both conclusions. Otherwise,
$Q>0$.

For each deterministic $S\in\{Q\}\cup\mathcal Z_+$
with $\Prob(E_S)>0$,
Proposition~\zcref[noname]{prop:shell-marking}
supplies the cavity formula and conditional
magnetization laws required by
Proposition~\zcref[noname]{prop:physical-shell-identification}.
Thus $\lambda_S=\mu$ for every such $S$.
Proposition~\zcref[noname]{prop:shell-reduction}
then shows that every physical directing measure
is supported on $\{v:\|v\|^2=Q\}$ and that
$S_{12}$ has law $\mu$. By
Lemma~\zcref[noname]{lem:norm-observable}, the
squared norms are determined by the overlap array,
so this support statement holds in every directing
representation. This proves
Proposition~\zcref[noname]{prop:marginal-sphere}.

In the cavity representation, both the predecessor
and successor measures are therefore supported on
$\{v:\|v\|^2=Q\}$. Hence $\Prob(E_Q)=1$ and
$\widehat G_Q=\widehat G$ almost surely.
We may thus apply
\zeqref{eq:si-shell-conditional-law} directly to
the successor array with $S=Q$.
Since
\[
 U_{ij}=\kappa(S_{ij}),\qquad
 \rho=\kappa_\#\mu,\qquad
 \mathcal A_n=\mathcal R_n,
\]
pushing forward that conditional law by
$\kappa^{-1}$ gives
\zeqref{eq:full-gg-conditional} for every $n\ge2$.
The conditioning sigma-field contains the signed
old overlaps, including at zero field.
Multiplying the conditional identity by a bounded
$\mathcal R_n$-measurable function and taking
expectations gives \zeqref{eq:full-gg}.
\end{proof}

\section{Reconstruction of the full overlap structure}\label{sec:rpc}
In this section we prove \zcref{thm:array} by reconstructing the full
overlap array from the marginal and identities obtained in
\zcref{sec:regression}. \zcref{subsec:rpc-reconstruction} recalls Ruelle
probability cascades and applies Panchenko's ultrametricity theorem and
the Baffioni--Rosati reconstruction theorem to identify the nonnegative
overlap law. At zero field, this argument is applied to squared overlaps.
\zcref{subsec:sign-reconstruction} then uses the sign relation in
Lemma~\zcref[noname]{lem:signs-all-zero-field} and spin-flip symmetry
to recover the signed array from its absolute values. Identifying
every subsequential limit gives convergence of the full sequence.

\subsection{Ruelle probability cascades and reconstruction}\label{subsec:rpc-reconstruction}

We first construct the Ruelle probability cascade associated with
a finitely supported probability measure on $[0,1]$. Write
\[
 \nu=\sum_{j=0}^r(m_j-m_{j-1})\delta_{q_j},
 \qquad
 0\le q_0<\cdots<q_r\le1,
 \qquad
 0=m_{-1}<m_0<\cdots<m_r=1.
\]
Thus $m_j=\nu([0,q_j])$.

Suppose first that $r\ge1$. Consider a rooted tree of depth $r$
with leaves indexed by $\mathbb N^r$. At each vertex of depth
$j<r$, independently generate a Poisson point process on
$(0,\infty)$ with intensity
\[
 m_jx^{-1-m_j}\,dx.
\]
Arrange its points in decreasing order and assign them to the
outgoing edges. For a leaf $\alpha=(\alpha_1,\ldots,\alpha_r)$,
write $u_{\alpha_1\cdots\alpha_\ell}$ for the point assigned
to its edge at depth $\ell$, and set
\[
 v_\alpha=\prod_{\ell=1}^r
 u_{\alpha_1\cdots\alpha_\ell},
 \qquad
 W_\alpha=
 \frac{v_\alpha}{\sum_{\beta\in\mathbb N^r}v_\beta}.
\]
The denominator is positive and finite almost surely, as the
total-mass calculation below shows. The $W_\alpha$ are the
\emph{cascade weights}.

For two leaves, let $\alpha\wedge\beta$ be the length of their
common initial segment:
\[
 \alpha\wedge\beta
 =\max\bigl\{0\le j\le r:
       \alpha_\ell=\beta_\ell\text{ for }1\le\ell\le j\bigr\}.
\]
In particular, $\alpha\wedge\beta=r$ when $\alpha=\beta$.
Conditional on the weights, sample leaves
$\alpha^1,\alpha^2,\ldots$ independently with probabilities
$(W_\alpha)_\alpha$, and define
\[
 \mathcal U_{ij}=q_{\alpha^i\wedge\alpha^j},
 \qquad i\ne j.
\]
We call this the finite-level RPC array associated with $\nu$.

We next derive its sampling rule conditional on all overlaps
among the first $n$ replicas. Put
\[
 \mathscr H_n
 =\sigma(\mathcal U_{ab}:1\le a<b\le n).
\]
This sigma-field records the nested partitions of the sampled
labels induced by their common ancestors. The conditional
probabilities below average over the cascade weights.

We use two facts about Poisson--Dirichlet distributions.
Let $0\le b<a<1$, and let $T$ be the sum of the points of a
Poisson process with intensity $a x^{-1-a}\,dx$.
Under the change of probability with density $T^b/\E T^b$,
the normalized points have law $\PD(a,-b)$
\cite[Proposition~14, Eq.~(44)]{PitmanYor1997}.
For independent sampling from these normalized masses, the
probability of a prescribed partition of $n$ labels into blocks
of sizes $n_1,\ldots,n_k$ is
\begin{equation}\label{eq:rpc-local-partition}
 p_{a,-b}(n_1,\ldots,n_k)
 =\frac{\prod_{\ell=1}^{k-1}(\ell a-b)}{(1-b)_{n-1}}
       \prod_{i=1}^k(1-a)_{n_i-1},
\end{equation}
where $(x)_j=x(x+1)\cdots(x+j-1)$ and $(x)_0=1$
\cite[Appendix~A.2, Proposition~51, Eq.~(178)]{PitmanYor1997}.
Taking ratios gives probabilities
\[
 \frac{n_i-a}{n-b},
 \qquad
 \frac{ka-b}{n-b}
\]
for the next label to join a specified existing block or start
a new block, respectively.

To identify the parameters at each vertex, consider an edge
with exponent $m_j$. Let $\mathcal T$ be the descendant tree
below that edge, and let $T(\mathcal T)$ be its total
unnormalized mass, excluding the edge point $u$.
Its contribution to the parent is $v=uT(\mathcal T)$.
Changing variables gives the marked intensity
\[
 m_j u^{-1-m_j}\,du\,\Prob(d\mathcal T)
 =
 m_j v^{-1-m_j}\,dv\,
 T(\mathcal T)^{m_j}\Prob(d\mathcal T).
\]
Consequently, the contributions form a Poisson process with
intensity
\[
 \E T^{m_j}\,m_jv^{-1-m_j}\,dv,
\]
and their descendant trees are independent marks with law
\[
 \frac{T(\mathcal T)^{m_j}}{\E T^{m_j}}
 \Prob(d\mathcal T).
\]
The marks are independent of the contributions. We may order
the contributions decreasingly, since relabeling children
preserves all overlaps. The constant multiplier of the
intensity disappears upon normalizing the contributions.
This is the marked Poisson calculation in the proof of
\cite[Lemma~2]{PT}.

The required fractional moments are finite. At the last level,
the descendant total is one. Proceeding upward, the total
below an edge of exponent $m_j$, for $j<r-1$, is a scaled
positive stable random variable of exponent $m_{j+1}$.
Its $m_j$-moment is finite because $m_j<m_{j+1}$
\cite[Proposition~10(ii), Eq.~(30)]{PitmanYor1997}.
This induction also shows that the total mass at the root is
positive and finite almost surely.

The same calculation gives a recursive description of the
normalized cascade. At depth $j$, the incoming edge induces
a tilt by the $m_{j-1}$-th power of the total mass; at the root
the exponent is $m_{-1}=0$. The normalized contributions of
the children therefore have law $\PD(m_j,-m_{j-1})$.
This tilt depends only on the contributions, so it preserves
their independence of the descendant marks.

Call a vertex represented if at least one of the first $n$
sampled leaves descends from it, and call the number of such
labels its size. The probability of a prescribed hierarchy
of these labels is the product, over its represented
non-leaf vertices, of the factors
\[
 p_{m_j,-m_{j-1}}(n_1,\ldots,n_k),
\]
where $j$ is the depth of the vertex and $n_1,\ldots,n_k$
are the sizes of its represented children.
There is no counting factor, since the partitions of the
labels are prescribed.

Suppose the next label reaches a represented parent at
depth $j$. Conditional on $\mathscr H_n$, its probability
of entering a specified represented child is
\[
 \frac{\text{child size}-m_j}
      {\text{parent size}-m_{j-1}}.
\]
Indeed, take the ratio of the probabilities of the extended
and original hierarchies. The factors outside the chosen
subtree cancel. Summing over all possible extensions within
the chosen child gives one, leaving the displayed ratio
from \zeqref{eq:rpc-local-partition}. Thus the rule remains
valid after conditioning on the finer partitions within
every represented subtree.

For $0\le j<r$, let $C$ be a block of the relation
\[
 a\sim_j b
 \quad\Longleftrightarrow\quad
 a=b\ \text{or}\ \mathcal U_{ab}>q_j
\]
on $\{1,\ldots,n\}$. Write
$C_0=\{1,\ldots,n\},C_1,\ldots,C_{j+1}=C$ for the label
sets along its ancestor chain. Multiplying the successive
conditional probabilities gives
\begin{equation}\label{eq:rpc-whole-hierarchy-rule}
 \begin{aligned}
 \Prob(n+1\text{ enters }C\mid\mathscr H_n)
 &=\prod_{\ell=0}^j
   \frac{|C_{\ell+1}|-m_\ell}
        {|C_\ell|-m_{\ell-1}}=\frac{|C|-m_j}{n}.
 \end{aligned}
\end{equation}

Taking $n=1$ gives
$\Prob(\mathcal U_{12}>q_j)=1-m_j$, and hence
\[
 \Prob(\mathcal U_{12}=q_j)=m_j-m_{j-1},
 \qquad 0\le j\le r.
\]
Thus $\mathcal U_{12}$ has law $\nu$
\cite[Theorem~3, Eq.~(1.19)]{PT}; see also
\cite{PanchenkoBook}.

More generally, if $C$ contains a fixed label $i\le n$, then
\[
 |C|
 =1+\sum_{\substack{1\le\ell\le n\\\ell\ne i}}
       \one_{\{\mathcal U_{i\ell}>q_j\}}.
\]
Equation~\zeqref{eq:rpc-whole-hierarchy-rule} therefore gives
the tail probabilities of the Ghirlanda--Guerra conditional
law. Since the overlap takes values in
$\{q_0,\ldots,q_r\}$, these tails determine
\[
 \Law(\mathcal U_{i,n+1}\mid\mathscr H_n)
 =\frac1n\nu
  +\frac1n\sum_{\substack{1\le\ell\le n\\\ell\ne i}}
       \delta_{\mathcal U_{i\ell}}.
\]

If $\nu=\delta_q$, we use the degenerate cascade with
$\mathcal U_{ij}=q$ for every $i\ne j$.
For a general probability measure $\nu$ on $[0,1]$, the
RPC array law is the weak limit of the finite-level cascade
laws associated with finitely supported measures converging
weakly to $\nu$. This limit exists and is independent of
the approximation; we denote it by $\mathcal L_\nu$
\cite{PanchenkoBook}.

\begin{proposition}\label{prop:rpc-reconstruction}
Let $(R_{ij})_{i,j\ge1}$ be a weakly exchangeable Gram array
with constant diagonal and off-diagonal marginal
$\nu\in\cP([0,1])$. If the array satisfies the full
Ghirlanda--Guerra identities, then $(R_{ij})_{i<j}$ has
law $\mathcal L_\nu$.
\end{proposition}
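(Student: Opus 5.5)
The plan is to use the two external inputs already announced in the introduction: Panchenko's ultrametricity theorem and the Baffioni--Rosati reconstruction. We then compare the result with the finite-level RPC computation just carried out. The diagonal value $R_{ii}=c$ plays no role beyond positive semidefiniteness, and off-diagonal entries lie in $[0,1]$ since $\nu\in\cP([0,1])$.

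\emph{Step 1 (ultrametricity).} By Panchenko's theorem \cite{PanUM}, a weakly exchangeable Gram array satisfying the full Ghirlanda--Guerra identities is almost surely ultrametric: $R_{12}\ge\min(R_{13},R_{23})$. Panchenko's theorem is stated for arrays with diagonal one and nonnegative off-diagonal entries. To handle a constant diagonal $c\ge\max\supp\nu$, I would first note that the directing measure in the Dovbysh--Sudakov representation may be replaced by one supported on the sphere of squared radius $\max\supp\nu$; this is the standard consequence of the Ghirlanda--Guerra identities recorded in \cite{PanchenkoBook}. The statement then applies after rescaling.

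\emph{Step 2 (discretization).} For a finitely supported $\nu=\sum_j(m_j-m_{j-1})\delta_{q_j}$, ultrametricity together with the identities shows that, for each $j$, the relations $a\sim_j b$ defined by $R_{ab}>q_j$ (or $=$ for $j=r$) are nested equivalence relations. The identities specify the conditional probability that label $n+1$ enters each existing block: the tail probabilities from the Ghirlanda--Guerra law give $(|C|-m_j)/n$ after inclusion--exclusion over the nested chain. This coincides with \zeqref{eq:rpc-whole-hierarchy-rule}. Inductively in $n$, the law of $(R_{ab})_{a<b\le n+1}$ is determined by that of $(R_{ab})_{a<b\le n}$ together with these transition probabilities. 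Hence the array law equals the finite-level RPC law, whose transition rule and marginal were computed above. The key point is that ultrametricity lets the single-overlap conditional law from the Ghirlanda--Guerra identities determine the whole vector $(R_{a,n+1})_{a\le n}$: once the maximal overlap of $n+1$ with the earlier labels and the corresponding block are known, all other overlaps follow from the tree.

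\emph{Step 3 (general $\nu$).} For general $\nu$, I would discretize via a map $\pi_k:[0,1]\to\{q_0^k<\dots<q_{r_k}^k\}$ given by a nondecreasing step function rounding down, with $\pi_k\to\mathrm{id}$ uniformly. Since $\pi_k$ is monotone, the array $\pi_k(R_{ij})$ remains ultrametric and satisfies the Ghirlanda--Guerra identities with marginal $(\pi_k)_\#\nu$, because those identities concern tail events $\{R>q\}$ at the thresholds of $\pi_k$. The array $\pi_k(R)$ need not be a Gram array, but Step~2 used only exchangeability, ultrametricity and the identities. Therefore $\pi_k(R)$ has law $\mathcal L_{(\pi_k)_\#\nu}$. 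Letting $k\to\infty$, the left side converges to the law of $R$ and the right side converges to $\mathcal L_\nu$ by the defining limit. This is exactly the Baffioni--Rosati argument \cite{BaffioniRosati}.

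I expect the main obstacle to be Step~1's normalization: reconciling the constant diagonal with the unit-diagonal hypothesis of Panchenko's theorem, and checking that it needs no positivity or support assumptions beyond $\nu\in\cP([0,1])$. The combinatorial identification in Step~2 is routine given \zeqref{eq:rpc-whole-hierarchy-rule}.
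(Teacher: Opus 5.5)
Your proposal is correct and follows essentially the paper's route: you place the directing measure on the sphere of squared radius $\max\supp\nu$ so that Panchenko's theorem applies after rescaling, you identify the finitely supported case from ultrametricity and the predictive rule \zeqref{eq:rpc-whole-hierarchy-rule}, and you pass to general $\nu$ by monotone rounding-down discretization. The differences are minor. You prove the finite-support uniqueness directly, which is the content the paper imports from Baffioni--Rosati, and you rightly note that the discretized array need not be Gram for that step, although the paper checks it anyway. You cite the sphere-support fact and the existence of the limit defining $\mathcal L_\nu$, whereas the paper derives both inline. Finally, you should add the trivial case $\max\supp\nu=0$: there your rescaling is undefined, but all off-diagonal entries vanish.
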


\begin{proof}
Put $q_*=\max\supp\nu$. Choose a directing representation
with random measure $G$ and, conditional on $G$, independent
samples $v^1,v^2,\ldots$, so that
\[
 R_{ij}=v^i\cdot v^j\quad(i\ne j).
\]
Write $q_i=\|v^i\|^2$.

Almost surely,
$G^{\otimes2}(v\cdot w>q_*)=0$. If a support point of $G$
had squared norm greater than $q_*$, continuity of the inner
product would give a neighborhood of positive $G$-mass
whose pairs all have overlap greater than $q_*$.
Thus every support point has squared norm at most $q_*$.
If $q_*=0$, then $G=\delta_0$ and the conclusion follows
from the definition of the degenerate cascade.

Suppose $q_*>0$. For $0\le t<q_*$, put
$a=\nu([0,t])<1$. Successive application of the
Ghirlanda--Guerra identities gives
\[
 \Prob(R_{12}\le t,\ldots,R_{1,M+1}\le t)
 =\prod_{n=1}^M
   \left(1-\frac{1-a}{n}\right)
 \longrightarrow0.
\]
Taking a countable sequence $t\uparrow q_*$ and using
exchangeability shows that, almost surely,
\[
 \sup_{j\ne i}R_{ij}=q_*
 \qquad\text{for every }i.
\]
On the other hand,
\[
 R_{ij}\le\sqrt{q_iq_j}\le\sqrt{q_iq_*},
 \qquad q_i\le q_*.
\]
It follows that $q_i=q_*$ almost surely. Hence $G$ is
supported on the sphere of squared radius $q_*$.

The Gram array of the vectors $v^i/\sqrt{q_*}$ has diagonal
one and satisfies the full Ghirlanda--Guerra identities.
Panchenko's theorem \cite[Theorem~1]{PanUM} therefore gives
ultrametricity, which is preserved upon scaling back:
\[
 R_{23}\ge\min(R_{12},R_{13})
 \qquad\text{almost surely}.
\]

Suppose first that $\nu$ has finite support.
The reconstruction theorem of Baffioni and Rosati
\cite[Theorem~6.3, Eqs.~(56)--(57)]{BaffioniRosati}
shows that ultrametricity and the full identities determine
every finite-dimensional overlap distribution from $\nu$.
The finite-level RPC associated with $\nu$ is ultrametric
by its tree construction and satisfies the full identities
by \zeqref{eq:rpc-whole-hierarchy-rule}. Its off-diagonal
law is therefore the unique law with these properties.

For general $\nu$, define
\[
 g_k(x)=2^{-k}\lfloor2^kx\rfloor,
 \qquad 0\le x\le1,
\]
and set
\[
 R^{(k)}_{ii}=1,\qquad
 R^{(k)}_{ij}=g_k(R_{ij})\quad(i\ne j).
\]
For each threshold $t$, ultrametricity makes
\[
 i\sim_t j
 \quad\Longleftrightarrow\quad
 i=j\ \text{or}\ R_{ij}\ge t
\]
an equivalence relation. Its indicator matrix is positive
semidefinite. Since
\[
 R^{(k)}_{ij}
 =2^{-k}\sum_{\ell=1}^{2^k}
   \one_{\{i\sim_{\ell2^{-k}}j\}},
\]
the transformed array is also Gram. It remains weakly
exchangeable and ultrametric. The full identities pass
to this array by applying the original identities to
bounded Borel test functions of the transformed entries.

Its off-diagonal marginal is $(g_k)_\#\nu$, so the
finite-level result identifies its law with that of
the corresponding RPC. Moreover,
\[
 \sup_{x\in[0,1]}|g_k(x)-x|\le2^{-k},
\]
and hence the transformed off-diagonal arrays converge
coordinatewise to the original array. Their laws depend
only on $\nu$, so this proves uniqueness of the
off-diagonal law for arbitrary $\nu$.

Finally, let $\nu_k$ be any finitely supported probability
measures converging weakly to $\nu$, and complete their RPC
arrays with diagonal one. Their laws are tight on the
compact space of off-diagonal arrays with entries in
$[0,1]$. Every subsequential limit is weakly exchangeable,
Gram, and ultrametric, with off-diagonal marginal $\nu$.
The Ghirlanda--Guerra identities for continuous test
functions pass to the limit. Uniqueness of finite Borel
measures then extends these identities to bounded Borel
test functions.

The uniqueness just proved identifies every subsequential
limit with the off-diagonal law of the original array.
Thus the finite-level RPC laws converge to this law,
independently of the approximation and without any
restriction on the atoms of $\nu$. By definition, this
law is $\mathcal L_\nu$.
\end{proof}

For completeness, the general RPC admits a directing measure
supported on the sphere of squared radius
$Q=\max\supp\nu$. To see this, write $(U_{ij})_{i<j}$ for
its overlap array and set $U_{ii}=Q$. The array is
ultrametric, and
\[
 U_{ij}=\int_0^Q\one_{\{U_{ij}>t\}}\,dt.
\]
For each $0\le t<Q$, the relation $U_{ij}>t$ is an
equivalence relation on the replica labels. Its indicator
matrix is positive semidefinite, so every finite matrix
$(U_{ij})$ is positive semidefinite as well. The directing
representation and the norm argument in the preceding proof
then give the claimed support restriction. When $Q=0$,
the directing measure is concentrated at the origin.

At nonzero field, the physical overlap array is nonnegative
and Gram, so Proposition~\zcref[noname]{prop:rpc-reconstruction}
applies directly. At zero field, define instead
\begin{equation}\label{eq:squared-gram}
 A_{ij}=R_{ij}^2.
\end{equation}
Every finite matrix $(A_{ij})$ is positive semidefinite by
the Schur product theorem. For distinct replica labels,
the directing representation also gives
\[
 A_{ij}
 =\langle v^i\otimes v^i,v^j\otimes v^j\rangle.
\]
Since the directing vectors have squared norm $Q$, their
tensor squares have squared norm $Q^2$.

The Ghirlanda--Guerra identities for absolute overlaps
imply the full identities for $A$, whose off-diagonal
marginal is $(q\mapsto q^2)_\#\mu$. Proposition~\zcref[noname]{prop:rpc-reconstruction}
therefore identifies its off-diagonal law with the RPC
law for this marginal. Taking square roots of the overlap
levels gives the RPC with marginal $\mu$, first for finite
cascades and then by weak approximation. Consequently,
$(|R_{ij}|)_{i<j}$ has law $\mathcal L_\mu$.
This argument does not require entrywise absolute values
to preserve positive semidefiniteness on arbitrary Gram
matrices.

\subsection{Reconstruction of the overlap signs}\label{subsec:sign-reconstruction}

At zero field, the signs of nonzero overlaps satisfy the
following relation, even when the Parisi measure has an atom
at zero.

\begin{lemma}\label{lem:signs-all-zero-field}
Every zero-field physical overlap array almost surely satisfies,
simultaneously for all distinct replica labels $i,j,k$,
\begin{equation}\label{eq:signs-all-zero-field}
 R_{ij}\ne0,\ R_{ik}\ne0
 \quad\Longrightarrow\quad
 \begin{gathered}
 R_{jk}\ne0,\\
 \operatorname{sgn}R_{jk}
 =\operatorname{sgn}R_{ij}\operatorname{sgn}R_{ik}.
 \end{gathered}
\end{equation}
\end{lemma}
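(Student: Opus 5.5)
Since Theorem~\zcref[noname]{thm:gg} and Proposition~\zcref[noname]{prop:marginal-sphere} are already proved, this is mostly a transfer of Lemma~\zcref[noname]{geo:sign} to the original array. That lemma gives the sign relation on each normalized sphere restriction. After the proof of Proposition~\zcref[noname]{prop:marginal-sphere}, the restriction at $S=Q$ is the whole directing measure. Take a zero-field physical overlap law. If $\mu=\delta_0$, Lemma~\zcref[noname]{lem:zero-measure} makes every off-diagonal overlap vanish, so the hypothesis of \zeqref{eq:signs-all-zero-field} never holds. Otherwise $Q>0$, and we represent the law as a successor via Lemma~\zcref[noname]{cav:representation}.

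By the final proof in Section~\zcref[noname]{sec:regression}, $\widehat G$ is supported on $\{v:\|v\|^2=Q\}$ almost surely. Hence $\Prob(E_Q)=1$ and $\widehat G_Q=\widehat G$, so the sampling law $\mathbf E_Q=\mathbb E_{\rm ph}$ coincides with the law of the successor array. Lemma~\zcref[noname]{geo:sign} (equivalently \zeqref{eq:si-shell-signs}) then gives, almost surely for all distinct $i,j,k$, the implication for $T_{ij}=\kappa(R_{ij})$. Since $\kappa$ is odd and strictly increasing on $[-1,1]$ with $\kappa(0)=0$, we have $\operatorname{sgn}T_{ij}=\operatorname{sgn}R_{ij}$ and $T_{ij}=0$ iff $R_{ij}=0$. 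This yields \zeqref{eq:signs-all-zero-field} for the successor array.

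The successor law is the given physical law, so the statement holds for it. The event in \zeqref{eq:signs-all-zero-field} is a countable intersection over triples of Borel conditions on finitely many coordinates, so it is a property of the array law alone. It therefore does not depend on the chosen representation.

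The main point to check is that the representation truly covers the given law, namely that Lemma~\zcref[noname]{cav:representation} realizes every physical law as a successor at the same coefficients and field. It is also worth noting that Lemma~\zcref[noname]{geo:sign} was proved with no restriction on atoms of $\mu$ at zero. When $T_{ij}\neq 0$, its argument used only \zeqref{eq:si-first-moment} and the limit \zeqref{eq:si-limit-correction}. This is why the conclusion survives an atom of $\mu$ at zero.
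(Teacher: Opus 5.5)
Your proposal is correct and follows the paper's proof. Once the directing measure is known to be supported on $\{v:\|v\|^2=Q\}$, the $S=Q$ restriction is the whole successor array, so \zeqref{eq:si-shell-signs} transfers to $R_{ij}$ because $\kappa$ is odd and strictly increasing, and the case $Q=0$ (equivalently $\mu=\delta_0$ at zero field) is vacuous. Your explicit passage through the successor representation and the identification $\mathbf E_Q=\mathbb E_{\rm ph}$ just spells out what the paper compresses into ``applies to the full physical array.''
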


\begin{proof}
If $Q=0$, all overlaps vanish and the implication is vacuous.
Suppose $Q>0$. The physical directing measure is supported
on the sphere of squared radius $Q$, so
\zeqref{eq:si-shell-signs} applies to the full physical array.
Since $\kappa$ is odd and strictly increasing,
\[
 \kappa(r)=0\quad\Longleftrightarrow\quad r=0,
 \qquad
 \operatorname{sgn}\kappa(r)=\operatorname{sgn}r.
\]
Thus \zeqref{eq:si-shell-signs} gives
\zeqref{eq:signs-all-zero-field} for each fixed triple.
Exchangeability and a countable intersection give the
assertion simultaneously for all distinct replica labels.
\end{proof}

\begin{lemma}\label{lem:signed-reconstruction}
Every zero-field physical overlap array has the same
off-diagonal law as
\[
 (\varepsilon_i\varepsilon_j\mathcal U_{ij})_{i<j},
\]
where $\mathcal U$ has law $\mathcal L_\mu$ and
$(\varepsilon_i)_{i\ge1}$ are independent fair signs,
independent of $\mathcal U$.
\end{lemma}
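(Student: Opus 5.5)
The plan is to show that the signed array is determined in law by two ingredients: the absolute array $(|R_{ij}|)_{i<j}$, whose law is $\mathcal L_\mu$ by the argument just given, and the invariance of the zero-field law under independent replica sign flips.

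First I will record the flip invariance at the level of physical laws. For a fixed finite sign vector $e\in\{-1,1\}^m$, the map $\sigma^i\mapsto e_i\sigma^i$ preserves $G_{N,0}^{\otimes m}$, because the Hamiltonian is even and $h=0$. Hence $(R_{ij})\overset{d}{=}(e_ie_jR_{ij})$ for every finite block, and this identity passes to the limit. Averaging over independent fair signs $\varepsilon$, independent of the array, I then get $(R_{ij})\overset{d}{=}(\varepsilon_i\varepsilon_jR_{ij})$.

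Next I will show that, conditionally on the absolute array, the sign pattern of the flipped array has the same law as the sign pattern of $(\varepsilon_i\varepsilon_j\mathcal U_{ij})$. The key structural input is Lemma~\zcref[noname]{lem:signs-all-zero-field}. By that lemma, the relation $i\approx j$ iff $i=j$ or $R_{ij}\ne0$ is almost surely an equivalence relation: if $R_{ij}\ne0$ and $R_{jk}\ne0$, then $R_{ik}\ne0$. Within each block the signs form a cocycle, so $\operatorname{sgn}R_{ij}=s_is_j$ for some block-wise choice of $s_i\in\{\pm1\}$. I can fix this choice measurably, for instance by setting $s_i=\operatorname{sgn}R_{i,m(i)}$, where $m(i)$ is the least label in the block of $i$ (with $s_{m(i)}=1$). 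This gives $R_{ij}=s_is_j|R_{ij}|$ for all $i\ne j$, with $s$ a measurable function of the array. Consequently $\varepsilon_i\varepsilon_jR_{ij}=(\varepsilon_is_i)(\varepsilon_js_j)|R_{ij}|$. Conditionally on the whole array, the signs $\varepsilon'_i=\varepsilon_is_i$ are i.i.d.\ fair signs, independent of $|R|$, because multiplying independent fair signs by fixed signs preserves their law. It follows that $(R_{ij})\overset{d}{=}(\varepsilon'_i\varepsilon'_j|R_{ij}|)$ with $\varepsilon'$ independent of $|R|$.

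Finally, since $(|R_{ij}|)_{i<j}$ has law $\mathcal L_\mu$ and the $\varepsilon'$ are independent of it, the pair $(\varepsilon',|R|)$ has the same joint law as $(\varepsilon,\mathcal U)$. The conclusion follows by applying the measurable map $(\varepsilon,u)\mapsto(\varepsilon_i\varepsilon_ju_{ij})$. On zero entries the signs are irrelevant, so blocks with $|R_{ij}|=0$ cause no issue. I expect the main care to lie in the measurable selection of $s$ simultaneously for infinitely many labels, and in making the countable intersection of null sets coming from Lemma~\zcref[noname]{lem:signs-all-zero-field}. To keep this finite, I will work with the first $m$ labels only, choosing $m(i)$ as the least label in $\{1,\dots,m\}$ in the block of $i$. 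Laws on $\Omega$ are determined by their finite-dimensional marginals, so this suffices.
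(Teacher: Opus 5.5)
Your proof is correct and uses the same ingredients as the paper: the sign relation of Lemma~\zcref[noname]{lem:signs-all-zero-field} to write $R_{ij}=s_is_j|R_{ij}|$ with a least-label gauge, invariance under deterministic replica flips, and the identification of $(|R_{ij}|)_{i<j}$ with $\mathcal L_\mu$ through the squared array. The only difference is in the last step: the paper argues that the conditional law of the signs given $|R|$ is invariant under a group acting transitively on the admissible sign patterns, hence uniform on that orbit, whereas you randomize by independent fair signs and absorb the gauge $s$ into them, which reaches the same conclusion more directly.
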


\begin{proof}
Put $U_{ij}=|R_{ij}|$ for $i<j$. The reconstruction applied
to $(R_{ij}^2)$ identifies the law of $U$ with
$\mathcal L_\mu$. We determine the conditional law of the
signed overlaps given $U$.

Fix $n$ and form the graph on $\{1,\ldots,n\}$ whose edges
are the pairs with $U_{ij}>0$.
Lemma~\zcref[noname]{lem:signs-all-zero-field} shows that
each connected component is complete and that the product
of the edge signs around every triangle is one.
In each component, choose its least label $a$, set $s_a=1$,
and put $s_i=\operatorname{sgn}R_{ai}$ for the remaining
labels. The triangle relation gives
\[
 R_{ij}=s_i s_jU_{ij}
 \qquad(1\le i<j\le n).
\]
For labels in different components, this equality holds
because both overlaps are zero.

At zero field, the even interactions make the Gibbs law
invariant under flipping all spins of any chosen replica.
Consequently, the finite-volume overlap law, and every
physical limit, is invariant under
\[
 R_{ij}\longmapsto\eta_i\eta_jR_{ij},
\]
where the $\eta_i$ are arbitrary deterministic signs equal
to one outside $\{1,\ldots,n\}$. These transformations leave
the entire absolute array $U$ unchanged. The conditional
law of the signed overlaps among the first $n$ replicas,
given $U$, is therefore invariant under this finite group.

The factorization above shows that the group acts
transitively on the possible sign patterns. Its invariant
law is thus uniform on that orbit. Independent fair vertex
signs produce exactly this law: each edge pattern has
$2^{c_n}$ representations, where $c_n$ is the number of
connected components, including isolated vertices.
The redundancy consists of a common flip of all vertex
signs within each component.

Thus, conditional on $U$, the overlaps among the first $n$
replicas have the law of
$(\varepsilon_i\varepsilon_jU_{ij})_{i<j\le n}$ with
independent fair signs. Since this holds for every $n$,
the finite-dimensional distributions give the asserted
infinite-array law.
\end{proof}

\begin{proof}[Proof of Theorem~{\zcref[noname]{thm:array}}]
By Proposition~\zcref[noname]{prop:marginal-sphere}, every
physical limit has $S_{12}\sim\mu$, and its directing
measure is supported on the sphere of squared radius $Q$.
Theorem~\zcref[noname]{thm:gg} gives the full
Ghirlanda--Guerra identities for $S$, conditional on the
signed overlaps.

At nonzero field, the limiting signed array is nonnegative,
so Proposition~\zcref[noname]{prop:rpc-reconstruction}
identifies its law with $\mathcal L_\mu$.
At zero field, reconstruction through the squared array
\zeqref{eq:squared-gram} identifies the absolute array,
and Lemma~\zcref[noname]{lem:signed-reconstruction}
determines the signed law. These conclusions also hold
when $Q=0$, since all off-diagonal overlaps then vanish.

Thus every subsequential limit of $\mathcal L_N$ equals
$\Law(\mathcal R^{\mu,h})$. Compactness of $\Omega$ gives
convergence of the full sequence. The microscopic diagonal
entries remain $R_{ii}=1$; the equality $\|v^i\|^2=Q$
concerns the directing vectors representing the
off-diagonal overlaps.

Finally, fix $n\ge2$, a bounded continuous function $F$ of
$(R_{ij})_{i<j\le n}$, and a continuous function
$\psi:[-1,1]\to\R$. Since the maps $R_{ij}\mapsto S_{ij}$
are continuous, full-array convergence gives convergence
of every term in the finite-volume version of
\zeqref{eq:full-gg}. The limiting terms satisfy that identity
with test function $\psi|_{[0,1]}$, because the limiting
$S$-overlaps are nonnegative. Hence the difference between
the two finite-volume sides tends to zero.
\end{proof}

\section{Quenched sampling laws and structural consequences}
\label{sec:quenched}
In this section we prove \zcref{thm:quenched} and
\zcref[range]{cor:geometry,cor:cavity-invariance} from the identified
overlap array. \zcref{subsec:quenched-measure} recovers the distribution
of the random quenched sampling measure by expressing its moments through
disjoint groups of replicas. \zcref{subsec:quenched-variance} derives the
one-third variance formula and characterizes when the quenched distribution
of $S_{12}$ converges to a deterministic measure.
\zcref{subsec:geometry-clusters} deduces the ultrametric geometry and
Poisson--Dirichlet cluster weights. Finally,
\zcref{subsec:cavity-invariance} combines the array identification with
the cavity representation from \zcref{sec:cavity} to prove invariance
under the one-site cavity map.

Throughout,
$\mu=\mu_{\xi,h}$ and $Q=\max\supp\mu$.

\subsection{The distribution of the quenched sampling measure}\label{subsec:quenched-measure}

The random measure $\mathscr Q_{\xi,h}^{\rm RPC}$ is the
conditional law of the overlap array given the cascade
environment. This law averages over independent replicas
sampled from the cascade weights and, at zero field,
independent fair replica signs. Its randomness therefore
comes entirely from the cascade environment.

\begin{proof}[Proof of {\zeqref{eq:quenched-convergence}}]
Let $F_1,\ldots,F_k$ be bounded continuous cylinder
functions on $\Omega$. Choose disjoint finite blocks of
replica labels $I_1,\ldots,I_k$, each large enough for the
corresponding function. Write $F_a(R|_{I_a})$ for $F_a$
evaluated after relabeling its replicas into $I_a$.
Conditional on the disorder $J$, the blocks are independent,
so
\begin{equation}\label{eq:quenched-block-moments}
 \E\prod_{a=1}^k\mathscr Q_N^J(F_a)
 =
 \E\left\langle
   \prod_{a=1}^k F_a(R|_{I_a})
 \right\rangle_{N,h}.
\end{equation}
The product on the right is a bounded continuous cylinder
function. Theorem~\zcref[noname]{thm:array} therefore gives
its limit under the RPC array law. Conditional on the
cascade environment, disjoint replica blocks are again
independent, including their signs at zero field. Hence
\[
 \E\prod_{a=1}^k\mathscr Q_N^J(F_a)
 \longrightarrow
 \E\prod_{a=1}^k\mathscr Q_{\xi,h}^{\rm RPC}(F_a).
\]

The space $\cP(\Omega)$ is compact and metrizable.
For each continuous cylinder function $F$, the evaluation
map $\nu\mapsto\nu(F)$
is continuous on $\cP(\Omega)$. These maps separate
probability measures, since continuous cylinder functions
are uniformly dense in $C(\Omega)$. Their finite products
and linear combinations thus form an algebra that contains
the constants and separates points of $\cP(\Omega)$.

By Stone--Weierstrass, this algebra is uniformly dense in
$C(\cP(\Omega))$. The mixed-moment convergence above
therefore extends by uniform approximation to every
continuous function on $\cP(\Omega)$, proving
\zeqref{eq:quenched-convergence}.
\end{proof}

\subsection{Non-self-averaging and the point-mass case}\label{subsec:quenched-variance}

\begin{proof}[Proof of the remaining assertions of Theorem~{\zcref[noname]{thm:quenched}}]
Recall that $P_N^J$ is the conditional law of $S_{12}$ given
the disorder $J$, as defined in
\zeqref{eq:quenched-array-definition}. At zero field,
$S_{12}=|R_{12}|$.

First work in the limiting array. For a bounded Borel
function $\psi:[-1,1]\to\R$, put
\[
 m=\int\psi\,d\mu,\qquad b=\int\psi^2\,d\mu.
\]
Apply Theorem~\zcref[noname]{thm:gg} with $n=2$ and
multiplier $\psi(S_{12})$ to obtain
\begin{equation}\label{eq:shared-pair-moment}
 \E[\psi(S_{12})\psi(S_{13})]
 =\tfrac12(m^2+b).
\end{equation}
Next apply the $n=3$ identity to the new overlap $S_{34}$,
with the same multiplier. Replica exchangeability and
\zeqref{eq:shared-pair-moment} give
\begin{equation}\label{eq:disjoint-pair-moment}
 \begin{aligned}
 3\E[\psi(S_{12})\psi(S_{34})]
 &=m^2+\E[\psi(S_{12})\psi(S_{13})]
       +\E[\psi(S_{12})\psi(S_{23})]\\
 &=2m^2+b.
 \end{aligned}
\end{equation}

At finite $N$, the pairs of replicas $(1,2)$ and $(3,4)$
are independent conditional on $J$. Hence
\[
 \E\bigl[\langle\psi(S_{12})\rangle_J^2\bigr]
 =
 \E\langle\psi(S_{12})\psi(S_{34})\rangle_J.
\]
Now suppose that the discontinuity set of $\psi$ has
$\mu$-measure zero. The single-overlap and product test functions 
are then almost surely continuous under the limiting array
law, since each displayed $S$-overlap has marginal $\mu$.
Theorem~\zcref[noname]{thm:array} therefore gives
\[
 \E\langle\psi(S_{12})\rangle_J\longrightarrow m,
 \qquad
 \E\bigl[\langle\psi(S_{12})\rangle_J^2\bigr]
 \longrightarrow\frac{2m^2+b}{3}.
\]
Subtracting the square of the mean proves
\zeqref{eq:one-third}. In particular, taking $\psi=\one_B$
for a $\mu$-continuity set $B$ gives limiting variance
$\mu(B)(1-\mu(B))/3$.

If $\mu=\delta_q$, marginal convergence yields
\[
 \E\int|s-q|\,P_N^J(ds)
 =\E\langle|S_{12}-q|\rangle_J
 \longrightarrow0.
\]
The integral bounds the bounded-Lipschitz distance between
$P_N^J$ and $\delta_q$. Markov's inequality therefore proves
weak convergence in probability to $\delta_q$.

If $\mu$ is not a point mass, take $\psi(s)=s$, for which
$\Var_\mu\psi>0$. Weak convergence in probability of $P_N^J$
to a deterministic probability measure would force
$P_N^J(\psi)$ to converge in probability to a constant.
Since these integrals are uniformly bounded, their variance
would tend to zero, contradicting \zeqref{eq:one-third}.
\end{proof}

\subsection{Geometry and threshold cluster weights}\label{subsec:geometry-clusters}

\begin{proof}[Proof of Corollary~{\zcref[noname]{cor:geometry}}]
Theorem~\zcref[noname]{thm:array} identifies the limiting
$S$-array with the RPC array, which is ultrametric.
This proves \zeqref{eq:geometry}. For each $\epsilon>0$,
the event
\[
 \{\min(S_{13},S_{23})-S_{12}\ge\epsilon\}
\]
is closed and has probability zero under the limiting law.
The closed-set bound for weak convergence therefore gives
\[
 \lim_{N\to\infty}\E\left\langle
 \one_{\{\min(S_{13},S_{23})-S_{12}\ge\epsilon\}}
 \right\rangle_{N,h}=0.
\]

At zero field, the directing vectors have squared norm $Q$,
and
\[
 d_\pm([v],[w])^2=2(Q-|v\cdot w|).
\]
Thus \zeqref{eq:geometry} is equivalent to
\[
 d_\pm([v^1],[v^2])
 \le
 \max\{d_\pm([v^1],[v^3]),d_\pm([v^2],[v^3])\}
\]
for independent directing samples, almost surely.
At nonzero field, the identity
$\|v-w\|^2=2(Q-v\cdot w)$ gives the same ultrametric
inequality for the ordinary Hilbert-space distance.
\end{proof}

\begin{proof}[Proof of Corollary~{\zcref[noname]{cor:pd}}]
Fix $0\le q<Q$ and put $a=\mu([0,q])$. Since $Q$ is
the largest point of $\supp\mu$, we have $0\le a<1$.
By ultrametricity, the relation
\[
 i\sim_q j
 \quad\Longleftrightarrow\quad
 i=j\ \text{or}\ S_{ij}>q
\]
is almost surely an equivalence relation.

We first relate its blocks to subsets of the directing
support. At zero field, use the image of the directing
measure under $v\mapsto[v]$, identifying $v$ with $-v$.
The overlap inequality holds for every triple in the
support: a violating triple would, by continuity, have
a neighborhood of positive product measure on which
the inequality still fails.

The equivalence classes on the support are the clusters
at threshold $q$. Each class contains a neighborhood of
each of its points, since the self-overlap is $Q>q$.
Every class therefore has positive directing mass.
There are at most countably many such classes, and
their masses sum to one. Conditional on the directing
measure, the strong law shows simultaneously for all
classes that their empirical frequencies converge to
their masses. In particular, every class is sampled.

Suppose the first $n$ replicas occupy $k$ blocks with
sizes $n_1,\ldots,n_k$. Choose the least label $i_j$ in
block $j$. The next replica joins this block exactly
when $S_{i_j,n+1}>q$. Applying
Theorem~\zcref[noname]{thm:gg} with
$\psi=\one_{(q,1]}$ gives
\begin{equation}\label{eq:pd-predictive}
 \Prob(\text{join block }j\mid\mathcal R_n)
 =\frac{1-a+n_j-1}{n}
 =\frac{n_j-a}{n}.
\end{equation}
The probability of starting a new block is consequently
\[
 1-\sum_{j=1}^k\frac{n_j-a}{n}=\frac{ka}{n}.
\]
These probabilities depend only on the block sizes, so
they also hold conditional on the sampled partition.

For $0<a<1$, the probability of a prescribed partition
of $\{1,\ldots,n\}$ into blocks of sizes $n_1,\ldots,n_k$
is
\begin{equation}\label{eq:pd-eppf}
 p(n_1,\ldots,n_k)
 =\frac{a^{k-1}(k-1)!}{(n-1)!}
       \prod_{j=1}^k(1-a)_{n_j-1}.
\end{equation}
Indeed, this expression equals one for the initial
one-label partition. Adding a label to block $j$
multiplies it by $(n_j-a)/n$, while creating a new block
multiplies it by $ka/n$, exactly as prescribed above.
These are the Ewens--Pitman partition probabilities
with parameters $(a,0)$. Their ranked block frequencies,
and hence the ranked directing cluster masses, have law
$\PD(a,0)$ \cite[Sections~1.3 and~2.3]{Petrov}.

If $a=0$, then $\Prob(S_{12}>q)=1$. Exchangeability and
a countable intersection show that all replica labels
belong to a single block, whose frequency is one.

The partitions at several thresholds are obtained from
the same RPC array. Their joint hierarchy therefore has
the law of the cascade restricted to those levels.
For a finite sample and finitely many thresholds $q_\ell$
with $\mu(\{q_\ell\})=0$, the comparison indicators
$\one_{\{S_{ij}>q_\ell\}}$ are almost surely continuous
under the limiting array law. Theorem~\zcref[noname]{thm:array}
therefore gives their joint convergence in distribution
to the indicators of the nested RPC partitions.
\end{proof}

\subsection{The fixed point of the physical cavity map}\label{subsec:cavity-invariance}

\begin{proof}[Proof of Corollary~{\zcref[noname]{cor:cavity-invariance}}]
Choose a physical predecessor $G_0$ and its successor
$\widehat G_0$ as in
Lemma~\zcref[noname]{cav:representation}, at the same
parameters $\xi,h$. By Theorem~\zcref[noname]{thm:array},
their annealed overlap-array laws both equal
$\Law(\mathcal R^{\mu,h})$. The disjoint-block argument
in \zeqref{eq:quenched-block-moments} therefore identifies
the distributions of their random conditional overlap-array
laws. Uniqueness of the directing representation
\cite[Theorem~1.2 and Proposition~1.3]{ArgChat} then gives
equality of their directing-measure laws up to isometry.
Here each measure is viewed on the closed linear span of
its support, and the isometries are linear. This common
law is also the law of the directing measure $G$ in the
statement.

The cavity representation obtains $\widehat G_0$ by
reweighting $G_0$ with
\[
 a(v)\cosh(h+Y_0(v)),
 \qquad
 a(v)=
 \exp\left\{\frac{\kappa(1)-\kappa(\|v\|^2)}2\right\}.
\]
Theorem~\zcref[noname]{thm:array} gives
$\|v\|^2=Q$ for $G_0$-almost every $v$.
Thus $a(v)$ is constant and cancels from the normalizing
factor, leaving
\[
 \widehat G_0=\mathcal C_{\xi,h}(G_0)
\]
with $\mathcal C_{\xi,h}$ defined by
\zeqref{eq:cavity-invariance}.

Linear isometries preserve the covariance
$\kappa(v\cdot w)$. Replacing a directing measure by an
isometric copy therefore gives an isometric copy of its
cavity transform in law. The conditional law of the
transformed isometry class depends only on the original
isometry class. Since $G$, $G_0$, and $\widehat G_0$
have the same law on these classes,
$\mathcal C_{\xi,h}(G)$ has the same law as $G$.

To express this transformation in the normalized covariance
convention of \cite[equations~(1.5)--(1.6)]{ArgChat}, put
\[
 L=\kappa(1),\qquad c(q)=\kappa(q)/L,\qquad
 \psi(x)=\log\cosh(h+\sqrt L\,x)-\log\cosh h.
\]
Then $c(1)=1$, $\psi(0)=0$, and
$\|\psi'\|_\infty\le\sqrt L$. The kernel
$c(v\cdot w)$ is positive semidefinite because $\kappa$
is a nonnegative linear combination of tensor inner
products.

Write $\ell_c=Y/\sqrt L$, and let $\E_z$ denote expectation
over an independent standard Gaussian variable. The weight
in that convention is
\[
 \E_z\exp\left\{
   \psi\left(\ell_c(v)+z\sqrt{1-c(\|v\|^2)}\right)
 \right\}
 =
 \frac{a(v)\cosh(h+Y(v))}{\cosh h}.
\]
Normalization cancels the constant $1/\cosh h$, giving
the physical cavity weight above. On the sphere of squared
radius $Q$, the factor $a(v)$ also cancels, yielding
exactly \zeqref{eq:cavity-invariance}.
\end{proof}

\section{Temperature chaos}
\label{sec:temperature-chaos}

We consider two zero-field Gibbs measures at different temperatures,
with the same realization of the interactions. Temperature chaos
means that the overlap between a replica from each measure converges
to zero. We prove this when the two Parisi measures have common
positive support points arbitrarily close to zero, and when either
Parisi measure is concentrated at zero. The support theorem for the
SK model then gives temperature chaos at every pair of distinct
nonnegative inverse temperatures. For a proof of temperature chaos in generic even $p$-spin models, see \cite{PanchenkoTemperature2016}. For predictions in the physics literature, see \cite{Kondor1989,FranzNeyNifle1995,RizzoCrisanti2003,Billoire2014}.

Assume \zeqref{eq:mixture}, and let $H_N$ be the Hamiltonian in
\zeqref{eq:hamiltonian}, with covariance $N\xi(R)$. Put
\[
 \kappa=\xi',\qquad L=\kappa(1).
\]
For $a\in\{1,2\}$ and $\beta_a\ge0$, define
\[
 G_{N,a}(\sigma)
 =\frac{\exp(\beta_a H_N(\sigma))}
        {\sum_{\tau\in\{-1,1\}^N}\exp(\beta_a H_N(\tau))}.
\]
Conditional on the common disorder, sample the two replica sequences
independently, with sequence $a$ sampled from $G_{N,a}$. We call $a$
the type of a replica. Throughout this section, $\langle\cdot\rangle$
denotes expectation over both sequences at fixed disorder, and
$\E\langle\cdot\rangle$ also averages over the disorder. For one
replica $\sigma^a$ of each type, write
\[
 R_N^{12}=\frac1N\sum_{i=1}^N\sigma_i^1\sigma_i^2.
\]
For $\beta_a>0$, let $\mu_a=\mu_{\beta_a^2\xi,0}$ be the
Parisi measure, and set
\[
 \alpha_a(q)=\mu_a([0,q]),\qquad Q_a=\max\supp\mu_a.
\]
The common-support hypothesis is
\begin{equation}\label{eq:tc-common-levels}
 0\in\overline{(\supp\mu_1\cap\supp\mu_2)\setminus\{0\}}.
\end{equation}
This condition requires a sequence of common positive support points
tending to zero; it does not require either support to be an interval.

\begin{theorem}\label{thm:tc-mixture}
Assume \zeqref{eq:mixture}, and let $\beta_1,\beta_2>0$ be distinct.
If \zeqref{eq:tc-common-levels} holds, then
\begin{equation}\label{eq:tc-chaos}
 \lim_{N\to\infty}\E\langle(R_N^{12})^2\rangle=0.
\end{equation}
The same conclusion holds without \zeqref{eq:tc-common-levels} if
$\mu_1=\delta_0$ or $\mu_2=\delta_0$.
\end{theorem}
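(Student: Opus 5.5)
We would follow the cavity strategy of Sections~\zcref[noname]{sec:cavity}--\zcref[noname]{sec:regression}, applied to a two-type system. First, extract a subsequence along which the joint overlap array of both replica types converges, jointly with the added-site spins. The Dovbysh--Sudakov representation, applied separately within each type (the types are jointly exchangeable), gives two directing measures. By Theorem~\zcref[noname]{thm:array}, type $a$ is supported on the sphere of squared radius $Q_a$, and the overlaps within that type form the signed RPC array for $\mu_a$. The cross overlaps $R^{12}$ are realized as inner products of vectors from the two directing measures, which we couple in a common Hilbert space.

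The one-spin cavity decomposition of Lemma~\zcref[noname]{cav:representation} runs in parallel. The cavity fields $\beta_1 Y$ and $\beta_2 Y$ come from the same Gaussian field, so the field at a type-$a$ vector has covariance $\beta_a\beta_b\kappa(v\cdot w)$ with a type-$b$ vector. Site exchangeability gives $\E[F R^{12}_{12}]=\E[F M^{(1)}_1M^{(2)}_2]$ as in \zeqref{eq:siteidentity}. Gaussian integration by parts (Lemma~\zcref[noname]{cav:gaussian}) now produces terms from both reweightings. Each type's reweighting has its own partition function, and differentiating it introduces a new replica of that type.

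The key step is a monotonicity/second-moment identity for the cross overlap. We take $g$ to be a function of a type-1 field, integrate by parts against a type-2 field, and compare with the one-type scalar identities already established. We expect this to yield, conditional on within-type overlaps, an expression for $\E[(R^{12})^2]$, or for $\E[\kappa(R^{12})R^{12}]$, in terms of scalar Parisi quantities at both temperatures. The goal is to show that any nonzero value $|R^{12}|=r$ forces a contact condition analogous to \zeqref{mark:paircontact}. Such a condition would make $r$ a level shared in the hierarchies of both measures, and \zeqref{eq:tc-common-levels} would enter to push this level below every common positive support point.

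Concretely, we would use ultrametricity (Corollary~\zcref[noname]{cor:geometry}) together with the Ghirlanda--Guerra identities (Theorem~\zcref[noname]{thm:gg}) for each type. The aim is to show that the cross overlap with a type-2 replica is constant on type-1 clusters at level $r$. Duplicating replicas within clusters at common levels $q\in\supp\mu_1\cap\supp\mu_2$ then gives an identity $\beta_1\beta_2$-weighted that is compatible with the distinct temperatures only when $r=0$. This mirrors Panchenko's argument for generic models, with our cavity magnetization laws replacing the generic overlap moments.

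The degenerate case $\mu_a=\delta_0$ is easier. Then type-$a$ directing measures are $\delta_0$ by Lemma~\zcref[noname]{lem:zero-measure}, so every type-$a$ vector is $0$ and all cross inner products vanish, which gives \zeqref{eq:tc-chaos} directly.

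We expect the main obstacle to be deriving the two-temperature conditional magnetization law. The finite-replica comparison of Lemma~\zcref[noname]{src:replica-upper} must be extended to mixed-temperature groups with a cross constraint $R^{12}\approx r$. This requires a two-temperature Guerra bound, whose scalar coefficient involves coupled Parisi diffusions with different $\alpha_1,\alpha_2$ that share Brownian increments only up to level $r$. That bound is tight only if $r$ lies in both supports, and the common-support hypothesis is what makes the matching argument go through.
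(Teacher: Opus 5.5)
Your treatment of the case $\mu_a=\delta_0$ is fine. The paper handles it at finite $N$ via $\tr(C_1C_2)\le(\tr C_1^2\,\tr C_2^2)^{1/2}$ for the spin correlation matrices, which needs no representation.

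The main case has a genuine gap: nothing in the proposal actually distinguishes the two temperatures. Your sentence about ``an identity $\beta_1\beta_2$-weighted that is compatible with the distinct temperatures only when $r=0$'' is where the whole proof has to happen, and no such identity is produced.

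The missing idea is scalar rigidity. For fixed $q>0$, $\arctanh m_a(q)$ differs from $\beta_aB_{\kappa(q)}$ by a deterministically bounded amount. Hence $\lambda^{-2}\log\E\cosh(\lambda\arctanh m_a(q))\to\beta_a^2\kappa(q)/2$, and the law of $|m_a(q)|$ determines $\beta_a$ (Lemma~\zcref[noname]{lem:tc-scalar-rigidity}).

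The contradiction then comes from showing that a cross overlap exceeding $s_0$ with positive probability forces $m_1(q)$ and $m_2(q)$ to have equal moments at a common support point $q$. On that event, one selects $n$ replicas of each type with oriented within-type overlaps in $(q-\delta,q+\delta)$ and all cross overlaps at least $q-\delta$ (Lemma~\zcref[noname]{lem:tc-descendants}). Since $\E[\varepsilon_i\varepsilon_j\mid\mathscr R]=R_{ij}$, the two site averages satisfy $\E[(B_{1,n}-B_{2,n})^2\mid\mathscr R]\le 2/n+4\delta$ there. Meanwhile the single-temperature moment estimates put the moments of $B_{a,n}$ close to $c_{a,k}(q)$.

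This is also the real role of \zeqref{eq:tc-common-levels}. It supplies a common level $q<s_0^2$ at which both families exist. The bound $q<s_0^2$ is what limits, by Cauchy--Schwarz, how many selected replicas can keep the large overlap with the other reference. Showing that a nonzero cross overlap must be a common support level, as you propose, would give no contradiction, because the hypothesis guarantees such levels exist.

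The step you single out as the main obstacle is also the wrong one. The suppression argument of Lemma~\zcref[noname]{src:source-suppression} works only because the constrained Guerra bound at $\theta=0$ equals $dP$, the actual $d$-replica free energy.
\begin{itemize}
\item \emph{Your proposed two-temperature bound is unjustified.} For groups at different temperatures sharing increments up to a cross level $r$, with different $\alpha_1,\alpha_2$, you give no reason the coupled recursion collapses to the sum of the two Parisi values. The claim that it is tight exactly when $r$ lies in both supports is unsupported. A sharp two-temperature bound of this type is essentially the hard content of temperature chaos.
\item \emph{The paper avoids it.} It never places a cross constraint in a Guerra bound. Lemma~\zcref[noname]{lem:tc-mixed-source} constrains only same-type overlaps, uses the one-temperature bound at each $\beta_a$, and merely conditions on the full mixed array.
\item \emph{Cross-overlap structure needs a different tool.} Within-type ultrametricity and Ghirlanda--Guerra identities say nothing about cross overlaps. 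The structure you want is this: for a same-type pair at overlap $r>0$, the cross overlaps $x,y$ with an other-type replica are both zero or of equal sign, and $|x|\ne|y|$ forces $\min(|x|,|y|)=r$. The paper derives it from the two-normalizer integration by parts with large exponential tilts (Lemma~\zcref[noname]{lem:tc-row}). It then transfers vanishing test functions from same-type to other-type replicas (Lemmas~\zcref[noname]{lem:tc-zero-row} and~\zcref[noname]{lem:tc-unequal-minimum}).
\end{itemize}
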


\begin{corollary}\label{cor:tc-sk}
For the zero-field SK model with $\xi(q)=q^2/2$,
\zeqref{eq:tc-chaos} holds whenever $0\le\beta_1<\beta_2$.
\end{corollary}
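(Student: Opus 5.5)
The plan is to reduce Corollary~\zcref[noname]{cor:tc-sk} to Theorem~\zcref[noname]{thm:tc-mixture} by splitting into cases according to the temperatures. The only additional input is the known description of the zero-field SK Parisi measure $\mu_\beta=\mu_{\beta^2\xi,0}$ with $\xi(q)=q^2/2$. For $\beta\le1$ we have $\mu_\beta=\delta_0$. For $\beta>1$, the full replica symmetry breaking theorem \cite{LopattoFRSB} gives $\supp\mu_\beta=[0,Q_\beta]$ with $Q_\beta>0$. Note that $\beta^2\xi$ is again a mixture of the form \zeqref{eq:mixture}, so Theorem~\zcref[noname]{thm:tc-mixture} applies with $\beta_a^2\xi$ in place of $\xi$.

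\emph{Case $\beta_1=0$.} Here $G_{N,1}$ is the uniform measure on $\{-1,1\}^N$, and it is independent of the disorder. Conditional on the disorder and on $\sigma^2$, the coordinates $\sigma^1_i\sigma^2_i$ are i.i.d.\ fair signs. Hence $\E\langle (R^{12}_N)^2\rangle=1/N\to0$ directly, and no input from the paper is needed.

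\emph{Case $0<\beta_1\le1$.} Then $\mu_1=\delta_0$. Since $\beta_2>\beta_1>0$, the second clause of Theorem~\zcref[noname]{thm:tc-mixture}, which requires no common-support hypothesis, gives \zeqref{eq:tc-chaos}.

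\emph{Case $1<\beta_1<\beta_2$.} Both supports are intervals $[0,Q_1]$ and $[0,Q_2]$ with $Q_1,Q_2>0$. Their intersection therefore contains $(0,\min(Q_1,Q_2)]$, which accumulates at $0$, and this verifies \zeqref{eq:tc-common-levels}. The first clause of Theorem~\zcref[noname]{thm:tc-mixture} then gives \zeqref{eq:tc-chaos}.

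The main point needing care is the exact form of the support input at the boundary. For the first case, one needs that the SK Parisi measure is $\delta_0$ for all $\beta\le1$, including $\beta=1$. I would derive this from the replica-symmetric characterization: at zero field, $\Gamma(q)=q$ with $\Gamma'(0)=\beta^2\le1$ excludes positive contact points, and uniqueness \cite{ACunique} together with \zeqref{eq:zero-in-support} then forces $\mu=\delta_0$. Alternatively one can cite the classical high-temperature results. For the second case, one needs that \cite{LopattoFRSB} asserts that the support is exactly the full interval $[0,Q_\beta]$ with $Q_\beta>0$ for every $\beta>1$. Given these two facts, the remaining steps are immediate.
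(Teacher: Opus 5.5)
Your case split and the three reductions match the paper's proof: the direct $1/N$ computation at $\beta_1=0$, the $\mu_1=\delta_0$ clause of Theorem~\zcref[noname]{thm:tc-mixture} for $0<\beta_1\le1$, and \cite{LopattoFRSB} supplying \zeqref{eq:tc-common-levels} for $\beta_1>1$. The one step that fails as you sketched it is the proof that $\mu_\beta=\delta_0$ for $0<\beta\le1$.

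\emph{Why the sketch fails.} A statement about $\Gamma'(0)$ cannot exclude contact points $\Gamma(q)=q$ at positive $q$. Moreover, $\Gamma'(0)=\beta^2V(0,0)^2$ equals $\beta^2$ only if the slack in Lemma~\zcref[noname]{lem:slack} vanishes at $q=0$. That happens only if $Q=0$, which is what you are trying to prove, so the argument is circular.

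\emph{What is needed.} You need the derivative bound on the whole interval. By \zeqref{eq:gamma-derivative} with $\kappa(q)=\beta^2q$, you have $\Gamma_\beta'(s)=\beta^2\E V(s,X_s)^2\le\beta^2$. Oddness gives $\Gamma_\beta(0)=u(0,0)^2=0$, so $\Gamma_\beta(q)\le\beta^2q<q$ for $q>0$ when $\beta<1$.

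At $\beta=1$ this only yields $\Gamma(q)\le q$, so you also need strictness for $s>0$. Since $0<V\le1-u^2$, $u(s,\cdot)$ is odd and strictly increasing, and $X_s$ is nondegenerate for $s>0$, you get $u(s,X_s)\ne0$ with positive probability. Hence $\E V(s,X_s)^2<1$ and $\Gamma(q)<q$ for every $q>0$.

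The contact identity \zeqref{eq:contact} then gives $\supp\mu_\beta\subseteq\{0\}$ directly; neither uniqueness nor \zeqref{eq:zero-in-support} is needed. This is exactly the paper's argument. Citing a classical high-temperature result instead is acceptable only if the reference genuinely covers the critical point $\beta=1$.
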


We argue by contradiction. Suppose that a limiting overlap
$R^{12}$ between the two temperatures satisfies
$\Prob(|R^{12}|>s_0)>0$ for some $s_0>0$. Choose a common
support point $q\in\supp\mu_1\cap\supp\mu_2$ with
$0<q<s_0^2$. On the event $\{|R^{12}|>s_0\}$, we construct,
for every $n$ and every sufficiently small $\delta>0$, two
families of $n$ replicas, one at each temperature. After
multiplying the selected replicas by suitable signs, all
overlaps between distinct replicas in either family lie in
$[q-\delta,q+\delta]$, and every overlap between the two
families is at least $q-\delta$.

Let $B_{a,n}$ be the average of the signed spins at one fixed
site in the family at inverse temperature $\beta_a$, and let
$\mathscr R$ denote the full overlap array. The conditional
expectation of a product of two spins equals their overlap.
Thus the within-family bounds control
$\E[B_{a,n}^2\mid\mathscr R]$ from above, while the
between-family bound controls
$\E[B_{1,n}B_{2,n}\mid\mathscr R]$ from below. Subtracting gives
\[
 \E[(B_{1,n}-B_{2,n})^2\mid\mathscr R]
 \le \frac2n+4\delta.
\]
The conditional spin estimates also show that, as
$n\to\infty$ and $\delta\downarrow0$, the conditional moments
of $B_{a,n}$ converge to those of the scalar magnetization
$m_a(q)$ at inverse temperature $\beta_a$. The displayed
bound therefore forces $m_1(q)$ and $m_2(q)$ to have the same
moments. Since these variables take values in $[-1,1]$,
their distributions agree.

This equality determines the temperature. Indeed,
$\arctanh m_a(q)$ differs by a deterministically bounded
amount from a centered Gaussian variable of variance
$\beta_a^2\kappa(q)$. The bounded difference does not affect
the quadratic growth rate
\[
 \lim_{\lambda\to\infty}\frac1{\lambda^2}
 \log\E_{\rm sc}\cosh\bigl(\lambda\arctanh m_a(q)\bigr)
 =\frac{\beta_a^2\kappa(q)}2.
\]
Equal scalar laws give equal limits. Since $\kappa(q)>0$,
this implies $\beta_1=\beta_2$, a contradiction.

We first prove this last scalar assertion in
\zcref{subsec:tc-scalar-law}. In
\zcref{subsec:tc-joint-cavity}, we extend the conditional moment 
estimates to conditioning on overlaps from both temperatures and
derive the Gaussian integration-by-parts formula with both Gibbs
normalizing constants. The identities in
\zcref{subsec:tc-overlap-identities} then constrain overlaps between
the two types. In \zcref{subsec:tc-replica-selection}, these constraints
and the marginal Ghirlanda--Guerra identities produce the finite
families described above. Finally, \zcref{subsec:tc-completion}
compares their spin averages and proves
\zcref[nocomp]{thm:tc-mixture,cor:tc-sk}.

\subsection{Determining the temperature from a scalar law}
\label{subsec:tc-scalar-law}

Let $\Phi_a$ solve the Parisi PDE associated with $\mu_a$ and
covariance $\beta_a^2\xi$, with terminal value $\log(2\cosh x)$, and put
$u_a=\partial_x\Phi_a$. The scalar diffusion from
\zcref{sec:scalar}, started at zero, can be written as
\begin{equation}\label{eq:tc-scalar-diffusion}
 X^a_q=\beta_a B_{\kappa(q)}
       +\beta_a^2\int_0^q
          \alpha_a(s)u_a(s,X^a_s)\,\dd\kappa(s),
 \qquad 0\le q\le1,
\end{equation}
where $B$ is a standard Brownian motion. Write $\E_{\rm sc}$ for
expectation over this scalar diffusion, and define
\[
 m_a(q)=u_a(q,X^a_q),\qquad
 c_{a,k}(q)=\E_{\rm sc}m_a(q)^k\quad(k\ge1).
\]

\begin{lemma}\label{lem:tc-scalar-rigidity}
For fixed $\xi$ and $q\in(0,1]$, the law of $|m_a(q)|$ determines
$\beta_a$. The point $q$ need not belong to $\supp\mu_a$.
\end{lemma}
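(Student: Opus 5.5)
The plan is to compute the quadratic growth rate
\[
 \Lambda_a(q)=\lim_{\lambda\to\infty}\frac1{\lambda^2}\log\E_{\rm sc}\cosh\bigl(\lambda\arctanh m_a(q)\bigr)
\]
and show that it equals $\beta_a^2\kappa(q)/2$. Since $\cosh$ is even, $\Lambda_a(q)$ depends only on the law of $|m_a(q)|$. Also $\kappa(q)>0$ for $q>0$, because $\xi\not\equiv0$ has nonnegative even coefficients. So the law of $|m_a(q)|$ determines $\beta_a^2$, and hence $\beta_a\ge0$. The formula should hold for every $q\in(0,1]$, because it uses only pathwise bounds on the diffusion and never the contact identity \zeqref{eq:contact}. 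For this reason $q$ need not lie in $\supp\mu_a$.

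\emph{Step 1: reduce $\arctanh m_a(q)$ to a Gaussian plus a bounded error.} Apply Lemma~\zcref[noname]{scalar:tail} to covariance $\beta_a^2\xi$. Its derivative is $\beta_a^2\kappa$, so \zeqref{eq:sigmoid-shift} gives
\[
 |\arctanh u_a(q,x)-x|\le 2\beta_a^2[\kappa(1)-\kappa(q)]\le 2\beta_a^2L.
\]
By \zeqref{eq:tc-scalar-diffusion}, and using $|\alpha_au_a|\le1$, the drift satisfies
\[
 \Bigl|X_q^a-\beta_aB_{\kappa(q)}\Bigr|\le\beta_a^2\kappa(q)\le\beta_a^2L.
\]
Hence $\arctanh m_a(q)=G+D$, where $G=\beta_aB_{\kappa(q)}\sim N(0,\beta_a^2\kappa(q))$ and $|D|\le c:=3\beta_a^2L$ deterministically. (When $\beta_a=0$, $m_a\equiv0$ and everything is trivial.)

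\emph{Step 2: sandwich the $\cosh$ moment.} Use the elementary bounds
\[
 e^{-\lambda c}\cosh(\lambda g)/2\le\cosh(\lambda(g+d))\le e^{\lambda c}\cosh(\lambda g)\qquad(|d|\le c).
\]
The lower bound follows because $\cosh(x+y)\ge \tfrac12 e^{|x|-|y|}\ge e^{-|y|}\cosh(x)/2$. Combining with $\E\cosh(\lambda G)=e^{\lambda^2\beta_a^2\kappa(q)/2}$ gives
\[
 \frac{\lambda^2\beta_a^2\kappa(q)}2-\lambda c-\log2\le\log\E_{\rm sc}\cosh\bigl(\lambda\arctanh m_a(q)\bigr)\le\frac{\lambda^2\beta_a^2\kappa(q)}2+\lambda c.
\]
Dividing by $\lambda^2$ and letting $\lambda\to\infty$ gives $\Lambda_a(q)=\beta_a^2\kappa(q)/2$.

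\emph{Step 3: conclude.} Two values $\beta_1,\beta_2$ that produce the same law of $|m_a(q)|$ have the same $\Lambda$, so $\beta_1^2\kappa(q)=\beta_2^2\kappa(q)$, and therefore $\beta_1=\beta_2$. The only point needing care is that $|m_a(q)|<1$ almost surely, which follows from \zeqref{e:pdebounds2}, so $\arctanh$ is finite. I expect no real obstacle here; the main thing to check is that the constant in \zeqref{eq:sigmoid-shift} scales as $\beta_a^2$ when Lemma~\zcref[noname]{scalar:tail} is applied at covariance $\beta_a^2\xi$.
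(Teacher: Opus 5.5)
Your proposal is correct and follows the paper's proof: you bound the drift and the $\arctanh$ shift to write $\arctanh m_a(q)$ as $\beta_a B_{\kappa(q)}$ plus a deterministically bounded error, sandwich $\E_{\rm sc}\cosh(\lambda\arctanh m_a(q))$ between Gaussian moments, and read off the growth rate $\beta_a^2\kappa(q)/2$. The differences are cosmetic. Your constant is $3\beta_a^2L$ rather than the paper's $\beta_a^2(2L-\kappa(q))$, and your lower bound carries an extra factor $1/2$, which disappears after dividing by $\lambda^2$.
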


\begin{proof}
The drift accumulated in \zeqref{eq:tc-scalar-diffusion} through
time $q$ has absolute value at most $\beta_a^2\kappa(q)$.
The bound \zeqref{eq:sigmoid-shift}, applied with covariance
$\beta_a^2\xi$, gives
\[
 |\arctanh u_a(q,x)-x|
 \le2\beta_a^2\bigl(L-\kappa(q)\bigr).
\]
In particular, $|u_a(q,x)|<1$ for finite $x$. With
\[
 W_a=\arctanh m_a(q),\qquad
 C_a=\beta_a^2\bigl(2L-\kappa(q)\bigr),
\]
the two bounds imply the pathwise estimate
\[
 |W_a-\beta_a B_{\kappa(q)}|\le C_a.
\]
For real $x,d$, we have
$e^{-|d|}\cosh x\le\cosh(x+d)\le e^{|d|}\cosh x$.
Applying this inequality pathwise and then taking Gaussian
expectation yields
\[
 e^{-C_a|\lambda|+\lambda^2\beta_a^2\kappa(q)/2}
 \le\E_{\rm sc}\cosh(\lambda W_a)
 \le e^{C_a|\lambda|+\lambda^2\beta_a^2\kappa(q)/2}.
\]
The bounded error contributes only a term linear in $|\lambda|$
to the logarithm. Therefore
\begin{equation}\label{eq:tc-logit-scale}
 \lim_{\lambda\to\infty}\frac1{\lambda^2}
 \log\E_{\rm sc}\cosh\bigl(\lambda\arctanh m_a(q)\bigr)
 =\frac{\beta_a^2\kappa(q)}2.
\end{equation}
The expectation depends only on the law of $|m_a(q)|$.
Since $\kappa(q)>0$ and $\beta_a>0$, this limit determines
$\beta_a$.
\end{proof}

\subsection{Conditional spin laws for the two temperatures}
\label{subsec:tc-joint-cavity}

We need to retain overlaps involving both temperatures when applying
the conditional moment estimates. Index replicas by
$(a,i)\in\{1,2\}\times\mathbb N$; for a single label $j$, let
$a(j)$ denote its type. Fix a subsequential joint limit of the full
signed overlap array and the replica spins at one distinguished
site, denoted by $(R,\varepsilon)$. Such a limit exists by
compactness of the product space. Let $\mathscr R$ be the completed
sigma-field generated by every off-diagonal entry of $R$. Expectations
$\E$ for this limiting system include all its randomness.

Site permutation invariance relates the added-site spins to the
overlaps. For distinct replica labels $i,j$,
\begin{equation}\label{eq:tc-spin-site}
 \E[\varepsilon_i\varepsilon_j\mid\mathscr R]=R_{ij},
 \qquad \varepsilon_i^2=1.
\end{equation}
Indeed, against a bounded continuous function of finitely many
overlaps, the expectation of the spin product at the distinguished
site equals its average over all sites, which is the overlap.
Pass to the joint limit and then extend from continuous cylinder
functions by a monotone-class argument.

\begin{lemma}\label{lem:tc-mixed-source}
Fix $a\in\{1,2\}$, $k\ge1$, $q\in[0,1]$, and $e>0$.
There exists $\eta>0$ such that, for every deterministic tuple
of distinct type-$a$ labels $i_1,\ldots,i_k$ and signs
$s_1,\ldots,s_k\in\{-1,1\}$,
\begin{equation}\label{eq:tc-source-shell}
 \left|\E\left[\prod_{j=1}^k s_j\varepsilon_{i_j}
                     \,\middle|\,\mathscr R\right]
          -c_{a,k}(q)\right|\le e
\end{equation}
almost surely on the event
\[
 |s_js_\ell R_{i_ji_\ell}-q|<\eta
 \quad(1\le j<\ell\le k).
\]
The two-group conditional estimate
\zeqref{mark:source-conditional-bound} also holds given
$\mathscr R$, with both groups of type $a$ and scalar covariance
$\beta_a^2\xi$.
\end{lemma}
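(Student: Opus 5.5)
The plan is to rerun the argument of Step~1 in the proof of Proposition~\zcref[noname]{mark:conditional}. The only change is that the conditioning now involves a larger, two-temperature overlap array, and the single-temperature suppression bound of Lemma~\zcref[noname]{src:source-suppression} will be applied to the type-$a$ replicas alone.

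The first step is to reduce the signed statement to the unsigned one. At zero field the Gibbs measure $G_{N,a}$ is invariant under a global flip of any single replica. Replacing $\sigma^{i_j}$ by $s_j\sigma^{i_j}$ maps the constraint $|s_js_\ell R_{i_ji_\ell}-q|<\eta$ to $|R_{i_ji_\ell}-q|<\eta$, and it turns the monomial $\prod_j s_j\sigma^{i_j}_N$ into $\prod_j\sigma_N^{i_j}$. I will apply this flip at finite $N$ to the selected labels only, together with the induced sign changes of overlaps in every test function. Then it suffices to treat $s_j\equiv1$ against arbitrary continuous cylinder test functions of the transformed array.

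Next, fix a nonnegative continuous cylinder function $T$ of finitely many entries of the two-type array, and let $\chi$ be the continuous cutoff from Step~1, built from the $k$ prescribed overlaps. At finite $N$, site exchangeability gives
\[
\E\langle T_N\chi_N A_N\rangle=\E\langle T_N\chi_N\textstyle\prod_j\sigma_N^{i_j}\rangle,
\]
because a common site permutation applied to all replicas of both types preserves the disorder law and every overlap. I then bound $T_N$ by $\|T\|_\infty$ and integrate out all labels other than $i_1,\ldots,i_k$. These remaining labels include every type-$b$ replica with $b\neq a$, and conditional on the disorder they are independent of the type-$a$ tuple. This leaves
\[
\E G_{N,a}^{\otimes k}\bigl(\mathcal C_\eta(D)\cap\{|A_N-c|>\delta\}\bigr),
\]
where $D$ is the one-group matrix with off-diagonal entries $q$. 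Lemma~\zcref[noname]{src:source-suppression}, applied with covariance $\beta_a^2\xi$ and $h=0$, bounds this by $Ce^{-c_0N}$, and the coefficient is $c=c(P_0;q)=\nu_{q,0}(z^k)=c_{a,k}(q)$ by Lemma~\zcref[noname]{src:replica-coefficient}. This yields \zeqref{mark:source-finite-bound} in the present setting.

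Then I pass to the fixed joint subsequential limit $(R,\varepsilon)$ and run the measure-positivity argument from Step~1 to extend from continuous $T$ to bounded $\mathscr R$-measurable $T$. Dividing by $\chi$ on $\{\chi>0\}$ gives \zeqref{eq:tc-source-shell} with $e=\delta$. The two-group assertion follows verbatim from Lemma~\zcref[noname]{src:source-suppression} for the two-group construction, again after integrating out the other type. The width $\eta$ depends only on $\xi,\beta_a,k,e$, so it is uniform over tuples and signs.

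The main obstacle I expect is the bookkeeping in the sign reduction. The flip has to be made at finite volume, and the test functions, which involve overlaps of the flipped labels with the other type, transform consistently because the flip is a measure-preserving bijection of the joint sampling law. After that, the passage to the limit is unchanged. The suppression lemma never sees the other temperature, since those replicas are integrated out before it is applied.
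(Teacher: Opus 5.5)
Your proposal is correct and follows the paper's proof essentially step for step. You use site exchangeability to replace the distinguished-site monomial by $A_N$, then integrate out every replica outside the selected type-$a$ tuple (including all replicas of the other type) before applying Lemma~\zcref[noname]{src:source-suppression} with covariance $\beta_a^2\xi$ at zero field, identify the coefficient as $c_{a,k}(q)$, and finish with the measure-positivity extension and division by the cutoff from Step~1 of Proposition~\zcref[noname]{mark:conditional}; the only difference is that the paper invokes the zero-field replica-flip invariance at the end to obtain general signs, whereas you make that reduction at the outset.
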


\begin{proof}
First take all signs equal to one. For the one-group construction
in \zcref{src:replica-groups}, use $k$ replicas, prescribe every
off-diagonal overlap to be $q$, and take
$P_0(\epsilon)=\prod_{j=1}^k\epsilon_j$.
Lemma~\zcref[noname]{src:replica-coefficient} identifies its scalar
coefficient as $c_{a,k}(q)$: conditional on the common scalar value
at $q$, the continuations have independent terminal spins with
mean $u_a(q,X_q^a)$. Apply
Lemma~\zcref[noname]{src:source-suppression} with covariance
$\beta_a^2\xi$ and accuracy $e$ to choose $\eta$.

Let $A_N$ be the empirical average of the spin monomial, let $T_N$ be a
nonnegative bounded continuous function of finitely many overlaps
of either type, and let $\chi_N\in[0,1]$ be a continuous cutoff
positive precisely on the prescribed open overlap constraint.
Site symmetry replaces the monomial at the distinguished site by
$A_N$. Splitting according to $|A_N-c_{a,k}(q)|\le e$ gives
\begin{equation}\label{eq:tc-mixed-suppression}
 \begin{aligned}
 &\left|\E\langle T_N\chi_N
       (P_0(\varepsilon)-c_{a,k}(q))\rangle\right|\le e\E\langle T_N\chi_N\rangle
       +2\|T_N\|_\infty C e^{-cN}.
 \end{aligned}
\end{equation}
For the second term, bound $T_N$ by its supremum and integrate out
every replica outside the selected tuple. Conditional on the disorder,
their Gibbs measures are probability measures. Bound the remaining
term using Lemma~\zcref[noname]{src:source-suppression}
at inverse temperature $\beta_a$. 

Pass to the joint limit. As in the proof of
\zeqref{mark:source-conditional-bound}, the inequalities against
all nonnegative continuous cylinder functions extend to all Borel
overlap events. Dividing on the sets where the cutoff is bounded
away from zero proves \zeqref{eq:tc-source-shell}.
The same argument with a two-group monomial proves the second
assertion. At zero field, flipping any selected replica preserves
the joint Gibbs law, even with the disorder fixed. This gives every
deterministic sign choice.

For each fixed $q$, take a common full-measure set over both types,
all degrees, deterministic tuples, sign choices, and dyadic
accuracies. There are only countably many choices. Consequently,
the tuple and signs in \zeqref{eq:tc-source-shell} may subsequently
be chosen measurably from $\mathscr R$, by partitioning according
to their countably many possible values.
\end{proof}

We next construct the cavity representation with a common Gaussian
field and two separate normalizing constants. In the statement below,
let $\nu_{a,q}$ denote the law of $m_a(q)$. For $0\le r\le Q_a$,
let $\nu_{a,r}^{Q_a}$ be the law of
\[
 \bigl(u_a(Q_a,X_{Q_a}^{a,1}),u_a(Q_a,X_{Q_a}^{a,2})\bigr),
\]
where the two scalar paths agree through time $r$ and then continue
independently conditional on their common value at $r$. For $r<0$,
reflect the second coordinate of $\nu_{a,|r|}^{Q_a}$.

\begin{lemma}\label{lem:tc-joint-cavity}
The chosen limit $(R,\varepsilon)$ has the following representation.
Let $G_1,G_2$ be random probability measures on a common separable
Hilbert space, with $G_a$ supported on $\{v:\|v\|^2=Q_a\}$.
Conditional on these measures, let $z$ be a centered Gaussian field
with covariance
\[
 \E[z(v)z(w)\mid G_1,G_2]=\kappa(v\cdot w).
\]
Define
\begin{equation}\label{eq:tc-cavity-weights}
 \begin{aligned}
 Y_a(v)&=\beta_a z(v),& M_a(v)&=\tanh Y_a(v),\\
 Z_a&=\int\cosh Y_a(v)\,G_a(dv),&
 \widehat G_a(dv)&=\frac{\cosh Y_a(v)}{Z_a}\,G_a(dv).
 \end{aligned}
\end{equation}
Conditional on $G_1,G_2,z$, sample all vectors independently, with
$v^i$ sampled from $\widehat G_{a(i)}$. Conditional on the measures,
the field, and all these vectors, the spins $\varepsilon_i$ are
independent signs with means $M_i=M_{a(i)}(v^i)$.
For $i\ne j$, their overlaps are $R_{ij}=v^i\cdot v^j$.
The measures $G_a$ and $\widehat G_a$ are mutually absolutely
continuous.

Conditional on $\mathscr R$, the law of $M_i$ is
$\nu_{a(i),Q_{a(i)}}$. If $i,j$ are distinct and of type $a$,
the conditional law of $(M_i,M_j)$ is $\nu_{a,R_{ij}}^{Q_a}$.
Applying $\arctanh$ gives the conditional laws of
$Y_i=Y_{a(i)}(v^i)$ and of a pair of fields of the same type.
Set $R_{ii}=Q_{a(i)}$ in formulas involving directing vectors.
Then, for all $i,j$, including the diagonal,
\begin{equation}\label{eq:tc-mark-site}
 \E[M_iM_j\mid\mathscr R]=R_{ij}.
\end{equation}

Let $\mathcal I$ be a finite set of old labels, let
$n_b=|\{j\in\mathcal I:a(j)=b\}|$, and let $F$ be a bounded
function of their overlaps. For $i\in\mathcal I$ and
$g\in C^1(\R^{\mathcal I})$ satisfying
\[
 |g(y)|+\sum_{j\in\mathcal I}|\partial_jg(y)|
 \le C\exp\left(c\sum_{j\in\mathcal I}|y_j|\right),
\]
we have
\begin{equation}\label{eq:tc-ibp}
 \begin{aligned}
 \E[FY_i g(\mathbf Y)]
 =\E\Bigl[F\Bigl(
 &\sum_{j\in\mathcal I}K_{ij}\partial_jg(\mathbf Y)
 +g(\mathbf Y)\sum_{j\in\mathcal I}K_{ij}M_j-g(\mathbf Y)\sum_{b=1}^2n_bK_{i,b+}M_{b+}
 \Bigr)\Bigr],
 \end{aligned}
\end{equation}
where
\[
 K_{ij}=\beta_{a(i)}\beta_{a(j)}\kappa(R_{ij}),
\]
and $b+$ denotes an additional independent type-$b$ sample in
its term. Coefficients of $g$ may depend measurably on the overlaps
among $\mathcal I$, provided the growth bound is uniform. These
coefficients are held fixed in the field derivatives.
\end{lemma}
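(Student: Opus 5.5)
We will obtain this two-temperature representation by running the one-spin cavity construction of Lemma~\zcref[noname]{cav:representation} for both Gibbs measures at once. Split $H_{N+1}$ into $H_N^{\rm red}+\epsilon z_N+W_N$ as in that proof. The cavity field $z_N$ and the remainder $W_N$ are common to both temperatures, and type $a$ sees $\beta_a z_N$. The appendix estimates remove $W_N$ and replace $H_N^{\rm red}$ by $H_N$ for each $\beta_a$ separately, at total-variation cost $o(1)$. Summing out the last spin then reweights each reduced measure $G^{\rm red}_{N,a}$ by $\cosh(\beta_a z_N)$, normalized by its own $\mathcal Z_{N,a}$.

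Passing to a further subsequence, we take a joint limit of the reduced overlap arrays of both types together with the restored spins. The Dovbysh--Sudakov theorem for a jointly exchangeable array with two types (exchangeable within each type, e.g.\ via Panchenko's multi-type version) gives random measures $G_1,G_2$ on a common Hilbert space. The conditional Gaussian field $z$ is built exactly as in the one-type case, with covariance $\kappa(v\cdot w)$. The independent noises $\eta_\ell$ contribute factors $a(v)$.

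Theorem~\zcref[noname]{thm:array} applies to each single-type predecessor with mixture $\beta_a^2\xi$. So $G_a$ is supported on $\{\|v\|^2=Q_a\}$, $a(v)$ is constant on each sphere, and it cancels from $\widehat G_a$, leaving \zeqref{eq:tc-cavity-weights}. The sampling rule, the conditional spin means $M_i$, and mutual absolute continuity are then read off as in Lemma~\zcref[noname]{cav:representation}, with two separate normalizers $Z_1,Z_2$. The order-$1/N$ discrepancy between full and reduced overlaps disappears in the limit as before.

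For the conditional laws, we repeat Steps~3--5 of Proposition~\zcref[noname]{mark:conditional}, replacing Step~1 by Lemma~\zcref[noname]{lem:tc-mixed-source}. That lemma already conditions on the full two-type $\mathscr R$, and the ball-selection Lemma~\zcref[noname]{mark:ball-approximation} carries over with $Z_h$ replaced by $Z_a$. The norms are deterministic, $q_i=Q_{a(i)}$, so the one-point laws are $\nu_{a,Q_a}$. The same-type pair laws are $\nu^{Q_a}_{a,R_{ij}}$, using the zero-field reflection for $R_{ij}<0$. Since $u_a(Q_a,\cdot)=\tanh$, applying $\arctanh$ gives the field laws.

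Identity \zeqref{eq:tc-mark-site} for $i\ne j$ follows from \zeqref{eq:tc-spin-site} by conditional independence of the spins, $\E[\varepsilon_i\varepsilon_j\mid\cdots]=M_iM_j$. This holds for pairs of different types as well, since \zeqref{eq:tc-spin-site} already includes them. On the diagonal we need $\E[M_i^2\mid\mathscr R]=\Gamma_a(Q_a)=Q_a$, which is \zeqref{eq:contact}.

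The integration-by-parts formula \zeqref{eq:tc-ibp} is proved as in Lemma~\zcref[noname]{cav:gaussian}. We write the sampling density relative to $\bigotimes_{j}G_{a(j)}$ as $\prod_b Z_b^{-n_b}\prod_j\cosh(\beta_{a(j)}z(v^j))$. We differentiate along finite Gaussian projections of $z$, using the covariance of $Y_i=\beta_{a(i)}z(v^i)$ with $Y_j$, which is $K_{ij}$. The derivative of $\log Z_b$ is $\beta_b\,\widehat G_b[\kappa(v^i\cdot\,)\beta_{a(i)}M_b]$, represented by an extra type-$b$ sample. Exponential moments as in \zeqref{cav:fieldtails} justify the projection limit. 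Coefficients depending on old overlaps are allowed because the reference vectors are frozen during differentiation.

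The main obstacle is the joint two-type directing representation with a \emph{common} field. We must make sure that the predecessor pair $(G_1,G_2)$ is physical for both temperatures simultaneously, so that Theorem~\zcref[noname]{thm:array} fixes both norms, and that the cross-type overlaps are inner products in one Hilbert space. For this we would invoke the two-type (multi-species) Dovbysh--Sudakov representation applied to the reduced system's joint array, and check that the appendix TV bounds hold uniformly over both types.
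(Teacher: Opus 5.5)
Your proposal is correct and takes essentially the paper's route. It uses Panchenko's two-type directing representation for the reduced arrays, a common cavity field $z$ with separate normalizers $Z_1,Z_2$, Steps~3--5 of Proposition~\zcref[noname]{mark:conditional} with Lemma~\zcref[noname]{lem:tc-mixed-source} replacing Step~1, and Gaussian integration by parts against the density $Z_1^{-n_1}Z_2^{-n_2}\prod_j\cosh Y_j$. The one difference is where you fix the norms: you apply Theorem~\zcref[noname]{thm:array} to each predecessor marginal, which is physical at $\beta_a^2\xi$ by the appendix relative-entropy comparison, while the paper applies it to the successor marginal (the original system at $\beta_a$) and transfers the sphere support to $G_a$ through the strictly positive cavity density; both work.
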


\begin{proof}
We adapt the construction of Lemma~\zcref[noname]{cav:representation}
to both temperatures. Remove the distinguished site in each system
along the chosen joint subsequence. The joint overlap array of the
reduced systems is positive semidefinite and invariant under separate
finite permutations of the two replica sequences. The representation
theorem \cite[Theorem~2]{PanchenkoTemperature2016} gives two directing
measures on a common Hilbert space. This theorem uses these properties
of the array and does not require the genericity assumption used
elsewhere in that reference. The measures are supported on the unit
ball, since a positive-mass sufficiently small ball around a vector
of norm greater than one would produce off-diagonal overlaps greater
than one.

The reduced Hamiltonian has coefficients
$b_p(N/(N+1))^{(p-1)/2}$, and the unscaled one-site Gaussian field
has covariance $\kappa(NR/(N+1))$. The repeated-index estimate in
\zcref{app:cavity} bounds the variance of the omitted Gaussian
remainder by
\[
 v_N\le\sum_p b_p^2
       \min\left\{p,\frac{p(p-1)}{2(N+1)}\right\}
       =O_\xi(N^{-1}).
\]
At fixed disorder, the total variation distance between products of
probability measures is at most the sum of their marginal distances.
The estimate for removing an independent Gaussian remainder in that appendix therefore bounds
the disorder-averaged error for $n_1,n_2$ replicas by
\[
 \frac{\sqrt{v_N}}2\sum_{a=1}^2n_a\beta_a.
\]
It tends to zero for fixed replica counts.

Keep both cavity normalizing constants when passing to the limit.
Each is at least one. The auxiliary-replica approximation in
\zcref{app:cavity-joint}, using $J$ auxiliary replicas of each
type, approximates their inverse powers with error $O(J^{-1/2})$
for fixed $n_1,n_2$. To see that the same estimate controls the
product, use $x^{-n_a}\le1$ on $[1,\infty)$ and telescope the
two factors. Their individual errors are bounded by the Lipschitz
constant of $x\mapsto x^{-n_a}$ and the Gaussian second moments
of the auxiliary averages. We may thus take $N\to\infty$ at
fixed $J$, using continuity of Gaussian expectations with respect to the covariance matrix,
and then let $J\to\infty$.

The common field can be constructed from the map
\[
 v\longmapsto\bigoplus_{p\in2\mathbb N}
                  \sqrt p\,b_p\,v^{\otimes(p-1)},
\]
whose inner product at $v,w$ is $\kappa(v\cdot w)$.
Before reweighting, condition on the directing measures and reference
vectors sampled independently from them. The limiting unscaled field
at label $i$ is represented by
\[
 z(v^i)+\eta_i,\qquad
 \eta_i\sim N\bigl(0,L-\kappa(\|v^i\|^2)\bigr),
\]
where the $\eta_i$ are independent of $z$ and of one another
conditional on the reference vectors. They are independent by label,
even when two vectors coincide. For an added spin $e\in\{-1,1\}$
of type $a$, integration gives
\[
 \E_\eta e^{e\beta_a(z(v)+\eta)}
 =e^{\beta_a^2(L-\kappa(\|v\|^2))/2}e^{e\beta_a z(v)}.
\]
Summing over $e$ gives the cavity weight, while retaining $e$
gives a sign with conditional mean $\tanh(\beta_a z(v))$.
The limit construction includes these prescribed spins in the
numerator test functions, and hence realizes the chosen joint limit.

Each successor marginal is a limit of the original system at its
fixed temperature. Theorem~\zcref[noname]{thm:array} places its
directing measure on the sphere of squared radius $Q_a$. The
strictly positive cavity density transfers this support to $G_a$.
The radial factor in the last display is consequently constant on
each measure and cancels in its normalization. This proves
\zeqref{eq:tc-cavity-weights} and mutual absolute continuity.

To identify the conditional magnetization laws, use
Lemma~\zcref[noname]{lem:tc-mixed-source} in the proof of
Proposition~\zcref[noname]{mark:conditional}. We verify that
approximating a sampled vector by replicas in a small ball remains
valid when the conditioning includes both types. Fix a type $a$,
sample $v$ from $\widehat G_a$, and then sample $w$ from its
normalized restriction to a ball $B_v$ of positive mass. Write
$(G_a)_{B_v}$ for the normalized restriction of $G_a$ to this ball.
The density relative to $G_a(dv)(G_a)_{B_v}(dw)$ is
\[
 \frac{\cosh Y_a(v)\cosh Y_a(w)}
      {Z_a\,(G_a)_{B_v}(\cosh Y_a)}.
\]
Both denominators are at least one. Using the reference Gaussian
law with the directing measures and vectors fixed, put
$D=Y_a(w)-Y_a(v)$ and
\[
 \begin{aligned}
 c(z)&=\beta_a^2[\kappa(w\cdot z)-\kappa(v\cdot z)],\\
 \sigma^2(v,w)&=\beta_a^2[
   \kappa(\|w\|^2)+\kappa(\|v\|^2)-2\kappa(v\cdot w)].
 \end{aligned}
\]
If $\sigma^2(v,w)\le\chi$ throughout the ball,
Cauchy--Schwarz gives $|c(z)|\le\beta_a\sqrt{L\chi}$. 
Differentiating the logarithm of this density in the Gaussian
direction associated with $D$ gives 
\[
 c(v)M_a(v)+c(w)M_a(w)
 -\widehat G_a[cM_a]
 -\widehat G_a[cM_a\mid B_v],
\]
as in \zeqref{mark:ball-score}. Its absolute value is at most
$4\beta_a\sqrt{L\chi}$. Gaussian integration by parts therefore
bounds the expectation $A=\E_{\rm ball}D^2$ under this sampling law by
\[
 A\le\chi+4\beta_a\sqrt{L\chi}\sqrt A,
 \qquad A\le C_{\beta_a,L}\chi.
\]
This bound is independent of the ball mass. A bounded function of
the mixed overlaps multiplies the error by at most its supremum;
after that bound, all other normalized sampling measures integrate
to one. Since the covariance is continuous, $\chi\to0$ as the
ball radius tends to zero. The selection of nearby replicas and
the fixed-degree moment approximation in
\zcref{subsec:conditional-magnetizations} thus apply with
conditioning on $\mathscr R$. They give the stated one-point and
same-type pair laws. Since $u_a(Q_a,x)=\tanh x$ by
\zeqref{eq:scalar-terminal}, applying $\arctanh$ identifies the
field laws as well.

Conditional independence of the added spins and
\zeqref{eq:tc-spin-site} give \zeqref{eq:tc-mark-site} off the
diagonal. On the diagonal, the conditional scalar law and
$c_{a,2}(Q_a)=Q_a$ from \zeqref{eq:contact} give the same identity.

Finally, condition on $G_1,G_2$ and hold the reference vectors and
$F$ fixed during Gaussian differentiation. Their sampling density is
\[
 Z_1^{-n_1}Z_2^{-n_2}
 \prod_{j\in\mathcal I}\cosh Y_j.
\]
Differentiating $g$ gives the first sum in \zeqref{eq:tc-ibp}.
The numerator factors give one term $K_{ij}M_j$ for each old
label. Differentiating $Z_b^{-n_b}$ gives
\[
 -n_b\int\beta_{a(i)}\beta_b\kappa(v^i\cdot v)
                  M_b(v)\,\widehat G_b(dv),
\]
represented by the additional label $b+$. Finite Gaussian
projections justify the calculation as in
Lemma~\zcref[noname]{cav:gaussian}: variances are bounded,
normalizing constants are at least one, and Gaussian exponential
moments dominate every term. Approximation by simple functions of
the finite overlap array gives the assertion about coefficients
of $g$.
\end{proof}

The counts in \zeqref{eq:tc-ibp} include every label used by $F$
or by a coefficient of $g$. We apply this identity only with
finitely many such labels. The conditional magnetization laws,
in contrast, condition on the entire mixed array. Gaussian
differentiation is performed before this conditioning, under the
Gaussian field law conditional on $G_1,G_2$.

\subsection{Identities for overlaps with two reference replicas}
\label{subsec:tc-overlap-identities}

The marginal overlap laws supply two support properties used below.
The first ensures that replicas occur near every prescribed marginal
support level. The second supplies vectors with arbitrarily small
projections onto any fixed finite collection of vectors.

\begin{lemma}\label{lem:tc-richness}
Almost surely, for $\widehat G_a$-almost every $v$, every relatively
open interval $I\subseteq[0,1]$ with $\mu_a(I)>0$ satisfies
\[
 \widehat G_a\{w:|v\cdot w|\in I\}>0.
\]
If $0\in\supp\mu_a$ and $Q_a>0$, then almost surely, for every
$m\ge1$, every collection of Hilbert-space vectors
$z_1,\ldots,z_m$, and every $\epsilon>0$, there exists
$w\in\supp\widehat G_a$ such that
\[
 \max_{1\le i\le m}|z_i\cdot w|<\epsilon.
\]
\end{lemma}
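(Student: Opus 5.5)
Both assertions concern only the single-temperature marginal, namely the type-$a$ successor $\widehat G_a$, whose overlap array is a physical overlap law at inverse temperature $\beta_a$. By Theorem~\zcref[noname]{thm:array} and Corollary~\zcref[noname]{cor:geometry}, its absolute overlaps form the RPC array $\mathcal L_{\mu_a}$, and $\widehat G_a$ is supported on the sphere of squared radius $Q_a$. The plan is to convert statements about the annealed array law into almost-sure statements about the random measure $\widehat G_a$. This uses the Ghirlanda--Guerra identities of Theorem~\zcref[noname]{thm:gg} together with the strong law for i.i.d.\ samples given $\widehat G_a$.

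For the first assertion, fix a rational open interval $I$ with $\mu_a(I)>0$; countably many such intervals suffice, since every relatively open $I$ of positive mass contains a rational subinterval of positive mass. Theorem~\zcref[noname]{thm:gg} with $\psi=\one_I$ gives, for every $n$,
\[
 \Prob\bigl(S_{1,n+1}\notin I,\ldots,S_{1,m+1}\notin I\mid\mathcal R_n\bigr)
 \le\prod_{k=n}^{m}\Bigl(1-\frac{\mu_a(I)}{k}\Bigr)\longrightarrow0
\]
as $m\to\infty$. Hence almost surely replica $1$ has infinitely many (indeed, at least one) later replicas with $S_{1j}\in I$. On the other hand, conditional on $\widehat G_a$ and $v^1$, the events $\{|v^1\cdot v^j|\in I\}$ for $j\ge2$ are i.i.d.\ with probability $\widehat G_a\{w:|v^1\cdot w|\in I\}$. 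If this probability vanished on a set of positive probability, no such $j$ would occur there, which is a contradiction. Fubini then converts ``for $v^1\sim\widehat G_a$, almost surely'' into ``for $\widehat G_a$-almost every $v$''. Finally we intersect over the countably many intervals.

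For the second assertion, apply the first with intervals $[0,\epsilon)$, which have positive $\mu_a$-mass because $0\in\supp\mu_a$. This produces, from a typical point, vectors nearly orthogonal to it. For several arbitrary vectors $z_1,\ldots,z_m$, we use the RPC structure instead. Consider the operator $T=\int w\otimes w\,\widehat G_a(dw)$. If every $w\in\supp\widehat G_a$ had $\max_i|z_i\cdot w|\ge\epsilon$, then by compactness of the finite-dimensional projection we would get a lower bound on $\widehat G_a^{\otimes2}(|v\cdot w|\ge\epsilon')$. We instead aim to show that this quantity is arbitrarily small. Take $\delta>0$ and apply the GG identities repeatedly: the probability that $n$ replicas all have pairwise $S$ below $\delta$ is bounded below by $\prod_{k<n}(k\mu_a([0,\delta))/k)$-type products, which is positive. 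So $\widehat G_a$ almost surely charges, for every $n$ and $\delta$, $n$ vectors with pairwise overlaps below $\delta$. Such nearly orthogonal vectors of norm $\sqrt{Q_a}$ satisfy $\sum_k (z\cdot w_k)^2\le \|z\|^2(1+n\delta/Q_a)\cdot Q_a$-type Bessel bounds. Summing over $i$, some $w_k$ then has $\max_i|z_i\cdot w_k|^2\le C\sum_i\|z_i\|^2/n<\epsilon^2$ once $n$ is large. Since the chosen vectors can be taken from $\supp\widehat G_a$ (take support points in small balls around sampled vectors), this gives the claim simultaneously for all collections, because $n,\delta$ range over countable sets and the Bessel argument is deterministic given such an almost-orthogonal family.

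The main obstacle is the almost-orthogonal family step: turning positivity of the annealed probability into an almost-sure property of $\widehat G_a$. Zero-one considerations require care. I would handle this by noting that the conditional probability given $\widehat G_a$ of the event is $\widehat G_a^{\otimes n}(\max_{k<l}|v^k\cdot v^l|<\delta)$. Its positivity must hold almost surely, not just with positive probability. To get this, I would use the RPC description from Theorem~\zcref[noname]{thm:array}, in which every cascade realization has infinitely many children at the root level (or a cluster at level near zero when $\mu_a$ has mass near $0$). Alternatively, I would argue via the strong law over disjoint blocks of replicas, as in the proof of Corollary~\zcref[noname]{cor:pd}, combined with the GG product bound tending to one along blocks.
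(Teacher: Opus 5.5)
Your treatment of the first assertion matches the paper's: the product $\prod_k(1-\mu_a(I)/k)\to0$, conditional i.i.d.\ sampling given $\widehat G_a$ and $v^1$, and a countable family of intervals. Your deterministic Gram-matrix step for the second assertion is also the paper's. With $\delta=1/M$, any $M$ vectors of squared norm $Q_a$ whose pairwise absolute overlaps are below $\delta$ have Gram operator norm at most $Q_a+1$, so some $w_j$ satisfies $\sum_i(z_i\cdot w_j)^2\le(Q_a+1)\sum_i\|z_i\|^2/M$.

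\textbf{The gap.} The gap is the step you flag yourself. You show that a fixed block of $n$ replicas is pairwise $\delta$-orthogonal with probability $\mu_a([0,\delta))^{n-1}>0$; note that this computation already needs ultrametricity, not only the identity for a single row. That bound only says $\widehat G_a^{\otimes n}$ charges such configurations on an event of positive probability. It does not give ``$\widehat G_a$ almost surely charges\dots''. Your block/strong-law remedy does not repair this as stated. Conditional on earlier replicas, the probability that a fresh block is pairwise $\delta$-orthogonal has no uniform lower bound; it is of order $k/N$ when one class holds almost all of the $N$ earlier replicas. So the chance of never seeing a good block does not factor into a vanishing product. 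You should also discard the aside about $T=\int w\otimes w\,\widehat G_a(dw)$. The quantity $\widehat G_a^{\otimes2}(|v\cdot w|\ge\epsilon')$ has mean $\mu_a([\epsilon',1])>0$ for $\epsilon'<Q_a$, so it cannot be made small. In any case, the projection onto $\mathrm{span}(z_1,\ldots,z_m)$ does not control $v\cdot w$.

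\textbf{How to close it.} The missing idea, which the paper uses, is to prove an almost-sure statement about the sampled array instead of about fixed blocks.
\begin{itemize}
\item For $0<\delta<Q_a$, put $d=\mu_a([0,\delta))\in(0,1)$. Ultrametricity makes $S_{ij}\ge\delta$ an equivalence relation.
\item The conditional Ghirlanda--Guerra identities then show: if the first $n$ replicas occupy $k$ classes, replica $n+1$ joins a class $C$ with probability $(|C|-d)/n$ and starts a new class with probability $kd/n$.
\item From any stage with $k$ classes, the probability that no further class ever appears is at most $\prod_{n\ge n_0}(1-kd/n)=0$. A countable union over the last creation time shows that infinitely many classes appear almost surely.
\item One representative from each of $M$ classes gives $M$ sampled vectors with pairwise absolute overlaps below $\delta$. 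These vectors lie in $\supp\widehat G_a$ almost surely.
\item Do this for $\delta=1/M$, $M\in\mathbb N$, on one full-measure event. Your deterministic Bessel step then gives the assertion simultaneously for all $z_1,\ldots,z_m$ and $\epsilon$.
\end{itemize}
No zero--one argument about $\widehat G_a$ is needed. Your other remedy, via the RPC, can also be made rigorous, but only if you phrase it as this same array event. The event ``infinitely many classes at threshold $\delta$'' has probability one under $\mathcal L_{\mu_a}$ by the $\PD(d,0)$ description, and hence under the physical law. Phrasing it instead as a property of cascade realizations would force you to transfer it to $\widehat G_a$ separately.
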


\begin{proof}
Use the marginal identities \zeqref{eq:full-gg-conditional} and
the marginal ultrametricity from Theorem~\zcref[noname]{thm:array}.
Write $S_{ij}=|R_{ij}|$ for replicas of type $a$. If $A_n$ is
the event that $S_{12},\ldots,S_{1n}$ all lie outside $I$, then
\[
 \Prob(A_{n+1})
 =\left(1-\frac{\mu_a(I)}n\right)\Prob(A_n).
\]
For $\mu_a(I)>0$, the product of these factors tends to zero.
Conditional independent sampling shows that the set of $v$ for
which the displayed set has zero $\widehat G_a$-mass is null.
First intersect over a countable base of intervals, and then use
inclusion to obtain the assertion for every relatively open $I$.

For the second assertion, fix $0<\eta<Q_a$ and set
$d=\mu_a([0,\eta))>0$. Ultrametricity makes the relation
$S_{ij}\ge\eta$ an equivalence relation on distinct replicas,
with each label also equivalent to itself. Among $n$ old replicas,
let its equivalence classes be $C_1,\ldots,C_k$. By
\zeqref{eq:full-gg-conditional}, conditional on the unsigned
array of these type-$a$ replicas, the next replica joins a given
class $C$ with probability $(|C|-d)/n$. Consequently, it starts
a new class with probability $kd/n$. From any fixed configuration
with $k$ classes, the probability of no further class appearing
is bounded by
\[
 \prod_{n\ge n_0}\left(1-\frac{kd}{n}\right)=0.
\]
A countable union over possible last creation times proves that
infinitely many classes appear.

Apply this conclusion at a sequence of thresholds tending to zero.
For every sufficiently large integer $M$, we can choose support
points $w_1,\ldots,w_M$ whose pairwise absolute overlaps are less
than $1/M$. Their Gram matrix has diagonal $Q_a$ and absolute
off-diagonal row sums less than one, so its operator norm is at
most $Q_a+1$. Hence
\[
 \sum_{j=1}^M\sum_{i=1}^m(z_i\cdot w_j)^2
 \le(Q_a+1)\sum_{i=1}^m\|z_i\|^2.
\]
For at least one $j$, the inner sum is at most the right side
divided by $M$. Taking $M$ large gives the required $w_j$.
Once the support points exist, this deterministic argument applies
to every finite collection $z_1,\ldots,z_m$.

All probability-one assertions used here concern a single marginal
array and therefore also hold under the joint law of the two arrays.
In particular, we have only conditioned the marginal
Ghirlanda--Guerra identities on overlaps of that marginal.
\end{proof}

Fix two reference labels $1,2$ of type $a$, and set $Q=Q_a$.
We first work on $\{R_{12}\ge0\}$ and write $r=R_{12}$;
overlap multipliers in the next identity are taken to vanish
outside this event. Given a continuous function
$h:[-1,1]^2\to\R$, the pair
$(R_{1j},R_{2j})$ records the overlaps of label $j$ with these
references. Write
\[
 h_j=h(R_{1j},R_{2j}).
\]
The marginal law determines the following function of $r$:
\begin{equation}\label{eq:tc-row-calibration}
 c_h(r)=h(Q,r)+h(r,Q)
       -2\E_a[h(R_{13},R_{23})\mid R_{12}=r],
\end{equation}
where all three labels in $\E_a$ have type $a$. Conditional
expectations are understood up to null sets for the marginal law
of $R_{12}$. The first two terms are the values at the reference
labels; the last term is twice the expected value at an additional
replica of their type. The next lemma shows that this same function
appears when overlaps with both types are included.

\begin{lemma}\label{lem:tc-row}
Let $\mathcal I$ contain the reference labels and every label used
by a bounded test function $F$ of finitely many overlaps. With the counts $n_b$ and
additional labels $b+$ from Lemma~\zcref[noname]{lem:tc-joint-cavity},
\begin{equation}\label{eq:tc-row-identity}
 \sum_{b=1}^2n_b\beta_b\E[Fh_{b+}]
 =\sum_{j\in\mathcal I}\beta_{a(j)}\E[Fh_j]
      -\beta_a\E[Fc_h(r)]
\end{equation}
for each of the test functions
\begin{equation}\label{eq:tc-row-tests}
 \begin{gathered}
 h(x,y)=|u\kappa(x)+v\kappa(y)|,
       \qquad u,v\in\R,\\
 h(x,y)=(\kappa(x)-\kappa(y))x,
       \qquad h(x,y)=x\kappa(x).
 \end{gathered}
\end{equation}
The identity is preserved under finite linear combinations and
uniform limits on $[-1,1]^2$. On $r\ge\epsilon>0$, the
coefficients of these combinations may depend continuously on $r$,
and the limits may be taken uniformly in $r$.
\end{lemma}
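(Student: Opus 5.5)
The plan is to obtain \zeqref{eq:tc-row-identity} from the two-temperature integration by parts \zeqref{eq:tc-ibp}, in a direction built from the two reference fields. The guiding observation is that every term involving only the references $1,2$ is a functional of the same-type pair law $\nu^{Q}_{a,r}$, and hence a deterministic function of $r$. The same computation applied to the type-$a$ marginal alone produces the same deterministic function. Subtracting the two identities eliminates it, and what remains is exactly the calibration $c_h(r)$ of \zeqref{eq:tc-row-calibration}. Concretely, for each test function I would derive an identity of the form
\[
 \sum_b n_b\beta_b\E[Fh_{b+}]=\sum_{j\in\mathcal I}\beta_{a(j)}\E[Fh_j]-\beta_a\E[F\,\Lambda_h(r)],
\]
with $\Lambda_h$ depending only on the scalar pair law. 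Applying it with $\mathcal I=\{1,2\}$ and $F$ a function of $r$ alone gives $2\E_a[h_3\mid r]=h(Q,r)+h(r,Q)-\Lambda_h(r)$, that is, $\Lambda_h=c_h$.

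\emph{The exact cases.} For $h(x,y)=x\kappa(x)$, take $i=1$ and $g=\tanh(Y_1)=M_1$ (the field is centered at zero field). The numerator term is $M_1\sum_jK_{1j}M_j$. Conditioning on $\mathscr R$ and using \zeqref{eq:tc-mark-site} gives $\beta_a\beta_{a(j)}\kappa(R_{1j})R_{1j}$ for $j\ne1$. The additional-label terms $b+$ are handled the same way after conditioning on the enlarged array. The left side, the derivative term and the $j=1$ term use only the one-point law $\nu_{a,Q}$, so they contribute a constant; dividing by $\beta_a$ puts the identity in the displayed form. For $h(x,y)=(\kappa(x)-\kappa(y))x$, take the difference of the identities for $i=1$ and $i=2$, again with $g=M_1$. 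The coefficient of $M_jM_1$ becomes $\beta_a\beta_{a(j)}(\kappa(R_{1j})-\kappa(R_{2j}))$, and conditional expectation replaces $M_jM_1$ by $R_{1j}$. The reference-only terms now involve the pair $(M_1,M_2)$, whose conditional law is $\nu^{Q}_{a,r}$, so they are a function of $r$.

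\emph{The absolute-value case.} For $h=|u\kappa(x)+v\kappa(y)|$ I would follow the proof of Lemma~\zcref[noname]{prop:outside-attachment}. Take the direction $uY_1+vY_2$ and $g=e^{z(uY_1+vY_2)}$, so that the coefficient of $M_j g$ is $\beta_a\beta_{a(j)}b_j$ with $b_j=u\kappa(R_{1j})+v\kappa(R_{2j})$. The tilt's conditional normalizer given $\mathscr R$ is a function of $r$ through the pair law, so, as in \zeqref{eq:si-tilt-preserves-array}, the tilt does not change the conditional law of the overlaps. Next, repeat the residual estimate \zeqref{eq:all-replica-residual} with $E_k=Y_k-z\beta_a\beta_{a(k)}^{-1}\cdot(\text{covariance})$, now using \zeqref{eq:tc-ibp} with both normalizers. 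Here $|D_k|$ stays bounded by a constant times the number of labels, which gives $M_k\to\operatorname{sgn}(b_k)$ in tilted conditional probability whenever $b_k\ne0$. This holds for old labels of both types and, via the tower property, for the additional labels $b+$. Hence $b_jM_j\to|b_j|=h_j$, and labels with $b_j=0$ contribute nothing. The reference terms, after normalization, converge to a deterministic limit depending only on $r$ (and on $u,v$), which is again identified with $c_h(r)$ by the marginal calibration. In this case the subtraction is done for each finite $z$, or equivalently after the limit, since the marginal identity has the same limit.

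\emph{Closure and main obstacle.} The closure properties follow from linearity and dominated convergence, because all terms are bounded by $\|h\|_\infty$ times finitely many labels. On $\{r\ge\epsilon\}$, coefficients depending continuously on $r$ can be handled by approximating them with simple functions of $r$, which are measurable multipliers held fixed during differentiation, as the last clause of Lemma~\zcref[noname]{lem:tc-joint-cavity} permits. The step I expect to be the main obstacle is the $z\to\infty$ limit in the absolute-value case. It must be uniform enough to apply to the additional labels from both types, which requires controlling $\E_z[M_{b+}\mid\cdots]$ through the enlarged array and checking that conditional dominated convergence survives the differing temperatures $\beta_b$ in the residual bound.
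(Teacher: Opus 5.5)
Your proposal is correct and follows the paper's proof essentially step for step. The shared steps are the exponential tilt in the direction $uY_1+vY_2$ with a normalizer depending only on $r$, and the residual bound giving $c_jM_j\to|c_j|$ for old and additional labels of both types. The reference-only remainder is identified by rerunning the identity with only labels $1,2$ old, at finite tilt, before taking the limit. The two exact cases use the field test function $M_1$ together with $\E[M_iM_j\mid\mathscr R]=R_{ij}$. The one small variation is for $h(x,y)=x\kappa(x)$: you identify the constant by this same calibration, whereas the paper computes it explicitly and matches it to $c_h(r)$ via the marginal Ghirlanda--Guerra identity. Both work.

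One slip: your centering should be $E_k=Y_k-z\beta_a\beta_{a(k)}\bigl(u\kappa(R_{1k})+v\kappa(R_{2k})\bigr)$. This is the shift that differentiating the exponential actually produces.

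The obstacle you anticipate does not arise. The remainder satisfies $|D_k|\le 2\beta_{a(k)}\kappa(1)\sum_b m_b\beta_b$ uniformly in $z$, where $m_b$ counts the type-$b$ labels used. So the differing temperatures only change constants.
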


\begin{proof}
For the first family, put
\[
 Z=uY_1+vY_2,\qquad
 L_r(\theta)=\E[e^{\theta Z}\mid\mathscr R].
\]
The conditional field law in
Lemma~\zcref[noname]{lem:tc-joint-cavity} makes $L_r(\theta)$ a
deterministic function of $r$. Define the tilted expectation
\[
 \E_\theta H=\E\left[H\frac{e^{\theta Z}}{L_r(\theta)}\right].
\]
This reweighting preserves the law of the entire overlap array,
since the density has conditional expectation one given
$\mathscr R$. Set
\[
 c_j=u\kappa(R_{1j})+v\kappa(R_{2j}),\qquad
 V(r)=\beta_a^2\bigl((u^2+v^2)\kappa(Q)+2uv\kappa(r)\bigr).
\]
Apply \zeqref{eq:tc-ibp} to each component of $Z$, with test function 
$e^{\theta Z}$ and multiplier $F/L_r(\theta)$. Combining the
two identities and dividing by $\beta_a$ gives
\begin{equation}\label{eq:tc-tilted-row}
 \begin{aligned}
 \sum_{b=1}^2n_b\beta_b\E_\theta[Fc_{b+}M_{b+}]
 &=\sum_{j\in\mathcal I}\beta_{a(j)}
              \E_\theta[Fc_jM_j]-\E[Fa_\theta(r)],\\
 a_\theta(r)
 &=\frac{L'_r(\theta)/L_r(\theta)-\theta V(r)}{\beta_a}.
 \end{aligned}
\end{equation}
Here $\theta V(r)$ is the derivative contribution from the
exponential test, and $L'_r/L_r$ is its conditional mean of $Z$.
The conditional pair law is symmetric under simultaneous reflection,
so Jensen's inequality gives $L_r(\theta)\ge1$. Thus the
overlap-dependent multiplier is bounded whenever $F$ is bounded.

We next show that the large positive tilt replaces $c_jM_j$ by
$|c_j|$. For any displayed label, including an additional label,
define
\[
 D_j=Y_j-\theta\beta_a\beta_{a(j)}c_j.
\]
Include $j$ among the old labels for this auxiliary calculation,
and apply \zeqref{eq:tc-ibp} with the test function 
$D_je^{\theta Z}/L_r(\theta)$. Differentiating the exponential
produces exactly the shift subtracted in $D_j$, leaving
\[
 \E_\theta D_j^2=K_{jj}+\E_\theta[D_jB_j],\qquad
 B_j=\sum_{\ell\in\mathcal I'}K_{j\ell}M_\ell
       -\sum_{b=1}^2n'_bK_{j,b+}M_{b+},
\]
where $\mathcal I'$ is the enlarged set of old labels and
$n'_b$ are its counts. Since $|M_\ell|\le1$ and
$|\kappa(R_{j\ell})|\le L$,
\[
 |B_j|\le2\beta_{a(j)}L\sum_{b=1}^2n'_b\beta_b.
\]
Cauchy--Schwarz therefore bounds $\E_\theta D_j^2$ uniformly
in $\theta$. For $|c_j|>\delta$, the shift in $Y_j$ has
magnitude at least $\theta\beta_a\beta_{a(j)}\delta$, so this
bound and $M_j=\tanh Y_j$ imply convergence to
$\operatorname{sgn}(c_j)$ in tilted probability. On
$|c_j|\le\delta$, the error $|c_jM_j-|c_j||$ is at most
$2\delta$. First let $\theta\to\infty$ and then
$\delta\downarrow0$ to obtain
\[
 \E_\theta|c_jM_j-|c_j||\longrightarrow0.
\]

To determine the remaining term in \zeqref{eq:tc-tilted-row},
apply that identity with only the two reference labels old and
with arbitrary bounded functions of $r$ as multipliers. Then
\[
 a_\theta(r)=\beta_a\E_\theta[
 c_1M_1+c_2M_2-2c_3M_3\mid R_{12}=r],
\]
where label $3$ has type $a$. The right side is bounded in
absolute value by $4\beta_a L(|u|+|v|)$. The preceding convergence
and preservation of the overlap law show that it converges in
$L^1$ of the marginal law of $r$ to $\beta_a c_h(r)$.
Consequently, we may take $\theta\to\infty$ in
\zeqref{eq:tc-tilted-row} with the bounded multiplier $F$.
This proves \zeqref{eq:tc-row-identity} for the absolute-value
test functions.

For $h(x,y)=(\kappa(x)-\kappa(y))x$, subtract the
integration-by-parts identities with left sides $Y_1M_1$ and
$Y_2M_1$. The conditional expectation of $(Y_1-Y_2)M_1$ is
a function of $r$, by the same-type pair law. The derivative term
is
\[
 \beta_a^2\bigl(\kappa(Q)-\kappa(r)\bigr)(1-Q),
\]
because \zeqref{eq:tc-mark-site} gives
$\E[M_1^2\mid\mathscr R]=Q$. The terms associated with an
old or additional label $j$ become
\[
 \beta_a\beta_{a(j)}
 \bigl(\kappa(R_{1j})-\kappa(R_{2j})\bigr)R_{1j},
\]
using \zeqref{eq:tc-mark-site} with conditioning that includes
the additional label when necessary. Thus the only term not already
of the form in \zeqref{eq:tc-row-identity} depends on $r$ alone.
Applying the same formula with just the reference pair old identifies
that term as $\beta_a c_h(r)$.

For the last test function, write $g(x)=x\kappa(x)$ and apply
\zeqref{eq:tc-ibp} with field test function $M_1$. Its conditional left
side is constant by the one-point law. With only label $1$ old,
\[
 \E[Y_1M_1]
 =\beta_a^2\left(
   \kappa(Q)(1-Q)+g(Q)-\int g\,d\mu_a\right).
\]
The integral uses the absolute-overlap law because $g$ is even.
For arbitrary old counts, the remainder is therefore
$\beta_a(g(Q)-\int g\,d\mu_a)$. With label $2$ included,
the marginal Ghirlanda--Guerra identity gives
\[
 2\E_a[g(R_{13})\mid R_{12}=r]
 =\int g\,d\mu_a+g(r).
\]
Hence the remainder is again $\beta_a c_h(r)$ for
$h(x,y)=g(x)$.

Linearity proves the assertion for finite combinations. Moreover,
\[
 |c_h(r)-c_{\widetilde h}(r)|
 \le4\|h-\widetilde h\|_\infty,
\]
so bounded convergence proves the uniform-limit assertion.
Approximating continuous coefficient functions on the compact
interval $[\epsilon,Q]$ gives the stated dependence on $r$.
The identities also apply on $\{r=0\}$: the scalar pair has
independent branches there, and each preceding calculation permits
the multiplier $\one_{\{r=0\}}$.
\end{proof}

The next consequence transfers an equality for marginal overlap
pairs to overlaps with a replica at the other temperature.

\begin{lemma}\label{lem:tc-zero-row}
Let $h$ be a test function allowed by Lemma~\zcref[noname]{lem:tc-row}.
Suppose that, for almost every sampled reference pair with $r\ge0$,
\[
 h(Q,r)=h(r,Q)=0,\qquad
 h(R_{13},R_{23})=0
\]
for almost every additional type-$a$ replica. If $h(0,0)=0$,
then, almost surely for that pair,
\[
 h(v^1\cdot w,v^2\cdot w)=0
 \quad\text{for every }w\in\supp\widehat G_{3-a}.
\]
The assertion also holds when all hypotheses and the conclusion
are restricted to a Borel set $D\subseteq[0,Q]$ of reference
overlaps $r$.
\end{lemma}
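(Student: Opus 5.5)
Under the hypotheses, the row identity of Lemma~\zcref[noname]{lem:tc-row} will collapse to a statement about the other-type labels alone. A nonnegativity argument will then force $h$ to vanish there.

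Fix $\mathcal I=\{1,2\}$, both of type $a$. Take $F$ to be a bounded nonnegative function of $r$ supported on $\{r\ge0\}$ (or on $D$), and apply \zeqref{eq:tc-row-identity}. By hypothesis $h_1=h(Q,r)=0$ and $h_2=h(r,Q)=0$, so the right side reduces to $-\beta_a\E[Fc_h(r)]$. In the definition \zeqref{eq:tc-row-calibration} all three terms vanish, because $h(R_{13},R_{23})=0$ for almost every additional type-$a$ replica. Hence $c_h(r)=0$ almost surely on the relevant set.

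On the left side the counts are $n_a=2$ and $n_{3-a}=0$, so the identity reads $2\beta_a\E[Fh_{a+}]=0$, which is automatic. We therefore need an other-type replica among the old labels. Enlarge $\mathcal I$ to $\{1,2,3'\}$, with $3'$ of type $3-a$, and take $F$ to depend only on $r$. The right side becomes $\beta_{3-a}\E[Fh_{3'}]$, since the reference terms and $c_h$ vanish. The left side is $2\beta_a\E[Fh_{a+}]+\beta_{3-a}\E[Fh_{(3-a)+}]$. The first term is zero by hypothesis. The additional type-$(3-a)$ sample has the same conditional law given $(v^1,v^2)$ as $v^{3'}$, so the two remaining terms agree and the identity is vacuous again.

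The fix is to use the multiplier $F$ and the nonnegativity of the absolute-value tests. The main obstacle is to choose $F$ depending on $h_{3'}$, or on overlaps involving $3'$, so that the identities do not cancel. I would take $F=\phi(r)\,\psi(h_{3'})$, possibly together with further type-$(3-a)$ labels, and compare the old-label term $\E[F h_{3'}]$ with the fresh-sample term $\E[F h_{(3-a)+}]$. The fresh sample is conditionally independent of $h_{3'}$ given the references, so the identity yields
\[
 \E\bigl[\phi(r)\psi(h_{3'})h_{3'}\bigr]=\E\bigl[\phi(r)\psi(h_{3'})\bigr]\,\E[h_{3'}\mid\cdots],
\]
a zero-covariance statement. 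Repeating this with several type-$(3-a)$ old labels, in the manner of Lemma~\zcref[noname]{lem:si-weighted-witness}, should force $h(v^1\cdot w,v^2\cdot w)$ to be constant in $w$ under $\widehat G_{3-a}$, for almost every reference pair.

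The constant is identified through Lemma~\zcref[noname]{lem:tc-richness}. When $0\in\supp\mu_{3-a}$ and $Q_{3-a}>0$, there are support points $w$ with $v^1\cdot w$ and $v^2\cdot w$ arbitrarily small. Continuity of $h$ and $h(0,0)=0$ then make the constant zero. The cases $\mu_{3-a}=\delta_0$ or $Q_{3-a}=0$ are handled by the same richness argument or are vacuous. Finally, continuity of $w\mapsto h(v^1\cdot w,v^2\cdot w)$ upgrades the conclusion from almost every $w$ to every $w\in\supp\widehat G_{3-a}$. A countable intersection over a dense family of multipliers $\phi$ gives the statement almost surely, including its restriction to a Borel set $D$ of reference overlaps.
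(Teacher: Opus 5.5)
Your route is the paper's: add one old label $3'$ of type $3-a$, and use a multiplier that depends on its row value $h_{3'}$, so that the old-label term and the fresh-sample term no longer cancel. The proposal does not finish, though. It stops at a ``zero-covariance'' identity and then appeals vaguely to repeating the argument with more labels in the manner of Lemma~\zcref[noname]{lem:si-weighted-witness}. None of that is needed.

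Take $\psi(x)=x$, that is, $F=\one_E h_{3'}$ with $E=\{r\in D\}$. The reference terms, $c_h$, and the type-$a$ fresh term all drop out, so the identity reads $\E[\one_E h_{3'}h_{(3-a)+}]=\E[\one_E h_{3'}^2]$. Conditional on the directing measures, the field and the reference pair, $3'$ and $(3-a)+$ are i.i.d.\ samples from $\widehat G_{3-a}$. Exchanging them gives $\E[\one_E(h_{3'}-h_{(3-a)+})^2]=0$. Thus the conditional variance of $h(v^1\cdot w,v^2\cdot w)$ under $\widehat G_{3-a}(dw)$ vanishes on $E$. This also handles the restriction to $D$ directly, without a countable family of multipliers $\phi$.

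Your displayed identity should read $\E[\phi(r)\psi(h_{3'})h_{3'}]=\E[\phi(r)\psi(h_{3'})\,m]$, with $m=\int h(v^1\cdot w,v^2\cdot w)\,\widehat G_{3-a}(dw)$. As written, it multiplies an unconditional expectation by a conditional one, which is not what the identity gives.

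The rest matches the paper. One simplification: you need not treat $0\in\supp\mu_{3-a}$ as a hypothesis. It holds at zero field by \zeqref{eq:zero-in-support}, so Lemma~\zcref[noname]{lem:tc-richness} applies whenever $Q_{3-a}>0$. When $Q_{3-a}=0$, the support is $\{0\}$ and the claim reduces to $h(0,0)=0$.
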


\begin{proof}
Put $b=3-a$. If $Q_b=0$, the support is the origin and the
claim follows from $h(0,0)=0$. Otherwise,
\zeqref{eq:zero-in-support} and
Lemma~\zcref[noname]{lem:tc-richness} give support vectors with
arbitrarily small projections onto both references.

Let $E=\{r\in D\}$, taking $D=[0,Q]$ when there is no further
restriction. The hypotheses give $c_h(r)=0$ on $E$. Include one
type-$b$ label $b,1$ among the old labels and take
$F=\one_Eh_{b,1}$ in \zeqref{eq:tc-row-identity}.
Every type-$a$ term vanishes, so, after dividing by $\beta_b>0$,
\[
 \E[\one_Eh_{b,1}h_{b,2}]=\E[\one_Eh_{b,1}^2].
\]
Exchangeability of the two type-$b$ labels gives
$\E[\one_E(h_{b,1}-h_{b,2})^2]=0$.
Conditional on the directing measures,
the field, and the reference pair, the labels $b,1$ and $b,2$
are independent samples. Thus their common function $h_{b,j}$
has zero conditional variance on $E$ and is constant almost
everywhere under $\widehat G_b$. Continuity extends that constant to its
support. Evaluating along vectors whose two projections tend to
zero identifies the constant as $h(0,0)=0$.
\end{proof}

\begin{lemma}\label{lem:tc-unequal-minimum}
Let $1,2$ be a same-type reference pair with $R_{12}=r>0$.
For a replica $j$ of the other type, put
$x=R_{1j}$ and $y=R_{2j}$. Almost surely, either $x=y=0$
or $xy>0$, and
\begin{equation}\label{eq:tc-unequal-minimum}
 |x|\ne|y|
 \quad\Longrightarrow\quad
 \min(|x|,|y|)=r.
\end{equation}
These assertions hold simultaneously for all sampled triples.
\end{lemma}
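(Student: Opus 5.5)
The plan is to apply Lemma~\zcref[noname]{lem:tc-zero-row} to suitable test functions from Lemma~\zcref[noname]{lem:tc-row}, chosen to vanish on all same-type configurations. Within the same type, Theorem~\zcref[noname]{thm:array} gives the zero-field RPC structure: signs multiply as in Lemma~\zcref[noname]{lem:signs-all-zero-field} and absolute overlaps are ultrametric. The transfer lemma then forces the same constraints on overlaps with the other type. We work first on $\{R_{12}=r>0\}$, restricting the reference overlaps to $D=[\epsilon,Q]$ and letting $\epsilon\downarrow0$ along a countable sequence.

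\emph{Sign relation.} For $u,v>0$ consider
\[
 h(x,y)=|u\kappa(x)+v\kappa(y)|-|u\kappa(x)-v\kappa(y)|\cdot\ldots
\]
More concretely, take $h(x,y)=|\kappa(x)-\kappa(y)|^{\ldots}$ type combinations from the allowed family. I would use
\[
 h_1(x,y)=|\kappa(x)+\kappa(y)|-|\kappa(x)-\kappa(y)|-2\min(|\kappa(x)|,|\kappa(y)|)\operatorname{sgn}\ldots,
\]
but since only $|u\kappa(x)+v\kappa(y)|$, $(\kappa(x)-\kappa(y))x$ and $x\kappa(x)$ are available, the cleaner choice is
\[
 h(x,y)=|\kappa(x)+\kappa(y)|-|\kappa(x)|-|\kappa(y)|,
\]
obtained as a limit of combinations. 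Here $|\kappa(x)|=|\kappa(x)+0\cdot\kappa(y)|$ is in the family. This $h$ is nonpositive and vanishes exactly when $xy\ge0$. It satisfies $h(0,0)=0$ and $h(Q,r)=h(r,Q)=0$ because $r>0$. For an additional same-type replica, Lemma~\zcref[noname]{lem:signs-all-zero-field} gives $R_{13}R_{23}\ge0$ whenever $R_{12}>0$, so $h=0$. Lemma~\zcref[noname]{lem:tc-zero-row} then yields $xy\ge0$ for every cross-type $w$ in the support, and hence for sampled replicas. To upgrade $xy\ge0$ to the dichotomy ``$x=y=0$ or $xy>0$'', I would exclude the case of exactly one zero using the ultrametric function in the next step: $x=0\ne y$ has $\min(|x|,|y|)=0\ne r$, so it is a violation of \zeqref{eq:tc-unequal-minimum} with $|x|\ne|y|$.

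\emph{Ultrametric relation.} The target is a test function $h\ge0$ in the allowed class vanishing exactly when $|x|=|y|$ or $\min(|x|,|y|)=r$. Since $c_h$ may depend continuously on $r$ on $r\ge\epsilon$, I would build, for fixed $r$,
\[
 h_r(x,y)=\bigl(|\kappa(x)|-|\kappa(y)|\bigr)^2\,\phi_r\bigl(\min(|\kappa(x)|,|\kappa(y)|)\bigr),
\]
where $\phi_r\ge0$ is continuous, vanishes only at $\kappa(r)$, and varies continuously in $r$. Such continuous functions of $(|\kappa(x)|,|\kappa(y)|)$ are uniform limits of polynomials in these two quantities. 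Products are the difficulty, because the family is only linear in the generators. I would therefore instead use piecewise-linear functions of $(\kappa(x),\kappa(y))$. Every continuous piecewise-linear positively homogeneous function of two variables is a linear combination of functions $|u s+v t|$, so these can be combined with the linear functions $\kappa(x),\kappa(y)$, which are differences of absolute values. To handle the inhomogeneous dependence on $\kappa(r)$, I would use the third generators $x\kappa(x)$ and $(\kappa(x)-\kappa(y))x$ together with the continuous-coefficient freedom.

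This is the step I expect to be the main obstacle: showing that the linear span, with $r$-dependent coefficients, uniformly approximates a nonnegative function with the required zero set. A fallback is to use only homogeneous functions, whose zero sets are cones, and to exploit the sign relation already proved together with same-type ultrametricity. The hypotheses of Lemma~\zcref[noname]{lem:tc-zero-row} hold for such an $h$ because of same-type ultrametricity (Theorem~\zcref[noname]{thm:array}) and the values at $(Q,r)$ and $(r,Q)$. Finally, a countable intersection over rational $\epsilon$ and exchangeability give the conclusion simultaneously for all triples.
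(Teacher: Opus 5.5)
Your sign step is correct; it is the paper's first test function $|X|+|Y|-|X+Y|$, where $X=\kappa(x)$ and $Y=\kappa(y)$. The gap is the level assertion \zeqref{eq:tc-unequal-minimum}. You leave it open, and the fallback you propose cannot close it.

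Every test function fed into Lemma~\zcref[noname]{lem:tc-zero-row} must vanish at the reference point $(X,Y)=(K,k)$, where $k=\kappa(r)$ and $K=\kappa(Q_a)$. A positively homogeneous one therefore vanishes on the whole ray $\{(sK,sk):s>0\}$. That ray contains configurations with $|x|\ne|y|$ and $\min(|x|,|y|)<r$. So homogeneous functions can at best confine $(X,Y)$ to a cone; they can never detect the level $\min(|X|,|Y|)=k$.

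Two smaller problems. Your plan invokes the linear functions $\kappa(x),\kappa(y)$, but these are not available: every allowed test function is even under $(x,y)\mapsto(-x,-y)$. Also, your exclusion of exactly one vanishing overlap was deferred to this unfinished step.

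The missing idea is that you do not need a nonnegative function with exactly the target zero set. Lemma~\zcref[noname]{lem:tc-zero-row} transfers \emph{any} identity that holds on the same-type configurations and at the two reference points. The paper uses this in two stages.

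First, it transfers $|kX+KY|+|kX-KY|-2K|Y|=0$ and its mirror $|kY+KX|+|kY-KX|-2K|X|=0$. Via $|s+t|+|s-t|=2\max(|s|,|t|)$, these give the cone $k|X|\le K|Y|$, $k|Y|\le K|X|$. Since $k>0$, the cone already excludes exactly one of $X,Y$ being zero.

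Second, it introduces the even, positively homogeneous function
\[
 H_1(X,Y)=(X-Y)\,\kappa^{-1}(kX/d),\qquad d=\max\Bigl\{\min(|X|,|Y|),\tfrac kK\max(|X|,|Y|)\Bigr\}.
\]
This lies in the allowed class by uniform approximation with absolute linear forms whose coefficients are continuous in $r\ge\epsilon$; that is where your piecewise-linear idea belongs. On the same-type configurations and at $(Q_a,r),(r,Q_a)$ one has $d=k$, so $H_1$ coincides with the \emph{non-homogeneous} generator $H_2=(\kappa(x)-\kappa(y))x$. Transfer $H_1=H_2$ to the other type. On the cone $d=\min(|X|,|Y|)$, so for $x\ne y$ you get $X=kX/\min(|X|,|Y|)$, hence $\min(|x|,|y|)=r$.

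This comparison of a homogeneous function with $H_2$ is what detects the scale $r$, and it is absent from your proposal.
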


\begin{proof}
Set
\[
 X=\kappa(x),\qquad Y=\kappa(y),\qquad
 k=\kappa(r),\qquad K=\kappa(Q_a).
\]
For a third replica of type $a$, marginal ultrametricity and
the sign relation \zeqref{eq:signs-all-zero-field} imply that
either $x=y$, or $x,y$ have the same sign and
$\min(|x|,|y|)=r$. In particular, the following three functions
are zero for those overlap pairs and for the pairs $(Q_a,r)$
and $(r,Q_a)$:
\[
 \begin{gathered}
 |X|+|Y|-|X+Y|,\\
 |kX+KY|+|kX-KY|-2K|Y|,\\
 |kY+KX|+|kY-KX|-2K|X|.
 \end{gathered}
\]
Each displayed function is a finite linear combination of the
absolute-value test functions in \zeqref{eq:tc-row-tests}. Apply
Lemma~\zcref[noname]{lem:tc-zero-row} to each. The first excludes
opposite signs, while the other two, using
$|s+t|+|s-t|=2\max(|s|,|t|)$, give
\begin{equation}\label{eq:tc-cone}
 (X,Y)=(0,0)\quad\text{or}\quad
 \begin{cases}
 XY>0,\\
 k|X|\le K|Y|,\\
 k|Y|\le K|X|.
 \end{cases}
\end{equation}
The last two inequalities also exclude exactly one of $X,Y$
being zero.

We need one more test function to determine the smaller nonzero overlap.
Define
\[
 d=\max\left\{\min(|X|,|Y|),
                    \frac{k}{K}\max(|X|,|Y|)\right\},
 \qquad
 H_1(X,Y)=(X-Y)\kappa^{-1}(kX/d)
\]
away from $(0,0)$, and set $H_1(0,0)=0$. The inverse is taken
on $[-K,K]$, where $\kappa$ maps $[-Q_a,Q_a]$ bijectively
onto this interval. Its argument belongs to $[-K,K]$ because
$d\ge k|X|/K$. Moreover, $H_1$ is continuous, even under
$(X,Y)\mapsto(-X,-Y)$, positively homogeneous of degree one,
and bounded in absolute value by $Q_a(|X|+|Y|)$.

Every continuous even function on $\R^2$ that is positively
homogeneous of degree one can be approximated uniformly on bounded
sets by finite linear combinations of absolute linear forms.
To verify this, interpolate its values by homogeneous linear
functions on the sectors of a fine antipodally symmetric partition
of the plane. The interpolants converge uniformly on the unit
circle. Continuity across a boundary line makes the gradient jump
normal to that line; evenness relates the jumps on its two rays.
Subtracting a suitable multiple of the absolute value of the normal
linear form removes both jumps. After doing this for every line,
the remainder is globally linear and even, and hence zero.
For the family $H_1$ with $r\in[\epsilon,Q_a]$, a common
partition gives uniform approximation with coefficients continuous
in $r$. Thus $H_1(\kappa(x),\kappa(y))$ is an allowed test function.

Compare it with
\[
 H_2(x,y)=(\kappa(x)-\kappa(y))x.
\]
When $x=y$, both functions vanish. For every other marginal
overlap pair described above, $d=k$, so
$H_1(\kappa(x),\kappa(y))=H_2(x,y)$. This also holds at the
two reference labels. Lemma~\zcref[noname]{lem:tc-zero-row},
applied to their difference on $r\ge\epsilon$, therefore gives
the same equality for a replica of the other type. On
\zeqref{eq:tc-cone}, we have $d=\min(|X|,|Y|)$. If $x\ne y$,
cancel $X-Y$ in the equality to obtain
\[
 x=\kappa^{-1}\left(\frac{kX}{\min(|X|,|Y|)}\right).
\]
Applying $\kappa$ and canceling $X\ne0$ gives
$\min(|X|,|Y|)=k$. Strict monotonicity and oddness of
$\kappa$ prove \zeqref{eq:tc-unequal-minimum}.
Let $\epsilon\downarrow0$ through a countable sequence.

For negative reference overlaps, flip the second reference and its
spin. Every deterministic sign change preserves the zero-field law.
Intersecting the resulting full-measure sets over the countably
many triples permits this orientation to be chosen from the signed
array.
\end{proof}

\subsection{Selecting replicas near a common support point}
\label{subsec:tc-replica-selection}

We now construct the finite families needed to compare spin averages.
First, the marginal Ghirlanda--Guerra identities supply arbitrarily
many replicas whose mutual overlaps lie in a prescribed interval
of positive Parisi mass. The interval may contain an isolated
support point.

\begin{lemma}\label{lem:tc-near-branches}
Fix a type $a$, a sampled reference vector $v$, and deterministic
thresholds $0<\ell<u\le1$ with $\mu_a([\ell,u))>0$.
Almost surely there are infinitely many distinct labels of type
$a$ whose vectors, after multiplication by the signs of their
overlaps with $v$, satisfy
\begin{equation}\label{eq:tc-near-branches}
 \ell\le v\cdot v_i<u,\qquad
 \ell\le v_i\cdot v_j<u\quad(i\ne j).
\end{equation}
Distinct labels are allowed to sample the same directing vector.
\end{lemma}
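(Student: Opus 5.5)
The argument uses only the marginal structure of type $a$: the full Ghirlanda--Guerra identities \zeqref{eq:full-gg-conditional} and the RPC identification of Theorem~\zcref[noname]{thm:array} applied to the type-$a$ array. At zero field, write $S_{ij}=|R_{ij}|$, and let $v=v^1$ be the reference label. The plan has three steps: produce infinitely many replicas whose absolute overlap with $v$ lies in $[\ell,u)$; extract infinitely many of them with pairwise absolute overlaps in $[\ell,u)$ using ultrametricity; and fix the signs using the sign relation.

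\emph{Step 1.} Let $A_n$ be the event that none of $S_{12},\ldots,S_{1n}$ lies in $I=[\ell,u)$. Iterating \zeqref{eq:full-gg-conditional} with $\psi=\one_I$ gives
\[
 \Prob(A_{n+1})=\Bigl(1-\tfrac{\mu_a(I)}{n}\Bigr)\Prob(A_n).
\]
This product tends to zero, exactly as in Lemma~\zcref[noname]{lem:tc-richness}. The same computation started after an arbitrary finite stage shows that, almost surely, infinitely many labels $j$ satisfy $S_{1j}\in I$. Equivalently, conditional on the directing measure and $v$, the set $\{w:|v\cdot w|\in I\}$ has positive $\widehat G_a$-mass, and the strong law gives infinitely many hits.

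\emph{Step 2.} I would use the ultrametric tree and reduce to two cases. Consider the labels $j$ with $S_{1j}\in I$. For two of them, $j,k$, ultrametricity (Corollary~\zcref[noname]{cor:geometry}) gives $S_{jk}\ge\min(S_{1j},S_{1k})\ge\ell$. The task is to also get $S_{jk}<u$.
\begin{itemize}
\item[(i)] If infinitely many of these labels have $S_{1j}<u$ and they occupy infinitely many distinct blocks of the relation $S\ge u$, choose one label per block. Within the ball $\{S_{1\cdot}\ge\ell\}$, two labels in distinct $u$-blocks have $S_{jk}<u$ by ultrametricity, so $\ell\le S_{jk}<u$.
\item[(ii)] Otherwise infinitely many hits fall into a single block where all pairwise overlaps are $\ge u$. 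I would rule this out, or obtain distinct $u$-blocks, via the Poisson--Dirichlet block-creation rule of Corollary~\zcref[noname]{cor:pd}. Inside the cluster $\{S_{1\cdot}>\ell'\}$ with $\ell'<\ell$ chosen so that $\mu_a([\ell',u))>0$, new sub-blocks at level $u$ are created with probability $k(\mu_a([0,u))-\mu_a([0,\ell']))/(\text{size})$, up to the normalization given by the RPC hierarchy rule \zeqref{eq:rpc-whole-hierarchy-rule}. The harmonic divergence then shows infinitely many sub-blocks appear, mirroring the class-creation argument in Lemma~\zcref[noname]{lem:tc-richness}.
\end{itemize}
When $I$ contains only an isolated atom $q$ of $\mu_a$, the same rule applies at the levels just below and just above $q$. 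Distinct sub-blocks then have overlap exactly $q\in[\ell,u)$.

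\emph{Step 3.} For the signs at zero field, multiply each selected $v_i$ by $\operatorname{sgn}R_{1i}$; this is nonzero since $\ell>0$. By Lemma~\zcref[noname]{lem:signs-all-zero-field}, for selected $i,j$ we get $\operatorname{sgn}R_{ij}=\operatorname{sgn}R_{1i}\operatorname{sgn}R_{1j}$. Hence the sign-corrected overlaps are all positive and equal the absolute values, which gives \zeqref{eq:tc-near-branches}. At nonzero field no signs are needed.

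The main obstacle is Step 2(ii): guaranteeing infinitely many distinct blocks at level $u$ inside the level-$\ell$ cluster of $v$, with the correct conditional creation probability. I would first derive this from the whole-hierarchy RPC predictive rule, since the array law is $\mathcal L_{\mu_a}$, rather than from the two-level rule in Corollary~\zcref[noname]{cor:pd}. Two further checks are needed: that the rule gives positive creation probability exactly when $\mu_a([\ell,u))>0$, and the boundary case where $[\ell,u)$ is approached by $\{S>\ell'\}$ versus $\{S\ge\ell\}$.
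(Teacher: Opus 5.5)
Your route is the paper's: ultrametricity turns threshold relations into equivalence relations, a Ghirlanda--Guerra predictive rule forces infinitely many new level-$u$ classes inside the level-$\ell$ class of $v$, you take one representative from each, and the sign relation orients them. However, the step you flag as the main obstacle is left unresolved, and your sketch of it would not work as written. With a strict threshold $\ell'<\ell$, representatives of distinct level-$u$ classes inside $\{S_{1\cdot}>\ell'\}$ are only guaranteed overlaps in $(\ell',u)$, so the lower bound $\ell$ in \zeqref{eq:tc-near-branches} is lost. Also, the creation probability $k(\mu_a([0,u))-\mu_a([0,\ell']))/(\text{size})$ you wrote is not the correct expression.

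The missing step is short. It needs neither the whole-hierarchy rule \zeqref{eq:rpc-whole-hierarchy-rule}, which the paper proves only for finite-level cascades, nor your Step~1 or the case split.
\begin{itemize}
\item Use the closed relation ``$i=j$ or $S_{ij}\ge t$'' and put $a_t=\mu_a([0,t))$. Apply \zeqref{eq:full-gg-conditional} with $\psi=\one_{[t,1]}$ at a fixed label of a represented class $C$, using exchangeability. This shows the next replica joins $C$ with probability $(|C|-a_t)/n$.
\item Apply this rule at $t=\ell$ to the class $C$ of $v$, and at $t=u$ to the represented classes $C_1,\ldots,C_k$, whose sizes sum to $|C|$. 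Subtracting, the next replica starts a new level-$u$ class inside $C$ with probability $(ka_u-a_\ell)/n\ge\mu_a([\ell,u))/n$. This is exactly where the hypothesis $\mu_a([\ell,u))>0$ enters, and there is no boundary issue.
\item Fix any $k$. The probability that no new class appears from some time on is bounded by the vanishing product $\prod_n\bigl(1-\mu_a([\ell,u))/n\bigr)$. Hence infinitely many such classes appear.
\item Discard $v$'s own level-$u$ class and take one representative from each remaining class. Their absolute overlaps with $v$ and with one another lie in $[\ell,u)$, and your Step~3 finishes the proof.
\end{itemize}
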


\begin{proof}
For $t>0$, put $a_t=\mu_a([0,t))$ and declare two labels
equivalent at threshold $t$ if they coincide or their absolute
overlap is at least $t$. Marginal ultrametricity makes this an
equivalence relation. For $t>Q_a$, every class is a singleton
and $a_t=1$. In all cases, a represented class $C$ among $n$
old type-$a$ labels receives the next label with probability
$(|C|-a_t)/n$, conditional on the old unsigned marginal array.

Let $C$ be the class of the reference at threshold $\ell$,
and let $C_1,\ldots,C_k$ be its represented classes at threshold
$u$. The probability that the next replica enters $C$ but none
of $C_1,\ldots,C_k$ is
\[
 \frac{|C|-a_\ell}{n}
 -\sum_{j=1}^k\frac{|C_j|-a_u}{n}
 =\frac{ka_u-a_\ell}{n}.
\]
This is precisely the probability of creating a new threshold-$u$
class within $C$. Since
$a_u-a_\ell=\mu_a([\ell,u))>0$, an infinite-product argument
as in Lemma~\zcref[noname]{lem:tc-richness} excludes remaining
forever at any fixed $k$. Infinitely many such classes therefore
appear.

Discard the threshold-$u$ class containing the reference and take
one representative from each remaining class in $C$.
Their absolute overlaps with the reference and with one another
belong to $[\ell,u)$. Since $\ell>0$, the sign relation
\zeqref{eq:signs-all-zero-field} makes every one of these overlaps
positive after orientation by $v$. This proves
\zeqref{eq:tc-near-branches}. The construction uses a marginal
probability-one assertion, which remains valid under the joint law.
\end{proof}

\begin{lemma}\label{lem:tc-descendants}
Assume \zeqref{eq:tc-common-levels}. Let $v,w$ be a fixed pair
of sampled vectors of types $1,2$, respectively, and suppose that
\[
 A=\{|v\cdot w|>s_0\},\qquad \Prob(A)>0,
\]
for some deterministic $s_0>0$. There is a deterministic common
support point $q>0$ such that, almost surely on $A$, for every
$n\ge1$ and every sufficiently small $\delta>0$, one can choose
$n$ distinct labels of each type and a sign for each label,
measurably from $\mathscr R$, with the following properties:
all oriented overlaps between distinct labels within either type
belong to $(q-\delta,q+\delta)$, and every oriented overlap between
the two types is at least $q-\delta$.
\end{lemma}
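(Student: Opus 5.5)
First fix $q$. By \zeqref{eq:tc-common-levels} there is a common positive support point $q$ of $\mu_1,\mu_2$ with $0<q<s_0^2$; this $q$ is deterministic and depends only on $s_0$. Throughout we work on $A$ and orient $w$ so that $x_0:=v\cdot w>s_0$. Since the directing measures sit on spheres of squared radii $Q_1,Q_2$, Cauchy--Schwarz gives $s_0<x_0\le\sqrt{Q_1Q_2}$, so $s_0^2<Q_1Q_2\le\min(Q_1,Q_2)$. In particular $q<\min(Q_1,Q_2)$. For small $\delta>0$ the interval $[q-\delta,q+\delta)$ has positive $\mu_a$-mass for each $a$, because $q\in\supp\mu_a$. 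Lemma~\zcref[noname]{lem:tc-near-branches}, applied with reference $v$ (type $1$) and with reference $w$ (type $2$), $\ell=q-\delta$, $u=q+\delta$, yields infinitely many type-$1$ labels $v_i$ and type-$2$ labels $w_j$. After orientation by their references, all within-type overlaps and all overlaps with the reference lie in $[q-\delta,q+\delta)$. The first $n$ such labels in a fixed enumeration are $\mathscr R$-measurable, as are their signs. Note that we use orientation relative to $v$ for type $1$ and relative to $w$ for type $2$.

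The remaining task is the cross bound $v_i\cdot w_j\ge q-\delta$. I would obtain it in two hops, each time applying Lemma~\zcref[noname]{lem:tc-unequal-minimum} to a same-type reference pair and a replica of the other type.

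In the first hop, take the type-$1$ pair $(v,v_i)$ with $r=v\cdot v_i\in[q-\delta,q+\delta)$, which is positive for small $\delta$. Set $x=x_0=v\cdot w$ and $y=v_i\cdot w$. The lemma gives either $x=y=0$, which is excluded since $x_0>s_0$, or $xy>0$. Hence $y>0$, and either $|y|=|x|$, so $y=x_0>s_0>q$, or $\min(x,y)=r$. In the latter case $x=x_0>q+\delta\ge r$ for small $\delta$, because $x_0>s_0>\sqrt q>q$; this forces $y=r\ge q-\delta$. So in both cases $v_i\cdot w\ge q-\delta$, and $v_i\cdot w$ lies in $\{r,x_0\}$.

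In the second hop, take the type-$2$ pair $(w,w_j)$ with $r'=w\cdot w_j\in[q-\delta,q+\delta)$ and the type-$1$ replica $v_i$. Set $x=v_i\cdot w\ge q-\delta>0$ and $y=v_i\cdot w_j$. Again $xy>0$, so $y>0$, and either $y=x\ge q-\delta$ or $\min(x,y)=r'$. In the latter case $y\ge r'\ge q-\delta$. Either way $v_i\cdot w_j\ge q-\delta$, with the orientations already fixed. The same two hops give the corresponding bound for the references themselves.

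The main obstacle is measurability and simultaneity. Lemma~\zcref[noname]{lem:tc-unequal-minimum} holds almost surely simultaneously for all sampled triples, and the selected labels are random. I would partition on the countably many possible label tuples, as in \zeqref{mark:source-selected-labels}, and intersect over rational $\delta$ and over $n$, so that the conclusion holds for all sufficiently small $\delta$ on a single full-measure subset of $A$. A further check is the case $Q_b=0$ for one type: this is excluded because $x_0>0$ forces both $Q_a>0$. Finally, Lemma~\zcref[noname]{lem:tc-near-branches} allows repeated directing vectors, which is harmless here since only labels need to be distinct.
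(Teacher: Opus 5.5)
Your proof is correct and follows the paper's argument. Both use the near-branch families of Lemma~\zcref[noname]{lem:tc-near-branches} around $v$ and around the oriented $w$, then apply Lemma~\zcref[noname]{lem:tc-unequal-minimum} first to the pair $(v,v_i)$ with $w$ and then to $(w,w_j)$ with $v_i$. The only difference is that the paper also discards the finitely many $v_i$ with $v_i\cdot w=v\cdot w$, using the averaging bound \zeqref{eq:tc-high-branches}, so that $v_i\cdot w=t_i$ exactly. You instead observe that both possible values of $v_i\cdot w$ are already at least $q-\delta$, which is all the second application needs, so that discarding step is unnecessary for the stated conclusion.
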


\begin{proof}
Orient $w$ so that $s=v\cdot w>s_0$. Since overlaps have
absolute value at most one, we may take $s_0<1$.
By \zeqref{eq:tc-common-levels}, choose a deterministic
\[
 q\in\supp\mu_1\cap\supp\mu_2,
 \qquad 0<q<s_0^2.
\]
The inequality $q<s_0^2$ will ensure that only finitely many
selected vectors can retain overlap $s$ with the other reference.
For sufficiently small $\delta>0$, choose rational thresholds
\[
 0<q-\delta<\ell<q<u<q+\delta<s_0^2.
\]
Both measures assign positive mass to $[\ell,u)$ because $q$
is a support point in its interior.

Apply Lemma~\zcref[noname]{lem:tc-near-branches} around $v$.
This gives infinitely many oriented type-$1$ representatives $v_i$
with $t_i=v\cdot v_i\in[\ell,u)$ and mutual overlaps in that
interval. For the reference pair $(v,v_i)$, the two overlaps with
$w$ are $s$ and $v_i\cdot w$. The sign assertion and
\zeqref{eq:tc-unequal-minimum}, together with $s>u>t_i$, give
\[
 v_i\cdot w\in\{t_i,s\}.
\]
Only finitely many representatives can have overlap $s$ with $w$.
Indeed, if $M$ representatives do, their average $V$ satisfies
\[
 V\cdot w=s,\qquad
 \|V\|^2\le u+\frac{Q_1}{M}.
\]
Cauchy--Schwarz then gives $s^2\le Q_2(u+Q_1/M)$, and hence
\begin{equation}\label{eq:tc-high-branches}
 M\le\frac{Q_1Q_2}{s^2-Q_2u}
   \le\frac1{s_0^2-u}.
\end{equation}
The denominator is positive by our choice of $u$.
Discard these finitely many representatives and retain $n$ with
$v_i\cdot w=t_i$. Repeat the construction around the oriented
vector $w$ for type $2$, obtaining $w_j$ with
\[
 u_j=w\cdot w_j\in[\ell,u),\qquad v\cdot w_j=u_j.
\]

It remains to check overlaps between the two selected families.
Put $c=v_i\cdot w_j$. Applying the sign conclusion of
Lemma~\zcref[noname]{lem:tc-unequal-minimum} to the same-type
pair $(w,w_j)$ and the other-type vector $v_i$ gives $c>0$.
If $c<\min(t_i,u_j)$, its overlaps $t_i,c$ with this pair
are unequal, so \zeqref{eq:tc-unequal-minimum} gives
$\min(t_i,c)=u_j$, a contradiction. Therefore
\[
 v_i\cdot w_j\ge\min(t_i,u_j)\ge\ell>q-\delta.
\]

For measurability, establish the preceding marginal class counts
for every sampled reference and every rational threshold pair on
one common full-measure set, together with the overlap identities for deterministic triples and their versions after sign changes. Scan the type-$1$ labels in
order, keeping the first representative of each new threshold-$u$
class inside the reference's threshold-$\ell$ class and excluding
the reference's own threshold-$u$ class. Orient by the reference
and discard representatives whose overlap with $w$ lies outside
$[\ell,u)$. There are infinitely many candidates, and
\zeqref{eq:tc-high-branches} bounds the number discarded. Retain
the first $n$. Repeat for type $2$, with orientation relative to
the already oriented $w$. Every choice uses only the signed overlap
array. Rational threshold pairs and all finite selections form
countable families, so the same full-measure set suffices.
\end{proof}

\subsection{Completion of the proof}
\label{subsec:tc-completion}

The selected families have the same approximate overlap level
within each temperature and a lower bound at that level between
temperatures. By \zeqref{eq:tc-spin-site}, these are upper bounds
on the second moments of the separate spin averages and a lower
bound on their mixed second moment. Their difference is therefore
small in mean square. We now combine this estimate with the
conditional moment identification.

\begin{proof}[Proof of \zcref{thm:tc-mixture}]
First assume \zeqref{eq:tc-common-levels}. Suppose that a
subsequential cross overlap is nonzero with positive probability.
Choose an event $A$ and a deterministic common support point $q$
as in Lemma~\zcref[noname]{lem:tc-descendants}.
For its selections at width $\delta>0$, write $i_{a,j}$ for
the $j$th selected label within type $a$, and let $s_{a,j}$
be its sign. Extend these choices measurably off $A$, and define
\[
 B_{a,n}=\frac1n\sum_{j=1}^n
            s_{a,j}\varepsilon_{(a,i_{a,j})},
 \qquad |B_{a,n}|\le1.
\]

Fix a degree $k\ge1$ and accuracy $e>0$. Choose $\delta$
smaller than the two admissible widths in
Lemma~\zcref[noname]{lem:tc-mixed-source}. In the expansion of
$B_{a,n}^k$, every ordered tuple of distinct indices has
conditional expectation within $e$ of $c_{a,k}(q)$.
The fraction of tuples with a repeated index is at most
$k(k-1)/(2n)$, by a union bound over pairs of positions.
Each such term differs from $c_{a,k}(q)$ by at most two.
Consequently, on $A$,
\begin{equation}\label{eq:tc-average-moment}
 \left|\E[B_{a,n}^k\mid\mathscr R]-c_{a,k}(q)\right|
 \le e+\frac{k(k-1)}n.
\end{equation}
The argument partitioning over the possible selected labels and signs at the end of the proof of
Lemma~\zcref[noname]{lem:tc-mixed-source} permits the selected
labels and signs to depend on $\mathscr R$.

For the second moments, the $n$ diagonal terms in each average
contribute $1/n$ because $\varepsilon_i^2=1$.
Each within-type off-diagonal term is at most $q+\delta$,
and each between-type term is at least $q-\delta$.
Thus \zeqref{eq:tc-spin-site} gives, on $A$,
\begin{equation}\label{eq:tc-average-distance}
 \begin{aligned}
 \E[(B_{1,n}-B_{2,n})^2\mid\mathscr R]
 &\le\frac2n+2\left(1-\frac1n\right)(q+\delta)
            -2(q-\delta)\\
 &\le\frac2n+4\delta.
 \end{aligned}
\end{equation}
Since $|x^k-y^k|\le k|x-y|$ on $[-1,1]$, conditional
Cauchy--Schwarz and \zeqref{eq:tc-average-moment} imply
\[
 |c_{1,k}(q)-c_{2,k}(q)|
 \le2e+\frac{2k(k-1)}n+k\sqrt{2/n+4\delta}.
\]
The left side is deterministic, and the inequality holds on the
event $A$ of positive probability. All thermodynamic limits have
already been taken. For fixed $k,e$, choose $\delta<e^2$ as
well as below the required overlap widths, then let $n\to\infty$.
Finally let $e\downarrow0$. We obtain
$c_{1,k}(q)=c_{2,k}(q)$ for every $k$.

In particular, all moments of $m_1(q)^2$ and $m_2(q)^2$ agree.
Polynomial approximation on $[0,1]$ identifies their laws and
therefore the laws of $|m_1(q)|$ and $|m_2(q)|$.
Lemma~\zcref[noname]{lem:tc-scalar-rigidity} forces
$\beta_1=\beta_2$, a contradiction. The unbounded test function involving
$\arctanh$ is used only after these scalar laws have been
identified; their magnetizations have absolute value strictly
less than one.

Every subsequential cross overlap consequently vanishes almost
surely. Compactness of the array space and boundedness of the
squared overlap give \zeqref{eq:tc-chaos} along the full sequence.

Finally, suppose $\mu_1=\delta_0$. At fixed disorder define the
spin correlation matrices
\[
 C_a(i,j)=\langle\sigma_i\sigma_j\rangle_{G_{N,a}}.
\]
Conditional independence of the two replicas gives
\[
 \langle(R_N^{12})^2\rangle=N^{-2}\tr(C_1C_2),
 \qquad
 \langle(R_N^{aa'})^2\rangle=N^{-2}\tr(C_a^2),
\]
where $a'$ denotes an independent replica at the same temperature
as $a$. Applying the Hilbert--Schmidt Cauchy--Schwarz inequality
to the matrices, and then Cauchy--Schwarz over the disorder, yields
\[
 \E\langle(R_N^{12})^2\rangle
 \le\left(
 \E\langle(R_N^{11'})^2\rangle
 \E\langle(R_N^{22'})^2\rangle\right)^{1/2}
 \longrightarrow0.
\]
The first factor tends to zero by
Theorem~\zcref[noname]{thm:array}, and the second is at most one.
Interchanging the types proves the case $\mu_2=\delta_0$.
\end{proof}

\begin{proof}[Proof of \zcref{cor:tc-sk}]
For $\beta_1>1$, the support theorem
\cite[Theorem~1.1]{LopattoFRSB} gives
$\supp\mu_a=[0,Q_a]$ with $Q_a>0$ for both temperatures.
Thus \zeqref{eq:tc-common-levels} holds, and
Theorem~\zcref[noname]{thm:tc-mixture} applies. The normalization
in the cited support theorem is the present covariance
$N\beta_a^2R^2/2$; placing $\log2$ in the PDE terminal value
leaves its minimizer unchanged.

For completeness, the scalar identities show that
$\mu_\beta=\delta_0$ when $0<\beta\le1$. Let
$u_\beta,V_\beta,X^\beta$ denote the scalar objects for
$\beta^2q^2/2$ at zero field, and put
$\Gamma_\beta(q)=\E_{\rm sc}u_\beta(q,X_q^\beta)^2$.
Symmetry gives $\Gamma_\beta(0)=0$. By
\zeqref{eq:gamma-derivative} and Lemma~\zcref[noname]{lem:slack},
\[
 \Gamma'_\beta(q)
 =\beta^2\E_{\rm sc}V_\beta(q,X_q^\beta)^2<\beta^2
 \qquad(0<q<1).
\]
To verify strictness, $0<V_\beta\le1-u_\beta^2$, while
$u_\beta(q,\cdot)$ is odd and strictly increasing. The Gaussian
part of $X_q^\beta$ has variance $\beta^2q>0$, and its drift
has absolute value at most $\beta^2q$. Hence
$\Prob(X_q^\beta>1)>0$, and $u_\beta(q,X_q^\beta)>0$
on that event. Thus $V_\beta^2<1$ with positive probability.
Integrating gives $\Gamma_\beta(q)<\beta^2q\le q$ for every
$q>0$. The contact identity \zeqref{eq:contact} excludes all
positive support points, proving $\mu_\beta=\delta_0$.
The final assertion of Theorem~\zcref[noname]{thm:tc-mixture}
now applies whenever $0<\beta_1\le1$.

If $\beta_1=0$, the first replica is uniform on
$\{-1,1\}^N$, so its spin correlation matrix is the identity.
The diagonal entries of the second correlation matrix are one.
Consequently, $\E\langle(R_N^{12})^2\rangle=1/N$, which
completes the proof.
\end{proof}

\appendix
\section{Continuity of the Gaussian Hamiltonian}\label{app:source}

\subsection{A bound on the modulus of continuity}
We bound the expected change in $H_N$ when each coordinate
is moved by at most $\delta$. We first treat finite mixtures.
The constants below depend only on $\xi'(1)+\xi''(1)$ and
are independent of $N$ and $\delta$.

The covariance is
\[
 \E[H_N(m)H_N(n)]=N\xi(N^{-1}m\cdot n),
 \qquad m,n\in[-1,1]^N.
\]
Differentiating in the direction $v$ at both arguments
and then setting $n=m$ gives, with $q=N^{-1}\|m\|^2$,
\[
 \operatorname{Var}(\nabla H_N(m)\cdot v)
 =\xi'(q)\|v\|^2+\frac{\xi''(q)}N(m\cdot v)^2
 \le[\xi'(1)+\xi''(1)]\|v\|^2.
\]
Integrating the directional derivative along the segment
from $n$ to $m$ and applying Cauchy--Schwarz therefore gives
\begin{equation}\label{eq:gaussian-metric}
 \E|H_N(m)-H_N(n)|^2\le C_\xi\|m-n\|^2.
\end{equation}

Fix $0<\delta\le1$, and define the Gaussian process
\[
 T_\delta=
 \{(m,n)\in[-1,1]^N\times[-1,1]^N:
          \|m-n\|_\infty\le\delta\},
 \qquad
 X_{m,n}=H_N(m)-H_N(n).
\]
By \zeqref{eq:gaussian-metric},
\[
 \sup_{(m,n)\in T_\delta}\|X_{m,n}\|_{L^2}
 \le C_\xi\delta\sqrt N.
\]
The distance defined by its increments satisfies
\[
 \begin{split}
 d((m,n),(m',n'))
 &:=\|X_{m,n}-X_{m',n'}\|_{L^2}\le C_\xi\bigl(\|m-m'\|+\|n-n'\|\bigr).
 \end{split}
\]
In particular, after enlarging $C_\xi$, the diameter of
$T_\delta$ in this distance is at most
$C_\xi\delta\sqrt N$.

Let $\mathcal N(T_\delta,d,\varepsilon)$ be the minimum
number of $d$-balls of radius $\varepsilon$ needed to cover
$T_\delta$. A Euclidean grid in each cube, with one point
of $T_\delta$ chosen in every product cell that meets
$T_\delta$, gives
\[
 \log\mathcal N(T_\delta,d,\varepsilon)
 \le 2N\log\left(1+\frac{C_\xi\sqrt N}{\varepsilon}\right).
\]
Since $X_{m,m}=0$ and $X_{n,m}=-X_{m,n}$, the Gaussian
entropy integral \cite[Corollary~5.25]{VanHandel2016}
bounds the supremum of the absolute increments. Taking
$C_\xi\ge2e$ sufficiently large gives
\begin{align}
 \frac1N\E\sup_{(m,n)\in T_\delta}|H_N(m)-H_N(n)|
 &\le\frac{C_\xi}{N}
   \int_0^{C_\xi\delta\sqrt N}
   \sqrt{N\log\left(1+
          \frac{C_\xi\sqrt N}{\varepsilon}\right)}
   \,d\varepsilon
   \notag\\
 &\le C_\xi^2\delta
          \sqrt{\log(C_\xi/\delta)}.
 \label{eq:rounding}
\end{align}
For the last inequality, substitute
$\varepsilon=C_\xi\delta\sqrt N\,t$ and use
\[
 \int_0^1\sqrt{\log(1+1/(\delta t))}\,dt
 \le\sqrt{\log(2/\delta)+1}
 \le\sqrt{\log(C_\xi/\delta)}.
\]

We next construct a continuous version for an infinite
mixture. Let $H_N^{(\le K)}$ denote the sum of the interaction
terms of orders $p\le K$. For $K>M$, put
\[
 T_N^{M,K}=H_N^{(\le K)}-H_N^{(\le M)},\qquad
 a_M=\sum_{p>M}p^2b_p^2.
\]
The directional-derivative calculation above gives
\[
 \operatorname{Var}(\nabla T_N^{M,K}(m)\cdot v)
 \le a_M\|v\|^2.
\]
If $a_M>0$, the normalized process
$T_N^{M,K}/\sqrt{a_M}$ therefore satisfies the preceding
estimates with universal constants. Since
$T_N^{M,K}(0)=0$, applying \zeqref{eq:rounding} with
$\delta=1$ and comparing every $m\in[-1,1]^N$ with the
origin yields
\[
 \E\|T_N^{M,K}\|_{C([-1,1]^N)}
 \le C N\sqrt{a_M},
\]
where $C$ is independent of $M,K,N$. If $a_M=0$, the tail
vanishes identically.

Exponential summability gives $a_M\to0$. Thus, for each
fixed $N$, the partial sums are Cauchy in
$L^1(C([-1,1]^N))$. Their limit is a random continuous
function. At each fixed $m$, the partial sums also converge
in $L^2$ to the Gaussian series defining $H_N(m)$, so this
continuous limit is a version of $H_N$.

The constants in \zeqref{eq:rounding} are uniform over the
partial sums, since their covariance derivatives at one
are bounded by those of the full mixture. Replacing one
continuous function by another changes its modulus of
continuity by at most twice their supremum distance.
The $L^1(C([-1,1]^N))$ convergence therefore passes
\zeqref{eq:rounding} to the full process.

\section{Parameter continuity and convergence of overlaps and added spins}\label{app:cavity}

\subsection{Differentiability in an active interaction coefficient}
Panchenko's differentiability theorem identifies active moments for
finite mixtures \cite{PanDiff}. The following scalar argument allows the
remaining interaction terms to form an infinite mixture satisfying
\zeqref{eq:mixture}.

\begin{lemma}\label{lem:active-derivative}
Fix an even integer $p\ge2$, $a_0>0$, and $h\in\R$.
Let $\xi_{\rm rest}$ be a fixed even mixture satisfying
the summability condition in \zeqref{eq:mixture}, and set
\[
 \xi_a(q)=\xi_{\rm rest}(q)+a^2q^p,\qquad a>0.
\]
Let $P(a)$ be the limiting pressure at field $h$, and let
$\mu_a$ be the Parisi minimizer for $\xi_a$ and $h$.
Then $P$ is differentiable at $a_0$, with
\begin{equation}\label{eq:coefficient-derivative}
 P'(a_0)=a_0\left(1-\int q^p\,\mu_{a_0}(dq)\right).
\end{equation}
\end{lemma}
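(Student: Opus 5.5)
The plan is to use the Parisi formula $P(a)=\min_\nu\mathcal P_{\xi_a,h}(\nu)$ together with uniqueness of the minimizer, via the envelope (Danskin) argument. For each fixed $\nu$, the map $a\mapsto\mathcal P_{\xi_a,h}(\nu)$ should be differentiable. Its derivative should be computable from the Parisi PDE, by differentiating the covariance.

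First I will compute $\partial_a\mathcal P_{\xi_a,h}(\nu)$ for a fixed step-function $\nu$ using the Cole--Hopf recursion, and then pass to general $\nu$ by the $L^1$ continuity used in the proof of Lemma~\zcref[noname]{src:replica-upper}. Write $\kappa_a=\xi_a'$; then $\partial_a\kappa_a(q)=2apq^{p-1}$. The derivative of $\Phi_\nu(0,h)$ with respect to a perturbation $\delta\kappa$ of the covariance is the standard expression
\[
 \tfrac12\int_0^1\delta\kappa'(q)\,\E\bigl[V(q,X_q)+\alpha(q)u(q,X_q)^2\bigr]\,dq .
\]
It is obtained by differentiating each Gaussian step of the recursion and reweighting by the tilted path law, exactly as in the kernels $Q^x_{0,j}$ there. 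Combining this with the penalty term $-\tfrac12\int q\kappa_a'\alpha\,dq$, and using integration by parts in $q$ with $\E V=\E[1-u^2]-(\text{slack})$, should give the formula at a minimizer. Concretely, at $\nu=\mu_{a}$ I will use \zeqref{eq:contact} and Lemma~\zcref[noname]{lem:slack} to simplify the derivative to
\[
 \tfrac12\int_0^1\partial_a\kappa_a'(q)\bigl(1-\alpha(q)q\bigr)\,dq-\dots
 = a\Bigl(1-\int q^p\,\mu_a(dq)\Bigr).
\]
This is the familiar Talagrand/Panchenko identity $\partial_a\mathcal P=a(\xi$-derivative$)(1)-a\int q^p\,d\mu$. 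As a sanity check, at the replica-symmetric $\mu=\delta_q$ the derivative equals $a(1-q^p)$.

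Next I will establish the envelope step. Since $P(a)\le\mathcal P_{\xi_a,h}(\mu_{a_0})$ with equality at $a_0$, the upper Dini derivative from the right is at most $D_0:=\partial_a\mathcal P_{\xi_a,h}(\mu_{a_0})|_{a_0}$, and the lower Dini derivative from the left is at least $D_0$. For the reverse inequalities I will use the minimizers $\mu_{a}$ with $a\to a_0$, which satisfy
\[
 P(a)-P(a_0)\ge\mathcal P_{\xi_a}(\mu_a)-\mathcal P_{\xi_{a_0}}(\mu_a).
\]
By the mean value theorem, the right side equals $(a-a_0)\,\partial_a\mathcal P(\mu_a)|_{a'}$ for some $a'$ between $a_0$ and $a$. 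Compactness of $\cP([0,1])$ and uniqueness \cite{ACunique} show that $\mu_a\Rightarrow\mu_{a_0}$: any limit point is a minimizer at $a_0$, because $\mathcal P$ is jointly continuous in $(a,\nu)$, which follows from the Lipschitz bound \zeqref{src:replica-recursion-continuity} and continuity in the covariance. So it suffices that $(a',\nu)\mapsto\partial_a\mathcal P_{\xi_{a'}}(\nu)$ is jointly continuous, which I will check by the same Gronwall/coupling argument used for the coefficients in Lemma~\zcref[noname]{src:replica-coefficient}, now varying $\kappa$ as well.

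The main obstacle is the derivative formula for general $\nu$ with an infinite-mixture $\xi_{\rm rest}$. I expect it to be handled by working at step $\nu$, where the recursion is explicit, with uniform bounds $|u|\le1$ and $0\le V\le1$ from Lemma~\zcref[noname]{lem:scalar-spatial-regularity}, and then extending by uniform convergence of the derivatives. Exponential summability enters only through $\xi''(1)<\infty$, which bounds the time derivatives uniformly.
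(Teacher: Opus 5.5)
Your proposal is correct and follows essentially the same route as the paper. You derive the fixed-trial-measure derivative $\tfrac12\int_0^1\partial_a\kappa_a'(q)\bigl[\E V(q,X_q)+\alpha(q)(\Gamma(q)-q)\bigr]\,dq$ for step functions, extend it by joint continuity in $(a,\nu)$, evaluate it at the unique minimizer through the contact identity, and close with a two-sided envelope argument using $\mu_a\Rightarrow\mu_{a_0}$. The continuity step uses coupling, Gronwall, and the uniform derivative bounds of Lemma~\zcref[noname]{lem:scalar-spatial-regularity}; the third-derivative bound there is what gives uniform convergence of $V$.

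The differences are cosmetic. You simplify at the minimizer via Lemma~\zcref[noname]{lem:slack} and an integration by parts in $q$, whereas the paper shows directly, by the Stieltjes product rule, that $\E V(q,X_q)+\alpha(q)(\Gamma(q)-q)=\int_q^1\alpha(s)\,ds$. You also use Dini derivatives and the mean value theorem where the paper integrates $D(s,\nu)$ over $s$.
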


\begin{proof}
We first differentiate the Parisi functional with its trial
measure fixed. We then evaluate this derivative at the
minimizer and justify differentiation of the minimum.
Throughout the proof, $h$ remains fixed.

For a probability measure $\nu$ on $[0,1]$, put
$\alpha(q)=\nu([0,q])$ and $v_a(q)=\xi_a''(q)$.
Let $\Phi=\Phi_{a,\nu}$ and $X=X^{a,\nu}$ be the scalar
Parisi potential and diffusion for $\xi_a,\nu,h$, and write
\[
 u=\Phi_x,\qquad V=\Phi_{xx},\qquad
 \Gamma(q)=\E[u(q,X_q)^2].
\]
Expectations here are over the scalar diffusion. The
Parisi functional is
\[
 \mathcal P(a,\nu)
 =\Phi_{a,\nu}(0,h)
  -\frac12\int_0^1qv_a(q)\alpha(q)\,dq.
\]

Suppose first that $\nu$ is finitely supported, so that
$\alpha$ is a step function. Differentiating the scalar
recursion in $a$ shows that $w=\partial_a\Phi$ satisfies
\[
 w_q+\tfrac12v_aw_{xx}+v_a\alpha u\,w_x
 =-\tfrac12\dot v_a(V+\alpha u^2),\qquad w(1,x)=0,
 \qquad \dot v_a(q)=2ap(p-1)q^{p-2}.
\]
The forcing term is bounded. Feynman--Kac evaluates
$w(0,h)$, and differentiation of the integral in
$\mathcal P(a,\nu)$ gives
\begin{equation}\label{eq:fixed-measure-derivative}
 \partial_a\mathcal P(a,\nu)
 =\frac12\int_0^1\dot v_a(q)
   [\E V(q,X_q)+\alpha(q)(\Gamma(q)-q)]\,dq.
\end{equation}
In particular, the term $-q\alpha(q)$ comes from
differentiating that integral.

For an arbitrary trial measure, denote the right side
of \zeqref{eq:fixed-measure-derivative} by $D(a,\nu)$.
We next prove that $D$ is jointly continuous and equals
$\partial_a\mathcal P(a,\nu)$ for every $\nu$.

Fix a compact interval $I\subset(0,\infty)$ containing
$a_0$ in its interior. On this interval, $v_a$ and
$\dot v_a$ are uniformly bounded. The scalar estimates
\[
 |u|\le1,\qquad 0<V\le1,\qquad
 \sup_{q,x}|\Phi_{xxx}(q,x)|\le C
\]
hold uniformly over $a\in I$ and all trial measures
\cite{ACunique}.

Let $a_n\to a$ in $I$ and $\nu_n\Rightarrow\nu$. Use
subscripts $n$ for the associated scalar quantities, and
write $v_n=v_{a_n}$. Then
\[
 \alpha_n\to\alpha\quad\text{in }L^1([0,1]),
 \qquad v_n\to v_a\quad\text{uniformly on }[0,1].
\]
Subtracting the two Parisi PDEs gives
\[
 \begin{split}
 \bigl(\partial_q+\tfrac12v_n\partial_{xx}
       +\tfrac12v_n\alpha_n(u_n+u)\partial_x\bigr)
       (\Phi_n-\Phi)
 ={}&-\tfrac12(v_n-v_a)V\\
    &-\tfrac12(v_n\alpha_n-v_a\alpha)u^2,
 \end{split}
\]
with zero terminal difference. The drift is bounded,
and the spatial supremum of the forcing term is at most
\[
 C\bigl(|v_n(q)-v_a(q)|
          +|\alpha_n(q)-\alpha(q)|\bigr).
\]
Feynman--Kac therefore gives
\[
 \|\Phi_n-\Phi\|_\infty
 \le C\bigl(\|v_n-v_a\|_{L^1([0,1])}
             +\|\alpha_n-\alpha\|_{L^1([0,1])}\bigr)
 \longrightarrow0.
\]
Here and below, the supremum norms of the potentials and
their spatial derivatives are over $[0,1]\times\R$.

The uniform derivative bounds turn this convergence into
uniform convergence of $u_n$ and $V_n$. More explicitly,
forward and centered differences with step $\rho>0$ give
\[
 \begin{split}
 \|u_n-u\|_\infty
 &\le 2\rho^{-1}\|\Phi_n-\Phi\|_\infty+C\rho,\\
 \|V_n-V\|_\infty
 &\le 4\rho^{-2}\|\Phi_n-\Phi\|_\infty+C\rho.
 \end{split}
\]
First let $n\to\infty$ and then let $\rho\downarrow0$.

Couple $X^n$ and $X$ using the same Brownian motion and
the same initial value $h$. Their drifts have a common
spatial Lipschitz constant, and
\[
 \int_0^1\sup_x
 |v_n(q)\alpha_n(q)u_n(q,x)
       -v_a(q)\alpha(q)u(q,x)|\,dq\longrightarrow0.
\]
Their diffusion coefficients satisfy
\[
 \int_0^1|\sqrt{v_n(q)}-\sqrt{v_a(q)}|^2\,dq
 \le\int_0^1|v_n(q)-v_a(q)|\,dq\longrightarrow0.
\]
Doob's inequality and Gronwall's inequality consequently give
\[
 \E\sup_{0\le q\le1}|X_q^n-X_q|^2\longrightarrow0.
\]
Together with the uniform convergence of $u_n,V_n$ and
the $L^1$ convergence of $\alpha_n$, this proves joint
continuity of $D(a,\nu)$. The potential estimate and the
formula for $\mathcal P$ also prove joint continuity of
$\mathcal P(a,\nu)$.

To establish the derivative formula for a general $\nu$,
choose finitely supported measures $\nu_j\Rightarrow\nu$.
For $a_1,a_2\in I$, the formula already proved gives
\[
 \mathcal P(a_2,\nu_j)-\mathcal P(a_1,\nu_j)
 =\int_{a_1}^{a_2}D(s,\nu_j)\,ds.
\]
The preceding continuity estimates and uniform bounds
permit passage to the limit in this identity. Thus
\[
 \mathcal P(a_2,\nu)-\mathcal P(a_1,\nu)
 =\int_{a_1}^{a_2}D(s,\nu)\,ds.
\]
Since $D(\cdot,\nu)$ is continuous, this proves
\zeqref{eq:fixed-measure-derivative} for every trial measure.

We now fix $a\in I$ and evaluate $D(a,\mu_a)$. All scalar
quantities in the following calculation use $\nu=\mu_a$.
Define
\[
 F(q)=\E V(q,X_q)+\alpha(q)(\Gamma(q)-q).
\]
It\^o's formula for $u(q,X_q)$ and $V(q,X_q)$ gives
\[
 \Gamma'(q)=v_a(q)\E[V(q,X_q)^2],\qquad
 d\,\E V(q,X_q)=-\alpha(q)\Gamma'(q)\,dq.
\]
The Stieltjes product rule therefore yields on $(0,1)$
\begin{equation}\label{eq:derivative-stieltjes}
 dF(q)
 =-\alpha(q)\,dq+(\Gamma(q)-q)\,\mu_a(dq)
 =-\alpha(q)\,dq.
\end{equation}
The last equality uses the contact identity
$\Gamma(q)=q$ on $\supp\mu_a$.

At time one, $V(1,x)=1-u(1,x)^2$ and $\alpha(1)=1$,
so $F(1)=0$. Moreover, $\Gamma(1)<1$ and the contact
identity imply $1\notin\supp\mu_a$. Thus $F$ has no jump
at one, and integrating \zeqref{eq:derivative-stieltjes}
gives
\[
 F(q)=\int_q^1\alpha(s)\,ds
 \qquad\text{for almost every }q\in[0,1].
\]
Substituting this expression and changing the order of
integration gives
\begin{align}
 D(a,\mu_a)
 &=ap(p-1)\int_0^1q^{p-2}
                  \int_q^1\alpha(s)\,ds\,dq\notag\\
 &=ap\int_0^1s^{p-1}\alpha(s)\,ds
   =a\left(1-\int t^p\,\mu_a(dt)\right).
 \label{eq:derivative-minimizer}
\end{align}
For the last equality, write
$\alpha(s)=\int\one_{\{t\le s\}}\,\mu_a(dt)$ and use
$\int_t^1ps^{p-1}\,ds=1-t^p$. This calculation includes
any atom at zero. The measure $\mu_a$ is held fixed
throughout this evaluation of the partial derivative.

Finally, we pass from the Parisi functional to
its minimum over trial measures. Weak compactness of the trial measures and
joint continuity of $\mathcal P$ imply that every
subsequential limit of $\mu_{a_n}$, for $a_n\to a_0$,
minimizes $\mathcal P(a_0,\cdot)$. Uniqueness of the
minimizer therefore gives
\[
 \mu_{a_n}\Rightarrow\mu_{a_0}.
\]
For either sign of $t$, with $a_0+t\in I$, use
$\mu_{a_0+t}$ as a trial measure at $a_0$ and
$\mu_{a_0}$ as a trial measure at $a_0+t$. Minimality
and the fundamental theorem of calculus with the trial measure held fixed give
\[
 \int_{a_0}^{a_0+t}D(s,\mu_{a_0+t})\,ds
 \le P(a_0+t)-P(a_0)
 \le\int_{a_0}^{a_0+t}D(s,\mu_{a_0})\,ds.
\]
Joint continuity of $D$ and convergence of the minimizers
show that both bounding integrals equal
\[
 tD(a_0,\mu_{a_0})+o(|t|).
\]
Hence $P'(a_0)=D(a_0,\mu_{a_0})$, and
\zeqref{eq:derivative-minimizer} proves
\zeqref{eq:coefficient-derivative}.
\end{proof}

\begin{corollary}\label{cor:active-moments}
Let $\mu$ be the Parisi minimizer for the fixed mixture
and field. For every even $p\ge2$ with $b_p>0$,
\[
 \lim_{N\to\infty}\E\langle R_{12}^p\rangle_{N,h}
 =\int q^p\,\mu(dq).
\]
Moreover,
\[
 \lim_{N\to\infty}
 \E\langle R_{12}\kappa(R_{12})\rangle_{N,h}
 =\int q\kappa(q)\,\mu(dq).
\]
\end{corollary}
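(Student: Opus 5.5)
The plan is to derive both limits from Lemma~\zcref[noname]{lem:active-derivative} together with the standard convexity and concentration argument for derivatives of the free energy in a coefficient. Fix an even $p$ with $b_p>0$ and regard $b_p=a$ as a variable, writing $\xi_a=\xi_{\rm rest}+a^2q^p$. At finite $N$, let $p_N(a)=\E F_{N,h}$ for the Hamiltonian with coefficient $a$. Gaussian integration by parts gives
\[
 p_N'(a)=a\bigl(1-\E\langle R_{12}^p\rangle_{N,h}\bigr),
\]
where the diagonal term $R_{11}^p=1$ accounts for the constant part.

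The next step is to show that $p_N(a)$ is convex in a suitable variable. Parametrize by $t=a^2/2$. Then $\partial_t\log Z$ is the Gibbs average of $N^{-1}\cdot$(a Gaussian term divided by $\sqrt{2t}$), and this is not directly convex. I would instead use the standard device of treating $p_N$ as a function of $a$ with the disorder $g^{(p)}$ fixed. In that form $a\mapsto F_{N,h}$ is convex, because it is a log-partition function of a linear parameter multiplying $N^{-(p-1)/2}\sum g\,\sigma\cdots$. Its expectation is therefore convex as well, and it converges pointwise to $P(a)$ by the Parisi formula applied to the mixture $\xi_a$, which still satisfies \zeqref{eq:mixture}.

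Since $P$ is differentiable at $a_0$ by Lemma~\zcref[noname]{lem:active-derivative}, Griffiths' lemma for convex functions yields $p_N'(a_0)\to P'(a_0)$. Comparing with the integration-by-parts formula and the value of $P'(a_0)$ in \zeqref{eq:coefficient-derivative} gives $\E\langle R_{12}^p\rangle\to\int q^p\,d\mu$ after dividing by $a_0>0$. The derivative of the convex function at fixed disorder is $N^{-(p-1)/2-1}\langle\sum g\sigma\cdots\rangle$, and its expectation equals the displayed $p_N'(a)$ by integration by parts. So no concentration is needed, only the expectation, and this step is routine.

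For the second limit, write $R\kappa(R)=\sum_p pb_p^2R^p$, a sum over active $p$ only. Each term converges by the first part. The tail is uniformly bounded by $\sum_{p>M}pb_p^2$, which tends to zero by exponential summability. Dominated convergence of the series, uniformly in $N$ since $|R|\le1$, then gives the limit $\int q\kappa(q)\,d\mu$.

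The main obstacle is confirming that the Parisi formula $p_N\to P$ holds for every $a$ near $a_0$ for these infinite mixtures, so that Griffiths' lemma applies on an interval. The cited results \cite{Talagrand,PanchenkoParisi2014} cover this under \zeqref{eq:mixture}, so I expect it to go through.
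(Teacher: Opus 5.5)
Your proposal is correct and follows essentially the same route as the paper. For the first limit, both arguments combine convexity of $p_N$ in the coefficient $a$ at fixed disorder, pointwise convergence from the Parisi formula, differentiability of $P$ from Lemma~\zcref[noname]{lem:active-derivative}, and the integration-by-parts identity $p_N'(a_0)=a_0\bigl(1-\E\langle R_{12}^p\rangle_{N,h}\bigr)$; the second limit follows in both from the uniform tail bound $\sum_{p>M}pb_p^2$ on the series $r\kappa(r)=\sum_p pb_p^2r^p$.
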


\begin{proof}
Fix an active order $p$ and put $a_0=b_p$. Write $p_N(a)$
and $P(a)$ for the averaged finite-volume pressure and its
limit when $b_p$ is replaced by $a$, with all other
coefficients and $h$ fixed. Convexity gives, for $0<t<a_0$,
\[
 \frac{p_N(a_0)-p_N(a_0-t)}{t}
 \le p_N'(a_0)
 \le\frac{p_N(a_0+t)-p_N(a_0)}{t}.
\]
First let $N\to\infty$, using the Parisi formula, and then
let $t\downarrow0$. Differentiability of $P$ from
Lemma~\zcref[noname]{lem:active-derivative} implies
$p_N'(a_0)\to P'(a_0)$. Gaussian integration by parts
therefore gives
\[
 a_0\bigl(1-\E\langle R_{12}^p\rangle_{N,h}\bigr)
 =p_N'(a_0)
 \longrightarrow
 a_0\left(1-\int q^p\,\mu(dq)\right).
\]
Since $a_0>0$, this proves the first assertion.

For the second assertion, the series
\[
 r\kappa(r)=\sum_{k\in2\mathbb N}k b_k^2r^k
\]
converges uniformly on $[-1,1]$, because
$\sum_k k b_k^2<\infty$. Apply the first assertion to each
finite partial sum. The uniform bound on the remaining
tail then permits the cutoff to tend to infinity.
\end{proof}

\subsection{Coefficient changes under one-site deletion}
We prove the relative-entropy estimate used in the proof of
Lemma~\zcref[noname]{cav:representation}. Recall that the reduced
$N$-site Hamiltonian has coefficients
\begin{equation}\label{eq:core-coefficients}
 b_{p,N}=b_p\left(\frac N{N+1}\right)^{(p-1)/2}.
\end{equation}
Write $\mathbf b=(b_p)_p$ and $\mathbf b_N=(b_{p,N})_p$,
and couple the Gibbs measures $G_{\mathbf b}$ and
$G_{\mathbf b_N}$ using the same Gaussian variables and field.
We display the coefficient vector as an argument in the averaged
pressure $p_N(\mathbf a)$ and write $\partial_p$ for differentiation
with respect to $a_p$. All sums over interaction orders below
run over even integers at least two.

For probability measures on the spin configurations, define
\[
 D(P\Vert Q)=\sum_\sigma P(\sigma)
                 \log\frac{P(\sigma)}{Q(\sigma)}.
\]
Let $H_{N,p}$ be the order-$p$ Hamiltonian with coefficient one,
so that $H_{N,\mathbf a}=\sum_p a_pH_{N,p}$. Put
\[
 \Delta H_N=H_{N,\mathbf b_N}-H_{N,\mathbf b}
           =\sum_p(b_{p,N}-b_p)H_{N,p}.
\]
Expanding the two relative entropies cancels their
log-partition terms and gives
\[
 D(G_{\mathbf b}\Vert G_{\mathbf b_N})
 +D(G_{\mathbf b_N}\Vert G_{\mathbf b})
 =G_{\mathbf b_N}(\Delta H_N)-G_{\mathbf b}(\Delta H_N).
\]
Since $\E G_{\mathbf a}(H_{N,p})=N\partial_p p_N(\mathbf a)$,
averaging yields
\begin{equation}\label{eq:core-entropy}
 \begin{aligned}
 &\E\bigl[D(G_{\mathbf b}\Vert G_{\mathbf b_N})
          +D(G_{\mathbf b_N}\Vert G_{\mathbf b})\bigr]=N\sum_p(b_{p,N}-b_p)
       \bigl[\partial_p p_N(\mathbf b_N)
                    -\partial_p p_N(\mathbf b)\bigr].
 \end{aligned}
\end{equation}
This calculation also applies to infinite mixtures. Indeed, for
each fixed $N$, the Gaussian maximal inequality gives
$\E\max_\sigma|H_{N,p}(\sigma)|\le CN$, uniformly in $p$,
because there are $2^N$ configurations and each field value has
variance $N$. Moreover, $\sum_p b_p<\infty$ by Cauchy--Schwarz
and \zeqref{eq:mixture}. Thus
\[
 \sum_p|b_{p,N}-b_p|\,
       \E\max_\sigma|H_{N,p}(\sigma)|
 \le CN\sum_p b_p<\infty,
\]
which justifies termwise averaging.

We next show that, for every fixed active $p$, the two pressure
derivatives in \zeqref{eq:core-entropy} have the same limit.
Gaussian integration by parts gives, for nonnegative coefficients,
\[
 \partial_p p_N(\mathbf a)
 =a_p\bigl(1-\E\langle R_{12}^p\rangle_{\mathbf a}\bigr),
 \qquad 0\le\partial_p p_N(\mathbf a)\le a_p.
\]
At the fixed vector $\mathbf b$, convergence to
$\partial_pP(\mathbf b)$ follows from
Corollary~\zcref[noname]{cor:active-moments} and
Lemma~\zcref[noname]{lem:active-derivative}.
It remains to prove convergence at the moving vector $\mathbf b_N$.

Let $e_p$ be the coefficient vector with entry one at order $p$
and zero elsewhere. Fix $|t|<b_p/2$. For sufficiently large $N$,
both $\mathbf b_N+t e_p$ and $\mathbf b+t e_p$ have nonnegative
coefficients. Gaussian interpolation between their covariance
functions gives
\[
 \begin{aligned}
 &|p_N(\mathbf b_N+t e_p)-p_N(\mathbf b+t e_p)|\\
 &\qquad\le\frac12\sum_j|b_{j,N}^2-b_j^2|
               +|t|\,|b_{p,N}-b_p|
 \longrightarrow0.
 \end{aligned}
\]
The term involving $t$ comes from expanding the difference
$(b_{p,N}+t)^2-(b_p+t)^2$. The sum tends to zero by dominated
convergence, since $0\le b_{j,N}\le b_j$ and $\sum_jb_j^2<\infty$.
The Parisi formula at the fixed vector $\mathbf b+t e_p$
therefore implies
\[
 p_N(\mathbf b_N+t e_p)\longrightarrow P(\mathbf b+t e_p).
\]

For $0<t<b_p/2$, convexity in the $p$-th coefficient gives
\[
 \frac{p_N(\mathbf b_N)-p_N(\mathbf b_N-t e_p)}{t}
 \le\partial_p p_N(\mathbf b_N)
 \le\frac{p_N(\mathbf b_N+t e_p)-p_N(\mathbf b_N)}{t}.
\]
First let $N\to\infty$ with $t$ fixed. The bounding quotients
converge to the backward and forward difference quotients of
$P$ at $\mathbf b$. Then let $t\downarrow0$ and use
Lemma~\zcref[noname]{lem:active-derivative} to obtain
\[
 \partial_p p_N(\mathbf b_N)\longrightarrow\partial_pP(\mathbf b).
\]

Finally, the inequalities $1-e^{-x}\le x$ and
$\log(1+1/N)\le1/N$ give
\[
 N|b_{p,N}-b_p|\le\frac{p-1}{2}b_p.
\]
The derivative bound above and $b_{p,N}\le b_p$ consequently imply
\[
 N|b_{p,N}-b_p|\,
 \bigl|\partial_p p_N(\mathbf b_N)-\partial_p p_N(\mathbf b)\bigr|
 \le(p-1)b_p^2.
\]
This bound is summable by \zeqref{eq:mixture}. For each active
$p$, the summand tends to zero because the derivative difference
tends to zero and $N|b_{p,N}-b_p|$ is bounded. For an inactive
$p$, it vanishes identically. Dominated convergence in
\zeqref{eq:core-entropy} therefore proves
\[
 \E\bigl[D(G_{\mathbf b}\Vert G_{\mathbf b_N})
          +D(G_{\mathbf b_N}\Vert G_{\mathbf b})\bigr]
 \longrightarrow0.
\]

\subsection{Repeated indices in one-site deletion}
For $\sigma\in\{-1,1\}^N$ and $\epsilon\in\{-1,1\}$,
separate the interaction terms according to the number of
occurrences of the index $N+1$. Write
\[
 H_{N+1}(\sigma,\epsilon)
 =H_N^{\rm red}(\sigma)+E_N(\sigma)
   +\epsilon z_N(\sigma)+\epsilon O_N(\sigma).
\]
Here $H_N^{\rm red}$ contains the terms with no occurrence,
$z_N$ contains those with exactly one, $E_N$ contains those
with an even number of occurrences at least two, and
$O_N$ contains those with an odd number at least three.
These four centered Gaussian processes are independent
because they use disjoint sets of Gaussian coefficients.
The reduced Hamiltonian $H_N^{\rm red}$ has the coefficients
in \zeqref{eq:core-coefficients}.

Put $a_N=N/(N+1)$, $d_N=1/(N+1)$, and
$R=N^{-1}\sigma\cdot\tau$. The covariances of
$H_N^{\rm red}+E_N$ and $z_N+O_N$ are, respectively,
\begin{align}
 C_N^e(R)
 &=\frac{N+1}{2}
   [\xi(a_NR+d_N)+\xi(a_NR-d_N)],\notag\\
 C_N^o(R)
 &=\frac{N+1}{2}
   [\xi(a_NR+d_N)-\xi(a_NR-d_N)].
 \label{eq:parity-covariances}
\end{align}
Indeed, adding the two expansions retains even powers
of $d_N$, while subtracting them retains odd powers.
The terms of degrees zero and one in $d_N$ give
\[
 \E[H_N^{\rm red}(\sigma)H_N^{\rm red}(\tau)]
 =(N+1)\xi(a_NR),\qquad
 \E[z_N(\sigma)z_N(\tau)]=\kappa(a_NR).
\]

We bound the variances of the remaining terms by Taylor
expansion at $a_N$. The nonnegative mixture coefficients
bound the second and third derivatives of $\xi$ on
$[0,1]$ by their values at one. Hence
\begin{equation}\label{eq:parity-errors}
 v_N^e:=\operatorname{Var}(E_N(\sigma))
 =C_N^e(1)-(N+1)\xi(a_N)
 \le\frac{\xi''(1)}{2(N+1)},
\end{equation}
and
\[
 v_N^{o,\ge3}:=\operatorname{Var}(O_N(\sigma))
 =C_N^o(1)-\kappa(a_N)
 \le\frac{\xi'''(1)}{6(N+1)^2}.
\]
These diagonal variances do not depend on $\sigma$.

To translate these variance bounds into comparisons of
Gibbs measures, let $G$ be a random Gibbs measure on a
finite configuration space and let $W$ be an independent
centered Gaussian process on that space, with
$\sup_x\operatorname{Var}(W(x))\le v$. Define
\[
 G^W(x)=\frac{e^{W(x)}G(x)}{G e^W}.
\]
The relative entropy satisfies
\[
 D(G\Vert G^W)=\log G e^W-GW.
\]
Conditional on $G$, the second term has mean zero.
Jensen's inequality and Gaussian exponential moments give
\[
 \E_W D(G\Vert G^W)
 =\E_W\log G e^W
 \le\log\sum_xG(x)e^{\operatorname{Var}(W(x))/2}
 \le\frac v2.
\]
Averaging over $G$ and applying Pinsker's inequality yields
\[
 \E\|G-G^W\|_{\rm TV}
 \le\left(\frac12\E D(G\Vert G^W)\right)^{1/2}
 \le\frac{\sqrt v}{2}.
\]

Apply this estimate on the full configuration space
$(\sigma,\epsilon)$. The remainder
$E_N(\sigma)+\epsilon O_N(\sigma)$ is independent of
$H_N^{\rm red}+\epsilon z_N$ and has variance
$v_N^e+v_N^{o,\ge3}$. Thus, with the external field unchanged,
removing this remainder changes the Gibbs measure by at most
$\frac12\sqrt{v_N^e+v_N^{o,\ge3}}=O_\xi(N^{-1/2})$
in expected total variation.

\subsection{Joint convergence of overlaps and added spins}
\label{app:cavity-joint}
We prove the joint convergence used in the proof of
Lemma~\zcref[noname]{cav:representation}. We approximate the
normalizing constant using finitely many auxiliary replicas,
pass to the overlap limit, and then remove the approximation.

Write $G_{N,h}^{\rm red}=G_{\mathbf b_N}$ and $L=\kappa(1)$.
Conditional on the reduced disorder, the field $z_N$ is centered
Gaussian with covariance
\[
 \E[z_N(\sigma)z_N(\tau)\mid G_{N,h}^{\rm red}]
 =\kappa\left(\frac{N}{N+1}R(\sigma,\tau)\right).
\]
In particular, its diagonal variances are at most $L$.
Fix $n\ge1$, signs $e_1,\ldots,e_n\in\{-1,1\}$, and a
bounded continuous function $F$ of the overlaps among these
$n$ replicas. Every replica label used by $F$ is included in $n$.
We first prescribe all $n$ added-site signs and sum over any
unspecified signs at the end.

Conditional on the reduced disorder and $z_N$, sample
$\sigma^1,\ldots,\sigma^n$ independently from $G_{N,h}^{\rm red}$.
Put
\[
 \mathcal Z_N=G_{N,h}^{\rm red}[\cosh(h+z_N)]\ge1,
 \qquad
 A_N=2^{-n}\exp\left\{
       \sum_{i=1}^n e_i(h+z_N(\sigma^i))\right\}.
\]
The finite-system sampling formula in the proof of
Lemma~\zcref[noname]{cav:representation} gives the expectation
with these prescribed signs as
\[
 I_N=\E[F(R)A_N\mathcal Z_N^{-n}].
\]
Here the repeated-index remainder has been removed, as in that
proof. Throughout this subsection, $\E$ averages over the reduced
disorder, the Gaussian field, and replicas sampled from the reduced
Gibbs measure before cavity reweighting.

To approximate $\mathcal Z_N$, sample $m$ additional replicas
$\tau^1,\ldots,\tau^m$ from $G_{N,h}^{\rm red}$, conditionally
independently of one another and of the displayed replicas given
the reduced disorder and $z_N$. Define
\[
 \widehat{\mathcal Z}_{N,m}
 =\frac1m\sum_{j=1}^m\cosh(h+z_N(\tau^j)),
 \qquad
 I_{N,m}=\E[F(R)A_N\widehat{\mathcal Z}_{N,m}^{-n}].
\]
The conditional sample mean has mean $\mathcal Z_N$, so its
conditional variance gives
\[
 \E\left[
   |\widehat{\mathcal Z}_{N,m}-\mathcal Z_N|^2
   \,\middle|\,G_{N,h}^{\rm red},z_N\right]
 \le\frac1mG_{N,h}^{\rm red}[\cosh^2(h+z_N)].
\]
For a centered Gaussian variable $Z$ of variance at most $L$,
\[
 \E\cosh^2(h+Z)
 =\frac{1+e^{2\Var(Z)}\cosh(2h)}2
 \le\frac{1+e^{2L}\cosh(2h)}2.
\]
Averaging first over the field and then over the reduced disorder
therefore yields
\[
 \E|\widehat{\mathcal Z}_{N,m}-\mathcal Z_N|^2
 \le\frac{C_{h,L}}m.
\]

We also need a moment bound for the numerator. Conditional on the
reduced disorder and the displayed configurations, the signed sum
$\sum_{i=1}^n e_i z_N(\sigma^i)$ is centered Gaussian with variance
at most $n^2L$. Hence
\[
 \E A_N^2\le4^{-n}\exp\{2n|h|+2n^2L\}.
\]
Both normalizing constants are at least one, and
\[
 |x^{-n}-y^{-n}|\le n|x-y|
 \qquad (x,y\ge1).
\]
Cauchy--Schwarz and the preceding moment bounds therefore give
\[
 \begin{aligned}
 |I_{N,m}-I_N|
 &\le n\|F\|_\infty
       \E\bigl[A_N
          |\widehat{\mathcal Z}_{N,m}-\mathcal Z_N|\bigr]\\
 &\le n\|F\|_\infty
       \bigl(\E A_N^2\bigr)^{1/2}
       \bigl(\E|\widehat{\mathcal Z}_{N,m}
                    -\mathcal Z_N|^2\bigr)^{1/2}\\
 &\le C_{h,L,n}\|F\|_\infty m^{-1/2}.
 \end{aligned}
\]
The constant is independent of $N$ and of the reduced Gibbs measure.

Fix $m$ and condition on all $n+m$ reduced replicas. Their field
values form a Gaussian vector whose covariance depends only on
their overlap matrix, including the diagonal entries $R_{ii}=1$.
Integrating this vector therefore expresses $I_{N,m}$ as an
expectation of a function of finitely many overlaps. We verify the
continuity needed to pass this expression to the overlap limit.

Let $K_\ell\to K$ be covariance matrices of size $n+m$ with
diagonal entries at most $L$, and let $g$ be a standard Gaussian
vector of that dimension. Couple the associated Gaussian vectors by
\[
 Z^{(\ell)}=K_\ell^{1/2}g,
 \qquad Z=K^{1/2}g.
\]
Continuity of the positive semidefinite square root gives
$Z^{(\ell)}\to Z$ almost surely, including when $K$ is singular.
The integrand defining $I_{N,m}$ is continuous in these field
coordinates. Its denominator is at least one, and the numerator
has the uniform second-moment bound proved above. The Gaussian
integrals consequently converge by uniform integrability.

The covariance matrices with diagonal entries at most $L$ form
a compact set, so this continuity is uniform. Since
\[
 \kappa\left(\frac{N}{N+1}r\right)\longrightarrow\kappa(r)
 \quad\text{uniformly for }r\in[-1,1],
\]
the functions obtained by integrating over the Gaussian field converge uniformly to bounded
continuous functions of the finite overlap matrix. Along a
subsequence on which the reduced overlap arrays converge, weak
convergence therefore gives
\[
 I_{N,m}\longrightarrow I_m
 \qquad\text{for each fixed }m.
\]

It remains to identify the limit as $m\to\infty$. Use the Gaussian
representation described in the proof of
Lemma~\zcref[noname]{cav:representation}. Conditional on a predecessor
directing measure $G$, sample the Gaussian field $Y$. Conditional on
$G,Y$, sample the reference vectors independently from $G$.
Each limiting field coordinate is represented as $Y(v)+\eta$, where
\[
 \E[Y(v)Y(w)\mid G]=\kappa(v\cdot w),
 \qquad
 \eta\sim N\bigl(0,L-\kappa(\|v\|^2)\bigr).
\]
Conditional on $G,Y$ and the vectors, the additional noises are
independent for distinct replica labels. Let $w^j$ and $\eta_j$
be the vectors and noises for the auxiliary replicas, and put
\[
 W_j=\cosh(h+Y(w^j)+\eta_j).
\]
Conditional on $G,Y$, the variables $W_j$ are independent and
identically distributed. The Gaussian average computed in the
proof of Lemma~\zcref[noname]{cav:representation} gives their
common conditional mean
\[
 \E[W_j\mid G,Y]
 =G[a\cosh(h+Y)]=Z_h.
\]
Moreover, $\E W_j^2\le C_{h,L}$ by the Gaussian moment bound above,
so this conditional mean is finite almost surely. The conditional
strong law therefore gives
\[
 \frac1m\sum_{j=1}^m W_j\longrightarrow Z_h
 \quad\text{almost surely}.
\]

For the displayed replicas, write
\[
 A=2^{-n}\exp\left\{
   \sum_{i=1}^n e_i(h+Y(v^i)+\eta_i)\right\}.
\]
The representation of $I_m$ uses this numerator and the auxiliary
sample mean in its denominator. Since the sample mean is at least
one and $\E A^2\le4^{-n}e^{2n|h|+2n^2L}$, dominated convergence
gives
\[
 I_m\longrightarrow
 I=\E\bigl[F((v^i\cdot v^j)_{i<j})A Z_h^{-n}\bigr].
\]
Integrating the independent noises of the displayed replicas
is the calculation in the proof of
Lemma~\zcref[noname]{cav:representation} that yields
\zeqref{eq:cavity-tilt} and \zeqref{eq:cavity-mark}.

Finally, the uniform approximation bound implies
\[
 \limsup_{N\to\infty}|I_N-I|
 \le C_{h,L,n}\|F\|_\infty m^{-1/2}+|I_m-I|.
\]
Letting $m\to\infty$ proves the required convergence. Thus the
thermodynamic limit is taken with a fixed number of auxiliary
replicas, and that number is sent to infinity afterward.
Summing over unspecified signs proves joint convergence with
any fixed collection of added spins. Indicators of events depending on finitely many overlaps may also be used when their boundaries have zero probability
under the limiting successor overlap law.

\bibliographystyle{amsplain}
\begingroup
\raggedright
\bibliography{manuscript}
\endgroup
\end{document}